\documentclass[a4paper,12pt]{article}

\usepackage{lmodern}
\usepackage[T1]{fontenc}
\usepackage{hyperref}
\usepackage{amsfonts,amsmath,amssymb,amsopn,amsthm}
\usepackage[all]{xy}
\usepackage{vmargin}
\usepackage{makeidx}
\usepackage{graphicx}
\usepackage{tikz-cd}
\usepackage{comment}
\usepackage{mathabx}
\usepackage{mathbbol}
\mathchardef\mhyphen="2D

\usepackage{pifont}
\title{$\mathbb M$-modules}
\author{Bernard Le Stum}
\date{Version of \today}
\usepackage[ style=alphabetic,citestyle=alphabetic,sorting=nyt,sortcites=true,autopunct=true,babel=hyphen,hyperref=true,abbreviate=false,backref=true]{biblatex}
\AtEveryBibitem{ \clearfield{url} \clearfield{urldate} \clearfield{review} \clearfield{series} \clearfield{doi} \clearfield{isbn} \clearfield{issn} \clearfield{note} }
\defbibheading{bibempty}{}
\DeclareNolabel{\nolabel{\regexp{[\p{Z}]+}}}
\newtheorem{thm}{Theorem}[section]
\newtheorem{prop}[thm] {Proposition}
\newtheorem{cor}[thm] {Corollary}
\newtheorem{lem}[thm] {Lemma}
\theoremstyle{definition}
\newtheorem{dfn}[thm] {Definition}

\newenvironment{xmp}[1][Example]{\begin{trivlist} \item[\hskip \labelsep {\bfseries #1}]}{\end{trivlist}}
\newenvironment{xmps}[1][Examples]{\begin{trivlist} \item[\hskip \labelsep {\bfseries #1}]}{\end{trivlist}}

\newenvironment{rmk}[1][Remark]{\begin{trivlist} \item[\hskip \labelsep {\bfseries #1}]}{\end{trivlist}}

\usepackage[pagewise]{lineno}

\newcommand{\Addresses}{{
 \bigskip
 \footnotesize

Bernard Le Stum, \textsc{IRMAR, Université de Rennes,
Campus de Beaulieu, 35042 Rennes cedex, France}\par\nopagebreak
\texttt{bernard.le-stum@univ-rennes1.fr}

}}

\begin{document}

\maketitle

\begin{abstract}
We consider the ring $\mathbb M$ of column-finite matrices with integer coefficients.
We prove that $\mathbb M$-modules form an additive closed symmetric monoidal abelian category that essentially contains complete metrizable linearly topologized abelian groups as a full subcategory.
We also introduce and discuss the notion of an $\mathbb M$-ring.
In the end, we show that the category of $\mathbb M$-modules (resp.\ $\mathbb M$-rings) is actually equivalent to the category of light solid abelian groups (resp.\ light solid rings) of Clausen and Scholze, which should come as no surprise to specialists.
However, we believe that our more straightforward approach may offer a fresh perspective on their theory. 
\end{abstract}

\tableofcontents

\section*{Introduction}

When I first heard about solid abelian groups, it quickly dawned on me that there existed a simpler way to describe them, namely as additive functors on free abelian groups\footnote{This perspective was eventually developed by Kiran Kedeya in \cite{Kedlaya25}.} -- exactly as a usual abelian group may be seen as an additive functor on free abelian groups of \emph{finite} rank.
When the light version came into the picture, I realized that the situation was even better: the category of light solid abelian groups is equivalent to the category of left modules over the ring $\mathbb M$ of column-finite matrices with integer coefficients.
The purpose of this note is to give a direct description of $\mathbb M$-modules allowing the reader to enter the world of solid mathematics without learning condensed mathematics first.
I want to emphasize, however, that there is no pretension to originality here. All our results can be easily derived from the original theory developed by Dustin Clausen and Peter Scholze in \cite{ClausenScholze23}.

Before going any further, let us mention that there exists a strong analogy between the matrix ring $\mathbb M$ and the Weyl algebra $\mathcal D$.
In the same way as we can write
\[
\mathcal D = \left\{\sum a_{kn}x^k\partial^n, \quad a_{kn} \in \mathbb C\right\}
\]
with 
\[
 x^kx^n=x^{n+k} , \quad \partial^n\partial^m =\partial^{n+m} \quad \mathrm{and} \quad \partial x^{n+1} = (n+1)x^n,
\]
we can interpret the ring of matrices as the \emph{$t$-adically complete} ring
\[
\mathbb M =\left\{\sum a_{kn}e_kt^n, \quad a_{kn} \in \mathbb Z\right\}
\]
with
\[
e_ke_n = \left\{\begin{array}{ll} e_k\ \mathrm{if}\ n=0 \\ 0\ \mathrm{otherwise}.\end{array}\right., \quad t^nt^m=t^{n+m} \quad \mathrm{and} \quad te_{n+1} = e_{n}.
\]
More precisely, we have on the one hand,
\[
\mathcal O = \left\{\sum_{\mathrm{finite}} c_n x^n, c_n \in \mathbb C\right\} \subset \mathcal D = \left\{\sum_{\mathrm{finite}} f_n \partial^n, f_n \in \mathcal O \right\} \subset \widehat{\mathcal D} = \left\{\sum_{\mathrm{infinite}} f_n \partial^n\right\},
\]
and on the other,
\[
\mathbb Z_\bullet := \left\{\sum_{\mathrm{finite}} c_n e_n, c_n \in \mathbb Z\right\} \subset \mathbb M^\vee := \left\{\sum_{\mathrm{finite}} a_n t^n, a_n \in \mathbb Z_\bullet \right\} \subset \mathbb M = \left\{\sum_{\mathrm{infinite}} a_n t^n \right\}
\]
(but $\mathbb Z_\bullet$ is now a left ideal in $\mathbb M$ and $\mathbb M^\vee$ is a two sided ideal).
Another similarity is the identifications $\mathrm{End}_{\mathbb C}(\mathcal O) \simeq \widehat {\mathcal D}$ as well as $\mathrm{End}_{\mathbb Z}(\mathbb Z_\bullet) \simeq \mathbb M$.
Also, there exists resolutions
\[
0 \to \mathcal D \overset {\cdot \partial} \to \mathcal D \to \mathcal O \to 0 \quad \mathrm{and} \quad 0 \to \mathbb M \overset {\cdot t} \to \mathbb M \to \mathbb Z_\bullet \to 0
\]
(where the second one splits).
Both categories of (left) modules over $\mathcal D$ and $\mathbb M$ are additive closed symmetric monoidal with unit $\mathcal O$ and $\mathbb Z_\bullet$ respectively.
Finally, the category of representations of the (punctured) fundamental group embeds into the category of $\mathcal D$-modules exactly as complete metrizable linearly topologized abelian groups will embed into the category of $\mathbb M$-modules.
And this is in both cases the main purpose of the theory: working inside the nice category of modules over a (non commutative) ring in order to better understand a more subtle notion.

Let us now set aside our analogy and attempt to elucidate precisely what $\mathbb M$-modules serve for.
Many mathematical problems can be formulated in terms of sheaves of abelian groups.
Although there usually exists some kind of underlying topological space, these sheaves have a purely algebraic flavor allowing powerful algebraic techniques.
We are however missing something because the topological background often provides these sheaves with a specific topology.
Unfortunately, if we take this topology into account, then we leave the realm of pure algebra and loose its remarquable tools.
For example, the modules that appear in non Archimedean geometry (say finitely presented modules over a Huber ring) are metrizable complete for a linear topology.
The category of (metrizable complete) linearly topologized abelian groups is symmetric monoidal (but \emph{not} closed) and preabelian (but \emph{not} abelian).
Using condensed mathematics, Dustin Clausen and Peter Scholze designed the category of (light) solid abelian groups as an enlargement of the former category that is closed symmetric monoidal abelian.
They define it as some reflective subcategory of the bigger category of (light) condensed abelian groups.
The purpose of the present note is to describe and study a flavor of light solid abelian groups that do not rely on condensed mathematics. 
We consider the ring $\mathbb M$ of column-finite matrices with integer coefficients and show that the abelian category of $\mathbb M$-modules can be made closed symmetric monoidal.
Then, if we are given a complete linear (we mean linearly topologized) abelian group, one can see a null sequence as a column vector and make $\mathbb M$ act on them.
This is well defined and functorial.
We obtain a monoidal embedding of the category of metrizable complete linear abelian groups into a closed symmetric monoidal abelian category.

Our approach may also be seen as a limit theory.
More precisely, if we denote by $\mathbb M_r$ the ring of matrices of finite rank $r$ with integer coefficients, then Morita equivalence tells us that the category of left $\mathbb M_r$-modules is equivalent to the category of abelian groups (whatever $r$).
When we allow infinite rows in our matrices (but still finite columns), we then replace $\mathbb M_r$ with $\mathbb M$ and it happens (theorem \ref{infMor}) that the category of left $\mathbb M$-modules is equivalent to the category of light solid abelian groups.
This is actually completely formal according to the results presented in the appendix but we shall also give an explicit construction.

\section*{Conventions}

We shall use von Neumann notation and write $n : = \{0, 1, \ldots, n-1\}$ and $\mathbb N := \{0, 1, 2, \ldots\} = \varinjlim n$.
When any of these sets is considered as an ordered set, then it is implicitly endowed with its usual order.
A \emph{sequence} is a family indexed by $\mathbb N$.

Unless otherwise specified, a usual set/abelian group will be endowed with the discrete topology and called a \emph{discrete} set/abelian group.
However, (infinite) limits of discrete sets/abelian groups will always be endowed with the limit topology.
This will for example be the case for $\overline {\mathbb N} := \{0, 1, 2, \ldots, \infty\} = \varprojlim \overline n$ with $\overline n =\{0, 1, \ldots, n-1, \infty\}$.

Given a set $I$ and an abelian group $V$, we shall denote by $V^I := \prod_I V$ the set of possibly infinite $I$-indexed sequences of elements of $V$ and by $V \cdot I := \oplus_I V$ the set of finite $I$-indexed sequences.
Instead of $V \cdot I$, it is also common to write $V^{(I)}$, or $I \otimes V$ or $IV$ or $V[I]$.
I had to rule out the last notation which is favored in condensed mathematics not to get confused with polynomial rings.

Unless otherwise specified, all rings are assumed to be associative and unitary (but not commutative).
We will usually simply say module instead of \emph{left} module.

We shall use a calligraphic letter and write $\mathcal H \mathrm{om}$ instead of $\underline {\mathrm{Hom}}$ to denote internal Hom not to get confused with the condensed set $\underline X$ associated to a topological space $X$.

We will follow Johnstone and call \emph{finitely presentable} - and not compact - an object $X$ such that $\mathrm{Hom}(X, -)$ preserves filtered colimits.
Following Johnstone again, a totally disconnected compact Hausdorff space will be called a \emph{Stone} space - and not a profinite set.
For example, we shall say ``metrizable Stone'' instead of the more fashionable ``light profinite'' -- which is the same thing.


\section{The matrix ring} \label{secmain}


\begin{dfn} \label{defM}
The \emph{(infinite) matrix ring} is the set
\[
\mathbb M := \{ [c_{kn}]_{k,n \in \mathbb N}, \forall n \in \mathbb N, \exists K \in \mathbb N, \forall k \geq K, c_{kn} = 0 \} \subset \mathbb Z^{\mathbb N \times \mathbb N}
\]
of column-finite matrices with integer coefficients endowed with usual multiplication
\[
[c_{kn}] \times [d_{kn}] = \left[\sum_i c_{ki}d_{in}\right].
\]
\end{dfn}

One easily checks that $\mathbb M$ is stable under addition, that multiplication is well defined and that $\mathbb M$ then becomes a non commutative ring (but see also proposition \ref{isoM} below).
We could 	as well consider the ring $\mathbb M'$ of row-finite matrices but transposition $u \mapsto u' := u^T$ then provides an isomorphism of rings $\mathbb M^{\mathrm{op}} \simeq \mathbb M'$.
 In particular, we can identify right $\mathbb M$-modules with left $\mathbb M'$-modules (and vice versa).

Anticipating on further developments, when $V$ is a \emph{discrete} abelian group, we shall denote by $V_\bullet$ the set of finite sequences of elements of $V$ (seen as \emph{column} vectors) so that $V_\bullet \simeq V \cdot \mathbb N := \oplus_{\mathbb N} V$.
For example, $\mathbb Z_\bullet \simeq \mathbb Z \cdot \mathbb N$ is the free abelian group on $(e_k)_{k \in \mathbb N}$ with $e_k = (0, \ldots, 0, 1, 0, \ldots)$ (written a a column) where $1$ sits at the $k$th rank\footnote{The $k+1$st place since we start counting at zero.}.
Our main players will then be
\[
\mathbb M = \left\{\left[\begin{array}{cccc} * & * & \cdots \\ \vdots & \vdots & \\ * & \vdots & \\ 0 & * & \\ \vdots & 0 & \\ \vdots & \vdots & \\ & \end{array} \right]\right\}
\quad \mathrm{and} \quad
\mathbb Z_\bullet := \left\{\left[\begin{array}{c} * \\ \vdots \\ * \\ 0 \\ \vdots \end{array} \right]\right\}.
\]
There exists a left module structure on the abelian group $\mathbb Z_\bullet$ which is given by
\[
[c_{kn}] \times (d_{k}) = \left(\sum_n c_{kn}d_{n}\right).
\]
In general, if $V$ is an abelian group, then
\[
V_\bullet \simeq \mathbb Z_\bullet \otimes_{\mathbb Z} V \quad (\mathrm{resp.} \
V^\mathbb N \simeq \mathrm{Hom}_{\mathbb Z}(\mathbb Z_\bullet, V))
\]
 becomes a left (resp.\ right) $\mathbb M$-module.

\begin{prop} \label{isoM}
The action of $\mathbb M$ on $\mathbb Z_\bullet$ (resp.\ $\mathbb Z^\mathbb N$) provides an isomorphism of rings
\[
\mathbb M \simeq \mathrm{End}_{\mathbb Z}(\mathbb Z_\bullet) \quad (\mathrm{resp.} \quad \mathbb M' \simeq \mathrm{End}_{\mathbb Z}(\mathbb Z^\mathbb N)).
\]
\end{prop}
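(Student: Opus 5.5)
The plan is to handle the two isomorphisms separately, the first being essentially linear algebra on a free module and the second requiring one genuinely non-formal input about $\mathbb Z^{\mathbb N}$.

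\textbf{The isomorphism $\mathbb M \simeq \mathrm{End}_{\mathbb Z}(\mathbb Z_\bullet)$.} The map $u \mapsto (x \mapsto ux)$ is additive, sends the identity matrix to the identity, and is multiplicative because $(uv)x = u(vx)$. Associativity of these finite sums is clear, since every sum involved has only finitely many nonzero terms by column-finiteness. For injectivity, observe that $ue_n$ is the $n$th column of $u$. For surjectivity, since $\mathbb Z_\bullet$ is free on $(e_n)$, an endomorphism $f$ is determined by the values $f(e_n) \in \mathbb Z_\bullet$. Each of these is a finite column, so the matrix whose $n$th column is $f(e_n)$ is column-finite, and it induces $f$.

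\textbf{The isomorphism $\mathbb M' \simeq \mathrm{End}_{\mathbb Z}(\mathbb Z^{\mathbb N})$.} A row-finite matrix $u'$ acts on $\mathbb Z^{\mathbb N}$ by $(u'y)_k = \sum_n u'_{kn} y_n$. This is a finite sum because rows are finite, and as before the assignment is a ring homomorphism, injective by evaluating on the $e_n$. Given an endomorphism $g$, set $u'_{kn} := (g(e_n))_k$. Then $u'$ is the candidate preimage, and two things must be checked:
\begin{enumerate}
\item $u'$ is row-finite;
\item $g$ agrees with $u'$ on all of $\mathbb Z^{\mathbb N}$, not merely on $\mathbb Z_\bullet$.
\end{enumerate}

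\textbf{The main obstacle.} Both checks reduce to the key input: every homomorphism $\mathbb Z^{\mathbb N} \to \mathbb Z$ is given by a finite linear combination of the coordinate projections. This is Specker's theorem, $\mathrm{Hom}(\mathbb Z^{\mathbb N}, \mathbb Z) = \mathbb Z_\bullet$ via the $e_n$.

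Granting it, apply it to $\pi_k \circ g$. This gives $\pi_k \circ g = \sum_n a_{kn}\pi_n$ with only finitely many $a_{kn}$ nonzero. Evaluating at $e_n$ shows $a_{kn} = u'_{kn}$, so $u'$ is row-finite. It also gives $g(y)_k = \sum_n u'_{kn}y_n$ for every $y \in \mathbb Z^{\mathbb N}$, so $g = u'$.

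To prove Specker's theorem, let $\phi \colon \mathbb Z^{\mathbb N} \to \mathbb Z$. The first step is to show that $\phi(e_n) = 0$ for almost all $n$. Suppose not, and pick infinitely many indices with $\phi(e_n) \neq 0$. Form $y = \sum_j c_j 2^{j} e_{n_j}$ with the integers $c_j$ chosen inductively so that the partial values $\phi(\sum_{j<m} c_j 2^j e_{n_j})$ do not stabilize modulo the growing powers of $2$.

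The cleaner route for the second step is the standard one. First show that $\phi$ kills $\prod_{n \ge N} 2^n \mathbb Z$ and $\prod_{n \ge N} 3^n\mathbb Z$ for suitable $N$. This holds because an element of $\prod_n 2^n\mathbb Z$ is divisible by arbitrarily high powers of $2$ modulo the finite part, and $\mathbb Z$ has no nonzero element divisible by all powers of $2$. Then, since $2^n$ and $3^n$ are coprime, every sequence supported in $n \ge N$ is a sum of one from $\prod 2^n\mathbb Z$ and one from $\prod 3^n\mathbb Z$. Hence $\phi$ factors through the projection to the first $N$ coordinates.

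I expect this divisibility argument, i.e.\ Specker's theorem, to be the only genuinely non-formal step. Everything else is bookkeeping with columns versus rows.
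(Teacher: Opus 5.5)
Your proof is correct and follows the paper's route: the first isomorphism is the routine argument via freeness of $\mathbb Z_\bullet$, and the second rests on exactly the fact the paper cites (slenderness of $\mathbb Z$, i.e.\ Specker's theorem $\mathrm{Hom}_{\mathbb Z}(\mathbb Z^{\mathbb N},\mathbb Z)\simeq\mathbb Z\cdot\mathbb N$, taken from Fuchs rather than proved). In your extra sketch of Specker's theorem the $2^n$/$3^n$ step is fine, but the first step (almost all $\phi(e_n)=0$) is only gestured at; to finish it, note that $\phi(y)$ equals the $2$-adic sum $\sum_j c_j2^j\phi(e_{n_j})$. Then restrict to a subsequence along which $j+v_2(\phi(e_{n_j}))$ strictly increases: the choices $c_j\in\{0,1\}$ there (and $c_j=0$ elsewhere) give uncountably many distinct values, so some $\phi(y)\notin\mathbb Z$, a contradiction.
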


\begin{proof}
The first assertion is easily checked and the second one relies on the fact that $\mathbb Z$ is \emph{slender} is the sense that $\mathrm{Hom}_{\mathbb Z}(\mathbb Z^\mathbb N, \mathbb Z) \simeq \mathbb Z \cdot \mathbb N$
(corollary 94.6 of \cite{Fuchs73}).
\end{proof}

Actually, it is almost a triviality that the first isomorphism holds as an isomorphism of abelian groups and this can be used to transfer the ring structure of $\mathrm{End}_{\mathbb Z}(\mathbb Z_\bullet)$ to $\mathbb M$ (and avoid tedious verifications after definition \ref{defM} above).

Throughout, we shall identify $\mathbb Z_\bullet$ (resp.\ $\mathbb Z^\mathbb N$) with the first column (resp.\ row) of $\mathbb M$.

There exists an $\mathbb M$-linear isomorphism of left $\mathbb M$-modules $(\mathbb Z_\bullet)^\mathbb N \simeq \mathbb M$
(list of columns) which shows that $\mathbb M$ acts on itself -- on the left -- column by column.
In particular, $\mathbb Z_\bullet$ is a finitely generated projective left ideal of $\mathbb M$.

\begin{dfn} \label{cantop}
The \emph{canonical topology} on $\mathbb M$ is transferred from the product topology under the isomorphism $(\mathbb Z_\bullet)^\mathbb N \simeq \mathbb M$ when $\mathbb Z_\bullet$ itself is endowed with the discrete topology.
\end{dfn}

In other words, a matrix is small when many first columns are made of zeros.

We shall denote by $t \in \mathbb M \simeq \mathrm{End}_{\mathbb Z}(\mathbb Z_\bullet)$ the matrix corresponding to the shift
\[
te_k =\left\{\begin{array}{ll} 0\ \mathrm{if}\ k=0 \\ e_{k-1}\ \mathrm{otherwise}, \end{array}\right.
\]
and by $t'$ its transpose given by $t'e_k = e_{k+1}$ so that
\[
t := \left[\begin{array}{ccccc} 0 & 1 & 0 & \cdots \\ 0 & 0 & 1 & \\ \vdots & & \ddots & \ddots \end{array} \right]
\quad \mathrm{and} \quad
t' := \left[\begin{array}{ccccc} 0 & 0 & \cdots \\ 1 & 0 & 0 \\ 0 & 1 & \ddots \\ \vdots & & \ddots \end{array} \right].
\]
If $u \in \mathbb M$, then $tu$ is obtained from $u$ by shifting lines up, $ut$ by shifting columns right, $t'u$ by shifting lines down and $ut'$ by shifting columns left.
As a consequence we have $tt' = 1$ and $t't$ erases the first column (resp.\ row) if it acts on the left (resp.\ right).

\begin{prop} \label{split1}
There exists split exact sequences of left (resp.\ right) $\mathbb M$-modules
\begin{align}
\begin{tikzcd}[ampersand replacement=\&]
0 \ar[r] \& \mathbb M \ar[r, " \cdot t^n "'] \& \mathbb M \ar[r] \ar[l, " \cdot t'^n "', bend right] \& \mathbb Z_\bullet^n \ar[r] \ar[l, bend right] \& 0 \quad \mathrm{and}
\end{tikzcd}
\\
\begin{tikzcd}[ampersand replacement=\&]
0 \ar[r] \& (\mathbb Z^\mathbb N)^n \ar[r] \& \mathbb M \ar[r, " t^n \cdot "'] \ar[l, bend right] \& \mathbb M \ar[r] \ar[l, " t'^n \cdot "', bend right] \& 0
\end{tikzcd}
\end{align}
(the dot indicates on which side the matrix acts and the extra maps are projection on, or inclusion of, the first rows/columns).
\end{prop}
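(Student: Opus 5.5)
The plan is to use only the two identities $tt' = 1$ and the description of $t'^n t^n$, and to deduce everything from them formally.

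\textbf{Formal step.} Since $tt'=1$, we have $t^n t'^n = 1$ for all $n$. It follows that
\[
e := t'^n t^n
\]
is an idempotent of $\mathbb M$. Concretely, acting on the left, $t'^nt^n$ kills the first $n$ rows (shift up $n$ times, then down $n$ times); acting on the right, it kills the first $n$ columns. Hence $1-e$ is the idempotent that keeps only the first $n$ rows (resp.\ columns).

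\textbf{First sequence (left modules).} Right multiplication $\cdot t^n \colon \mathbb M \to \mathbb M$ is left $\mathbb M$-linear. Because $(\cdot t'^n)\circ(\cdot t^n)$ is right multiplication by $t^nt'^n = 1$, the map $\cdot t'^n$ is a retraction and $\cdot t^n$ is injective. Its image is $\mathbb M t^n = \mathbb M e$: one inclusion comes from $t^n = t^n(t'^nt^n)$, the other from $\mathbb M e = \mathbb M t'^n t^n \subset \mathbb M t^n$.

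The cokernel is therefore isomorphic to the complementary summand $\mathbb M(1-e)$. This consists of the matrices $u(1-e)$, i.e.\ $u$ with all columns beyond the first $n$ erased. Under the column-by-column identification $\mathbb M \simeq (\mathbb Z_\bullet)^{\mathbb N}$ of left modules, it is $\mathbb Z_\bullet^n$. The map $\mathbb M \to \mathbb Z_\bullet^n$ is projection onto the first $n$ columns, and the section is inclusion as the first columns. Both are left linear because $\mathbb M$ acts column by column.

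\textbf{Second sequence (right modules).} This is the mirror argument. Left multiplication $t^n\cdot$ is right linear and surjective, since $t^n t'^n u = u$ means $t'^n\cdot$ is a section. Its kernel is $\{u : t^nu=0\}$. Because $t^n u$ is $u$ shifted up $n$ lines, this kernel consists of the matrices supported on the first $n$ rows. This equals $(1-e)\mathbb M$, and the splitting is $\mathbb M = (1-e)\mathbb M \oplus e\mathbb M$.

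It remains to identify $(1-e)\mathbb M$ with $(\mathbb Z^{\mathbb N})^n$ as right modules. A matrix with only finitely many nonzero rows is automatically column-finite, so any $n$ arbitrary rows in $\mathbb Z^{\mathbb N}$ occur. Since $\mathbb M$ acts on the right row by row, with $\mathbb Z^{\mathbb N}$ identified with the first row, the identification is right linear. The maps are inclusion of, and projection onto, the first $n$ rows.

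\textbf{Main obstacle.} There is no real obstacle; the only care needed is bookkeeping. One has to check the exact form of $t'^nt^n$ on each side, and observe that the finite-row matrices lie in $\mathbb M$, so that the kernel is all of $(\mathbb Z^{\mathbb N})^n$ and not something smaller.
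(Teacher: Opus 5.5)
Your proof is correct and is essentially the argument the paper intends. The paper leaves this as an exercise, right after recording that $tt'=1$ and how multiplication by $t$ and $t'$ shifts rows and columns; your idempotent $e=t'^nt^n$ packages exactly those facts into the splittings $\mathbb M=\mathbb Me\oplus\mathbb M(1-e)$ and $\mathbb M=e\mathbb M\oplus(1-e)\mathbb M$, with the correct identifications $\mathbb M(1-e)\simeq\mathbb Z_\bullet^n$ and $(1-e)\mathbb M\simeq(\mathbb Z^{\mathbb N})^n$ (you rightly note that matrices supported on finitely many rows are automatically column-finite).
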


\begin{proof}
Left as an exercise.
\end{proof}

Said differently,
\[
\mathbb Z_\bullet^n \simeq \mathbb M/\mathbb Mt^n \simeq \mathbb M^{\cdot t'^n = 0}\quad \mathrm{(resp.\ }\ 
\mathbb Z^\mathbb N \simeq \mathbb M/t'^n\mathbb M \simeq \mathbb M^{t^n\cdot = 0}).
\]
Going one step further provides similar split exact sequences of abelian groups
\begin{align}
\begin{tikzcd}[ampersand replacement=\&]
0 \ar[r] \& \mathbb Z^n \ar[r] \& \mathbb Z_\bullet \ar[r, " t^n \cdot "'] \ar[l, bend right] \& \mathbb Z_\bullet \ar[r] \ar[l, " t'^n \cdot "', bend right] \ar[l, bend right]\& 0 \quad \mathrm{and}
\end{tikzcd}
\\
\begin{tikzcd}[ampersand replacement=\&]
0 \ar[r] \& \mathbb Z^\mathbb N \ar[r, " \cdot t^n "'] \& \mathbb Z^\mathbb N \ar[r] \ar[l, " \cdot t'^n "', bend right] \& \mathbb Z^n \ar[r] \ar[l, bend right] \& 0.
\end{tikzcd}
\end{align}

From now on, we will follow our general conventions and simply say \emph{module} (resp.\ \emph{ideal}) for left module (resp.\ left ideal).

\begin{xmps}
\begin{enumerate}
\item Both $\mathbb M$ and $\mathbb Z_\bullet$ are $\mathbb M$-modules but $\mathbb Z^\mathbb N$ is not: this is a right $\mathbb M$-module or, equivalently, an $\mathbb M'$-module.
\item
The subset
\[
\mathbb M^\vee := \{ [a_{kn}]_{k,n \in \mathbb N}, \exists N, k+n> N \Rightarrow a_{kn} = 0 \} \subset \mathbb M
\]
(use $\max(k,n)$ if you like it better) made of all finite matrices is a two-sided projective ideal (but not a subring since it lacks a unit).
As $\mathbb M$-modules, we have $(\mathbb Z \cdot \mathbb N)_\bullet \simeq \mathbb Z_\bullet \cdot \mathbb N \simeq \mathbb M^\vee$ but we also have $\mathbb M^\vee \simeq \mathbb Z \cdot (\mathbb N \times \mathbb N)$ as abelian group.
\item
We shall also encounter the sets $\mathbb M^{\otimes n}$ of hypermatrices of size $n$ made of families $[a_{i_0i_1\ldots i_n}]$ that are finite (only) in the first variable so that $(\mathbb Z_\bullet)^{\mathbb N^n} \simeq \mathbb M^{\otimes n}$.
They come with a (left) $\mathbb M$-module structure, as well as $n$ different equivalent right $\mathbb M$-module structures.
\item Go to section \ref{Seclin} for more examples.
\end{enumerate}
\end{xmps}

Recall that we equipped in defintion \ref{cantop} the matrix ring $\mathbb M$ with a topology.

\begin{prop} \label{tMod}
The canonical topology of $\mathbb M$ is the $t$-adic topology and $\mathbb M$ is complete (Hausdorff) for this topology.
\end{prop}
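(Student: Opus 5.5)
The plan is to identify the neighbourhoods of $0$ in both topologies explicitly, and then read off completeness from the product description in definition \ref{cantop}.

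\textbf{Step 1: the $t$-adic neighbourhoods.} Since $t$ acts by right multiplication, the relevant powers are the left ideals $\mathbb M t^n$. By proposition \ref{split1}, right multiplication by $t^n$ shifts columns $n$ places to the right. It is injective and has left inverse $\cdot t'^n$. Hence
\[
\mathbb M t^n = \{u \in \mathbb M, \ u t'^n t^n = u\}.
\]
I will check directly that this is exactly the set of matrices whose first $n$ columns vanish. One inclusion is clear, because $ut^n$ has zeros in columns $0,\ldots,n-1$. Conversely, if the first $n$ columns of $u$ vanish, then $u = (ut'^n)t^n$: shifting left by $n$ and then right by $n$ restores $u$, since only zero columns were discarded.

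\textbf{Step 2: comparison with the canonical topology.} Under the isomorphism $(\mathbb Z_\bullet)^\mathbb N \simeq \mathbb M$ (list of columns), with $\mathbb Z_\bullet$ discrete, a fundamental system of neighbourhoods of $0$ in the product topology is given by the subgroups
\[
U_n = \{(v_k), \ v_0 = \cdots = v_{n-1} = 0\}.
\]
By Step 1, $U_n$ corresponds exactly to $\mathbb M t^n$. Both topologies are group topologies on the additive group, being translation invariant with these bases of open subgroups. They therefore coincide.

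\textbf{Step 3: completeness and Hausdorffness.} Hausdorffness follows because $\bigcap_n \mathbb M t^n = 0$: a matrix all of whose columns vanish is zero. For completeness, it suffices to note that the natural map
\[
\mathbb M \to \varprojlim \mathbb M/\mathbb M t^n
\]
is bijective. By proposition \ref{split1}, $\mathbb M/\mathbb M t^n \simeq \mathbb Z_\bullet^n$, corresponding to the first $n$ columns, and the transition maps are the projections forgetting the last column. The inverse limit is therefore $(\mathbb Z_\bullet)^\mathbb N$, which is $\mathbb M$ by the column isomorphism. Equivalently, a product of discrete groups is complete.

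\textbf{Main obstacle.} The only real subtlety is Step 1: one must use right multiplication, so that the relevant ideals are $\mathbb M t^n$ and not $t^n\mathbb M$, and verify the image is precisely the matrices with zero first columns. One must also check that the shift preserves column-finiteness. This holds because each column of $ut'^n$ or $ut^n$ is a column of $u$ or zero.
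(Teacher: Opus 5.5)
Your proposal is correct and follows essentially the same route as the paper, which notes that the claim amounts to the ideals $\mathbb M t^n$ forming a basis of neighbourhoods of zero together with $\mathbb M \simeq \varprojlim_n \mathbb M/\mathbb M t^n$, and deduces both from the split exact sequences of proposition \ref{split1}. You have simply made explicit the identification of $\mathbb M t^n$ with the matrices whose first $n$ columns vanish, which the paper leaves to the reader.
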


\begin{proof}
It means that the ideals $\mathbb Mt^n$ for $n \in \mathbb N$ form a basis of neighborhoods of zero and that
\[
\mathbb M \simeq \varprojlim_{n \in \mathbb N} \mathbb M/\mathbb Mt^n.
\]
This follows from proposition \ref{split1}.
\end{proof}

Being an ideal in $\mathbb M$, the free abelian group $\mathbb Z_\bullet$ on $(e_k)_{k \in \mathbb N}$ inherits the structure of a \emph{non unitary} ring with commutation rule
\begin{equation} \label{rul1}
e_ke_n = \left\{\begin{array}{ll} e_k\ \mathrm{if}\ n=0 \\ 0\ \mathrm{otherwise} \end{array}\right.
\end{equation}
(so that incidentally $e_0$ is actually a right unit).
Then
\[
\mathbb M = \mathbb Z_\bullet[[t]] = \left\{\sum_{n=0}^\infty a_n t^n, a_n \in \mathbb Z_\bullet \right\}
\]
is the non-commutative power series ring with coefficients in $\mathbb Z_\bullet$ and commutation rule
\begin{equation} \label{rul2}
te_k =\left\{\begin{array}{ll} 0\ \mathrm{if}\ k=0 \\ e_{k-1}\ \mathrm{otherwise}. \end{array}\right.
\end{equation}
In particular, any element of $\mathbb M$ can be uniquely written $\sum a_{kn}e_kt^n$ as an infinite series in $n$ (but finite in $k$ for fixed $n$) with $a_{kn} \in \mathbb Z$.
Actually, $e_kt^n$ is nothing but the matrix with $1$ on $k+1$st row and $n+1$st column and $0$ everywhere else and
\[
(e_kt^n)(e_\ell t^m) = \left\{ \begin{array}{ll} e_kt^m & \mathrm{if}\ n = \ell \\ 0 & \mathrm{otherwise}. \end{array}\right.
\]
Note that $\mathbb M^\vee = \mathbb Z_\bullet[t]$ is a \emph{non unitary} non commutative polynomial ring whose $t$-adic completion $\mathbb M$ becomes unitary with $1 = \sum_{n=0}^\infty e_nt^n$.
We also have $\mathbb M^{\otimes n} = \mathbb Z_\bullet[[t_1, \ldots, t_n]]$ where $t_i$ denotes the shift in the $i$-th direction - but this is \emph{not} a ring in any way when $n >1$.

Since $\mathbb M$ contains $\mathbb Z[[t]]$ as a non-central subring, there exists two structures of $\mathbb Z[[t]]$-algebra on $\mathbb M$-called left and right and we shall only consider the right one because the left structure is not faithful (it factors through $\mathbb Z_\bullet$).
We can then also write
\[
\mathbb M = \left\{\sum_{k=0}^\infty e_k f_k, f_k \in \mathbb Z[[t]],\quad \mathrm{ord}(f_k) \to \infty \right\}.
\]

\begin{lem}
If $M$ is an $\mathbb M$-module, then
\[
M^{t=0} = M^{e_0=1} = e_0M \simeq \mathbb Z^\mathbb N \otimes_{\mathbb M} M \simeq \mathrm{Hom}_{\mathbb M}(\mathbb Z_\bullet, M).
\]
\end{lem}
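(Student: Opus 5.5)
The plan is to derive everything from the relations $tt'=1$ and $t't = 1 - e_0$, and from the split sequences of proposition \ref{split1} with $n=1$.

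First I check the matrix identities. We have $e_0 = e_0t^0$, the matrix unit in position $(0,0)$. From the description of $t$ and $t'$, the product $t't$ erases the first row when acting on the left, so $t't = 1 - e_0$. Also $e_0$ is idempotent and $te_0 = 0$.

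For $M^{t=0} = M^{e_0=1}$: if $tm=0$, then $t'tm = 0$, so $m = e_0 m$. Conversely, if $e_0m = m$, then $tm = te_0m = 0$. The equality $M^{e_0=1} = e_0M$ holds because $e_0$ is idempotent: if $m=e_0x$, then $e_0m = e_0^2x = e_0x = m$.

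For $\mathrm{Hom}_{\mathbb M}(\mathbb Z_\bullet, M)$, I use the split exact sequence $0 \to \mathbb M \xrightarrow{\cdot t} \mathbb M \to \mathbb Z_\bullet \to 0$ of left modules from proposition \ref{split1}, with $n=1$. The map $\mathbb M \to \mathbb Z_\bullet$ is projection onto the first column, $u \mapsto ue_0$. Applying $\mathrm{Hom}_{\mathbb M}(-,M)$ gives an exact sequence
\[
0 \to \mathrm{Hom}_{\mathbb M}(\mathbb Z_\bullet,M) \to M \xrightarrow{t\cdot} M.
\]
Here I identify $\mathrm{Hom}_{\mathbb M}(\mathbb M,M) = M$ via $f\mapsto f(1)$, and precomposition with $\cdot t$ becomes $m \mapsto tm$. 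So $\mathrm{Hom}_{\mathbb M}(\mathbb Z_\bullet,M) \simeq M^{t=0}$. Explicitly, $f \mapsto f(e_0)$, which is legitimate because $e_0 = 1\cdot e_0$ is the image of $1$. Alternatively, $\mathbb Z_\bullet = \mathbb M e_0$ is the projective ideal generated by the idempotent $e_0$, so $\mathrm{Hom}(\mathbb Me_0,M) \simeq e_0M$ directly.

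For $\mathbb Z^{\mathbb N}\otimes_{\mathbb M} M$, I dually use the split exact sequence of right modules $0 \to \mathbb Z^{\mathbb N} \to \mathbb M \xrightarrow{t'\cdot} \mathbb M \to 0$. Equivalently, $\mathbb Z^{\mathbb N} \simeq \mathbb M/t'\mathbb M$ as right modules, and by the idempotent description $\mathbb Z^{\mathbb N} = e_0\mathbb M$ (the first row), a direct summand of $\mathbb M_{\mathbb M}$ with complement $(1-e_0)\mathbb M$. Then
\[
e_0\mathbb M \otimes_{\mathbb M} M \simeq e_0 M
\]
by the standard isomorphism $e\mathbb M\otimes_{\mathbb M} M \simeq eM$, $ea\otimes m \mapsto eam$. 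Its inverse is $em \mapsto e\otimes em$; this is well defined since $e\otimes em = e^2 \otimes m$. Right exactness of the tensor product applied to $\mathbb M \xrightarrow{t'\cdot}\mathbb M \to \mathbb Z^{\mathbb N}\to 0$ gives the same result: $M/t'M \simeq e_0M$, since $1 = t't + e_0$ and $e_0t' = 0$.

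The only delicate point is bookkeeping. I need to identify the first row $\mathbb Z^{\mathbb N}$ with the right ideal $e_0\mathbb M$ and the first column $\mathbb Z_\bullet$ with the left ideal $\mathbb Me_0$, compatibly with the paper's conventions. I also need to verify $t't = 1-e_0$ on the correct side. Once these identifications are made, each isomorphism is the standard idempotent isomorphism.
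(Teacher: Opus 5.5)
Your proposal is correct and follows essentially the same route as the paper. The first two equalities come from $t't = 1-e_0$ and $te_0=0$, and the identifications with $\mathrm{Hom}_{\mathbb M}(\mathbb Z_\bullet, M)$ and $\mathbb Z^{\mathbb N}\otimes_{\mathbb M} M$ come from the split sequences of Proposition \ref{split1}; these are just the idempotent decompositions $\mathbb M = \mathbb Me_0 \oplus \mathbb M(1-e_0)$ and $\mathbb M = e_0\mathbb M \oplus (1-e_0)\mathbb M$ that you invoke.

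One slip to fix: the sequence $0 \to \mathbb Z^{\mathbb N} \to \mathbb M \overset{t'\cdot}{\longrightarrow} \mathbb M \to 0$ is wrong as written, because $t'\cdot$ (shifting rows down) is injective. The surjection whose kernel is the first row is $t\cdot$, with section $t'\cdot$; equivalently $\mathbb Z^{\mathbb N} \simeq \mathbb M/t'\mathbb M$, which is what you then correctly use, so the argument is unaffected.
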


\begin{proof}
Since $t't = 1-e_0$, we have $M^{t=0} \subset M^{e_0=1}$.
Moreover, $M^{e_0=1} \subset e_0M$.
The identity $te_0 = 0$ then implies $e_0M \subset M^{t=0}$.
This shows the first two equalities.
The splittings in proposition \eqref{split1} provide (split) exact sequences
\[
0 \to \mathrm{Hom}_\mathbb M(\mathbb Z_\bullet, M) \to \mathrm{Hom}_\mathbb M(\mathbb M, M) \overset {t} \to \mathrm{Hom}_\mathbb M(\mathbb M, M) \to 0
\]
and
\[
0 \to \mathbb Z^\mathbb N \otimes_{\mathbb M} M \to \mathbb M \otimes_{\mathbb M} M \overset {t} \to \mathbb M \otimes_{\mathbb M} M \to 0
\]
showing that $\mathrm{Hom}_\mathbb M(\mathbb Z_\bullet, M) \simeq \mathbb Z^\mathbb N \otimes_{\mathbb M} M \simeq M^{t=0}$.
\end{proof}

\begin{dfn}
The \emph{top row} of an $\mathbb M$-module $M$ is $M(*) :=M^{t=0}$.
\end{dfn}

Note that $M(*)$ is a direct factor (as an abelian group) of $M$.
As an example, we have $\mathbb M(*) \simeq \mathbb Z^\mathbb N$ and $\mathbb Z_\bullet(*) \simeq \mathbb Z$.

There is a tight relation between discrete abelian groups and $\mathbb M$-modules:

\begin{prop} \label{adj}
There exists an adjunction
\[
\mathrm{Hom}_\mathbb M(V_\bullet, M) \simeq \mathrm{Hom}_{\mathbb Z} (V, M(*))
\]
between \emph{discrete} abelian groups and $\mathbb M$-modules.
The functor $V \mapsto V_\bullet$ is fully faithful exact.
The functor $M \mapsto M(*)$ has also a coadjoint.
\end{prop}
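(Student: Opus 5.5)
The adjunction is a Morita-type argument, using the lemma identifying $M(*)$ with $\mathrm{Hom}_{\mathbb M}(\mathbb Z_\bullet, M)$. First I will write $V_\bullet \simeq \mathbb Z_\bullet \otimes_{\mathbb Z} V$ as a left $\mathbb M$-module, with $\mathbb Z_\bullet$ viewed as an $(\mathbb M,\mathbb Z)$-bimodule. Tensor-hom adjunction then gives
\[
\mathrm{Hom}_{\mathbb M}(\mathbb Z_\bullet \otimes_{\mathbb Z} V, M) \simeq \mathrm{Hom}_{\mathbb Z}(V, \mathrm{Hom}_{\mathbb M}(\mathbb Z_\bullet, M)) \simeq \mathrm{Hom}_{\mathbb Z}(V, M(*)).
\]
Concretely, the bijection sends $\phi$ to $v \mapsto \phi(e_0 \otimes v)$; note $e_0\otimes v$ is killed by $t$, so this value lies in $M(*)$. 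Conversely, $f$ is sent to $\sum e_k\otimes v_k \mapsto \sum e_k t^k f(v_k)$. Here I use $e_k t^k e_0 = e_k$ to see this is well defined and $\mathbb M$-linear.

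For exactness of $V \mapsto V_\bullet$: as abelian groups $V_\bullet = \oplus_{\mathbb N} V$, and direct sums are exact. For full faithfulness, I check that the unit $V \to (V_\bullet)(*)$ is an isomorphism, since a left adjoint is fully faithful exactly when its unit is invertible. We have $(V_\bullet)(*) = (V_\bullet)^{t=0}$, and $t$ acts on $V_\bullet$ as the shift. Its kernel is therefore the vectors supported on the first coordinate, i.e. $e_0 \otimes V \simeq V$. This is compatible with the unit map $v \mapsto e_0\otimes v$.

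The coadjoint (right adjoint) of $M \mapsto M(*)$ comes from the second description $M(*) \simeq \mathbb Z^{\mathbb N}\otimes_{\mathbb M} M$, which exhibits $M(*)$ as a tensor product with the $(\mathbb Z,\mathbb M)$-bimodule $\mathbb Z^{\mathbb N}$. Tensor-hom adjunction then gives
\[
\mathrm{Hom}_{\mathbb Z}(\mathbb Z^{\mathbb N}\otimes_{\mathbb M} M, W) \simeq \mathrm{Hom}_{\mathbb M}(M, \mathrm{Hom}_{\mathbb Z}(\mathbb Z^{\mathbb N}, W)).
\]
So the right adjoint is $W \mapsto \mathrm{Hom}_{\mathbb Z}(\mathbb Z^{\mathbb N}, W)$, with its left $\mathbb M$-structure induced by the right action on $\mathbb Z^{\mathbb N}$.

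The only delicate point is bookkeeping. I need to make sure the isomorphisms of the lemma are natural in $M$ and compatible with the bimodule structures, since the lemma as stated only gives abstract isomorphisms. The cleanest route is to work throughout with $e_0M$ and the explicit maps $\phi \mapsto \phi(e_0)$ and $x\otimes m \mapsto$ (first-row pairing), which are visibly natural.
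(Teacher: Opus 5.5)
Your main argument is the paper's own proof. You use the tensor--hom adjunction for the $(\mathbb M,\mathbb Z)$-bimodule $\mathbb Z_\bullet$ together with $M(*)\simeq\mathrm{Hom}_{\mathbb M}(\mathbb Z_\bullet,M)$. Exactness comes from $V_\bullet=\oplus_{\mathbb N}V$, and full faithfulness from the unit $V\simeq V_\bullet(*)$. The coadjoint is $W\mapsto\mathrm{Hom}_{\mathbb Z}(\mathbb Z^{\mathbb N},W)$, obtained from $M(*)\simeq\mathbb Z^{\mathbb N}\otimes_{\mathbb M}M$. You are right to write $\mathrm{Hom}_{\mathbb Z}$ there; the paper's $\mathrm{Hom}_{\mathbb M}$ is a typo.

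\textbf{The explicit inverse is wrong.} In the ``concretely'' aside, the identity $e_kt^ke_0=e_k$ fails for $k\ge 1$:
\begin{itemize}
\item $e_kt^k$ is the diagonal matrix unit at position $(k,k)$, and $t^ke_0=0$ for $k\ge1$, so $e_kt^ke_0=0$.
\item Hence your map sends $e_k\otimes v\mapsto e_kt^kf(v)=0$ for $k\ge1$, because $f(v)=e_0f(v)$.
\item This map is not $\mathbb M$-linear. Take $V=\mathbb Z$, $M=\mathbb Z_\bullet$ and $f(1)=e_0$. Then $t'(e_0\otimes 1)=e_1\otimes 1$ is sent to $0$, whereas $t'f(1)=e_1\neq 0$.
\end{itemize}

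The correct inverse, which the abstract adjunction already produces, is $e_k\otimes v\mapsto e_kf(v)$. Here $e_k=e_kt^0\in\mathbb Z_\bullet\subset\mathbb M$ is the matrix unit in row $k$, column $0$; equivalently, use $t'^kf(v)$. It is well defined and $\mathbb M$-linear because $e_ke_0=e_k$ and $f(v)=e_0f(v)$.

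Since your proof rests on the abstract adjunction, this slip does not invalidate it. You should correct it if you follow the explicit route suggested in your last paragraph.
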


\begin{proof}
As a baby example of the technics recalled in the appendix, there exists an adjunction
\[
\mathrm{Hom}_\mathbb M(\mathbb Z_\bullet \otimes_{\mathbb Z} V, M) \simeq \mathrm{Hom}_{\mathbb Z} (V, \mathrm{Hom}_\mathbb M(\mathbb Z_\bullet, M)).
\]
We have $V_{\bullet} \simeq \mathbb Z_\bullet \otimes_{\mathbb Z} V$ and $M(*) \simeq \mathrm{Hom}_\mathbb M(\mathbb Z_\bullet, M)$.
Exactness of $V \mapsto V_\bullet$ follows from the fact that $\mathbb Z_\bullet$ is a free abelian group and fullfaithfulness from the easy identification $V \simeq V_\bullet(*)$.
Finally, the functor $V \mapsto \mathrm{Hom}_{\mathbb M}(\mathbb Z^{\mathbb N}, V)$ provides a coadjoint to $M \mapsto M(*) \simeq \mathbb Z^{\mathbb N} \otimes_{\mathbb M} M$.
\end{proof}

As a consequence of the proposition, the functor $V \mapsto V_\bullet$ preserves all finite limits and all colimits of discrete abelian groups and the functor $M \mapsto M(*)$ preserves all limits and all colimits of $\mathbb M$-modules.
Be careful however that $(\mathbb Z_\bullet)^I \neq (\mathbb Z^I)_\bullet$ when $\mathbb Z^I$ is considered as a discrete abelian group and $I$ is infinite.
We shall remedy to this defect later when we consider a topological enhancement.
Until then, we always mean the former when we remove parenthesis and more generally write $V_\bullet^I := (V_\bullet)^I$.

Recall that we denote by $\mathbb M'$ the ring of row-finite matrices.

\begin{cor} \label{Endis}
There exists canonical ring isomorphisms $\mathrm{End}_\mathbb M(\mathbb M) \simeq \mathbb M'$ and $\mathrm{End}_\mathbb M(\mathbb M^\vee) \simeq \mathbb M$.
\end{cor}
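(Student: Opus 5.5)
The corollary follows from the adjunction of proposition \ref{adj} together with the column-by-column descriptions $\mathbb M \simeq (\mathbb Z_\bullet)^{\mathbb N}$ and $\mathbb M^\vee \simeq \mathbb Z_\bullet \cdot \mathbb N = (\mathbb Z\cdot\mathbb N)_\bullet$. Since proposition \ref{adj} describes maps out of objects of the form $V_\bullet$, I will treat $\mathbb M^\vee$ first and then $\mathbb M$.

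For $\mathbb M^\vee$, take $V = \mathbb Z \cdot \mathbb N$ in proposition \ref{adj}. This gives
\[
\mathrm{End}_{\mathbb M}(\mathbb M^\vee) \simeq \mathrm{Hom}_{\mathbb Z}(\mathbb Z\cdot\mathbb N, \mathbb M^\vee(*)).
\]
By full faithfulness of $V \mapsto V_\bullet$, this is $\mathrm{End}_{\mathbb Z}(\mathbb Z \cdot \mathbb N)$. Indeed $\mathbb M^\vee(*) = t$-torsion of $\mathbb M^\vee$ is its first row, and its rows are finite, so $\mathbb M^\vee(*) \simeq \mathbb Z \cdot \mathbb N$. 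Proposition \ref{isoM} identifies $\mathrm{End}_{\mathbb Z}(\mathbb Z\cdot \mathbb N) = \mathrm{End}_{\mathbb Z}(\mathbb Z_\bullet)$ with $\mathbb M$ as rings. I still have to check that the composite is a ring map and is concrete. A column-finite matrix $u$ should act on $\mathbb M^\vee$ by right multiplication, viewing elements of $\mathbb M^\vee$ as finite sums of columns. Right multiplication $x \mapsto xu$ preserves $\mathbb M^\vee$: if $x$ has finitely many nonzero columns and rows, then each column of $xu$ is a finite combination of columns of $x$. Because $u$ is column-finite, only finitely many columns of $xu$ are nonzero. Left-linearity is clear. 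One subtlety is that right multiplication reverses composition. Under the adjunction, the endomorphism of $\mathbb Z\cdot\mathbb N$ should be the action on row vectors, which composes in the correct order once we use the transpose convention. So I will fix the identification to be the one given by the isomorphism of abelian groups, and verify compatibility with products on the elementary matrices $e_kt^n$.

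For $\mathbb M$, use $\mathbb M \simeq \mathrm{Hom}_{\mathbb M}(\mathbb M,\mathbb M)$ evaluated at $1$, giving $\mathrm{End}_{\mathbb M}(\mathbb M) \simeq \mathbb M^{\mathrm{op}}$ via right multiplication. Transposition then gives $\mathbb M^{\mathrm{op}} \simeq \mathbb M'$, as noted after definition \ref{defM}. For a more intrinsic version matching the statement, I would instead use $\mathbb M \simeq \mathbb Z_\bullet^{\mathbb N}$ together with the lemma $\mathrm{Hom}_{\mathbb M}(\mathbb Z_\bullet, M) \simeq M(*)$. An endomorphism is then determined by its values on the column generators $e_0 t^n$, which lie in $\mathbb M(*) \simeq \mathbb Z^{\mathbb N}$. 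This produces an $\mathbb N\times\mathbb N$ matrix with rows in $\mathbb Z^{\mathbb N}$.

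The main obstacle is checking that this matrix is row-finite, i.e.\ that every such family actually defines a morphism from the product $\mathbb Z_\bullet^{\mathbb N}$. This is where the answer $\mathbb M'$ rather than all matrices comes from. The easy route through $\mathbb M^{\mathrm{op}}$ sidesteps the issue. Otherwise I would use slenderness of $\mathbb Z$, as in proposition \ref{isoM}, to see that a map out of a product is determined by finitely many coordinates. Finally I would check on $e_kt^n$ that composition corresponds to matrix multiplication in $\mathbb M'$.
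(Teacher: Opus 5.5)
Your main argument is the same as the paper's. You get $\mathrm{End}_{\mathbb M}(\mathbb M)\simeq\mathbb M^{\mathrm{op}}\simeq\mathbb M'$ by right multiplication and transposition. You get $\mathrm{End}_{\mathbb M}(\mathbb M^\vee)\simeq\mathrm{End}_{\mathbb M}((\mathbb Z\cdot\mathbb N)_\bullet)\simeq\mathrm{End}_{\mathbb Z}(\mathbb Z\cdot\mathbb N)\simeq\mathrm{End}_{\mathbb Z}(\mathbb Z_\bullet)\simeq\mathbb M$ from full faithfulness of $V\mapsto V_\bullet$ and proposition \ref{isoM}. That core is correct. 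No separate ring-compatibility check is needed, because an additive fully faithful functor automatically induces ring isomorphisms on endomorphism rings.

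Your optional concrete description, however, contains a false claim: right multiplication by a column-finite $u$ does \emph{not} preserve $\mathbb M^\vee$.
\begin{itemize}
\item Take $x=e_0$ and $u=\sum_n e_0t^n$. This $u$ is column-finite with first row all ones, and $xu=u\notin\mathbb M^\vee$.
\item For $xu$ to stay finite for all $x\in\mathbb M^\vee$, $u$ must be row-finite, not column-finite.
\item Unwinding $\mathbb M^\vee\simeq(\mathbb Z\cdot\mathbb N)_\bullet$, where the $k$th component is the $k$th row, $u\in\mathbb M$ actually acts by $x\mapsto xu'$ with $u'=u^T\in\mathbb M'$. So the transpose is needed for the map to be well defined at all, not merely to fix the order of composition.
\end{itemize}

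Likewise, your ``intrinsic'' route for $\mathbb M$ has no real obstacle, and slenderness is not needed. The values $f(e_0t^n)\in\mathbb M(*)$ are just the rows of $w=f(1)$. The finiteness condition is therefore column-finiteness of $w$, which holds automatically because $\mathbb M$ is free of rank one. Also, contrary to what you wrote, not every family of rows defines a morphism.
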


\begin{proof}
We already know that $\mathrm{End}_\mathbb M(\mathbb M) \simeq \mathbb M^{\mathrm{op}} \simeq \mathbb M'$.
Moreover, it follows from proposition \ref{adj}) that
\[
\mathrm{End}_\mathbb M(\mathbb M^\vee) \simeq \mathrm{End}_\mathbb M((\mathbb Z \cdot \mathbb N)_\bullet) \simeq \mathrm{End}_\mathbb Z(\mathbb Z \cdot \mathbb N) \simeq \mathrm{End}_\mathbb Z(\mathbb Z_\bullet) \simeq \mathbb M. \qedhere
\]
\end{proof}

We shall also need later the following result which shows that countable products behave remarkably:

\begin{prop} \label{countlm}
Asumme $I$ is (at most) countable.
Then,
\begin{enumerate}
\item 
there exists an $\mathbb M$-linear isomorphism $\mathbb M^I \simeq \mathbb M$,
\item
there exists a natural isomorphism of abelian groups (for any $J$)
\[
\mathrm{Hom}_{\mathbb M}(\mathbb Z_\bullet^I, \mathbb Z_\bullet^J) \simeq (\mathbb Z \cdot I)^J.
\]
\end{enumerate}
\end{prop}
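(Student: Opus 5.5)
For part (1), the plan is to use the isomorphism of left $\mathbb M$-modules $\mathbb M \simeq (\mathbb Z_\bullet)^{\mathbb N}$ given by the list of columns. It gives
\[
\mathbb M^I \simeq \left((\mathbb Z_\bullet)^{\mathbb N}\right)^I \simeq (\mathbb Z_\bullet)^{\mathbb N \times I},
\]
since products of products are products over the product index set. As $I$ is countable, $\mathbb N \times I$ is countable. If it is infinite, any bijection $\mathbb N \times I \simeq \mathbb N$ gives $(\mathbb Z_\bullet)^{\mathbb N\times I} \simeq (\mathbb Z_\bullet)^{\mathbb N} \simeq \mathbb M$, and this is $\mathbb M$-linear because $\mathbb M$ acts componentwise. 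If $I$ is empty, the product is $0$; I read the statement as tacitly assuming $I$ nonempty, since $\mathbb M^\emptyset = 0 \neq \mathbb M$. Concretely, the isomorphism interleaves the columns of the $|I|$ matrices into a single column-finite matrix.

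For part (2), I first reduce to $J$ a singleton. Since $\mathrm{Hom}_{\mathbb M}(P, -)$ commutes with products, $\mathrm{Hom}_{\mathbb M}(\mathbb Z_\bullet^I, \mathbb Z_\bullet^J) \simeq \mathrm{Hom}_{\mathbb M}(\mathbb Z_\bullet^I, \mathbb Z_\bullet)^J$. It therefore suffices to show $\mathrm{Hom}_{\mathbb M}(\mathbb Z_\bullet^I, \mathbb Z_\bullet) \simeq \mathbb Z \cdot I$. Proposition \ref{split1} with $n = 1$ gives a split surjection $\mathbb M \to \mathbb Z_\bullet$ with kernel $\mathbb M t$. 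Because products of split sequences stay split, $\mathbb Z_\bullet^I$ is the direct summand of $\mathbb M^I$ cut out by the idempotent $e_0$ acting on the right, namely $u \mapsto ue_0$, which projects onto the first column, applied componentwise. Part (1) then identifies $\mathbb M^I$ with $\mathbb M$. Under the interleaving, $\mathbb Z_\bullet^I$ becomes $\mathbb M e$, where $e = \sum_{i} e_{\phi(i)}t^{\phi(i)}$ is the diagonal idempotent supported on the columns $\phi(i)$ corresponding to the first column of each factor, with $\phi : I \hookrightarrow \mathbb N$. This sum is convergent in the $t$-adic topology (proposition \ref{tMod}).

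Next I use $\mathrm{Hom}_{\mathbb M}(\mathbb M e, N) \simeq eN$ for any module $N$. With $N = \mathbb Z_\bullet$, this gives $e\mathbb Z_\bullet$, the set of finite column vectors supported on the rows $\phi(I)$, which is $\mathbb Z \cdot I$. Naturality in $I$ and $J$ should be clear once I write the map explicitly. A morphism $f : \mathbb Z_\bullet^I \to \mathbb Z_\bullet$ corresponds to the finitely supported family $(f_i)_{i\in I}$, where each $f_i \in \mathrm{End}_{\mathbb M}(\mathbb Z_\bullet) = \mathbb Z$ (by proposition \ref{adj}) is the composite of $f$ with the $i$th inclusion. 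The content of the result is that every $f$ is determined by, and is a finite sum of, these components.

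The main obstacle is keeping the bookkeeping honest. I need to check that the isomorphism in part (1) really carries the summand $\mathbb Z_\bullet^I$ onto $\mathbb M e$. I also need to check that the explicit description via components agrees with $e\mathbb Z_\bullet$, so that finiteness comes from column-finiteness of $\mathbb M$ rather than from a slenderness argument. As a cross-check, I would test the special case $I = \mathbb N$ against corollary \ref{Endis}, using $\mathbb Z_\bullet^{\mathbb N} \simeq \mathbb M$ and $\mathrm{Hom}_{\mathbb M}(\mathbb M, \mathbb Z_\bullet) \simeq \mathbb Z_\bullet \simeq \mathbb Z \cdot \mathbb N$; this is consistent.
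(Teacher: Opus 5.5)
Your proof is correct and follows the paper's route. Part (1) is the same column reindexing $\mathbb M^I \simeq \mathbb Z_\bullet^{\mathbb N\times I}\simeq \mathbb Z_\bullet^{\mathbb N}\simeq\mathbb M$, and for part (2) the paper reduces to $J=1$, $I=\mathbb N$ and uses $\mathrm{Hom}_{\mathbb M}(\mathbb M,\mathbb Z_\bullet)\simeq\mathbb Z_\bullet$, which is your $\mathrm{Hom}_{\mathbb M}(\mathbb M e,N)\simeq eN$ with $e=1$; your diagonal idempotent simply absorbs the finite case, which the paper dismisses as trivial, and your caveat that $I$ must be nonempty in part (1) is a fair correction.
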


\begin{proof}
For the first assertion, it is sufficient to remark that $\mathbb M^I \simeq \mathbb Z_\bullet^{\mathbb N \times I} \simeq \mathbb Z_\bullet^{\mathbb N} \simeq \mathbb M$ since $I$ is countable.
The second one quickly reduces to the case $I = \mathbb N$ and $J = 1$ and we have
\[
\mathrm{Hom}_{\mathbb M}(\mathbb Z_\bullet^{\mathbb N}, \mathbb Z_\bullet) \simeq \mathrm{Hom}_{\mathbb M}(\mathbb M, \mathbb Z_\bullet) \simeq \mathbb Z_\bullet \simeq \mathbb Z \cdot \mathbb N. \qedhere
\]
\end{proof}

\section{Monoidal structure on $\mathbb M$-modules} \label{dualsec}

We introduced earlier hypermatrices and we are specially interested in the case $\mathbb M^{\otimes 2}$ of hypermatrices $[c_{ijk}]$ which are finite in $i$ (but not necessarily in $j$ or $k$).
We let $\mathbb M$ act on the left on columns $[c_{* jk}]$ and on the right in two different ways on lines $[c_{i* k}]$ or $[c_{ij*}]$.
Both right structures are exchanged by horizontal transposition turning $[a_{ijk}]$ into $[a_{ikj}]$.
Alternatively, $\mathbb M^{\otimes 2} = \mathbb Z_\bullet^{\mathbb N \times \mathbb N}$ as a left $\mathbb M$-module and we use both (commuting) structures of a right $\mathbb M$-module coming from both identifications $\mathbb Z_\bullet^{\mathbb N \times \mathbb N} \simeq (\mathbb Z_\bullet^{\mathbb N})^{\mathbb N} \simeq \mathbb M^{\mathbb N}$ (using either the first or second factor of $\mathbb N \times \mathbb N$).

Following theorem 3.2 of \cite{Hovey11}, we set:

\begin{dfn}
The \emph{internal tensor product} of two $\mathbb M$-modules $M$ and $N$ is the $\mathbb M$-module
\[
M \otimes_{\mathbb Z_\bullet} N := (\mathbb M^{\otimes 2} \otimes_{\mathbb M} M) \otimes_{\mathbb M} N
\]
obtained by using successively both right structures of $\mathbb M^{\otimes 2}$.
\end{dfn}

\begin{thm}
Internal tensor product endows the category of $\mathbb M$-modules with an additive closed symmetric monoidal structure with $\mathbb Z_\bullet$ as a unit.
\end{thm}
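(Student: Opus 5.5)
The plan is to apply Hovey's criterion (theorem 3.2 of \cite{Hovey11}). For a ring $R$, closed symmetric monoidal structures on $R$-modules that are right exact in each variable correspond to \emph{commutative monoids} in $(R\mhyphen R)$-bimodules with a compatible structure. Concretely, one needs an $R$-module $U$ (the unit) and an $R$-$(R\otimes R)$-bimodule $T$ (here $T=\mathbb M^{\otimes 2}$ with its left structure and its two commuting right structures). These must come with isomorphisms encoding associativity, unit and symmetry, satisfying the coherence diagrams. Rather than quoting the criterion abstractly, I would verify the data by hand, using the Eilenberg--Watts description: a right exact, sum-preserving functor in each variable is determined by its value on $\mathbb M$. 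Define $M\otimes_{\mathbb Z_\bullet}N$ as in the definition. It is additive and commutes with colimits in each variable by construction, being an iterated tensor product over $\mathbb M$. So once the monoidal constraints exist on free modules, they extend uniquely.

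First, the \textbf{symmetry}. The horizontal transposition $[a_{ijk}]\mapsto[a_{ikj}]$ is a left $\mathbb M$-linear involution of $\mathbb M^{\otimes 2}$ exchanging the two right structures. It therefore induces a natural isomorphism $M\otimes_{\mathbb Z_\bullet}N\simeq N\otimes_{\mathbb Z_\bullet}M$ whose square is the identity.

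Second, the \textbf{unit}. Using proposition \ref{split1}, $\mathbb Z_\bullet\simeq\mathbb M/\mathbb Mt$, so $\mathbb M^{\otimes2}\otimes_{\mathbb M}\mathbb Z_\bullet$ is the cokernel of right multiplication by $t$ in one direction. This is the set of hypermatrices $[c_{ijk}]$ restricted to one index value $j=0$, that is $\mathbb M$ as an $\mathbb M$-bimodule. Hence $\mathbb Z_\bullet\otimes_{\mathbb Z_\bullet}N\simeq\mathbb M\otimes_{\mathbb M}N\simeq N$, and symmetrically on the other side.

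Third, the \textbf{associativity}. Both $(M\otimes N)\otimes P$ and $M\otimes(N\otimes P)$ are computed by tensoring a bimodule with three right actions against $M,N,P$. The two bimodules are $\mathbb M^{\otimes2}\otimes_{\mathbb M}\mathbb M^{\otimes2}$, where the tensor is taken over one of the two right structures of the outer factor. Using $\mathbb M^{\otimes 2}\simeq\mathbb M^{\mathbb N}$ and the fact that tensor products with the finitely presented projective $\mathbb M$ commute with countable products here (the module is $\mathbb Z_\bullet^{\mathbb N\times\mathbb N}$), I expect both to be identified with $\mathbb M^{\otimes3}=\mathbb Z_\bullet^{\mathbb N^3}$ with its three right actions. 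The resulting isomorphism is just a reindexing $\mathbb N\times\mathbb N\times\mathbb N$, so the pentagon and hexagon identities reduce to identities of index permutations and hold trivially.

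Finally, \textbf{closedness}. For fixed $N$, the functor $-\otimes_{\mathbb Z_\bullet}N$ is a tensor product with the $\mathbb M$-bimodule $\mathbb M^{\otimes2}\otimes_{\mathbb M}N$, where $N$ is tensored via one right structure and the other is kept. It therefore has the right adjoint $\mathcal H\mathrm{om}(N,P)=\mathrm{Hom}_{\mathbb M}(\mathbb M^{\otimes2}\otimes_{\mathbb M}N,P)$. Additivity is clear.

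The main obstacle is the associativity step. The identification $\mathbb M^{\otimes2}\otimes_{\mathbb M}\mathbb M^{\otimes2}\simeq\mathbb M^{\otimes3}$ requires care, because $\otimes_{\mathbb M}$ need not commute with infinite products in general. I would handle this by writing $\mathbb M^{\otimes2}\simeq\mathbb M$ via proposition \ref{countlm}, transporting the right actions, and checking directly that $X\otimes_{\mathbb M}\mathbb M^{\mathbb N}\simeq X^{\mathbb N}$ for $X=\mathbb M^{\otimes2}$. This holds because $\mathbb M^{\mathbb N}\simeq\mathbb M$ as a left module, and the right actions can be tracked explicitly on the basis $e_kt^n$.
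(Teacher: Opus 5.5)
Your proposal is correct and follows essentially the same route as the paper: symmetry from the horizontal transposition of $\mathbb M^{\otimes 2}$, the unit from $\mathbb M^{\otimes 2}\otimes_{\mathbb M}\mathbb Z_\bullet\simeq\mathbb M$, associativity through the identification with $\mathbb M^{\otimes 3}$, and closedness from preservation of colimits. For the unit, the paper commutes the finitely presented $\mathbb Z_\bullet$ past the product $\mathbb M^{\otimes 2}\simeq\mathbb M^{\mathbb N}$ where you compute the cokernel of $\cdot t$. Your associativity step is more careful than the paper's ``easily checked'', and the point you flag works as you suggest: the inner factor $\mathbb Z_\bullet^{\mathbb N\times\mathbb N}\simeq\mathbb M$ is a finitely presented left module, so tensoring with it commutes with the product decomposition of the outer factor.
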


\begin{proof}
Closeness follows from the fact that external tensor product preserves all colimits.
Symmetry follows from the fact that both right structures on $\mathbb M^{\otimes 2}$ are exchanged by an $\mathbb M$-linear automorphism.
Associativity is easily checked and we actually have the general formula
\[
M_1 \otimes_{\mathbb Z_\bullet} \ldots \otimes_{\mathbb Z_\bullet} M_n \simeq \mathbb M^{\otimes n} \otimes_{\mathbb M} (M_1 \otimes_{\mathbb M} \ldots \otimes_{\mathbb M} M_n)
\]
where we use all $n$ right structures on the right hand side.
Finally, since $\mathbb Z_\bullet$ is a finitely presented $\mathbb M$-module, we have (using the first right structure)
\[
\mathbb M^{\otimes 2} \otimes_{\mathbb M} \mathbb Z_\bullet \simeq \mathbb M^{\mathbb N} \otimes_{\mathbb M} \mathbb Z_\bullet \simeq (\mathbb M \otimes_{\mathbb M} \mathbb Z_\bullet)^{\mathbb N} \simeq \mathbb Z_\bullet^{\mathbb N} \simeq \mathbb M
\]
and therefore
\[
\mathbb Z_\bullet \otimes_{\mathbb Z_\bullet} M := (\mathbb M^{\otimes 2} \otimes_{\mathbb M} \mathbb Z_\bullet) \otimes_{\mathbb M} M = \mathbb M \otimes_{\mathbb M} M = M
\]
showing that $\mathbb Z_\bullet$ is the unit.
\end{proof}

It is worth noting that the older notation $\mathbb M^{\otimes n}$ for hypermatrices is (fortunately) compatible with that of internal tensor product.
We shall denote internal Hom (the coadjoint to internal tensor product) as $\mathcal H\mathrm{om}_{\mathbb Z_\bullet}$ (with a calligraphic H) so that
\[
\mathrm{Hom}_{\mathbb M}(M \otimes_{\mathbb Z_\bullet} N, P) \simeq \mathrm{Hom}_{\mathbb M}(M, \mathcal H\mathrm{om}_{\mathbb Z_\bullet} (N, P) )
\]
when $M, N, P$ are $\mathbb M$-modules.
It is then completely formal to check that
\[
\mathrm{Hom}_{\mathbb M}(M, N) \simeq \mathrm{Hom}_{\mathbb M}(\mathbb Z_\bullet, \mathcal H\mathrm{om}_{\mathbb Z_\bullet} (M, N) ) = \mathcal H\mathrm{om}_{\mathbb Z_\bullet} (M, N)(*)
\]
and
\[
\mathcal H\mathrm{om}_{\mathbb Z_\bullet} (M \otimes_{\mathbb Z_\bullet} N, P) \simeq \mathcal H\mathrm{om}_{\mathbb Z_\bullet} (M, \mathcal H\mathrm{om}_{\mathbb Z_\bullet} (N, P) ).
\]

We shall also denote by $\otimes^L_{\mathbb Z_\bullet}$ and $\mathrm R\mathcal H \mathrm {om}_{\mathbb Z_\bullet}$ the derived versions and by $\mathrm{Tor}^{\mathbb Z_\bullet}_q$ and $\mathcal E \mathrm{xt}_{\mathbb Z_\bullet}^q$ their respective cohomology.

\begin{prop}
A projective $\mathbb M$-module $M$ is internally flat and internally projective: if $N$ is any $\mathbb M$-module, then
\[
\forall q > 0,\quad \mathrm{Tor}^{\mathbb Z_\bullet}_q(M, N) = 0\quad \mathrm{and}\quad \mathcal E \mathrm{xt}_{\mathbb Z_\bullet}^q(M, N) = 0.
\]
\end{prop}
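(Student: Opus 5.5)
We reduce everything to the case $M = \mathbb M$ and then to exactness of explicit functors. Since $\mathrm{Tor}^{\mathbb Z_\bullet}_q(-, N)$ commutes with direct sums and $\mathcal E \mathrm{xt}^q_{\mathbb Z_\bullet}(-, N)$ turns direct sums into products, and both functors are compatible with direct summands, it suffices to treat a free module $M = \mathbb M \cdot I$, and hence $M = \mathbb M$.

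\emph{Flatness.} Write $M \otimes_{\mathbb Z_\bullet} N = (\mathbb M^{\otimes 2} \otimes_{\mathbb M} M) \otimes_{\mathbb M} N$. For $M = \mathbb M$, the first right structure gives $\mathbb M^{\otimes 2} \otimes_{\mathbb M} \mathbb M \simeq \mathbb M^{\otimes 2} \simeq \mathbb M^{\mathbb N}$. For the remaining right structure, proposition \ref{countlm} applied with $I = \mathbb N$ gives
\[
\mathbb M^{\otimes 2} = \mathbb Z_\bullet^{\mathbb N \times \mathbb N} \simeq \mathbb M^{\mathbb N} \simeq \mathbb M .
\]
We must check that this isomorphism can be chosen right $\mathbb M$-linear. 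Concretely, as a right $\mathbb M$-module $\mathbb M^{\otimes 2} \simeq \mathbb M^{\mathbb N}$ is a countable product of copies of $\mathbb M$, and we want it free of rank one (or at least flat) as a right module. The bijection $\mathbb N \times \mathbb N \simeq \mathbb N$ gives $\mathbb M^{\mathbb N} \simeq \mathbb M$ as left modules. On the right side, $\mathbb M^{\mathbb N}$ should be identified with $\mathrm{Hom}_{\mathbb Z}(\mathbb Z_\bullet, \mathbb M)$-type data, and $\mathbb M^{\mathbb N}\otimes_{\mathbb M} N \simeq N^{\mathbb N}$ because $\mathbb M$ is finitely presented as a right module over itself. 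In any case,
\[
\mathbb M \otimes_{\mathbb Z_\bullet} N \simeq \mathbb M^{\mathbb N} \otimes_{\mathbb M} N \simeq N^{\mathbb N},
\]
as in the unit computation in the proof of the theorem above. The cleaner route is to use this formula directly, rather than to establish right flatness of $\mathbb M^{\mathbb N}$ abstractly.

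The formula $\mathbb M \otimes_{\mathbb Z_\bullet} N \simeq N^{\mathbb N}$ is an exact functor of $N$, since countable products are exact in a module category. Now take a projective resolution $P_\bullet \to N$ and note that the internal tensor product is symmetric. Then $\mathrm{Tor}^{\mathbb Z_\bullet}_q(\mathbb M, N)$ is the homology of $P_\bullet^{\mathbb N}$, which vanishes for $q>0$. The step to verify carefully is the identification $\mathbb M^{\mathbb N} \otimes_{\mathbb M} N \simeq N^{\mathbb N}$: it holds because $\mathbb M$ is finitely presented (indeed free of rank one) as a right $\mathbb M$-module, so tensoring commutes with products, exactly as used above for $\mathbb Z_\bullet$.

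\emph{Projectivity.} By adjunction, $\mathrm{Hom}_{\mathbb M}(P, \mathcal H\mathrm{om}_{\mathbb Z_\bullet}(\mathbb M, N)) \simeq \mathrm{Hom}_{\mathbb M}(P \otimes_{\mathbb Z_\bullet} \mathbb M, N)$. Using $P \otimes_{\mathbb Z_\bullet} \mathbb M \simeq P^{\mathbb N}$ and taking $P = \mathbb M$ (so $\mathbb M^{\mathbb N} \simeq \mathbb M$ by proposition \ref{countlm}), we get
\[
\mathcal H\mathrm{om}_{\mathbb Z_\bullet}(\mathbb M, N) \simeq \mathrm{Hom}_{\mathbb M}(\mathbb M \otimes_{\mathbb Z_\bullet} \mathbb M, N) \simeq \mathrm{Hom}_{\mathbb M}(\mathbb M, N) \simeq N
\]
as abelian groups. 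We must also track the module structure, but exactness only needs the underlying groups. This is an exact functor of $N$. Compute $\mathrm R\mathcal H\mathrm{om}_{\mathbb Z_\bullet}(\mathbb M, N)$ via an injective resolution $N \to I^\bullet$; the resulting complex is isomorphic to $I^\bullet$ itself, which is acyclic in positive degrees. Hence $\mathcal E\mathrm{xt}^q_{\mathbb Z_\bullet}(\mathbb M, N) = 0$ for $q > 0$.

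The main obstacle is justifying that these identifications are natural in $N$ and compatible with the right module structures on $\mathbb M^{\otimes 2}$. Once $\mathbb M \otimes_{\mathbb Z_\bullet} -$ is seen to be $(-)^{\mathbb N}$ and $\mathcal H\mathrm{om}_{\mathbb Z_\bullet}(\mathbb M,-)$ to be an exact functor, the rest is formal.
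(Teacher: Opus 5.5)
There is a genuine gap, and it sits in the $\mathrm{Tor}$ half. Your $\mathcal E\mathrm{xt}$ half is fine. As abelian groups, naturally in $N$, $\mathcal H\mathrm{om}_{\mathbb Z_\bullet}(\mathbb M, N) \simeq \mathrm{Hom}_{\mathbb M}(\mathbb M \otimes_{\mathbb Z_\bullet} \mathbb M, N) = \mathrm{Hom}_{\mathbb M}(\mathbb M^{\otimes 2}, N)$. Since $\mathbb M^{\otimes 2} \simeq \mathbb M$ is a projective left module, $\mathcal H\mathrm{om}_{\mathbb Z_\bullet}(\mathbb M, -)$ is exact. There you only need $P \otimes_{\mathbb Z_\bullet} \mathbb M \simeq P^{\mathbb N}$ for $P = \mathbb M$, where it holds, and the reductions to free modules and direct summands go through.

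The $\mathrm{Tor}$ half rests on a false formula. It is true that $\mathbb M \otimes_{\mathbb Z_\bullet} N \simeq \bigl(\prod_{\mathbb N} \mathbb M\bigr) \otimes_{\mathbb M} N$, a countable product of copies of the right regular module tensored with $N$. But the comparison map to $N^{\mathbb N}$ is an isomorphism only when $N$ is finitely presented. Whether $- \otimes_{\mathbb M} N$ commutes with products is a property of the \emph{left} module $N$; finite presentation of the factors $\mathbb M$ plays no role. In the unit computation you invoke, it is the left module $\mathbb Z_\bullet$ that is finitely presented, and the paper states $\mathbb Z_\bullet^I \otimes^{\mathrm L}_{\mathbb Z_\bullet} M \simeq M^I$ only for finitely presented $M$.

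In fact no natural isomorphism $\mathbb M \otimes_{\mathbb Z_\bullet} - \simeq (-)^{\mathbb N}$ can exist, since the left side commutes with direct sums and the right side does not. For $N = \mathbb M^\vee = \mathbb Z_\bullet \cdot \mathbb N$ one gets $\mathbb M \otimes_{\mathbb Z_\bullet} N \simeq \bigoplus_{\mathbb N} \mathbb M$. The natural map to $N^{\mathbb N}$ misses the sequence whose $m$th term is $e_0$ placed in the $m$th summand. The same failure hits the terms of your resolution $P_\bullet$, which are typically free of infinite rank, so ``$\mathrm{Tor}_q$ is the homology of $P_\bullet^{\mathbb N}$'' is unjustified. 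Nor is $\prod_{\mathbb N}\mathbb M$ free of rank one as a right module: tensoring a surjection $\mathbb M \to \prod_{\mathbb N}\mathbb M$ with $\mathbb Z_\bullet$ would make the uncountable group $\mathbb Z_\bullet^{\mathbb N}$ a quotient of the countable group $\mathbb Z_\bullet$.

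What internal flatness of $\mathbb M$ actually requires is that $\prod_{\mathbb N}\mathbb M$ be a \emph{flat} right $\mathbb M$-module. This does not follow formally from projectivity; by Chase's theorem it follows from left coherence of $\mathbb M$. Given coherence, the argument is short. A finitely generated left ideal $J$ is finitely presented (even projective, corollary \ref{fgpr}), hence $\bigl(\prod_{\mathbb N}\mathbb M\bigr)\otimes_{\mathbb M} J \simeq J^{\mathbb N} \hookrightarrow \mathbb M^{\mathbb N}$. Equivalently: your formula $N \mapsto N^{\mathbb N}$ is correct and exact on finitely presented modules. Over a coherent ring, every short exact sequence is a filtered colimit of short exact sequences of finitely presented modules, and $\mathbb M \otimes_{\mathbb Z_\bullet} -$ commutes with filtered colimits, so it is exact.

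The paper establishes coherence only later (lemma \ref{split}, theorem \ref{cohM}). Those arguments use only the $\mathcal E\mathrm{xt}$ half, namely the vanishing of $\mathcal E\mathrm{xt}^1_{\mathbb Z_\bullet}(\mathbb Z_\bullet\cdot I, \mathbb Z_\bullet)$, so there is no circularity if you prove that half first. You can also check coherence directly:
\begin{enumerate}
\item Finitely generated left ideals are principal, since $\mathbb M^n \simeq \mathbb M$.
\item Viewing $u \in \mathbb M$ as an endomorphism of $\mathbb Z_\bullet$, the left annihilator of $u$ is $\mathrm{Hom}_{\mathbb Z}(\mathrm{coker}\, u, \mathbb Z_\bullet)$.
\item Using the decomposition $\mathrm{coker}\,u \simeq \mathbb Z\cdot E \oplus T$ with $T^\vee = 0$ from the proof of lemma \ref{split}, this annihilator is $\simeq \mathbb Z_\bullet^E$ with $E$ countable, hence finitely generated.
\end{enumerate}
The paper's own proof is a one-line appeal to the existence of enough projectives. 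Your explicit route is worthwhile, but on the $\mathrm{Tor}$ side it needs this extra input.
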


\begin{proof}
Follows from the fact that the category of $\mathbb M$-modules has enough projectives.
\end{proof}

\begin{prop} \label{dlhom}
$\mathcal H\mathrm{om}_{\mathbb Z_\bullet} (\mathbb M, \mathbb Z_\bullet) \simeq \mathbb M^\vee$ and $\mathcal H\mathrm{om}_{\mathbb Z_\bullet} (\mathbb M^\vee, \mathbb Z_\bullet) \simeq \mathbb M$.
\end{prop}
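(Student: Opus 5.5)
The idea is to compute internal Hom by testing it against the projective generator and reading off the answer as a left $\mathbb M$-module. By the adjunction defining $\mathcal H\mathrm{om}_{\mathbb Z_\bullet}$, for any $\mathbb M$-module $N$ the underlying abelian group of $\mathcal H\mathrm{om}_{\mathbb Z_\bullet}(N, \mathbb Z_\bullet)$ is
\[
\mathrm{Hom}_{\mathbb M}(\mathbb M, \mathcal H\mathrm{om}_{\mathbb Z_\bullet}(N,\mathbb Z_\bullet)) \simeq \mathrm{Hom}_{\mathbb M}(\mathbb M \otimes_{\mathbb Z_\bullet} N, \mathbb Z_\bullet).
\]
The left $\mathbb M$-action is induced by the right $\mathbb M$-action on $\mathbb M$, i.e.\ by $\mathrm{End}_{\mathbb M}(\mathbb M)$. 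So the first step is to compute $\mathbb M \otimes_{\mathbb Z_\bullet} \mathbb M$ and $\mathbb M \otimes_{\mathbb Z_\bullet} \mathbb M^\vee$.

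For the first, the associativity formula gives $\mathbb M\otimes_{\mathbb Z_\bullet}\mathbb M \simeq \mathbb M^{\otimes 2} \otimes_{\mathbb M}(\mathbb M\otimes_{\mathbb M}\mathbb M)$, which is just $\mathbb M^{\otimes 2}\simeq \mathbb Z_\bullet^{\mathbb N\times\mathbb N}\simeq \mathbb M$ by proposition \ref{countlm}. More precisely, $\mathbb M^{\otimes 2}\simeq \mathbb M^{\mathbb N}$, where the factor $\mathbb N$ carries the remaining right $\mathbb M$-structure. Then
\[
\mathrm{Hom}_{\mathbb M}(\mathbb Z_\bullet^{\mathbb N\times\mathbb N},\mathbb Z_\bullet) \simeq \mathbb Z\cdot(\mathbb N\times\mathbb N)
\]
by proposition \ref{countlm}(2). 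This matches $\mathbb M^\vee$ as an abelian group. For $\mathbb M^\vee \simeq \mathbb Z_\bullet\cdot\mathbb N$, the tensor product commutes with direct sums, and $\mathbb M\otimes_{\mathbb Z_\bullet}\mathbb Z_\bullet = \mathbb M$. So $\mathbb M\otimes_{\mathbb Z_\bullet}\mathbb M^\vee \simeq \mathbb M\cdot \mathbb N$, and
\[
\mathrm{Hom}_{\mathbb M}(\mathbb M\cdot\mathbb N,\mathbb Z_\bullet)\simeq \mathbb Z_\bullet^{\mathbb N}\simeq \mathbb M.
\]

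The second step is to identify the $\mathbb M$-module structures and not just the groups. I would argue more invariantly. Since $\mathbb M^\vee \simeq (\mathbb Z\cdot\mathbb N)_\bullet$ and $\mathcal H\mathrm{om}$ turns direct sums in the first variable into products, $\mathcal H\mathrm{om}_{\mathbb Z_\bullet}(\mathbb M^\vee,\mathbb Z_\bullet)\simeq \mathcal H\mathrm{om}_{\mathbb Z_\bullet}(\mathbb Z_\bullet,\mathbb Z_\bullet)^{\mathbb N} = \mathbb Z_\bullet^{\mathbb N}\simeq\mathbb M$ as left modules, which settles the second claim cleanly. For the first claim, I would write $\mathbb M \simeq \mathbb Z_\bullet^{\mathbb N}$, so that I need $\mathcal H\mathrm{om}_{\mathbb Z_\bullet}(\mathbb Z_\bullet^{\mathbb N},\mathbb Z_\bullet)$. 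I would use the split exact sequences of proposition \ref{split1}: $\mathbb M \simeq \mathbb Z_\bullet^n\oplus \mathbb M$ compatibly with $t'^n$, so $\mathbb M \simeq \varprojlim \mathbb Z_\bullet^n$. The expected answer is $\varinjlim \mathcal H\mathrm{om}(\mathbb Z_\bullet^n,\mathbb Z_\bullet)=\varinjlim \mathbb Z_\bullet^n = \mathbb Z_\bullet\cdot\mathbb N\simeq\mathbb M^\vee$, the canonical map $\varinjlim\mathcal H\mathrm{om}(\mathbb Z_\bullet^n,\mathbb Z_\bullet)\to\mathcal H\mathrm{om}(\mathbb M,\mathbb Z_\bullet)$ being induced by the projections $\mathbb M\to\mathbb Z_\bullet^n$. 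To show this map is an isomorphism, it suffices to evaluate on the generator $\mathbb M$, i.e.\ apply $\mathrm{Hom}_{\mathbb M}(\mathbb M,-)$. This reduces to checking that
\[
\varinjlim_n \mathrm{Hom}_{\mathbb M}(\mathbb M\otimes_{\mathbb Z_\bullet}\mathbb Z_\bullet^n,\mathbb Z_\bullet)\to \mathrm{Hom}_{\mathbb M}(\mathbb M\otimes_{\mathbb Z_\bullet}\mathbb M,\mathbb Z_\bullet)
\]
is bijective. Here $\mathbb M\otimes_{\mathbb Z_\bullet}\mathbb Z_\bullet^n=\mathbb M^n$ and $\mathbb M\otimes_{\mathbb Z_\bullet}\mathbb M = \mathbb M^{\mathbb N}$.

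So the main obstacle is the statement that $\mathrm{Hom}_{\mathbb M}(\mathbb M^{\mathbb N},\mathbb Z_\bullet) = \varinjlim_n \mathrm{Hom}_{\mathbb M}(\mathbb M^n,\mathbb Z_\bullet)$. In other words, every $\mathbb M$-linear map $\mathbb Z_\bullet^{\mathbb N\times\mathbb N}\to\mathbb Z_\bullet$ factors through finitely many factors in the second index. This is exactly the content of proposition \ref{countlm}(2): $\mathrm{Hom}_{\mathbb M}(\mathbb Z_\bullet^I,\mathbb Z_\bullet)\simeq \mathbb Z\cdot I$, which is a slenderness statement that already comes from $\mathrm{Hom}_{\mathbb M}(\mathbb M,\mathbb Z_\bullet)=\mathbb Z_\bullet$. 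With $I=\mathbb N\times\mathbb N$, the finite support in $I$ gives the factorization, and compatibility of the identifications with the transition maps is routine.
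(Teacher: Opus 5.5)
Your proof is correct and follows essentially the paper's argument: the second isomorphism comes from $\mathbb M^\vee \simeq \mathbb Z_\bullet\cdot\mathbb N$ and direct sums turning into products. The first comes from a comparison map whose bijectivity is checked on underlying groups via $\mathrm{Hom}_{\mathbb M}(\mathbb M\otimes_{\mathbb Z_\bullet}\mathbb M,\mathbb Z_\bullet)\simeq\mathrm{Hom}_{\mathbb M}(\mathbb Z_\bullet^{\mathbb N\times\mathbb N},\mathbb Z_\bullet)\simeq\mathbb Z\cdot(\mathbb N\times\mathbb N)$ using Proposition \ref{countlm}. The only difference is that the paper obtains the map $\mathbb M^\vee\to\mathcal H\mathrm{om}_{\mathbb Z_\bullet}(\mathbb M,\mathbb Z_\bullet)$ by adjunction from the already established isomorphism $\mathbb M\simeq\mathcal H\mathrm{om}_{\mathbb Z_\bullet}(\mathbb M^\vee,\mathbb Z_\bullet)$, whereas you build it from the finite projections $\mathbb M\to\mathbb Z_\bullet^n$; your route makes it slightly more transparent that the group-level bijection is induced by that map.
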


\begin{proof}
Given any set $I$,
\[
\mathcal H\mathrm{om}_{\mathbb Z_\bullet} (\mathbb Z_\bullet \cdot I, \mathbb Z_\bullet) \simeq \mathcal H\mathrm{om}_{\mathbb Z_\bullet} (\mathbb Z_\bullet, \mathbb Z_\bullet)^I \simeq \mathcal H\mathrm{om}_{\mathbb Z_\bullet} (\mathbb Z_\bullet, \mathbb Z_\bullet^I) = \mathbb Z_\bullet^I 
\]
so that, in particular, $\mathcal H\mathrm{om}_{\mathbb Z_\bullet} (\mathbb M^\vee, \mathbb Z_\bullet) \simeq \mathbb M$.
By adjunction, this isomorphism provides a morphism $\mathbb M^\vee \to \mathcal H\mathrm{om}_{\mathbb Z_\bullet} (\mathbb M, \mathbb Z_\bullet)$ which is actually bijective:
\begin{align*}
 \mathcal H\mathrm{om}_{\mathbb Z_\bullet} (\mathbb M, \mathbb Z_\bullet) & \simeq \mathrm{Hom}_{\mathbb M}(\mathbb M, \mathcal H\mathrm{om}_{\mathbb Z_\bullet} (\mathbb M, \mathbb Z_\bullet)) 
\\ & \simeq \mathrm{Hom}_{\mathbb M}(\mathbb M \otimes_{\mathbb Z_\bullet} \mathbb M, \mathbb Z_\bullet)
\\& \simeq \mathrm{Hom}_{\mathbb M}(\mathbb Z_\bullet^{\mathbb N \times \mathbb N}, \mathbb Z_\bullet)
\\& \simeq \mathbb Z \cdot (\mathbb N \times \mathbb N)
\\& \simeq \mathbb M^\vee
\end{align*}
by proposition \ref{countlm}.
\end{proof}

As a consequence of the proposition, there is no ambiguity in denoting by
\[
M^\vee := \mathcal H\mathrm{om}_{\mathbb Z_\bullet} (M, \mathbb Z_\bullet)
\]
the internal dual of an $\mathbb M$-module $M$.

\begin{cor} \label{dlfree}
If $I$ is countable\footnote{This is not necessary for the second isomorphism.}, then
\[
(\mathbb Z_\bullet^I)^\vee = \mathbb Z_\bullet\cdot I \quad \mathrm{and} \quad (\mathbb Z_\bullet\cdot I)^\vee = \mathbb Z_\bullet^I.
\]
\end{cor}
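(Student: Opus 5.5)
The second isomorphism holds for an arbitrary set $I$, and I will prove it first; the countable case is then reduced to Proposition \ref{dlhom}.

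\textbf{The second isomorphism.} Internal Hom turns colimits in its first variable into limits, since it is defined as a right adjoint. I will then follow the first computation in the proof of Proposition \ref{dlhom}. The module $\mathbb Z_\bullet \cdot I$ is the direct sum of copies of $\mathbb Z_\bullet$, and $\mathbb Z_\bullet$ is the monoidal unit, so
\[
(\mathbb Z_\bullet \cdot I)^\vee \simeq \mathcal H\mathrm{om}_{\mathbb Z_\bullet}(\mathbb Z_\bullet, \mathbb Z_\bullet)^I \simeq \mathbb Z_\bullet^I .
\]
This step needs no hypothesis on $I$, which explains the footnote.

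\textbf{The first isomorphism.} When $I$ is finite, $\mathbb Z_\bullet^I = \mathbb Z_\bullet \cdot I$ and the first step applies. When $I$ is countably infinite, I will reduce to $I = \mathbb N$. Proposition \ref{countlm}(1) applied to the left ideal $\mathbb Z_\bullet$, or directly the list-of-columns identification, gives $\mathbb Z_\bullet^{\mathbb N} \simeq \mathbb M$. Proposition \ref{dlhom} then yields
\[
(\mathbb Z_\bullet^{\mathbb N})^\vee \simeq \mathcal H\mathrm{om}_{\mathbb Z_\bullet}(\mathbb M, \mathbb Z_\bullet) \simeq \mathbb M^\vee .
\]
By the examples following Proposition \ref{split1}, $\mathbb M^\vee \simeq \mathbb Z_\bullet \cdot \mathbb N$ as $\mathbb M$-modules, which is the claim. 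For general countably infinite $I$, a bijection $I \simeq \mathbb N$ transports everything.

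\textbf{Canonicity.} The main subtlety is that the identification should be the canonical one, not just an abstract isomorphism. The natural map is obtained by adjunction from the evaluation pairing
\[
(\mathbb Z_\bullet \cdot I) \otimes_{\mathbb Z_\bullet} \mathbb Z_\bullet^I \to \mathbb Z_\bullet ,
\]
which itself comes from the second isomorphism; this gives $\mathbb Z_\bullet \cdot I \to (\mathbb Z_\bullet^I)^\vee$. I would check that this map agrees with the chain of isomorphisms above. This is the same check as in the proof of Proposition \ref{dlhom}, so it needs nothing new.

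To see on top rows that the map is bijective, I would use Proposition \ref{countlm}(2) with $J = 1$:
\[
\mathrm{Hom}_{\mathbb M}(\mathbb Z_\bullet^I, \mathbb Z_\bullet) \simeq \mathbb Z \cdot I .
\]
This is exactly the slenderness-type input, and it is where countability of $I$ is essential.
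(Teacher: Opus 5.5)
Your argument is correct and is exactly the paper's proof: the second isomorphism and the finite case come from $\mathcal H\mathrm{om}_{\mathbb Z_\bullet}(-,\mathbb Z_\bullet)$ turning direct sums into products, with $\mathbb Z_\bullet$ the unit. The case $I=\mathbb N$ is Proposition \ref{dlhom}, read through $\mathbb Z_\bullet^{\mathbb N}\simeq\mathbb M$ and $\mathbb M^\vee\simeq\mathbb Z_\bullet\cdot\mathbb N$, and the paper itself only claims this up to isomorphism.

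Your canonicity paragraph goes beyond what is needed, and there bijectivity on top rows would not by itself show that the canonical map is an isomorphism. The functor $M\mapsto M(*)$ is not conservative: it vanishes on the nonzero module $\mathbb M/\mathbb M e_0\mathbb M$. To settle canonicity you would compare underlying groups via $\mathrm{Hom}_{\mathbb M}(\mathbb M,-)$, as in the proof of Proposition \ref{dlhom}.
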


\begin{proof}
The case where $I$ is finite as well as the second isomorphism both follow from preservation of limits.
On the other hand, when $I = \mathbb N$, our statement is exactly the content of proposition \ref{dlhom} (up to isomorphism).
\end{proof}

\begin{cor} \label{tensfr}
If $I,J$ are countable, then there exists natural isomorphisms\footnote{$J$ can be uncountable for the second one.}
\[
\mathbb Z_\bullet^I \otimes^{\mathrm L}_{\mathbb Z_\bullet} \mathbb Z_\bullet^J \simeq \mathbb Z_\bullet^{I \times J} \quad \mathrm{and} \quad \mathrm R\mathcal H\mathrm{om}_{\mathbb Z_\bullet} (\mathbb Z_\bullet^I, \mathbb Z_\bullet^J) \simeq (\mathbb Z_\bullet \cdot I)^J.
\]
\end{cor}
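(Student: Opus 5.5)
The plan is to reduce both isomorphisms to statements about projective modules, where derived functors agree with underived ones. Each step rests on proposition \ref{countlm} and the projectivity of $\mathbb M$ and $\mathbb Z_\bullet$.

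For $I$ countable, $\mathbb Z_\bullet^I$ is projective. If $I$ is finite, it is a finite sum of copies of $\mathbb Z_\bullet$, which is a direct factor of $\mathbb M$ by proposition \ref{split1}. If $I$ is infinite, it is isomorphic to $\mathbb Z_\bullet^{\mathbb N} \simeq \mathbb M$. The preceding proposition then says projective modules are internally flat and internally projective. Hence the derived functors reduce to the underived ones,
\[
\mathbb Z_\bullet^I \otimes^{\mathrm L}_{\mathbb Z_\bullet} \mathbb Z_\bullet^J \simeq \mathbb Z_\bullet^I \otimes_{\mathbb Z_\bullet} \mathbb Z_\bullet^J
\quad \mathrm{and} \quad
\mathrm R\mathcal H\mathrm{om}_{\mathbb Z_\bullet}(\mathbb Z_\bullet^I, \mathbb Z_\bullet^J) \simeq \mathcal H\mathrm{om}_{\mathbb Z_\bullet}(\mathbb Z_\bullet^I, \mathbb Z_\bullet^J).
\]
For the second statement only $I$ needs to be countable.

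For the internal Hom, I use that $\mathcal H\mathrm{om}_{\mathbb Z_\bullet}(M,-)$ is a right adjoint and so commutes with products. This gives $\mathcal H\mathrm{om}_{\mathbb Z_\bullet}(\mathbb Z_\bullet^I, \mathbb Z_\bullet^J) \simeq ((\mathbb Z_\bullet^I)^\vee)^J$. Corollary \ref{dlfree} identifies $(\mathbb Z_\bullet^I)^\vee \simeq \mathbb Z_\bullet \cdot I$, which yields $(\mathbb Z_\bullet \cdot I)^J$ for arbitrary $J$.

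For the tensor product, I compute directly from the definition. Write $\mathbb Z_\bullet^I = \mathbb M^{\otimes 2}\otimes_{\mathbb M}(\ldots)$ after reducing, by the finite case and additivity, to $I, J \in \{1, \mathbb N\}$. The case where one factor is $\mathbb Z_\bullet$ is the unit property. For $I = J = \mathbb N$ we have $\mathbb M \otimes_{\mathbb Z_\bullet} \mathbb M = (\mathbb M^{\otimes 2}\otimes_{\mathbb M}\mathbb M)\otimes_{\mathbb M}\mathbb M = \mathbb M^{\otimes 2} \simeq \mathbb Z_\bullet^{\mathbb N\times\mathbb N}$, as already used in the proof of proposition \ref{dlhom}.

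The main obstacle is naturality and compatibility with the product decomposition. The isomorphism should be induced by the canonical map $\mathbb Z_\bullet^I \otimes_{\mathbb Z_\bullet} \mathbb Z_\bullet^J \to \mathbb Z_\bullet^{I\times J}$, obtained by adjunction from $\mathbb Z_\bullet^I \to \mathcal H\mathrm{om}_{\mathbb Z_\bullet}(\mathbb Z_\bullet^J, \mathbb Z_\bullet^{I\times J}) = ((\mathbb Z_\bullet\cdot J)^{I\times J})$, using the diagonal-type inclusions. I would then check bijectivity after applying $\mathrm{Hom}_{\mathbb M}(-, \mathbb Z_\bullet)$. Both sides are countable products of $\mathbb Z_\bullet$, and proposition \ref{countlm} identifies their Homs to $\mathbb Z_\bullet$ with $\mathbb Z\cdot(I\times J)$ in a compatible way. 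Since both sides are isomorphic to $\mathbb M$ (or to finite sums of $\mathbb Z_\bullet$), this duality detects isomorphisms.
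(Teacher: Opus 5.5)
Your proposal is correct and follows the paper's proof. Projectivity of $\mathbb Z_\bullet^I$ reduces everything to the underived functors. Internal Hom commuting with products, together with corollary \ref{dlfree}, gives the second isomorphism. After disposing of the finite cases, the first is just $\mathbb M\otimes_{\mathbb Z_\bullet}\mathbb M=\mathbb M^{\otimes 2}=\mathbb Z_\bullet^{\mathbb N\times\mathbb N}$ by definition. Your extra paragraph is more careful about naturality than the paper, which simply asserts it: you build the canonical comparison map and detect that it is an isomorphism via $\mathrm{Hom}_{\mathbb M}(-,\mathbb Z_\bullet)$, which is fully faithful on the modules $\mathbb Z_\bullet^K$ with $K$ countable by proposition \ref{countlm}, and therefore reflects isomorphisms.
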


\begin{proof}
Since $\mathbb Z_\bullet^I $ is projective, it is sufficient to consider the underived situation.
In the first case, we can assume that $I = J = \mathbb N$ (since the cases $I$ or $J$ finite are essentially trivial) and we have
\[
\mathbb Z_\bullet^{\mathbb N} \otimes_{\mathbb Z_\bullet} \mathbb Z_\bullet^{\mathbb N} \simeq \mathbb M \otimes_{\mathbb Z_\bullet} \mathbb M = \mathbb M^{\otimes 2} = \mathbb Z_\bullet^{\mathbb N \times \mathbb N}
\]
by definition.
In the second case, we may assume $J = 1$ (since internal hom preserves all limits) and then $\mathcal H\mathrm{om}_{\mathbb Z_\bullet} (\mathbb Z_\bullet^I, \mathbb Z_\bullet) = (\mathbb Z_\bullet^I)^\vee =\mathbb Z_\bullet \cdot I$ thanks to corollary \ref{dlfree}.
\end{proof}

As a consequence, we more generally have $\mathbb Z_\bullet^I \otimes^{\mathrm L}_{\mathbb Z_\bullet} M \simeq M^I$ when $M$ is a finitely presented $\mathbb M$-module and $I$ is countable.

\begin{lem}
If $V$ is a \emph{discrete} abelian group and $M$ is an $\mathbb M$-module then
\[
\mathcal H \mathrm{om}_{\mathbb Z_\bullet}(V_\bullet, M) \simeq \mathrm{Hom}_{\mathbb Z}(V, M) \quad \mathrm{and} \quad V_\bullet \otimes_{\mathbb Z_\bullet} M \simeq V\otimes_{\mathbb Z} M.
\]
\end{lem}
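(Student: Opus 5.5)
The plan is to prove the $\mathcal H\mathrm{om}$ isomorphism first, by reducing to free abelian groups, and then to deduce the tensor isomorphism from it by adjunction and Yoneda. Throughout, $\mathrm{Hom}_{\mathbb Z}(V, M)$ and $V \otimes_{\mathbb Z} M$ are $\mathbb M$-modules through the action of $\mathbb M$ on $M$.

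\emph{Free case.} Take $V = \mathbb Z \cdot I$ free, so that $V_\bullet \simeq \mathbb Z_\bullet \cdot I$. The internal Hom turns colimits in the first variable into limits, and $\mathbb Z_\bullet$ is the unit, so
\[
\mathcal H\mathrm{om}_{\mathbb Z_\bullet}(\mathbb Z_\bullet \cdot I, M) \simeq \mathcal H\mathrm{om}_{\mathbb Z_\bullet}(\mathbb Z_\bullet, M)^I \simeq M^I \simeq \mathrm{Hom}_{\mathbb Z}(\mathbb Z \cdot I, M).
\]
This isomorphism must be natural with respect to maps $\mathbb Z \cdot J \to \mathbb Z \cdot I$. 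Here I would use proposition \ref{adj}: since $V \mapsto V_\bullet$ is fully faithful, every $\mathbb M$-linear map $\mathbb Z_\bullet \cdot J \to \mathbb Z_\bullet \cdot I$ comes from a unique $\mathbb Z$-linear map, namely an $I \times J$ column-finite integer matrix. Both sides then act by the same matrix on $M^I \to M^J$, so the isomorphism is natural in $V$.

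\emph{General $V$.} Choose a free presentation $\mathbb Z \cdot J \to \mathbb Z \cdot I \to V \to 0$. Since $V \mapsto V_\bullet$ is exact, this gives a presentation $\mathbb Z_\bullet \cdot J \to \mathbb Z_\bullet \cdot I \to V_\bullet \to 0$. Both $\mathcal H\mathrm{om}_{\mathbb Z_\bullet}(-, M)$ and $\mathrm{Hom}_{\mathbb Z}(-, M)$ are left exact contravariant functors, the latter being applied to $V$ itself. Applying them to the two presentations identifies both sides with the kernel of the same map $M^I \to M^J$, by the naturality just established. The resulting isomorphism is independent of the chosen presentation and natural in $V$ and $M$, by the usual comparison of presentations.

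\emph{Tensor product.} For any $\mathbb M$-module $P$, I would chain the isomorphisms
\[
\mathrm{Hom}_{\mathbb M}(V_\bullet \otimes_{\mathbb Z_\bullet} M, P) \simeq \mathrm{Hom}_{\mathbb M}(M, \mathcal H\mathrm{om}_{\mathbb Z_\bullet}(V_\bullet, P)) \simeq \mathrm{Hom}_{\mathbb M}(M, \mathrm{Hom}_{\mathbb Z}(V, P)) \simeq \mathrm{Hom}_{\mathbb M}(V \otimes_{\mathbb Z} M, P).
\]
The first step uses symmetry of the internal tensor product together with its defining adjunction. The second step is the $\mathcal H\mathrm{om}$ isomorphism just proved. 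The third is ordinary tensor--hom adjunction for the $(\mathbb M, \mathbb Z)$-bimodule structures. Yoneda then gives $V_\bullet \otimes_{\mathbb Z_\bullet} M \simeq V \otimes_{\mathbb Z} M$.

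The main obstacle is the naturality check in the free case. One has to make sure that the identification $\mathcal H\mathrm{om}_{\mathbb Z_\bullet}(\mathbb Z_\bullet, M) \simeq M$ is $\mathbb M$-linear, which is just unitality of the monoidal structure. One also has to check that the matrices acting on the two sides coincide, which I would verify directly on the basis vectors $e_k$. Everything after that is formal.
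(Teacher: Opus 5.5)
Your proposal is correct and follows the paper's route. The paper's one-line proof (``we may assume that $V=\mathbb Z$'') is exactly your reduction to the unit case, via a free presentation, exactness of $V\mapsto V_\bullet$ and (co)limit preservation, which you simply spell out together with the naturality check. The only minor difference is that you deduce the tensor isomorphism from the $\mathcal H\mathrm{om}$ one by adjunction and Yoneda, whereas the paper reduces it directly to $V=\mathbb Z$ using that both sides preserve colimits in $V$.
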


\begin{proof}
We may assume that $V = \mathbb Z$ and there is nothing to do.
\end{proof}

\begin{prop} \label{monadj}
The adjunction $V \mapsto V_\bullet$ and $M \mapsto M(*)$ between \emph{discrete} abelian groups and $\mathbb M$-modules is monoidal:
\[
V_\bullet \otimes_{\mathbb Z_\bullet} W_\bullet \simeq (V\otimes_{\mathbb Z} W)_\bullet \quad \mathrm{and} \quad M(*) \otimes_{\mathbb Z} N(*) \to (M \otimes_{\mathbb Z_\bullet} N)(*)
\]
\end{prop}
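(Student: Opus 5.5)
The plan is to get the isomorphism $V_\bullet \otimes_{\mathbb Z_\bullet} W_\bullet \simeq (V\otimes_{\mathbb Z} W)_\bullet$ from the preceding lemma, and the lax comparison map formally from the adjunction of proposition \ref{adj}.

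First, apply the lemma to $V$ and the $\mathbb M$-module $M = W_\bullet$:
\[
V_\bullet \otimes_{\mathbb Z_\bullet} W_\bullet \simeq V \otimes_{\mathbb Z} W_\bullet .
\]
Here $\mathbb M$ acts on the second factor. Since $W_\bullet \simeq \mathbb Z_\bullet \otimes_{\mathbb Z} W$ and the tensor product over $\mathbb Z$ is associative and commutative, we get
\[
V \otimes_{\mathbb Z} (\mathbb Z_\bullet \otimes_{\mathbb Z} W) \simeq \mathbb Z_\bullet \otimes_{\mathbb Z} (V \otimes_{\mathbb Z} W) = (V \otimes_{\mathbb Z} W)_\bullet ,
\]
and this is $\mathbb M$-linear. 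Naturality in $V$ and $W$ is inherited from the naturality of the lemma. The lemma itself reduces to $V=\mathbb Z$ using that both sides commute with colimits in $V$, so I would simply quote it. One should still check that the resulting isomorphism is compatible with the unit and associativity constraints. Since everything is built from the canonical identification $\mathbb Z_\bullet \otimes_{\mathbb Z_\bullet} M \simeq M$ established in the proof of the monoidal structure, this compatibility follows by reducing to $V = W = \mathbb Z$, where it is the unit isomorphism.

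Second, construct the lax structure map on the right adjoint. By the adjunction of proposition \ref{adj}, giving a morphism $M(*) \otimes_{\mathbb Z} N(*) \to (M \otimes_{\mathbb Z_\bullet} N)(*)$ is the same as giving an $\mathbb M$-linear map $(M(*) \otimes_{\mathbb Z} N(*))_\bullet \to M \otimes_{\mathbb Z_\bullet} N$. By the first step, the source is $M(*)_\bullet \otimes_{\mathbb Z_\bullet} N(*)_\bullet$. For this we take $\epsilon_M \otimes \epsilon_N$, where $\epsilon_M \colon M(*)_\bullet \to M$ and $\epsilon_N \colon N(*)_\bullet \to N$ are the counits, and we use functoriality of $\otimes_{\mathbb Z_\bullet}$. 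This is the standard doctrinal adjunction: a strong monoidal left adjoint induces a lax monoidal structure on its right adjoint. Coherence of this lax structure is then formal.

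Concretely, the map sends $x \otimes y$, with $x \in e_0M$ and $y \in e_0N$, to the image of $x \otimes y$ in $e_0(M \otimes_{\mathbb Z_\bullet} N)$. I do not expect it to be an isomorphism in general (e.g.\ $M=N=\mathbb M$ gives $\mathbb Z^{\mathbb N} \otimes \mathbb Z^{\mathbb N} \to \mathbb Z^{\mathbb N \times \mathbb N}$), so only the arrow is claimed.

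The main obstacle is the monoidal-coherence bookkeeping in the first step: making sure the isomorphism $V_\bullet \otimes_{\mathbb Z_\bullet} W_\bullet \simeq (V\otimes W)_\bullet$ is compatible with the symmetry and associativity of $\otimes_{\mathbb Z_\bullet}$. I would handle it by noting that all functors involved commute with colimits in $V$ and $W$ separately, which reduces every check to $V = W = \mathbb Z$. There the statement is the unit isomorphism $\mathbb Z_\bullet \otimes_{\mathbb Z_\bullet} \mathbb Z_\bullet \simeq \mathbb Z_\bullet$, which is compatible with the symmetry because the swap automorphism of $\mathbb M^{\otimes 2}$ restricts to the identity on the relevant component.
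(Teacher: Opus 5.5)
Your proposal is correct and follows essentially the paper's route: the isomorphism reduces, by colimit preservation in each variable (which you package through the preceding lemma, itself proved by reducing to $V=\mathbb Z$), to the trivial case $V=W=\mathbb Z$, and the lax comparison map $M(*)\otimes_{\mathbb Z}N(*)\to(M\otimes_{\mathbb Z_\bullet}N)(*)$ is obtained formally by adjunction. Your coherence checks, also settled by reducing to $V=W=\mathbb Z$, go beyond what the paper writes but are sound.
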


\begin{proof}
Since all the functors involved preserve all colimits, the first isomorphism reduces to the trivial case $\mathbb Z_\bullet \otimes_{\mathbb Z_\bullet} \mathbb Z_\bullet \simeq (\mathbb Z\otimes_{\mathbb Z} \mathbb Z)_\bullet$.
The second one formally follows by adjunction.
\end{proof}

Note that this adjunction is not strong monoidal because already when $M=N =\mathbb M$, we get $\mathbb Z^{\mathbb N} \otimes_{\mathbb Z} \mathbb Z^{\mathbb N} \not\simeq \mathbb Z^{\mathbb N \times \mathbb N}$.
The explicit formula for the first isomorphism sounds very natural:
\[
\begin{tikzcd}[row sep=tiny]
\mathbb M^{\otimes 2} \otimes_{\mathbb M} V_\bullet \otimes_{\mathbb M} W_\bullet = V_\bullet \otimes_{\mathbb Z_\bullet} W_\bullet \ar[r] & (V \otimes_{\mathbb Z} W)_\bullet
\\
\left[ c_{k,n,m}\right]_{k,n,m \in \mathbb N}\otimes (x_n)_{n \in \mathbb N} \otimes (y_m)_{m \in \mathbb N} \ar[r, maps to] & \left(\sum_{n,m} c_{k,n,m} x_n \otimes y_m\right)_{k \in \mathbb N}.
\end{tikzcd}
\]

Finally, it is important to mention that there is \emph{no} natural bilinear map $M \times N \to M \otimes_{\mathbb Z_\bullet} N$ in general and the next result is the best we can hope for in this direction:

\begin{prop} \label{bilmon}
If $M$ and $N$ are two $\mathbb M$-modules, then there exists a natural bilinear map $M(*) \times N \to M \otimes_{\mathbb Z_\bullet} N$.
\end{prop}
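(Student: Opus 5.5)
We construct the bilinear map from the top row $M(*) \simeq \mathrm{Hom}_{\mathbb M}(\mathbb Z_\bullet, M)$ (the lemma preceding the definition of the top row) together with the unit isomorphism $\mathbb Z_\bullet \otimes_{\mathbb Z_\bullet} N \simeq N$ of the monoidal structure. Fix $x \in M(*)$. It corresponds to a unique $\mathbb M$-linear map $\phi_x : \mathbb Z_\bullet \to M$, namely $e_0 \mapsto x$, or more generally $u e_0 \mapsto ux$ for $u \in \mathbb M$. This is well defined precisely because $x \in M^{t=0} = e_0M$ and $\mathbb Z_\bullet \simeq \mathbb M/\mathbb M t$ by proposition \ref{split1}. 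Functoriality of the internal tensor product in the first variable gives an $\mathbb M$-linear map
\[
\phi_x \otimes \mathrm{id}_N : N \simeq \mathbb Z_\bullet \otimes_{\mathbb Z_\bullet} N \to M \otimes_{\mathbb Z_\bullet} N ,
\]
and we set $x \otimes y := (\phi_x \otimes \mathrm{id}_N)(y)$.

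Next we check bilinearity. Additivity in $y$ is immediate, since $\phi_x \otimes \mathrm{id}_N$ is a group homomorphism. Additivity in $x$ follows from two facts. First, $x \mapsto \phi_x$ is additive, because it is the isomorphism $M(*) \simeq \mathrm{Hom}_{\mathbb M}(\mathbb Z_\bullet, M)$. Second, the functor $-\otimes_{\mathbb Z_\bullet} N$ is additive, as the monoidal structure is additive by the theorem above. In fact the map is even $\mathbb M$-linear in $y$, though only $\mathbb Z$-bilinearity is claimed. Naturality in $M$ and $N$ comes from two observations: for $f : M \to M'$ we have $\phi_{f(x)} = f \circ \phi_x$, and the unit isomorphism is natural in $N$.

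Alternatively, one can write the map explicitly using the definition $M \otimes_{\mathbb Z_\bullet} N = (\mathbb M^{\otimes 2} \otimes_{\mathbb M} M)\otimes_{\mathbb M} N$. Since $x = e_0x$, one sends $(x,y)$ to $c \otimes x \otimes y$, where $c \in \mathbb M^{\otimes 2}$ is the hypermatrix $c_{k,n,m} = \delta_{n0}\delta_{km}$. This choice is modelled on the formula after proposition \ref{monadj}. Bilinearity is then obvious, but one would still need to check that this agrees with the construction above. I would therefore present the abstract construction and mention the formula only as an illustration.

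The main obstacle is conceptual rather than computational: explaining why we cannot do better, i.e.\ why $x$ must lie in the top row. The construction uses $x$ only through the morphism $\mathbb Z_\bullet \to M$ it defines. A general element of $M$ instead gives a morphism $\mathbb M \to M$, and $\mathbb M \otimes_{\mathbb Z_\bullet} N \neq N$. In the proof itself the only delicate point is keeping track of the identification $\mathbb Z_\bullet \otimes_{\mathbb Z_\bullet} N \simeq N$ from the proof of the theorem. Since we use it only as an abstract natural isomorphism, this causes no trouble.
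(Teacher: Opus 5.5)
Your proposal is correct and is essentially the paper's argument. The paper gets the additive map $M(*) \to \mathrm{Hom}_{\mathbb M}(N, M \otimes_{\mathbb Z_\bullet} N)$ by applying the top-row functor to the unit $M \to \mathcal H\mathrm{om}_{\mathbb Z_\bullet}(N, M \otimes_{\mathbb Z_\bullet} N)$, identifying $\mathcal H\mathrm{om}_{\mathbb Z_\bullet}(N,P)(*) \simeq \mathrm{Hom}_{\mathbb M}(N,P)$ (this unwinds, via the triangle identity and the unit isomorphism, exactly to your $x \mapsto \phi_x \otimes \mathrm{id}_N$), and then uncurrying; your explicit hypermatrix formula also agrees with this map, since $c \otimes e_0 \in \mathbb M^{\otimes 2} \otimes_{\mathbb M} \mathbb Z_\bullet \simeq \mathbb M$ corresponds to the identity matrix.
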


\begin{proof}
The unit map $M \to \mathcal H \mathrm{om}_{\mathbb Z_\bullet}(N, M \otimes_{\mathbb Z_\bullet} N)$ provides
\[
M(*) \to \mathcal H \mathrm{om}_{\mathbb Z_\bullet}(N, M \otimes_{\mathbb Z_\bullet} N)(*) = \mathrm{Hom}_{\mathbb M}(N, M \otimes_{\mathbb Z_\bullet} N)
\]
and our bilinear map is obtained by adjunction.
\end{proof}

\section{Finitely presented $\mathbb M$-modules}

\begin{lem} \label{fptr}
For an $\mathbb M$-module $M$, the following are equivalent:
\begin{enumerate}
\item $M$ is finitely presented,
\item There exists $u \in \mathbb M$ such that $M \simeq \mathbb M/\mathbb Mu$,
\item There exists a right exact sequence $\mathbb Z_\bullet^I \to \mathbb Z_\bullet^J \to M \to 0$ with $I,J$ countable.
\end{enumerate}
\end{lem}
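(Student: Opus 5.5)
The plan is to prove the cycle (1) $\Rightarrow$ (2) $\Rightarrow$ (3) $\Rightarrow$ (1), with proposition \ref{countlm} doing the real work.

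For (1) $\Rightarrow$ (2), a finitely presented $M$ has a presentation $\mathbb M^p \xrightarrow{f} \mathbb M^q \to M \to 0$ with $p,q$ finite. By proposition \ref{countlm}(1) there are $\mathbb M$-linear isomorphisms $\mathbb M^p \simeq \mathbb M$ and $\mathbb M^q \simeq \mathbb M$. The case $p = 0$ or $q = 0$ is harmless: we can always add a free summand $\mathbb M$ to both source and target with the identity map between them. After transport, $f$ becomes an endomorphism of the left module $\mathbb M$. Since $\mathrm{End}_{\mathbb M}(\mathbb M) \simeq \mathbb M^{\mathrm{op}}$, every such endomorphism is right multiplication $x \mapsto xu$ for a unique $u \in \mathbb M$. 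Its image is $\mathbb Mu$, so $M \simeq \mathbb M/\mathbb Mu$. This is formal once we accept $\mathbb M^I \simeq \mathbb M$ as left modules.

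For (2) $\Rightarrow$ (3), I use the identification $\mathbb M \simeq \mathbb Z_\bullet^{\mathbb N}$ (list of columns). The right exact sequence $\mathbb M \xrightarrow{\cdot u} \mathbb M \to \mathbb M/\mathbb Mu \to 0$ then reads $\mathbb Z_\bullet^{\mathbb N} \to \mathbb Z_\bullet^{\mathbb N} \to M \to 0$, which gives (3) with $I = J = \mathbb N$.

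For (3) $\Rightarrow$ (1), I note that for countable $I$ the module $\mathbb Z_\bullet^I$ is finitely generated projective. When $I$ is finite, $\mathbb Z_\bullet^I$ is a finite sum of copies of $\mathbb Z_\bullet$, and $\mathbb Z_\bullet$ is a direct summand of $\mathbb M$ by proposition \ref{split1}. When $I$ is infinite, $\mathbb Z_\bullet^I \simeq \mathbb Z_\bullet^{\mathbb N} \simeq \mathbb M$. So $M$ is the cokernel of a map between finitely generated projectives. Adding complementary summands to make both free of finite rank, and extending the map by the identity on the added summand of the target, shows $M$ is finitely presented.

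I expect no step to be a real obstacle. The only points needing care are that the isomorphisms in proposition \ref{countlm}(1) are genuinely $\mathbb M$-linear for the left structure, which they are because the left action is column by column, and the padding trick for degenerate finite cases in (3) $\Rightarrow$ (1).
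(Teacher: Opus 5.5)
Your proof is correct and follows the paper's argument: (1)$\Rightarrow$(2) uses $\mathbb M^p\simeq\mathbb M$ together with $\mathrm{End}_{\mathbb M}(\mathbb M)\simeq\mathbb M^{\mathrm{op}}$, and (2)$\Rightarrow$(3) uses $\mathbb M\simeq\mathbb Z_\bullet^{\mathbb N}$. For the return direction the paper goes straight to (2) by adding the identity of $\mathbb Z_\bullet^{\mathbb N}$, so that $I,J$ become infinite and both sides become $\mathbb M$; this is a more concrete form of your padding argument, and your handling of the degenerate case $\mathbb M^0=0$ covers a small omission in the paper's appeal to proposition \ref{countlm}.
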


\begin{proof}
($(1) \Leftrightarrow (2)$)
By definition, an $\mathbb M$-module $M$ is finitely presented if and only if there exists a right exact sequence
\[
\mathbb M^m \to \mathbb M^n \to M \to 0.
\]
We may assume $m=n=1$ thanks to proposition \ref{countlm} and any endomorphism of $\mathbb M$ is necessarily right multiplication by some $u \in \mathbb M$.

($(2) \Rightarrow (3)$)
We can use right multiplication by $u$ and the isomorphism $\mathbb M \simeq (\mathbb Z_\bullet)^\mathbb N$.

($(3) \Rightarrow (2)$)
Possibly after taking product with the identity of $\mathbb Z^{\mathbb N}$, we may assume that $I$ and $J$ are infinite but then $\mathbb Z_\bullet^I \simeq \mathbb Z_\bullet^J \simeq \mathbb M$.
\end{proof}

Note also that a finitely generated $\mathbb M$-module (resp.\ ideal) is automatically monogenic (resp.\ principal) since $\mathbb M^n \simeq \mathbb M$.

\begin{lem}
For an $\mathbb M$-module $M$, the following are equivalent:
\begin{enumerate}
\item $M$ is finitely generated\footnote{Or, equivalenlty, presented.} projective,
\item $M$ is (isomorphic to) a finitely generated projective ideal of $\mathbb M$,
\item $M \simeq \mathbb Me$ with $e \in \mathbb M$ idempotent,
\item $M \simeq \mathbb Z_\bullet^I$ with $I$ countable.
\end{enumerate}
\end{lem}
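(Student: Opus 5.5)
The plan is to prove the cycle $(1) \Rightarrow (3) \Rightarrow (2) \Rightarrow (1)$ together with the equivalence $(1) \Leftrightarrow (4)$. For $(1) \Rightarrow (3)$: a finitely generated projective module is a direct summand of some $\mathbb M^n$. By proposition \ref{countlm}, $\mathbb M^n \simeq \mathbb M$, so $M$ is a direct summand of $\mathbb M$. Write $\mathbb M = M \oplus M'$ and let $p$ be the projection onto $M$. This $p$ is an idempotent in $\mathrm{End}_{\mathbb M}(\mathbb M) \simeq \mathbb M^{\mathrm{op}}$, so $p$ is right multiplication by an idempotent $e \in \mathbb M$, and $M = \mathrm{im}(p) = \mathbb Me$. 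For $(3) \Rightarrow (2)$: $\mathbb Me$ is a left ideal, and it is a direct summand of $\mathbb M$ because $\mathbb M = \mathbb Me \oplus \mathbb M(1-e)$. Hence it is projective and finitely generated (by $e$). For $(2) \Rightarrow (1)$ there is nothing to prove. It also follows that finitely generated and finitely presented coincide here, since $\mathbb Me \simeq \mathbb M/\mathbb M(1-e)$ is finitely presented by lemma \ref{fptr}.

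The direction $(4) \Rightarrow (1)$ is immediate. If $I$ is finite, $\mathbb Z_\bullet^I$ is a summand of $\mathbb M^{|I|}$ by proposition \ref{split1}. If $I$ is countably infinite, $\mathbb Z_\bullet^I \simeq \mathbb Z_\bullet^{\mathbb N} \simeq \mathbb M$. (The case $I = \emptyset$ gives $0$, which is fine.)

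The main obstacle is $(1) \Rightarrow (4)$: every direct summand of $\mathbb M$ must be isomorphic to some $\mathbb Z_\bullet^I$. I would argue via the top row and duality. Let $M$ be a summand of $\mathbb M$. Since $M \mapsto M(*)$ is additive, $V := M(*)$ is a direct summand of $\mathbb M(*) \simeq \mathbb Z^{\mathbb N}$ as an abelian group. Moreover, $M \simeq \mathbb Me$ corresponds to an idempotent endomorphism of $\mathbb Z_\bullet$, so it is better to dualize. Using proposition \ref{dlhom}, $M^\vee$ is a summand of $\mathbb M^\vee = (\mathbb Z\cdot\mathbb N)_\bullet$, and internal duality is an involution on summands of $\mathbb M$ and $\mathbb M^\vee$ (it holds on $\mathbb M$ and $\mathbb M^\vee$, and passes to summands by additivity). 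By corollary \ref{Endis}, $\mathrm{End}_{\mathbb M}((\mathbb Z\cdot\mathbb N)_\bullet) \simeq \mathrm{End}_{\mathbb Z}(\mathbb Z\cdot\mathbb N)$, because $V \mapsto V_\bullet$ is fully faithful. So the summand $M^\vee$ is $W_\bullet$, where $W$ is a direct summand of the free abelian group $\mathbb Z\cdot\mathbb N$. Here one uses that $V \mapsto V_\bullet$ is exact, so it takes images of idempotents to images of idempotents. A subgroup of a free abelian group is free, so $W \simeq \mathbb Z \cdot I$ with $I$ countable. Hence $M^\vee \simeq \mathbb Z_\bullet \cdot I$, and dualizing back with corollary \ref{dlfree} gives $M \simeq M^{\vee\vee} \simeq \mathbb Z_\bullet^I$.

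The delicate point is the biduality $M \simeq M^{\vee\vee}$ for a summand $M$ of $\mathbb M$. I would verify it by noting that the canonical map $N \to N^{\vee\vee}$ is natural and additive, and is an isomorphism for $N = \mathbb M$ by proposition \ref{dlhom}. A retract of an isomorphism is an isomorphism, so it holds for $M$.
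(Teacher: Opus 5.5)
Your proof is correct and takes essentially the same route as the paper: (1)--(3) are handled via $\mathbb M^n\simeq\mathbb M$ and $\mathrm{End}_{\mathbb M}(\mathbb M)\simeq\mathbb M^{\mathrm{op}}$, and (1)$\Rightarrow$(4) by dualizing into $\mathbb M^\vee=(\mathbb Z\cdot\mathbb N)_\bullet$, using full faithfulness of $V\mapsto V_\bullet$ to get $M^\vee\simeq(\mathbb Z\cdot I)_\bullet$, then dualizing back. Where the paper only asserts two steps (it invokes the coadjoint of $V\mapsto V_\bullet$ and writes $\mathbb M=\mathbb M^{\vee\vee}\Rightarrow M=M^{\vee\vee}$), you make them explicit, by transporting the idempotent through full faithfulness and by the retract argument for the natural bidual map.
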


\begin{proof}
($(1) \Leftrightarrow (2)$) A finitely generated $\mathbb M$-module is projective if and only if it is a direct factor of some $\mathbb M^n \simeq \mathbb M$.

($(1) \Leftrightarrow (3)$) An $\mathbb M$-module $M$ is finitely generated projective if and only if there exists a projector $e : \mathbb M^n \to \mathbb M^n$ (i.e. $e \circ e = e$) such that $M \simeq \mathrm{im}\; e$.
Here, we have $\mathbb M^n \simeq \mathbb M$.

($(2) \Rightarrow (4)$)
We use here proposition \ref{dlfree}.
Since $M$ is a direct factor of $\mathbb M$ and $\mathbb M = \mathbb M^{\vee\vee}$, we shall also have $M = M^{\vee\vee}$.
Moreover, $M^\vee$ will be a direct factor of $\mathbb M^\vee = (\mathbb Z \cdot \mathbb N)_\bullet$.
It follows that there exists a direct factor $F$ of $\mathbb Z \cdot \mathbb N$ such that $M^\vee = F_\bullet$ (this is formal since $-_\bullet$ is fully faithful and admits a coadjoint).
We can then write $F =: \mathbb Z\cdot I$ with $I$ countable and we shall then have $M^\vee = (\mathbb Z\cdot I)_\bullet$ and finally $M = M^{\vee\vee} = \mathbb Z_\bullet^I$.

($(4) \Rightarrow (1)$)
Either $M \simeq \mathbb M$ or $M \simeq \mathbb Z_\bullet^n$ for some $n \in \mathbb N$, but $\mathbb Z_\bullet$ itself is finitely generated projective.
\end{proof}

\begin{cor}
The ideal $\mathbb M^\vee$ of $\mathbb M$ is projective but not finitely generated.
\end{cor}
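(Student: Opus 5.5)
Projectivity is the easy half. As an $\mathbb M$-module, $\mathbb M^\vee \simeq \mathbb Z_\bullet \cdot \mathbb N = \oplus_{\mathbb N} \mathbb Z_\bullet$ (second example after proposition \ref{split1}). By proposition \ref{split1}, $\mathbb Z_\bullet$ is a direct factor of $\mathbb M$, so it is projective. A direct sum of projectives is projective, hence $\mathbb M^\vee$ is projective.

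For the failure of finite generation, I will argue by contradiction using the previous lemma. If $\mathbb M^\vee$ were finitely generated, then, being projective, it would satisfy condition (1). Hence $\mathbb M^\vee \simeq \mathbb Z_\bullet^I$ for some countable $I$. Since $\mathbb M^\vee \neq 0$ has infinite rank, either $I$ is finite or $I \simeq \mathbb N$, giving $\mathbb Z_\bullet \cdot \mathbb N \simeq \mathbb Z_\bullet^n$ or $\mathbb Z_\bullet\cdot\mathbb N \simeq \mathbb M$.

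The cleanest way to rule these out is to apply the top row functor $M \mapsto M(*)$, which commutes with limits and colimits (proposition \ref{adj}). It sends $\mathbb Z_\bullet\cdot\mathbb N$ to $\mathbb Z\cdot\mathbb N$, $\mathbb Z_\bullet^n$ to $\mathbb Z^n$, and $\mathbb M$ to $\mathbb Z^{\mathbb N}$. Now $\mathbb Z\cdot \mathbb N$ is not isomorphic to $\mathbb Z^n$, since the ranks differ. It is also not isomorphic to $\mathbb Z^{\mathbb N}$, since the first is countable and the second uncountable (alternatively, $\mathbb Z^{\mathbb N}$ is not free, by Baer). This gives the contradiction.

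There is also a more direct argument that avoids the lemma. A finitely generated submodule of $\mathbb M^\vee = \oplus_{\mathbb N}\mathbb Z_\bullet$ lies in a finite subsum $\mathbb Z_\bullet^N$, because each generator has finite support. So $\mathbb M^\vee$ cannot be generated by finitely many elements. I expect no real obstacle here. The only point to check is that the identification $\mathbb M^\vee \simeq (\mathbb Z\cdot\mathbb N)_\bullet$ is $\mathbb M$-linear, and this was already stated in the examples.
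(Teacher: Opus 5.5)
Your proof is correct and follows the paper's argument. Projectivity comes from writing $\mathbb M^\vee$ as a direct sum of copies of the projective $\mathbb Z_\bullet$; then the classification lemma reduces to $\mathbb M^\vee \simeq \mathbb M$ or $\mathbb M^\vee \simeq \mathbb Z_\bullet^n$, and passing to top rows compares $\mathbb Z\cdot\mathbb N$ with $\mathbb Z^{\mathbb N}$ (countable versus uncountable) or with $\mathbb Z^n$ (infinite versus finite rank).

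Your closing finite-support argument is also valid. It is more elementary, since it needs neither that lemma (and hence no duality) nor any cardinality count.
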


\begin{proof}
It is projective as a direct sum of projectives.
If it were finitely presented, there would exist an isomorphism $\mathbb M^\vee \simeq \mathbb M$ or $\mathbb M^\vee \simeq \mathbb Z_\bullet^n$.
By adjunction, we would then have $\mathbb Z \cdot \mathbb N \simeq \mathbb Z^\mathbb N$, but the former is countable and the later is not, or $\mathbb Z^n \cdot \mathbb N \simeq \mathbb Z^n$ which is even worse (infinite rank and finite rank).
\end{proof}

\begin{prop}
Finitely presented $\mathbb M$-modules are stable under finite colimits, \emph{countable} limits, extensions and internal tensor product.
Any $\mathbb M$-module is a filtered colimit of finitely presented $\mathbb M$-modules.
\end{prop}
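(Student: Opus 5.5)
Finite colimits are the easy case. Finitely presented modules over any ring are stable under cokernels and finite direct sums, and these two generate all finite colimits. For countable limits, I would use lemma \ref{fptr}(3): write each $M_i$, for $i$ in a countable index set, as a cokernel $\mathbb Z_\bullet^{I_i} \to \mathbb Z_\bullet^{J_i} \to M_i \to 0$ with $I_i, J_i$ countable. Products in module categories are exact, so
\[
\prod_i \mathbb Z_\bullet^{I_i} \to \prod_i \mathbb Z_\bullet^{J_i} \to \prod_i M_i \to 0
\]
is still right exact. Here $\prod_i \mathbb Z_\bullet^{I_i} = \mathbb Z_\bullet^{\sqcup_i I_i}$ with $\sqcup_i I_i$ countable, so $\prod_i M_i$ is finitely presented by lemma \ref{fptr}.

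A countable limit is the kernel of a map between two countable products, $\prod M_i \to \prod M_j$. It therefore remains to show that kernels of maps between finitely presented modules are finitely presented; this is where I expect the real work. My plan is to show that $\mathbb M$ is left coherent, i.e.\ finitely generated ideals are finitely presented. Then finitely presented modules form an abelian subcategory and kernels come for free.

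To prove coherence, take a principal ideal $\mathbb Mu$ and compute the kernel of $\cdot u : \mathbb M \to \mathbb M$. Through $\mathbb M \simeq \mathbb Z_\bullet^{\mathbb N}$ and proposition \ref{countlm}, this map is $\mathbb Z_\bullet^{\mathbb N} \to \mathbb Z_\bullet^{\mathbb N}$, and it is induced, via the adjunction of proposition \ref{adj} and the top-row functor, by a group map $f : \mathbb Z \cdot \mathbb N \to \mathbb Z \cdot \mathbb N$. The kernel of $\mathcal H\mathrm{om}_{\mathbb Z_\bullet}((\mathbb Z\cdot\mathbb N)_\bullet, \mathbb Z_\bullet) \to \mathcal H\mathrm{om}_{\mathbb Z_\bullet}((\mathbb Z\cdot\mathbb N)_\bullet, \mathbb Z_\bullet)$ is $\mathcal H\mathrm{om}_{\mathbb Z_\bullet}((\mathrm{coker} f)_\bullet, \mathbb Z_\bullet)$, computed via the lemma on $\mathcal H\mathrm{om}_{\mathbb Z_\bullet}(V_\bullet, M)$. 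The group $\mathrm{coker} f$ is countable, so it has a free resolution $0 \to \mathbb Z\cdot I \to \mathbb Z\cdot J \to \mathrm{coker} f \to 0$ with $I, J$ countable. Dualizing and using corollary \ref{dlfree} identifies the kernel with the kernel of a map $\mathbb Z_\bullet^J \to \mathbb Z_\bullet^I$. I would then have to check that the cokernel of this dual map sits inside the internal $\mathcal E\mathrm{xt}^1$ of $(\mathrm{coker} f)_\bullet$. That $\mathcal E\mathrm{xt}^1$ is $\mathrm{Ext}^1_{\mathbb Z}(\mathrm{coker} f, \mathbb Z)_\bullet$-like, so it is a filtered colimit, and I must argue the image is finitely presented. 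I am least sure of this step, and I might instead need the classical fact that subgroups of $\mathbb Z^{\mathbb N}$ defined by countably many linear conditions are again products. For extensions, the standard horseshoe argument works once finite presentation is established: lift generators, and the kernel becomes an extension of finitely generated modules.

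The internal tensor product is right exact in each variable. Presenting $M$ and $N$ as in lemma \ref{fptr}(3), $M \otimes_{\mathbb Z_\bullet} N$ is a cokernel of maps between modules of the form $\mathbb Z_\bullet^I \otimes_{\mathbb Z_\bullet} \mathbb Z_\bullet^J \simeq \mathbb Z_\bullet^{I\times J}$ by corollary \ref{tensfr}, which are finitely presented. Finite colimits then finish the argument. Finally, every module over any ring is a filtered colimit of finitely presented modules: write $M$ as the filtered colimit of cokernels of finite subpresentations of a free presentation.
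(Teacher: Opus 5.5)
Your treatment of finite colimits, countable products, extensions, internal tensor products and filtered colimits is sound and essentially matches the paper. Products of countable presentations are again presentations of the allowed form, since $\prod_i \mathbb Z_\bullet^{I_i} = \mathbb Z_\bullet^{\sqcup_i I_i}$; this is the paper's $\mathbb M^I \simeq \mathbb M$. Tensor products follow from right exactness and Corollary \ref{tensfr}.

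You are also right that going from countable products to arbitrary countable limits needs kernels of maps between finitely presented modules to be finitely presented, i.e.\ coherence of $\mathbb M$. The paper's one-line reduction ``countable limits reduce to countable products'' uses this tacitly, and coherence is only established later (Lemma \ref{split}, Theorem \ref{cohM}, Corollary \ref{fgpr}).

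The genuine gap is that your proof of coherence does not close. You correctly reach $\ker(\cdot u) \simeq ((\mathrm{coker}\, f)_\bullet)^\vee \simeq \mathrm{Hom}_{\mathbb Z}(Q, \mathbb Z_\bullet)$ with $Q := \mathrm{coker}\, f$ countable. (The identification $\cdot u = (f_\bullet)^\vee$ comes from the duality of Corollary \ref{dlfree}, not from the top-row functor, but that is harmless.) However, resolving $Q$ by free groups and dualizing only rewrites this kernel as the kernel of another map $\mathbb Z_\bullet^J \to \mathbb Z_\bullet^I$. That is the very problem you started from. The cokernel and the $\mathcal E\mathrm{xt}^1$ term sit at the other end of that sequence and say nothing about finite generation of the kernel. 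Your fallback about subgroups of $\mathbb Z^{\mathbb N}$ only controls the top row $\ker(\cdot u)(*) = Q^\vee$. An abstract isomorphism $Q^\vee \simeq \mathbb Z^E$ does not identify the $\mathbb M$-module $\mathrm{Hom}_{\mathbb Z}(Q, \mathbb Z_\bullet)$.

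The missing idea is the splitting used in Lemma \ref{split}: a countable abelian group decomposes as $Q = F \oplus T$ with $F = \mathbb Z \cdot E$ free ($E$ countable) and $T^\vee = 0$. To see this directly, put $T := \bigcap_{\varphi \in Q^\vee} \ker \varphi$. Then $Q/T$ is countable and embeds in a product of copies of $\mathbb Z$, so its finite rank subgroups are free, and $Q/T$ is free by Pontryagin's criterion. Hence $Q \to Q/T$ splits. Any $\psi \in T^\vee$ then extends by zero on a complement to some $\varphi \in Q^\vee$, which vanishes on $T$ by definition, so $\psi = 0$.

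Next, $\mathrm{Hom}_{\mathbb Z}(T, \mathbb Z_\bullet) \subset \mathrm{Hom}_{\mathbb Z}(T, \mathbb Z^{\mathbb N}) = (T^\vee)^{\mathbb N} = 0$, exactly as in Corollary \ref{torlm}. So $\ker(\cdot u) \simeq \mathrm{Hom}_{\mathbb Z}(\mathbb Z \cdot E, \mathbb Z_\bullet) = \mathbb Z_\bullet^E$, which is finitely generated projective. Thus every principal ideal, hence every finitely generated ideal, is finitely presented, and $\mathbb M$ is coherent. With that, your reduction of countable limits to countable products plus kernels goes through. This is in substance what the paper does afterwards in Lemma \ref{split} and Theorem \ref{cohM}, which even show that finitely generated ideals are projective.
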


\begin{proof}
Only stability under countable limits and internal tensor product require a specific argument.
Countable limits reduce to the case of countable products.
Since (even infinite) products are exact, it reduces to showing that $\mathbb M^I$ is finitely presented when $I$ is countable but then $\mathbb M^I \simeq \mathbb M$.
The case of tensor products reduces by right exactness to corollary \ref{tensfr}.
\end{proof}

Note that finitely presented $\mathbb M$-modules are \emph{not} stable under internal Hom however.

\begin{lem} \label{split}
Any finitely presented $\mathbb M$-module $M$ splits as
\[
M \simeq \mathbb Z_\bullet^E \oplus \mathcal E\mathrm{xt}_{\mathbb Z_\bullet}(T_\bullet, \mathbb Z_\bullet)
\]
with a countable set $E$ and a countable discrete abelian group $T$ such that\footnote{This is the naive dual $T^\vee := \mathrm{Hom}_{\mathbb Z}(T, \mathbb Z)$.} $T^\vee = 0$.
\end{lem}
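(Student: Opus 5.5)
Start from lemma \ref{fptr}: $M$ has a presentation $\mathbb Z_\bullet^I \overset{f}{\to} \mathbb Z_\bullet^J \to M \to 0$ with $I,J$ countable. Then dualize. By proposition \ref{countlm}, $f$ corresponds to an element of $(\mathbb Z\cdot I)^J \simeq \mathrm{Hom}_{\mathbb Z}(\mathbb Z^J,\mathbb Z\cdot I)$; more intrinsically, by corollary \ref{dlfree} and full faithfulness of $V\mapsto V_\bullet$ (proposition \ref{adj}), $f^\vee : (\mathbb Z\cdot J)_\bullet \to (\mathbb Z\cdot I)_\bullet$ equals $\varphi_\bullet$ for a unique morphism $\varphi : \mathbb Z\cdot J\to \mathbb Z\cdot I$ of countable free abelian groups. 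Corollary \ref{dlfree} also gives $f = f^{\vee\vee}$. So $M \simeq \mathrm{coker}(\varphi_\bullet^\vee)$, and everything reduces to the structure of a map of countable free abelian groups.

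Set $F := \ker\varphi$, $T := \mathrm{coker}\,\varphi$, and $P := \mathrm{im}\,\varphi$. Since subgroups of free abelian groups are free, $P$ and $F$ are free and countable. The sequence $0\to F\to \mathbb Z\cdot J \to P\to 0$ splits, and $-_\bullet$ is exact. Apply $\mathrm R\mathcal H\mathrm{om}_{\mathbb Z_\bullet}(-,\mathbb Z_\bullet)$ to $0\to P_\bullet\to(\mathbb Z\cdot I)_\bullet\to T_\bullet\to 0$. The lemma before proposition \ref{monadj} identifies $\mathcal H\mathrm{om}_{\mathbb Z_\bullet}(V_\bullet,\mathbb Z_\bullet)$ with $\mathrm{Hom}_{\mathbb Z}(V,\mathbb Z_\bullet)$, and it vanishes in higher degrees for free $V$ (projectivity). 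This yields the exact sequence
\[
0\to \mathcal H\mathrm{om}(T_\bullet,\mathbb Z_\bullet)\to \mathbb Z_\bullet^I \to (P_\bullet)^\vee \to \mathcal E\mathrm{xt}_{\mathbb Z_\bullet}(T_\bullet,\mathbb Z_\bullet)\to 0 ,
\]
where $(P_\bullet)^\vee\simeq\mathbb Z_\bullet^{B}$ for a basis $B$ of $P$. Meanwhile $\mathbb Z_\bullet^J \simeq (P_\bullet)^\vee\oplus (F_\bullet)^\vee$, and $f=\varphi_\bullet^\vee$ factors as $\mathbb Z_\bullet^I\to (P_\bullet)^\vee \hookrightarrow \mathbb Z_\bullet^J$. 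Hence
\[
M\simeq \mathcal E\mathrm{xt}_{\mathbb Z_\bullet}(T_\bullet,\mathbb Z_\bullet)\oplus \mathbb Z_\bullet^{E'},
\]
with $E'$ a basis of $F$, which is countable.

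It remains to arrange $T^\vee=0$. Write $T = T_0\oplus \mathbb Z\cdot E''$, where $\mathbb Z\cdot E''$ is a free direct summand chosen so that $T_0^\vee=0$. Then
\[
\mathcal E\mathrm{xt}(T_\bullet,\mathbb Z_\bullet)=\mathcal E\mathrm{xt}(T_{0\bullet},\mathbb Z_\bullet),
\]
since the free part is projective, and the free summand does not enter the $\mathcal E\mathrm{xt}$ term at all. It therefore suffices to take $E=E'$; but we still need the splitting of $T$.

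This splitting is the main obstacle. For a countable abelian group $T$, consider the canonical map $T\to T^{\vee\vee}$ and the image of $T$ in the free group $\mathbb Z^{T^\vee}$. I would argue that $T^\vee$ is free of finite rank, or at least that the image $\bar T$ of $T$ in $\mathrm{Hom}(T^\vee,\mathbb Z)$ is a countable subgroup of a product of $\mathbb Z$'s that is also slender-dual, hence free (Pontryagin/Specker-type, \cite{Fuchs73}). Then $T\to \bar T$ splits, with kernel $T_0$ satisfying $T_0^\vee=0$: any $T_0\to\mathbb Z$ extends to $T$ through the splitting and is killed on $T_0$ by construction. If freeness of $\bar T$ is hard to justify directly, the fallback is the quotient $T/\bigcap_{g\in T^\vee}\ker g$, using that a countable torsion-free group in which every finite-rank pure subgroup is free is free (Pontryagin's criterion), since $\mathbb Z$-valued separation makes finite-rank pure subgroups embed in $\mathbb Z^n$.
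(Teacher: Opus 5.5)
Your proposal is correct and follows the paper's proof. You dualize the presentation to get $\varphi\colon \mathbb Z\cdot J\to\mathbb Z\cdot I$ (the paper's $g$), split off its free kernel as the summand $\mathbb Z_\bullet^E$, identify the rest of $M$ with $\mathcal E\mathrm{xt}_{\mathbb Z_\bullet}(T_\bullet,\mathbb Z_\bullet)$ through the long exact sequence, and then discard the free part of the countable group $T=\mathrm{coker}\,\varphi$, which the paper simply quotes from Fuchs (corollary 19.3) and which you try to prove.

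In that last step only your fallback argument is valid. $T^\vee$ need not be free of finite rank (for $T=\mathbb Z\cdot\mathbb N$ it is $\mathbb Z^{\mathbb N}$), and $\mathbb Z^{T^\vee}$ is not free. The fallback does work: $T/\bigcap_{h\in T^\vee}\ker h$ is a countable subgroup of $\mathbb Z^{T^\vee}$, and each of its finite-rank subgroups embeds in some $\mathbb Z^n$ via finitely many coordinates. Pontryagin's criterion then makes it free, and the resulting splitting $T\simeq T_0\oplus\mathbb Z\cdot E''$ has $T_0^\vee=0$ exactly as you argue.
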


\begin{proof} (After Clausen-Scholze)
There exists a presentation $\mathbb Z_\bullet^I \overset f \to \mathbb Z_\bullet^J \to M \to 0$ with $I,J$ countable and, by duality, a left exact sequence $0 \to M^\vee \to (\mathbb Z \cdot J)_\bullet \overset {f^\vee} \to (\mathbb Z\cdot I)_\bullet $.
We can write $f^\vee = g_\bullet$ with $g : \mathbb Z \cdot J \to \mathbb Z\cdot I$.
Then, there exists a split exact sequence $0 \to \mathbb Z \cdot E \to \mathbb Z \cdot J \to \mathbb Z\cdot K \to 0$ with $\ker g \simeq \mathbb Z\cdot E$ and $\mathrm{im}\; g \simeq \mathbb Z \cdot K$.
In particular, $g$ splits as $g : \mathbb Z \cdot J \twoheadrightarrow \mathbb Z\cdot K \rightarrowtail \mathbb Z\cdot I$.
By duality, there exists a split exact sequence $0 \to \mathbb Z_\bullet^K \to \mathbb Z_\bullet^J \to \mathbb Z_\bullet^E \to 0$ as well as a splitting $f : \mathbb Z_\bullet^I \to \mathbb Z_\bullet^K \rightarrowtail \mathbb Z_\bullet^J$.
We can then fill-up the commutative diagram with exact rows
\[
\begin{tikzcd}
& \mathbb Z_\bullet^I \ar[r, " f " ] \ar[d] & \mathbb Z_\bullet^J \ar[r] \ar[d, equal] & M \ar[r] \ar[d, dashed, twoheadrightarrow] & 0
\\
0 \ar[r] & \mathbb Z_\bullet^K \ar[r] & \mathbb Z_\bullet^I \ar[r] & \mathbb Z_\bullet^E \ar[r] & 0
\end{tikzcd}
\]
and obtain a splitting $M = \mathbb Z_\bullet^E \oplus N$ (since $\mathbb Z_\bullet^E$ is projective).
Since $M^\vee = \ker f^\vee = \ker g_\bullet = \mathbb Z_\bullet \cdot E$, then, necessarily, $N^\vee = 0$.
After replacing $M$ by $N$, we may therefore assume that $M^\vee = 0$ so that $g$ is now injective.
Then, there exists a short exact sequence $0 \to \mathbb Z \cdot J \overset g \to \mathbb Z\cdot I \to Q \to 0$ with $Q$ countable providing a (long) exact sequence $0 \to Q_\bullet^\vee \to \mathbb Z_\bullet^I \overset f \to \mathbb Z_\bullet^J \to \mathcal E\mathrm{xt}_{\mathbb Z_\bullet}(Q_\bullet, \mathbb Z_\bullet) \to 0$ and in particular an identification $M \simeq \mathcal E\mathrm{xt}_{\mathbb Z_\bullet}(Q_\bullet, \mathbb Z_\bullet)$.
Finally, since $Q$ is countable, we know from corollary 19.3 in \cite{Fuchs70} that $Q = F \oplus T$ with $F$ free and $T^\vee = 0$ so that $\mathcal E\mathrm{xt}_{\mathbb Z_\bullet}(Q_\bullet, \mathbb Z_\bullet) = \mathcal E\mathrm{xt}_{\mathbb Z_\bullet}(T_\bullet, \mathbb Z_\bullet)$.
\end{proof}

\begin{thm} \label{cohM}
A finitely presented $\mathbb M$-module $M$ has projective dimension at most one.
\end{thm}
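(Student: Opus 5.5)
The plan is to reduce, via lemma \ref{split}, to the two summands $\mathbb Z_\bullet^E$ and $\mathcal E\mathrm{xt}_{\mathbb Z_\bullet}(T_\bullet, \mathbb Z_\bullet)$, and then to write an explicit projective resolution of length one for each. Projective dimension of a finite direct sum is the maximum of the projective dimensions of the summands. The first summand $\mathbb Z_\bullet^E$, with $E$ countable, is finitely generated projective by the lemma characterizing finitely generated projective $\mathbb M$-modules. So everything comes down to showing that $N := \mathcal E\mathrm{xt}_{\mathbb Z_\bullet}(T_\bullet, \mathbb Z_\bullet)$ has projective dimension at most one, where $T$ is a countable discrete abelian group with $T^\vee = 0$.

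Since $T$ is countable, I will choose a free resolution
\[
0 \to \mathbb Z \cdot J \overset g \to \mathbb Z \cdot I \to T \to 0
\]
with $I, J$ countable; this exists because subgroups of free abelian groups are free. The functor $V \mapsto V_\bullet$ is exact (proposition \ref{adj}), so this gives a short exact sequence $0 \to (\mathbb Z\cdot J)_\bullet \to (\mathbb Z \cdot I)_\bullet \to T_\bullet \to 0$ of $\mathbb M$-modules. Applying $\mathrm R\mathcal H\mathrm{om}_{\mathbb Z_\bullet}(-, \mathbb Z_\bullet)$ yields a long exact sequence
\[
0 \to T_\bullet^\vee \to (\mathbb Z\cdot I)_\bullet^\vee \to (\mathbb Z\cdot J)_\bullet^\vee \to N \to \mathcal E\mathrm{xt}_{\mathbb Z_\bullet}((\mathbb Z\cdot I)_\bullet, \mathbb Z_\bullet).
\]
Three identifications then finish the argument.
\begin{enumerate}
\item The last term vanishes. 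The module $(\mathbb Z\cdot I)_\bullet = \mathbb Z_\bullet \cdot I$ is projective, hence internally projective by the proposition on internal flatness and projectivity.
\item By corollary \ref{dlfree}, $(\mathbb Z\cdot I)_\bullet^\vee \simeq \mathbb Z_\bullet^I$ and $(\mathbb Z\cdot J)_\bullet^\vee \simeq \mathbb Z_\bullet^J$. Both are finitely generated projective because $I$ and $J$ are countable.
\item $T_\bullet^\vee = 0$. By the lemma computing $\mathcal H\mathrm{om}_{\mathbb Z_\bullet}(V_\bullet, M)$, we have $T_\bullet^\vee \simeq \mathrm{Hom}_{\mathbb Z}(T, \mathbb Z_\bullet)$. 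Since $\mathbb Z_\bullet \subset \mathbb Z^{\mathbb N}$, this embeds into $\mathrm{Hom}_{\mathbb Z}(T, \mathbb Z^{\mathbb N}) = (T^\vee)^{\mathbb N} = 0$.
\end{enumerate}
Together these give a short exact sequence $0 \to \mathbb Z_\bullet^I \to \mathbb Z_\bullet^J \to N \to 0$ with projective outer terms, so $N$ has projective dimension at most one.

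The only delicate point is the third identification, namely the vanishing of $T_\bullet^\vee$, which is exactly where the hypothesis $T^\vee = 0$ from lemma \ref{split} is used. Everything else is formal from exactness of $-_\bullet$, internal projectivity of projectives, and duality for countable free modules. Alternatively, one can reread the exact sequence $0 \to Q_\bullet^\vee \to \mathbb Z_\bullet^I \to \mathbb Z_\bullet^J \to M \to 0$ at the end of the proof of lemma \ref{split}: after discarding the free part $F$ of $Q$, the same vanishing argument shows that the kernel term is zero.
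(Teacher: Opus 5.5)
Your proposal is correct and follows the paper's proof. Like the paper, you reduce via lemma \ref{split} to $\mathcal E\mathrm{xt}_{\mathbb Z_\bullet}(T_\bullet,\mathbb Z_\bullet)$, apply $-_\bullet$ to a countable free resolution of $T$, dualize, and kill $T_\bullet^\vee$ via $\mathbb Z_\bullet\subset\mathbb Z^{\mathbb N}$ and $T^\vee=0$; this last step is exactly the discrete case of corollary \ref{torlm} that the paper cites, and your labelling of the final sequence $0\to\mathbb Z_\bullet^I\to\mathbb Z_\bullet^J\to N\to 0$ is the consistent one (the paper's displayed sequence has $I$ and $J$ swapped relative to its chosen resolution).
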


\begin{proof}
Thanks to lemma \ref{split}, we may assume that $M = \mathcal E\mathrm{xt}_{\mathbb Z_\bullet}(T_\bullet, \mathbb Z_\bullet)$ with $T$ countable and $T^\vee = 0$.
Then, there exists a short exact sequence $0 \to \mathbb Z \cdot I \to \mathbb Z \cdot J \to T \to 0$ with $I,J$ countable.
Thanks to corollary \ref{torlm} below (discrete case), we obtain a short exact sequence
\[
0 \to \mathbb Z_\bullet^I \to \mathbb Z_\bullet^J \to \mathcal E\mathrm{xt}_{\mathbb Z_\bullet}(T_\bullet, \mathbb Z_\bullet) \to 0. \qedhere
\]
\end{proof}

The following statement is ``equivalent'' to the previous theorem:

\begin{cor} \label{fgpr}
Any finitely generated ideal of $\mathbb M$ is projective. \qed
\end{cor}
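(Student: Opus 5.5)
Let $L \subset \mathbb M$ be a finitely generated ideal. I will deduce its projectivity from theorem \ref{cohM}, using the fact that finitely generated ideals of $\mathbb M$ are principal.

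First, reduce to the principal case. If $L$ is generated by $u_1, \ldots, u_n$, then $L$ is the image of the map $\mathbb M^n \to \mathbb M$ given by $(a_i) \mapsto \sum a_i u_i$. By proposition \ref{countlm}, we have $\mathbb M^n \simeq \mathbb M$, and every $\mathbb M$-linear endomorphism of $\mathbb M$ is right multiplication by some element. Hence $L = \mathbb M u$ for a single $u \in \mathbb M$, as already noted after lemma \ref{fptr}.

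Next, look at the quotient. We have a short exact sequence
\[
0 \to \mathbb M u \to \mathbb M \to \mathbb M/\mathbb M u \to 0 .
\]
By lemma \ref{fptr}, the quotient $M := \mathbb M/\mathbb M u$ is a finitely presented $\mathbb M$-module. Theorem \ref{cohM} then says that $M$ has projective dimension at most one, so $\mathrm{Ext}^2_{\mathbb M}(M, -) = 0$.

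Now apply the long exact $\mathrm{Ext}$ sequence to the short exact sequence above, for an arbitrary module $N$. Since $\mathbb M$ is free, $\mathrm{Ext}^1_{\mathbb M}(\mathbb M, N) = 0$. This gives
\[
\mathrm{Ext}^1_{\mathbb M}(\mathbb M u, N) \simeq \mathrm{Ext}^2_{\mathbb M}(M, N) = 0 ,
\]
so $\mathbb M u$ is projective. Equivalently, by Schanuel's lemma, in any projective resolution $P_1 \to P_0 = \mathbb M \to M$ of length one the first syzygy $\mathbb M u$ is projective. Combined with the earlier lemma, $L$ is then isomorphic to $\mathbb Z_\bullet^I$ for some countable $I$.

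The main obstacle is the dimension-shifting step, and in particular whether theorem \ref{cohM} gives an honest bound $\mathrm{pd}\, M \le 1$. Its proof ultimately rests on corollary \ref{torlm}, which is only stated later in the paper. Given that result, the deduction is purely formal. The converse direction (that the corollary implies the theorem) also holds, since every finitely presented module is of the form $\mathbb M/\mathbb M u$; this is why the two statements are called ``equivalent''.
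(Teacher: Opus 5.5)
Your proposal is correct and is exactly the deduction the paper leaves implicit when it states the corollary without proof as ``equivalent'' to Theorem~\ref{cohM}. A finitely generated ideal is principal, $\mathbb M u$; the quotient $\mathbb M/\mathbb M u$ is finitely presented, hence of projective dimension at most one; and dimension shifting (or Schanuel) then makes the kernel $\mathbb M u$ projective. Your worry about Corollary~\ref{torlm} being stated later is harmless, since its proof uses neither this corollary nor Theorem~\ref{cohM}, so there is no circularity.
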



The next corollary is very likely well known but I was unable to find the statement in the literature:

\begin{cor}
The ring $\mathbb M$ is (left) coherent. \qed
\end{cor}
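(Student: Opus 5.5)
The plan is to use the standard characterization: a ring is left coherent if and only if every finitely generated left ideal is finitely presented. By the preceding Corollary \ref{fgpr}, every finitely generated ideal $L \subset \mathbb M$ is projective. It remains to see that $L$ is finitely presented, and then to invoke the characterization.

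First I will argue that $L$ is finitely presented. Since $\mathbb M^n \simeq \mathbb M$, the ideal $L$ is principal, say $L = \mathbb M u$. So there is a surjection $\mathbb M \twoheadrightarrow L$, $x \mapsto xu$. As $L$ is projective, this surjection splits, and $L$ is a direct factor of $\mathbb M$. Hence $L \simeq \mathbb M e$ for an idempotent $e$, and the kernel $\mathbb M(1-e)$ is again a direct factor, in particular finitely generated. Therefore $L$ is finitely presented, being the cokernel of $\mathbb M \overset{\cdot (1-e)}{\to} \mathbb M$. Equivalently, this is the earlier lemma listing the finitely generated projective modules as the $\mathbb M e$, and these are finitely presented.

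Next I will conclude via the standard equivalence. For completeness I would recall the argument, since it is the only nontrivial step. Let $\varphi : \mathbb M^n \to \mathbb M$ be a map. Its image is a finitely generated ideal, hence finitely presented by the previous step. The kernel of a surjection from a finitely generated module onto a finitely presented module is finitely generated, by Schanuel-type reasoning. Hence $\ker \varphi$ is finitely generated. By induction on $n$ using the short exact sequences $0 \to \mathbb M^{n-1} \to \mathbb M^n \to \mathbb M \to 0$, every finitely generated submodule of a free module of finite rank is then finitely presented. Alternatively, $\mathbb M^n \simeq \mathbb M$ directly reduces everything to submodules of $\mathbb M$ itself, which removes the induction entirely.

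The main obstacle is already absorbed by Corollary \ref{fgpr}, that is, by Theorem \ref{cohM}. That result gives projective dimension at most one, so a finitely generated ideal $\mathbb M u$, being the image of $\cdot u$ with cokernel $\mathbb M/\mathbb M u$ finitely presented, is projective. What remains is purely formal. In fact this shows more: $\mathbb M$ is left semihereditary on finitely generated ideals, which implies coherence.
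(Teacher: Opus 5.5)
Your proposal is correct, and it is essentially the argument the paper leaves implicit behind its \qed: by Corollary \ref{fgpr} every finitely generated left ideal is projective, hence a direct factor of $\mathbb M$ and so finitely presented, which is left coherence (indeed left semihereditarity). The details you supply are all sound: principality via $\mathbb M^n \simeq \mathbb M$, the splitting giving $L \simeq \mathbb M/\mathbb M(1-e)$, and the standard reduction of coherence to finitely generated ideals.
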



\section{Digressions on topological abelian groups}

This section is totally independent of the previous ones.
We gather here several results about topological abelian groups and more generally topological rings and modules that will be used later.
Concerning linear topologies, we send the reader to chapter 1 of \cite{Positselski24}, section 2.1 of \cite{Baldassarri25} or \cite{MauroPombo17}.

Let us first briefly recall the behavior of limits and colimits in this situation.
The forgetful functor from topological spaces to sets has both an \emph{exact} adjoint (discretization) and a coadjoint (indiscretization).
It formally follows that the same holds for the forgetful functors from topological rings to plain rings or from topological $A$-modules to plain $A$-modules when $A$ is a topological ring.
In particular, these forgetful functors preserve all limits and all colimits.
Now, the forgetful functor from $A$-modules to sets has an adjoint (freeness) but no coadjoint: it preserves all limits, all filtered colimits and cokernels (aka coequalizer) but \emph{not} coproducts ($\cup \neq +$).
Again, the same holds for the forgetful functor from topological $A$-modules to topological spaces.
In particular, the topology of a finite direct sum is the product topology but the topology of an infinite direct sum is \emph{not} induced by the product topology.

\begin{dfn} \label{linab}
If $R$ is a topological ring, then an \emph{$R$-linear\footnote{Or more precisely \emph{$R$-linearly topologized}.} ring} $A$ (resp.\ \emph{$R$-linear $A$-module} $M$) is a topological $R$-algebra $A$ (resp.\ topological $A$-module $M$) such that open $R$-submodules form a basis of neighborhoods of zero.
\end{dfn}

We shall simply say $\emph{linear}$ in the case $R = \mathbb Z$ and call in particular \emph{linear abelian group} a topological abelian group whose topology is linear.

\begin{xmps}
\begin{enumerate}
\item The discrete topology is linear.
\item The topology of an adic ring $A$ as well as any $A$-module $M$ (endowed with its adic topology) is $A$-linear.
\item The topology of a Huber ring or, more generally, a finitely presented $A$-module (endowed with its canonical topology) is linear (meaning $\mathbb Z$-linear) but \emph{not} $A$-linear in general.
\end{enumerate}
\end{xmps}

$R$-linear $A$-modules form a reflective and coreflective subcategory of the category of all topological $A$-modules.
The reflection (adjoint to inclusion) $V \mapsto V^{\mathrm{lin}}$ is obtained by endowing the underlying module with the coarser topology having open $R$-submodules as a basis of neighborhoods of zero but the coreflection (coadjoint to inclusion) is more mysterious. Nevertheless, the inclusion functor preserves all limits and all colimits.


If $M$ is an $R$-linear $A$-module, then the \emph{(Hausdorff) completion} of $M$ is the $R$-linear $A$-module
\begin{equation} \label{cmpdef}
\widehat M := \varprojlim_{M'} M/M',
\end{equation}
where $M'$ runs through all (or a basis of) open $R$-submodules, and $M$ is said to be \emph{complete\footnote{There exists a more general notion of completeness for arbitrary topological abelian groups (any Cauchy filter is convergent) that we shall not need.} (Hausdorff)} if the canonical map is an isomorphism $M \simeq \widehat M$.
Completion is a reflection and any limit of complete $R$-linear $A$-modules is therefore automatically complete. 
This is actually also the case for (infinite) direct sums (endowed with the direct sum topology) - but not other colimits in general.
When $R = A$, this notion of $A$-linear topology is closely related to that of pro-$A$-modules: if $M,N$ are two $A$-linear $A$-modules, then
\[
\mathrm{Hom}_{A\mhyphen\mathrm{cont}}(M, \widehat N) =\varprojlim_{N'} \varinjlim_{M'} \mathrm{Hom}_{A}(M/M', N/N')
\]
when $M'$ and $N'$ run through open $A$-submodules.
In particular, this $\mathrm{Hom}$ is naturally a complete linear abelian group\footnote{But we shall favor in practice compact-open topology.} (and even an $A$-linear $A$-module when $A$ is commutative).
In the case $R$ is commutative, there exists a universal $R$-linear topology on the tensor product of two $R$-linear (right and left) $A$-modules.
In the case $A$ too is commutative, then (complete) $R$-linear $A$-modules form an additive symmetric monoidal category with respect to (completed) tensor product (which is \emph{not} closed).

By composition, the category of complete $R$-linear $A$-modules is a reflective subcategory of the category of all topological $A$-modules whose reflection $M \mapsto M^{(\wedge)} := M^{\mathrm{lin}, \wedge}$ may be called \emph{linear completion} and is given by the same formula as in \eqref{cmpdef}.
In other words, this is the inverse limit of the discrete quotients.
In particular, linear completion coincides with Hausdorff completion when $M$ is an $R$-linear $A$-module (but \emph{not} otherwise in general).

An $R$-linear $A$-module $M$ is metrizable if and only if it is first countable Hausdorff if and only if there exists a countable basis of open neighborhoods $M_n$ such that $\bigcap M_n = 0$.
Completion preserves quotients by metrizable $R$-linear $A$-modules (but not all finite colimits).
On the bad side, a countable direct sum of metrizable $R$-linear $A$-modules is usually \emph{not} metrizable itself (with respect to the direct sum topology).

As indicated earlier, if $A$ is a topological ring, then the forgetful functor from topological $A$-modules to plain $A$-modules admits an exact adjoint.
Concretely an $A$-module $M$ will be endowed with the \emph{adjoint} or \emph{colimit} topology which is the finest $A$-module topology. 
Then, by definition, if $N$ is a topological $A$-module, any $A$-linear map $M \to N$ will be continuous.
In practice, if $A \cdot I \twoheadrightarrow M$ is a surjective $A$-linear map from a free $A$-module, then the adjoint topology on $A \cdot I$ (resp.\ $M$) is the direct sum topology (resp.\ the quotient topology) and this does not depend on the choices.
Be careful however that the topology induced on a submodule by the adjoint topology is usually coarser than the adjoint topology itself.
Finally, when $A$ is an $R$-linear ring, then the adjoint topology on an $A$-module $M$ is automatically $R$-linear.
All this sounds quite appealing but we should not rely solely on our intuition when using adjoint topology: for example, the polynomial ring $\mathbb Z_p[X]$ is already complete for the adjoint topology when $\mathbb Z_p$ is equipped with the $p$-adic topology.

\begin{dfn}
If $V$ is a \emph{topological} abelian group, then the \emph{set of null sequences} of $V$ is
\[
V_\bullet:= \{(s_n)_{n \in \mathbb N}, \quad s_n \to 0\}
\]
 (seen as column vectors).
\end{dfn}

Of course, when $V$ is discrete, we recover our older notation from section \ref{secmain}.
This applies in particular to $\mathbb Z$ which is (unless otherwise specified) always endowed with the discrete topology.
Be careful however that $\mathbb Z^I$ will be endowed with the \emph{product} topology which is not discrete when $I$ is infinite.

If $V$ is any topological abelian group, we shall then (still) denote by $t$ the endomorphism of $V_\bullet$ that shifts components up, so that
\[
t(s_0, s_1, s_2, \ldots) = (s_1, s_2, s_3, \ldots).
\]
We then get a split exact sequence $0 \to V \to V_\bullet \overset {t} \to V_\bullet \to 0$ and in particular an isomorphism $V_\bullet(*) := V_\bullet^{t=0} \simeq V$.
The map $1-t$ is injective (resp.\ bijective) on $V_\bullet$ if and only if $V$ is Hausdorff (resp.\ any null sequence is uniquely summable).
Note that, there also exists an endomorphism $e_k$ for each $k \in \mathbb N$ given by
\[
e_k(s_0, s_1, s_2, \ldots) = (0, \ldots, 0, s_0, 0, \ldots)
\]
with $s_0$ at the $k$th rank.
Together with $t$, they satisfy commutation rules \eqref{rul1} and \eqref{rul2}.

If $V$ is a topological abelian group and we want to see $V_\bullet$ itself as a topological abelian group, we will then choose as a basis of neighborhoods of zero the subsets $U_\bullet$ of null sequences in a neighborhood $U$ of zero in $V$.

Recall that we equip $\overline {\mathbb N} := \mathbb N \cup \{\infty\}$ with its profinite topology.
Then, there exists a split exact sequence of topological abelian groups
\begin{equation} \label{cotop}
0 \longrightarrow V_\bullet \longrightarrow \mathrm{Hom}_{\mathrm{cont}}(\overline{\mathbb N}, V) \overset \infty \longrightarrow V \longrightarrow 0
\end{equation}
when the middle term is endowed with the (compact-) open topology (so that the subsets $\mathrm{Hom}_{\mathrm{cont}}(\overline{\mathbb N}, U)$ form a basis of neighborhoods of zero).
Note that the shift $t$ on $V_\bullet$ is induced by the shift map $n \mapsto n+1$ on $\overline{\mathbb N}$.
This presentation shows that the functor of $V \mapsto V_\bullet$ preserves all limits of topological abelian groups.
In particular, there is no ambiguity in writing $V_\bullet^I$ as long as we take the topology into account.

If $S$ is topological space (resp.\ and $\infty \in S$ is a given point), then there exists a topological abelian group $\mathbb Z \cdot S$ (resp.\ $\mathbb Z \cdot S^*$) which is universal for morphisms (resp.\ for pointed morphisms) to topological abelian groups.
This is discussed in \cite{Morris74}) where both flavors are called after Markov\footnote{A. A. Markov, son of A. Markov.} and Graev respectively.
When $S$ is completely regular\footnote{Points and closed sets can be separated by continuous real valued functions.}, then the underlying abelian group is the usual (pointed) free abelian group 
\[
\mathbb Z \cdot S^{(*)} = \left\{ \sum_{\mathrm{finite}} c_s e_s, \quad c_s \in \mathbb Z, \quad s \in S^{(*)} \right\}
\]
- and conversely.
Moreover, the topology of $\mathbb Z \cdot S^{(*)}$ is the finest abelian group topology making the inclusion of $S$ continuous.
Finally, $\mathbb Z \cdot S \simeq \mathbb Z \cdot S^* \oplus \mathbb Z \cdot \infty$ as topological abelian groups.
When $S$ is compact metrizable, then the topology of $\mathbb Z \cdot S^{(*)}$ is sequential\footnote{Closed/open/continuity may be characterized by convergent sequences -- equivalently, this is a quotient of a metrizable, or a quotient of first countable space.} Hausdorff.
Be careful however that it is \emph{never} locally compact nor metrizable unless $S$ is discrete.
We may always consider the Archimedean norm $\|\sum c_ss\|_1= \sum_s |c_s|$ on $\mathbb Z \cdot S$ and set 
\[
(\mathbb Z \cdot S)_N = \left\{x \in \mathbb Z \cdot S,\quad \|x\|_1 \leq N \right\}.
\]
When $S$ is compact Hausdorff, we can then write $\mathbb Z \cdot S := \varinjlim_{N \in \mathbb N} (\mathbb Z \cdot S)_N$ as a filtered colimit of compact Hausdorff subspaces.
In the case $S$ is a Stone (aka profinite) space, then $S := \varprojlim_{i \in I} S_i$ is a (filtered) limit of finite discrete sets (with surjective transition maps), and $(\mathbb Z \cdot S)_N = \varprojlim_{i \in \mathbb N} (\mathbb Z \cdot S_i)_N$ also is a Stone space.

We shall denote by $V^\vee := \mathrm{Hom}_{\mathbb Z-\mathrm{cont}}(V, \mathbb Z)$ the topological dual of a topological abelian group $V$ (equipped with the compact-open topology).
Recall that the category of totally disconnected topological spaces is a reflective subcategory of the category of all topological spaces with reflection $S \mapsto \pi_0(S)$ (when $\pi_0(S)$ is equipped with the quotient topology).
Since $\mathbb Z$ has the discrete (and in particular totally disconnected) topology, we have $(\mathbb Z \cdot S)^\vee \simeq (\mathbb Z \cdot \pi_0(S))^\vee$.
In particular, the case of a compact Hausdorff space $S$ reduces to that of the Stone space $\pi_0(S)$.

Recall that we also introduced above the linear completion $M^{(\wedge)}$ of an arbitrary topological abelian group $M$ as the inverse limit of its discrete quotients.

\begin{prop} \label{ddcomp}
If $S$ is a Stone space and we let $S'$ run through all finite quotients of $S$, then
\begin{enumerate}
\item
$
(\mathbb Z \cdot S)^{(\wedge)} \simeq \varprojlim_{S \twoheadrightarrow S'} \mathbb Z \cdot S',
$
\item
$(\mathbb Z \cdot S)^\vee \simeq \varinjlim_{S \twoheadrightarrow S'} (\mathbb Z \cdot S')^\vee$,
\item
$
(\mathbb Z \cdot S)^{(\wedge)} \simeq (\mathbb Z\cdot S)^{\vee\vee}.
$
\end{enumerate}
\end{prop}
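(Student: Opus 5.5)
Throughout, write $S = \varprojlim_{i} S_i$ as a cofiltered limit of finite discrete quotients $S \twoheadrightarrow S'$. The plan is to prove (1) by identifying the open subgroups of $\mathbb Z \cdot S$, then (2) by a compactness argument on each bounded piece $(\mathbb Z\cdot S)_N$, and to deduce (3) formally from (1) and (2) via duality between finitely generated free groups.

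For (1), note that any continuous homomorphism $\mathbb Z \cdot S \to D$ to a discrete abelian group corresponds, by the universal property of the Markov free group, to a continuous map $S \to D$. Such a map is locally constant on the compact space $S$, so it has finite image and factors through some finite quotient $S \twoheadrightarrow S'$. Hence every discrete quotient of $\mathbb Z\cdot S$ is a quotient of some $\mathbb Z \cdot S'$. Conversely, each kernel of $\mathbb Z\cdot S \to \mathbb Z \cdot S'$ is open, since the target is discrete and the map is induced by a continuous map $S \to S'$. These kernels therefore form a cofinal family among open subgroups. Since $D$ is arbitrary, every open subgroup contains one of them, and the linear completion is $\varprojlim \mathbb Z\cdot S'$.

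For (2), the same argument with $D=\mathbb Z$ shows that every continuous $\varphi : \mathbb Z\cdot S \to \mathbb Z$ factors through some $\mathbb Z \cdot S'$. Factorization is also unique, because $\mathbb Z\cdot S \to \mathbb Z \cdot S'$ is surjective. This gives the isomorphism of abelian groups $\varinjlim (\mathbb Z\cdot S')^\vee \simeq (\mathbb Z\cdot S)^\vee$. The main obstacle is the topology: the left side carries the colimit topology, the right side the compact-open topology, and both should turn out discrete. To show the compact-open topology is discrete, I would use the compact subsets $(\mathbb Z\cdot S)_N$, and in particular $S = (\mathbb Z\cdot S)_1 \cap S$. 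The set $\{\psi : \psi(S) \subset \{0\}\}$ is an open neighbourhood of $0$ in the compact-open topology. It reduces to $\{0\}$ because $S$ generates $\mathbb Z\cdot S$. Hence the dual is discrete, matching the colimit of finite free discrete groups $\mathbb Z^{S'}$.

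For (3), I dualize once more. Since $(\mathbb Z\cdot S)^\vee$ is discrete, its dual is $\mathrm{Hom}_{\mathbb Z}(\varinjlim \mathbb Z^{S'}, \mathbb Z) = \varprojlim \mathrm{Hom}(\mathbb Z^{S'},\mathbb Z) = \varprojlim \mathbb Z\cdot S'$. The compact-open topology on the dual of a discrete group is the topology of pointwise convergence. I would check that this agrees with the limit topology, using the finiteness of each $S'$: the compact subsets of a discrete group are finite, and each such subset lies in some $\mathbb Z^{S'}$. Combining with (1) gives $(\mathbb Z\cdot S)^{(\wedge)} \simeq (\mathbb Z\cdot S)^{\vee\vee}$. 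Finally, I would verify that this composite isomorphism is the canonical evaluation map $\mathbb Z\cdot S \to (\mathbb Z\cdot S)^{\vee\vee}$ extended to the completion, which is immediate on each finite level $S'$.
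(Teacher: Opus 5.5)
Your proposal is correct and follows essentially the same route as the paper: both arguments rest on the fact that a continuous map from $S$ to a discrete group factors through a finite quotient $S'$. This gives (2) directly, and (3) follows by dualizing the colimit and using biduality at each finite level $\mathbb Z\cdot S'$. The differences are cosmetic. For (1) you show that the kernels of $\mathbb Z\cdot S\to\mathbb Z\cdot S'$ are cofinal among open subgroups, whereas the paper checks the universal property against discrete targets. You also handle the topologies explicitly (discreteness of the dual, and agreement of the compact-open and limit topologies on the bidual), which the paper only addresses in the remark following its proof.
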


\begin{proof}
Notice first that the right hand side is indeed a complete linear abelian group.
Now, we have to show that, if $V$ is another complete linear abelian group, then
\[
\mathrm{Hom}_{\mathbb Z-\mathrm{cont}}\left( \varprojlim_{S \twoheadrightarrow S'} \mathbb Z \cdot S', V\right) \simeq \mathrm{Hom}_{\mathbb Z-\mathrm{cont}}(\mathbb Z \cdot S, V).
\]
We may assume that $V$ is discrete in which case
\begin{align*}
\mathrm{Hom}_{\mathbb Z-\mathrm{cont}}\left( \varprojlim_{S \twoheadrightarrow S'} \mathbb Z \cdot S', V\right) &\simeq \varinjlim_{S \twoheadrightarrow S'} \mathrm{Hom}_{\mathbb Z}(\mathbb Z \cdot S', V)
\\ &\simeq \varinjlim_{S \twoheadrightarrow S'} \mathrm{Hom}(S', V)
\\ &\simeq \mathrm{Hom}_{\mathrm{cont}}(S, V)
\\ &\simeq \mathrm{Hom}_{\mathbb Z-\mathrm{cont}}(\mathbb Z \cdot S, V).
\end{align*}

The last lines in the case $V = \mathbb Z$ also provide
\[
(\mathbb Z\cdot S)^\vee \simeq \varinjlim_{S \twoheadrightarrow S'} (\mathbb Z \cdot S')^\vee.
\]

Finally, when $S$ is finite, then $\mathbb Z \cdot S \simeq (\mathbb Z\cdot S)^{\vee\vee}$ and, in general,
\begin{align*}
(\mathbb Z\cdot S)^{\vee\vee} &\simeq \mathrm{Hom}_{\mathbb Z}\left(\varinjlim_{S \twoheadrightarrow S'} (\mathbb Z \cdot S')^\vee, \mathbb Z\right)
\\ &\simeq \varprojlim_{S \twoheadrightarrow S'} \mathrm{Hom}_{\mathbb Z}((\mathbb Z \cdot S')^\vee, \mathbb Z)
\\ &\simeq \varprojlim_{S \twoheadrightarrow S'} (\mathbb Z\cdot S')^{\vee\vee}
\\ &\simeq \varprojlim_{S \twoheadrightarrow S'} \mathbb Z \cdot S'
\\ &\simeq (\mathbb Z \cdot S)^{(\wedge)}. \qedhere
\end{align*}
\end{proof}

We may also notice that the topology of $(\mathbb Z \cdot S)^\vee$ is discrete: since $S$ is compact and $0$ is open in $\mathbb Z$, the set of all continuous maps from $\mathbb Z \cdot S$ to $\mathbb Z$ sending $S$ to $0$ is open (for the compact open topology) and this set is reduced to the zero map.
A lot more is true:

\begin{thm}[Nöbeling]\label{Nobel}
If $S$ is a compact Hausdorff (metrizable) topological space, then $(\mathbb Z\cdot S)^\vee$ is a \emph{free} discrete abelian group (of countable rank).
\end{thm}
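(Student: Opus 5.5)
First reduce to the Stone case. By the remark preceding proposition \ref{ddcomp}, $(\mathbb Z\cdot S)^\vee \simeq (\mathbb Z\cdot\pi_0(S))^\vee$, and $\pi_0(S)$ is Stone. If $S$ is metrizable, then $\pi_0(S)$ is metrizable as well: it is a compact Hausdorff quotient of a compact metrizable space. Next, a continuous homomorphism $\mathbb Z\cdot S\to\mathbb Z$ is the same as a continuous map $S\to\mathbb Z$, by the universal property of $\mathbb Z\cdot S$. Hence $(\mathbb Z\cdot S)^\vee \simeq C(S,\mathbb Z)$ as an abelian group. We already saw that it is discrete, so the statement becomes: \emph{for $S$ Stone, the group $C(S,\mathbb Z)$ of locally constant integer valued functions is free, of countable rank when $S$ is metrizable.}

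The countable-rank claim is the easy part. By proposition \ref{ddcomp}(2), $C(S,\mathbb Z)=\varinjlim_{S\twoheadrightarrow S'}\mathbb Z^{S'}$. When $S$ is metrizable, $S=\varprojlim_n S_n$ along a sequence of finite quotients. Then $C(S,\mathbb Z)=\bigcup_n \mathbb Z^{S_n}$ is an increasing union of free groups $F_n$ of finite rank, each injecting into the next because the transition maps are surjective. To get freeness, I would show that each inclusion $F_n\subset F_{n+1}$ has free, hence split, cokernel. If $S_{n+1}\to S_n$ is surjective, then $\mathbb Z^{S_n}\to\mathbb Z^{S_{n+1}}$ sends a function to its pullback. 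The cokernel is free, because one can choose a section $S_n\to S_{n+1}$ and the complement is the set of functions vanishing on its image. So $C(S,\mathbb Z)=\bigoplus_n F_{n+1}/F_n$ is a countable direct sum of finitely generated free groups, hence free of countable rank. This settles the metrizable case, which is the one relevant for the light theory.

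For a general Stone space $S$, which is Nöbeling's actual theorem, I would follow the standard proof: embed $S\subset\{0,1\}^I$ with $I$ well ordered. One then proves by transfinite induction on the ordinal type of $I$ that the set of ``good'' products $e_{i_1}\cdots e_{i_k}$ ($i_1<\dots<i_k$) of coordinate projections restricted to $S$ forms a basis of $C(S,\mathbb Z)$. Here a product is good if it is not a $\mathbb Z$-combination of smaller products in the lexicographic order. Linear independence holds by construction, and the whole content is spanning. The inductive step splits $S$ along the last coordinate into $S_0$, $S_1$ and the projection of $S_1$. This step, including the limit-ordinal case, is the main obstacle, and I would rather cite it, e.g.\ from Fuchs, or from Scholze's condensed notes where it is written for exactly this purpose.

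Alternatively, one can avoid Nöbeling in general by invoking Specker's theorem that bounded functions $\mathbb N\to\mathbb Z$ form a free group. The route above already suffices for the parenthetical metrizable case, though, and that case is all the paper uses later.
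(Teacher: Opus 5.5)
Your proof is correct, but it does something the paper does not. The paper gives no argument at all: it cites N\"obeling's original paper and \'Asgeirsson's proof for arbitrary compact Hausdorff $S$, with a pointer to Camargo for the metrizable case. You instead prove the metrizable case from scratch. After reducing to $C(S,\mathbb Z)$ for a metrizable Stone space $S$, you write it as the increasing union of the $\mathbb Z^{S_n}$. A set-theoretic section $\sigma$ of $S_{n+1}\twoheadrightarrow S_n$ gives $\mathbb Z^{S_{n+1}}=\mathbb Z^{S_n}\oplus\mathbb Z^{S_{n+1}\setminus\sigma(S_n)}$, so the union is a countable direct sum of free groups of finite rank. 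This is elementary and covers everything the light theory uses: the isomorphisms $\mathbb Z\cdot S^{\vee\vee}_\bullet\simeq\mathbb Z_\bullet^I$ with $I$ countable are only applied to compact metrizable $S$. However, it depends essentially on $S$ being the limit of a \emph{sequence} of finite sets. For Stone spaces of uncountable weight you still need N\"obeling's transfinite argument, which you, like the paper, take from the literature.

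Two of your side remarks on the general case are inaccurate, although neither affects what you actually prove.

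First, the roles in your sketch of N\"obeling's proof are reversed. With ``good'' meaning ``not a $\mathbb Z$-combination of smaller products'', it is spanning that holds by construction. The products span $C(S,\mathbb Z)$, and the lexicographic order on products (with indices listed in \emph{decreasing} order) is a well-order. So by transfinite induction every product lies in the span of the good ones. Linear independence is the real content. A relation $\sum_k a_kf_k=0$ among good products only shows that $af$ lies in the span of smaller products, where $f$ is the largest product occurring and $a\neq 0$ its coefficient. Over $\mathbb Z$ you cannot divide by $a$. What proves independence is the transfinite induction, via the exact sequence $0\to C(S',\mathbb Z)\to C(S,\mathbb Z)\to C(S_0\cap S_1,\mathbb Z)\to 0$, given by $f\mapsto f(\cdot,1)-f(\cdot,0)$. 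Here $S'$ is the projection of $S$ forgetting the last coordinate, and $S_0,S_1\subset S'$ are the projections of the two slices.

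Second, Specker's theorem is about bounded integer \emph{sequences}. Restricting to a countable dense subset, it gives freeness of $C(S,\mathbb Z)$ for separable $S$. But Stone spaces need not be separable: for $S=\{0,1\}^I$ with $|I|>2^{\aleph_0}$, the group $C(S,\mathbb Z)$ is too large to embed into the bounded sequences at all. The extension to bounded functions on arbitrary sets is precisely what N\"obeling proved, so this route does not avoid his theorem.
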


\begin{proof}
This is shown in \cite{Nobeling68} with a more recent proof in \cite{Asgeirsson24b} (but see also propostion 2.1.7 of \cite{Camargo26} in the metrizable case).
\end{proof}

Nöbeling's theorem states that, if $S$ is a compact Hausdorff space, then there exists a set $I$ and a (highly non-natural) isomorphism $(\mathbb Z\cdot S)^\vee \simeq \mathbb Z \cdot I$ (with the discrete topology).
From this, we deduce an isomorphism $(\mathbb Z\cdot S)^{\vee\vee} \simeq \mathbb Z^I$ for the topological bidual.

We shall consider more specifically the case $S = \overline {\mathbb N}$ pointed at $\infty$ and denote by $P := \mathbb Z \cdot \overline {\mathbb N}^*$ the \emph{Graev free abelian group of countable rank}.
Thus, $P$ is a free abelian group on $(e_n)_{n \in\mathbb N}$ which is not metrizable and not locally compact but may be written as a filtered colimit of metrizable Stone subspaces.
Recall also that $\mathbb Z \cdot \overline {\mathbb N} \simeq P \oplus \mathbb Z \cdot \infty$.

\begin{lem} \label{Pbul}
If $V$ is a topological abelian group, then $V_\bullet \simeq \mathrm{Hom}_{\mathbb Z\mhyphen\mathrm{cont}}(P, V)$.
\end{lem}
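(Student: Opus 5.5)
We combine the universal property of the Graev free abelian group with the split exact sequence \eqref{cotop}. Since $P = \mathbb Z \cdot \overline{\mathbb N}^*$ is universal for pointed continuous maps from $(\overline{\mathbb N}, \infty)$ to topological abelian groups (pointed at $0$), restriction along $\overline{\mathbb N} \to P$ gives a bijection
\[
\mathrm{Hom}_{\mathbb Z\mhyphen\mathrm{cont}}(P, V) \simeq \{ f \in \mathrm{Hom}_{\mathrm{cont}}(\overline{\mathbb N}, V), \quad f(\infty) = 0\}.
\]
This bijection is additive, because the group structure on both sides is pointwise in $V$.

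Next we identify the right-hand side with null sequences. A map $f : \overline{\mathbb N} \to V$ is continuous if and only if it is continuous at $\infty$, since every point of $\mathbb N$ is isolated in $\overline{\mathbb N}$. The sets $\{n \geq N\} \cup \{\infty\}$ form a basis of neighborhoods of $\infty$. Hence continuity at $\infty$ means exactly that $f(n) \to f(\infty)$. With $f(\infty) = 0$, the assignment $f \mapsto (f(n))_{n \in \mathbb N}$ is therefore a bijection onto $V_\bullet$. This is precisely the kernel of the evaluation map $\infty$ in \eqref{cotop}, so the isomorphism can be read off directly from that sequence.

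Composing the two steps gives $V_\bullet \simeq \mathrm{Hom}_{\mathbb Z\mhyphen\mathrm{cont}}(P, V)$. We also check naturality in $V$ and compatibility with the shift $t$: the shift corresponds to precomposition with the endomorphism of $P$ induced by $n \mapsto n+1$, which is how $t$ was described after \eqref{cotop}.

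If the statement is also meant as an isomorphism of topological groups, with the compact-open topology on the Hom, there is one more step. A basis of neighborhoods of zero in $\mathrm{Hom}_{\mathrm{cont}}(\overline{\mathbb N}, V)$ is given by the sets $\mathrm{Hom}_{\mathrm{cont}}(\overline{\mathbb N}, U)$, and these match the sets $U_\bullet$. We must then compare the compact-open topology on $\mathrm{Hom}_{\mathbb Z\mhyphen\mathrm{cont}}(P, V)$ with the one transported from $\overline{\mathbb N}$. The expected obstacle is that $P$ is not compact, only a filtered colimit of the compact sets $P_N = (\mathbb Z\cdot \overline{\mathbb N}^*)_N$. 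To handle this, note that each $P_N$ is the image of the compact set $(\overline{\mathbb N} \cup -\overline{\mathbb N})^N$ under summation. So the condition $\varphi(P_N) \subset U$ follows from $\varphi(\overline{\mathbb N}) \subset U'$ for a neighborhood $U'$ with $N$-fold sums $U' + \cdots + U' \subset U$. This shows the two topologies agree.
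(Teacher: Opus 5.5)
Your argument is correct and is essentially the paper's. The paper applies $\mathrm{Hom}_{\mathbb Z\mhyphen\mathrm{cont}}(-,V)$ to the splitting $\mathbb Z\cdot\overline{\mathbb N}\simeq P\oplus\mathbb Z\cdot\infty$ and compares the result with \eqref{cotop}, which is exactly your identification, via the Graev universal property, of $\mathrm{Hom}_{\mathbb Z\mhyphen\mathrm{cont}}(P,V)$ with the kernel of evaluation at $\infty$.

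Your topological refinement goes beyond what the paper proves. To finish it you also need that every compact subset of $P$ lies in some $P_N$, which holds because $P$ carries the colimit topology of the compact Hausdorff $P_N$; only then do the sets $\{\varphi : \varphi(P_N)\subset U\}$ form a neighborhood basis of zero for the compact-open topology.
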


\begin{proof}
Compare the split exact sequence
\[
0 \to \mathrm{Hom}_{\mathbb Z\mhyphen\mathrm{cont}}(P, V) \to \mathrm{Hom}_{\mathbb Z\mhyphen\mathrm{cont}}(\mathbb Z \cdot \overline{\mathbb N}, V) \to \mathrm{Hom}_{\mathbb Z\mhyphen\mathrm{cont}}(\mathbb Z \cdot \infty, V) \to 0
\]
with \eqref{cotop}.
\end{proof}

As a consequence of the lemma, we see that $P^\vee \simeq \mathbb Z_\bullet$, so that $P^{\vee\vee} \simeq \mathbb Z^\mathbb N$ and the bidual map $P \rightarrowtail P^{\vee\vee}$ identifies with the obvious inclusion map $P \rightarrowtail \mathbb Z^\mathbb N$.

Recall also that the pointed continuous map
\[
t : \overline {\mathbb N} \to \overline {\mathbb N}, \quad n \mapsto n+1
\]
induces a endomorphism $t : P \to P$ that recovers our endomorphism $t$ of $V_\bullet = \mathrm{Hom}_{\mathbb Z\mhyphen\mathrm{cont}}(P, V)$.
Odd enough, there exists therefore a split exact sequence $0 \to P \overset t \to P \to \mathbb Z \to 0$ (evaluation at $0$) so that $P \simeq \mathbb Z \oplus P \simeq P \oplus \mathbb Z \simeq \mathbb Z \cdot \overline{\mathbb N}$ as topological abelian groups (Graev = Markov).
Also worth mentioning, the pointed continuous map
\[
e_k : \overline{\mathbb N} \to \overline{\mathbb N}, \quad \left\{\begin{array}{ll} 0 \mapsto k \\ n \mapsto \infty\ \mathrm{otherwise}, \end{array}\right.
\]
defines an endomorphism of $P$ that induces the above endomorphism $e_k$ of $V_\bullet$.


\section{$\mathbb M$-modules and linear abelian groups} \label{Seclin}

In a complete linear abelian group $V$, any null sequence is uniquely summable or, in other words, a series whose general term goes to zero is automatically (uniquely) convergent\footnote{This is the dream of freshpeople discovering calculus.}.
There exists therefore a natural action
\[
[c_{kn}] \times (s_{k}) = \left(\sum_n c_{kn}s_{n}\right).
\]
of $\mathbb M$ on the set $V_\bullet$ of null sequences generalizing the case of a discrete $V$.

\begin{xmps}
\begin{enumerate}
\item
We have $\mathbb Z[[t]]_\bullet \simeq \mathbb M$ ($t$-adic topology) and $\mathbb Z[t]_\bullet \simeq \mathbb M^\vee$ (discrete topology).
More precisely, we identify $(f_n)$ with $\sum e_nf_n$ in both cases.
\item
We have (discrete topology) $\mathbb Z[1/n]_\bullet \simeq \mathbb M^\vee/\mathbb M^\vee(1-nt)$ with 
\[
1-nt = \left[\begin{array}{ccccc} 1 & -n & 0 & \cdots \\ 0 & 1 & -n & \ddots \\ \vdots & \ddots & \ddots & \ddots \end{array} \right].
\]
\item
We have ($p$-adic topology) $\mathbb Z_{p\bullet} \simeq \mathbb M/\mathbb M(t-p)$ with 
\[
t-p = \left[\begin{array}{ccccc} -p & 1 & 0 & \cdots \\ 0 & -p & 1 & \ddots \\ \vdots & \ddots & \ddots & \ddots \end{array} \right].
\]
\item We have ($p$-adic topology) $\mathbb Q_{p\bullet} \simeq \mathbb Z_{p\bullet}[1/p] \simeq \varinjlim_{p\cdot} \mathbb Z_{p\bullet}$.
\item Let
\[
 \mathbb Q_p\{X\} := \left\{\sum a_n X^n, a_n \to 0\right\} \subset \mathbb Q_p[[X]] \quad \mathrm{and} \quad A := \varinjlim_{X \mapsto pX} \mathbb Q_p\{X\}_\bullet
\]
(with respect to the $p$-adic topology on $\mathbb Q_p\{X\}$).
Then $A$ is \emph{not} complete.
However, $A$ is a genuine $\mathbb M$-module sitting in between $\mathbb Q_p\{X\}_\bullet$ and $\mathbb Q_p[[X]]_\bullet$ (it represents the local ring at the origin).
\end{enumerate}
\end{xmps}

\begin{lem} \label{endcont}
The matrix ring $\mathbb M$ is a complete metrizable $\mathbb M$-linear \emph{ring} (for the canonical topology).
\end{lem}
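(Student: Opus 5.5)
We have to show three things for the canonical topology of definition \ref{cantop}: $\mathbb M$ is a topological ring, it is $\mathbb M$-linear (open left ideals form a basis of neighborhoods of zero), and it is complete and metrizable. The plan is to work with the description of proposition \ref{tMod}, where the canonical topology is the $t$-adic one with basis of neighborhoods of zero the left ideals $\mathbb Mt^n$.

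\textbf{Linearity and metrizability.} Each $\mathbb Mt^n$ is a left ideal, hence a left $\mathbb M$-submodule. So once $\mathbb M$ is known to be a topological ring, the topology is automatically $\mathbb M$-linear in the sense of definition \ref{linab}. The basis $(\mathbb Mt^n)_n$ is countable. Moreover, $\bigcap_n \mathbb Mt^n = 0$: a matrix in $\mathbb Mt^n$ has its first $n$ columns equal to zero. Hence the topology is Hausdorff and first countable, so metrizable, as recalled in section 4. Completeness is exactly the isomorphism $\mathbb M \simeq \varprojlim \mathbb M/\mathbb Mt^n$ of proposition \ref{tMod}.

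\textbf{Continuity of the ring operations.} Continuity of addition is clear, since the neighborhoods are subgroups. The point needing an argument is continuity of multiplication $(u,v)\mapsto uv$. Write $uv - u_0v_0 = (u-u_0)v + u_0(v-v_0)$; we must control each term.

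The second term is easy. $\mathbb Mt^n$ is a left ideal, so $v-v_0 \in \mathbb Mt^n$ forces $u_0(v-v_0) \in \mathbb Mt^n$ for any $u_0$.

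The first term is the main obstacle, because $\mathbb Mt^n$ is not a right ideal: right multiplication by an arbitrary $v$ does not preserve it. For fixed $v$ and $n$, we need some $m$ with $\mathbb Mt^m v \subset \mathbb Mt^n$, and we need this uniformly for $v$ near $v_0$.

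The plan is to use column-finiteness. The first $n$ columns of $v$ involve only finitely many rows, say rows $< m$. Then for $w \in \mathbb Mt^m$, whose first $m$ columns vanish, the $j$-th column of $wv$ is $\sum_i w_{\ast i} v_{ij}$. For $j<n$ only $i<m$ contribute, and those columns of $w$ are zero, so $wv \in \mathbb Mt^n$. Equivalently, in the language of the paper, $t^m v \in \mathbb M t^n$ after adjusting: $v$ maps the span of $e_0,\dots,e_{n-1}$ into the span of $e_0,\dots,e_{m-1}$.

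For uniformity, note that if $v - v_0 \in \mathbb Mt^n$, then $v$ and $v_0$ share their first $n$ columns. So the same $m$ works for all such $v$. Taking $u-u_0\in\mathbb Mt^m$ and $v-v_0\in\mathbb Mt^n$ gives $uv-u_0v_0\in \mathbb Mt^n$, which proves joint continuity.

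Alternatively, continuity can be phrased through $\mathbb M \simeq \mathrm{End}_{\mathbb Z}(\mathbb Z_\bullet)$ (proposition \ref{isoM}). There the canonical topology is the topology of pointwise convergence on the discrete group $\mathbb Z_\bullet$: $u$ is small when it vanishes on $e_0,\dots,e_{n-1}$. Composition is jointly continuous for this topology because $v(e_j)$ for $j<n$ is a finite set of vectors, determined by the first $n$ columns of $v$.
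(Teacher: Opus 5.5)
Your proof is correct and follows essentially the same route as the paper, which also takes the $t$-adic description and completeness from proposition \ref{tMod} and then uses column-finiteness of $u$ to find $m$ with $t^m u \in \mathbb M t^r$ (continuity of right multiplication), the left-ideal property of $\mathbb M t^n$ handling the rest. Your version simply spells out metrizability and the joint continuity, including the uniform choice of $m$, which the paper leaves implicit.
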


\begin{proof}
We know from proposition \ref{tMod} that the canonical topology is identical to the $t$-adic topology and that $\mathbb M$ is then complete as a topological $\mathbb M$-module.
Hence, it only remains to check that any right multiplication by some $u = [a_{kn}] \in \mathbb M$ is continuous.
Since all columns of $u$ are finite, if $r \in \mathbb N$, then there exists $m \in \mathbb N$ such that $a_{kn} = 0$ for $n < r$ and $k \geq m$.
It follows that $t^mu = t^m \sum a_{kn}e_kt^n = \sum a_{kn}e_{k-m}t^n\in \mathbb Mt^r$.
\end{proof}

Unless otherwise specified, an $\mathbb M$-module $M$ will be endowed with the \emph{adjoint} topology (the finest $\mathbb M$-module topology\footnote{As discussed in the previous section.}) that we shall call \emph{canonical}.
We shall also call \emph{canonical} the topology on the abelian group $M(*)$ induced by the canonical topology of $M$ (or equivelently the quotient topology since $M(*)$ is a direct factor).
Recall also\footnote{As discussed in the previous section :-).} that when $V$ is a complete linear abelian group, $V_\bullet$ inherits a topology from $V$.
This turns $V_\bullet$ into a complete $\mathbb M$-linear $\mathbb M$-module: the $\mathbb M$-submodules $V'_\bullet$, when $V'$ runs trough all open subgroups of $V$, form a basis of neighborhoods of zero.
Be careful that the topology on $V_\bullet$ might be coarser than the canonical topology when $V$ is not metrizable.

\begin{prop}
The topology of a complete metrizable $\mathbb M$-linear $\mathbb M$-module $M$ is identical to its canonical topology.
\end{prop}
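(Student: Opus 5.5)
We compare the given topology $\tau$ of $M$ with the canonical (adjoint) topology $\tau_{\mathrm{can}}$. Since the canonical topology is the finest $\mathbb M$-module topology, the identity $(M,\tau_{\mathrm{can}}) \to (M,\tau)$ is continuous. So $\tau_{\mathrm{can}}$ is finer than $\tau$, and it remains to show that every canonical neighborhood of zero contains an open submodule of $\tau$.

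Choose a decreasing basis $M_0 \supset M_1 \supset \cdots$ of open $\mathbb M$-submodules for $\tau$, with $\bigcap M_n = 0$ and $M \simeq \varprojlim M/M_n$. I would build a presentation adapted to this filtration. For each $n$, choose generators of $M_n$, that is, a surjection $\mathbb M \cdot I_n \twoheadrightarrow M_n$. Completeness should then give a surjection
\[
\pi : \widehat{\bigoplus_n} \mathbb M \cdot I_n \twoheadrightarrow M ,
\]
defined on families $(x_n)$ with $x_n \in \mathbb M \cdot I_n$ by $(x_n) \mapsto \sum_n \pi_n(x_n)$. The sum converges in $M$ because $\pi_n(x_n) \in M_n$. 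If $U$ is a canonical neighborhood of zero, then $\pi^{-1}(U)$ pulled back to $\bigoplus \mathbb M\cdot I_n$ is open for the direct sum topology, since that topology is the adjoint one.

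The key point is this. Given a canonical neighborhood $U$, we want some $N$ with $M_N \subset U$. It is easier to use the structure of $\mathbb M$ directly. The canonical topology on $\mathbb M$ is the $t$-adic one (proposition \ref{tMod}), so a canonical neighborhood of zero in $M$ contains the images of $\mathbb M t^{r_m}$ under every map $\mathbb M \to M$, for exponents depending on the map.

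\textbf{Main step.} Argue by contradiction. Suppose that for every $N$ there is $y_N \in M_N \setminus U$. I would exploit the $\mathbb M$-module structure to make $y_N$ a ``column'' of a single element. By completeness and metrizability, $(y_N)$ is a null sequence in $(M,\tau)$. The family $(y_N)_N$ should therefore define an $\mathbb M$-linear map $\phi : \mathbb M \simeq \mathbb Z_\bullet^{\mathbb N} \to M$ whose $N$-th column is sent to $y_N$, meaning $e_0 t^N \mapsto y_N$ up to the identification. Here is how I would construct $\phi$ concretely: take $z := \sum_N e_N y_N$, using the action of $e_N$ on $M$. Since each $M_N$ is an $\mathbb M$-submodule, $e_N y_N \in M_N$, so the series converges in the complete metrizable $M$. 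Then $\phi(u) = u z$ gives $\phi(e_0 t^N) = e_0 t^N z = e_0 y_N$.

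This works on the top row. To handle all of $M$, replace $y_N$ by its components using $1 = \sum e_k t^k$, or reduce to $M(*)$ via the direct factor decomposition. Once $\phi$ exists, continuity of $\phi$ for the canonical topology, together with the $t$-adic description of $\mathbb M$, gives some $r$ with $\phi(\mathbb M t^r) \subset U$. But $e_0 t^N \in \mathbb M t^r$ for $N \ge r$, which contradicts $y_N \notin U$.

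I expect the main obstacle to be this construction: showing that an arbitrary null sequence of $(M,\tau)$ is the image of the columns of a single $\mathbb M$-linear map from $\mathbb M$. This needs care with the $e_k$ and $t$ actions, namely checking that $t^N z$ recovers $y_N$ modulo terms landing in $U$. The other delicate point is justifying that ``$U$ canonical open'' yields ``$\phi^{-1}(U)$ is $t$-adically open'', which follows since any $\mathbb M$-linear map from $\mathbb M$ is continuous for adjoint topologies.
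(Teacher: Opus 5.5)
Your opening reduction is sound. The canonical topology is finer than $\tau$, and if a canonical neighbourhood $U$ of $0$ contained no $M_N$, you get a $\tau$-null sequence $y_N\in M_N\setminus U$. The completed direct sum detour is never used. The gap is in the main step, which is where all the content lies.

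With $z=\sum_Ne_Ny_N$ you get $\phi(e_0t^N)=e_0y_N$, not $y_N$. So $\phi(\mathbb Mt^r)\subset U$ only yields $e_0y_N\in U$ for $N\ge r$, which contradicts nothing. This is structural, not bookkeeping. The column $\mathbb Me_0t^N$ is a copy of $\mathbb Z_\bullet$, and $\mathrm{Hom}_{\mathbb M}(\mathbb Z_\bullet,M)\simeq M(*)=e_0M$. Hence no $\mathbb M$-linear map $\mathbb M\to M$ can send a column generator $e_0t^N$ outside the top row, since $\phi(e_0t^N)=e_0t^N\phi(1)$. ``Reducing to $M(*)$'' does not fix this formally either. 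Write $y_N=\sum_{k<K}e_k\,e_0t^ky_N+t'^Kt^Ky_N$. The top-row statement, applied to the null sequences $(t^ky_N)_N$, controls each component separately, but not the tail uniformly in $N$.

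The missing idea is to give each $y_N$ a whole copy of $\mathbb M$, and then use that countably many copies of $\mathbb M$ are topologically just one. That is the paper's proof. The map $(u_n)\mapsto\sum_nu_ny_n$ is a well-defined $\mathbb M$-linear map $\mathbb M^{\mathbb N}\to M$ sending the $N$-th unit vector to $y_N$. The chain $\mathbb M^{\mathbb N}\simeq\mathbb Z_\bullet^{\mathbb N\times\mathbb N}\simeq\mathbb Z_\bullet^{\mathbb N}\simeq\mathbb M$ is a homeomorphism for the product topology. So the canonical topology of $\mathbb M^{\mathbb N}$ is the product topology, in which the unit vectors tend to $0$, and hence $y_N\to0$ canonically.

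Your ``components'' remedy becomes correct exactly once you build in this re-indexing. For a bijection $\sigma:\mathbb N\times\mathbb N\to\mathbb N$, set
\[
z:=\sum_{N,k}e_{\sigma(N,k)}\,e_0t^ky_N\quad\mathrm{and}\quad w_N:=\sum_ke_kt^{\sigma(N,k)}\in\mathbb M .
\]
The family defining $z$ is summable, because $e_0t^ky_N\in M_N$ and $t^ky_N\to0$ as $k\to\infty$ by continuity of the action. Using $e_ke_0=e_k$ and $\sum_ke_kt^k=1$, one gets $w_Nz=\sum_ke_kt^ky_N=y_N$. Moreover $w_N\in\mathbb Mt^{r_N}$ with $r_N=\min_k\sigma(N,k)\to\infty$, which gives your contradiction. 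Put $Q_N:=\sum_ke_{\sigma(N,k)}t^k$. Then $w_NQ_n$ is $1$ if $n=N$ and $0$ otherwise, and $z=\sum_nQ_ny_n$. The $w_N$ and $Q_N$ are precisely the coordinates of the isomorphism $\mathbb M\simeq\mathbb M^{\mathbb N}$, so this is the paper's argument unwound.
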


\begin{proof}
It is sufficient to show that a null sequence $(s_n)_{n \in \mathbb N}$ in $M$ is also a null sequence for the canonical topology.
Let us consider the continuous $\mathbb M$-linear map
\[
\mathbb M^\mathbb N \to M, \quad (u_n)_{n \in \mathbb N} \mapsto \sum_{n=0}^\infty u_ns_n
\]
which is well defined since the topology of $M$ is $\mathbb M$-linear and $M$ is complete.
If $\mathbb M^\mathbb N$ is endowed with the product topology, then there exists a sequence of isomorphisms of topological $\mathbb M$-modules $\mathbb M^\mathbb N \simeq (\mathbb Z_\bullet^\mathbb N)^\mathbb N \simeq \mathbb Z_\bullet^{\mathbb N \times \mathbb N} \simeq \mathbb Z_\bullet^\mathbb N \simeq \mathbb M$.
It follows that the canonical topology on $\mathbb M^\mathbb N$ is identical to its product topology.
Since the obvious ``basis'' of $\mathbb M^\mathbb N$ is a null sequence, its image $(s_n)_{n \in \mathbb N}$ is also a null sequence for the canonical topology of $M$.
\end{proof}

\begin{cor} \label{metf}
\begin{enumerate}
\item \label{metf1}
Any $\mathbb M$-linear map $M \to N$ between $\mathbb M$-linear $\mathbb M$-modules with $M$ metrizable and $N$ complete is continuous,
\item \label{metf2} If $V$ is a complete \emph{metrizable} linear abelian group, then the topology of $V_\bullet$ inherited from $V$ is identical to its canonical topology as an $\mathbb M$-module. \qed
\end{enumerate}
\end{cor}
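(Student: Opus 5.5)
Both parts follow from the preceding proposition: a complete metrizable $\mathbb M$-linear $\mathbb M$-module carries exactly its canonical (adjoint) topology. Combined with the defining universal property of the adjoint topology, this gives continuity for free.

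For (\ref{metf1}), let $f : M \to N$ be $\mathbb M$-linear, with $M$ metrizable and $N$ complete. First I reduce to the case where $M$ is complete as well. The completion $\widehat M$ is again a metrizable $\mathbb M$-linear $\mathbb M$-module, since it is a countable limit of discrete quotients $M/M_n$. By the proposition, its topology is the canonical one, so every $\mathbb M$-linear map out of $\widehat M$ into a topological $\mathbb M$-module is continuous.

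The obstacle is that $f$ need not extend to $\widehat M$ a priori. I would therefore argue directly with null sequences instead. Since $M$ is metrizable, continuity of $f$ is sequential, so it is enough to show that $f(s_n) \to 0$ whenever $s_n \to 0$ in $M$.

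Take such a null sequence $(s_n)$ in $M$. As in the proof of the proposition, consider the map
\[
\mathbb M^\mathbb N \to \widehat M, \quad (u_n) \mapsto \sum u_n s_n.
\]
For an elementwise argument, it is better to observe the following: for each open $\mathbb M$-submodule $N'$ of $N$, the preimage $f^{-1}(N')$ is an $\mathbb M$-submodule of $M$. We need to show it is open. If it were not, we could choose $s_n \to 0$ with $s_n \notin f^{-1}(N')$.

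Now the series $\sum_n f(e_0 t'^n{}^{\ast})$-type sums are not available in $M$ itself, since $M$ is not complete. So instead I would look at the sequence in $N$. The series $x = \sum_n u_n s_n$ can be formed in the complete module $N$ via $f$, only once we know $f(s_n) \to 0$, which is circular.

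So I would really use completeness of the source, and in practice assume $M$ complete, as is the case in all applications. Then the proposition gives that the topology of $M$ is canonical, and $f$ is continuous by the defining property of the adjoint topology. If the statement genuinely requires incomplete $M$, I expect the remaining step to be the main difficulty. I would try to exploit that $N$ is $\mathbb M$-linear and complete so that each $N/N'$ is discrete. Then $f^{-1}(N')$ has the form $\ker(M \to N/N')$, and one must show that any $\mathbb M$-module map from a metrizable $\mathbb M$-linear module to a discrete $\mathbb M$-module kills some $M_n$. I would prove this by contradiction: choose $s_n \in M_n$ not in the kernel, pass to the completion $\widehat M$ where $\sum u_n s_n$ makes sense, and derive a contradiction from the $t$-adic behaviour of $\mathbb M$ acting on $\mathbb M^\mathbb N \simeq \mathbb M$.

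For (\ref{metf2}), the case where $V$ is a complete metrizable linear abelian group, I would check three things about $V_\bullet$ with the inherited topology:
\begin{itemize}
\item It is $\mathbb M$-linear, as already noted in the text.
\item It is metrizable, since the $V_{n\bullet}$ form a countable basis of neighbourhoods of zero with zero intersection.
\item It is complete, since $V_\bullet = \varprojlim_n (V/V_n)_\bullet$.
\end{itemize}
The proposition then applies directly, so the inherited topology equals the canonical one.
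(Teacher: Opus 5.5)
\textbf{The gap in part (1).} Your proof covers only complete $M$. You leave the case of a metrizable but incomplete $M$ open, with a sketch by contradiction. That case cannot be proved, because part (1) as literally stated is false there.

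Take $M=\mathbb M^\vee$ with the topology induced from $\mathbb M$. Its open submodules $\mathbb M^\vee\cap\mathbb Mt^n$ (finite matrices whose first $n$ columns vanish) form a countable basis with zero intersection. Take $N=\mathbb M^\vee$ with the discrete topology.
\begin{itemize}
\item $M$ is an $\mathbb M$-linear $\mathbb M$-module, being a submodule of the topological module $\mathbb M$.
\item $N$ is one too, since $\mathbb Mt^m x=0$ for $x\in\mathbb M^\vee$ and $m$ large.
\item $M$ is metrizable and $N$ is complete.
\end{itemize}
Yet the identity $M\to N$ is not continuous: $e_0t^n\to 0$ in $M$, while $e_0t^n\neq 0$. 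Composing with the column-sum map $\mathbb M^\vee\to\mathbb Z_\bullet$ gives a counterexample whose target is the discrete module $\mathbb Z_\bullet$. Since the discrete topology is the canonical one on $\mathbb M^\vee\simeq\mathbb Z_\bullet\cdot\mathbb N$, this also shows that completeness is genuinely needed in the preceding proposition.

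This is exactly where your sketch breaks. For $s_n=e_0t^n$, the sums $\sum u_ns_n$ live in $\widehat M=\mathbb M$, typically outside $\mathbb M^\vee$, where $f$ has no value. So no contradiction can be extracted. The same obstruction kills your attempted reduction to $\widehat M$, as you yourself noticed.

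\textbf{What you do prove.} You prove the correct version of part (1). If $M$ is complete metrizable, the proposition identifies its topology with the adjoint one. Then every $\mathbb M$-linear map from $M$ to any topological $\mathbb M$-module is continuous, so the hypothesis on $N$ is superfluous. This is the only reading under which the paper's claim that the corollary is immediate from the proposition makes sense, and your argument is precisely that deduction.

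Part (2) is fine as you wrote it. $V_\bullet$ with the inherited topology is $\mathbb M$-linear and metrizable via the $V_{n\bullet}$. It is complete because $V_\bullet/V_{n\bullet}\simeq(V/V_n)_\bullet$ and $V_\bullet\simeq\varprojlim_n(V/V_n)_\bullet$. So the proposition applies directly, as the paper intends.
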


\begin{lem} \label{carb}
If $V$ is a complete linear abelian group, then
\[
V_\bullet \simeq \mathbb Z_\bullet \widehat \otimes_{\mathbb Z} V \simeq \mathrm{Hom}_{\mathbb Z\mhyphen\mathrm{cont}}(\mathbb Z^{\mathbb N}, V)
\]
(where $\mathbb Z_\bullet$ is endowed with the discrete topology and $\mathbb Z^{\mathbb N}$ with the product topology).
\end{lem}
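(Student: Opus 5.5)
We prove the two isomorphisms separately, each time reducing to the discrete case by passing to the limit over open subgroups of $V$. Write $V \simeq \varprojlim_{V'} V/V'$, where $V'$ runs through the open subgroups of $V$ and each $V/V'$ is discrete.

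\emph{First isomorphism.} By definition, the completed tensor product $\mathbb Z_\bullet \widehat\otimes_{\mathbb Z} V$ is the linear completion of $\mathbb Z_\bullet \otimes_{\mathbb Z} V$ for the tensor product topology. Since $\mathbb Z_\bullet$ is discrete, a basis of open subgroups of the tensor product is given by the $\mathbb Z_\bullet \otimes V'$. Therefore
\[
\mathbb Z_\bullet \widehat\otimes_{\mathbb Z} V \simeq \varprojlim_{V'} \mathbb Z_\bullet \otimes_{\mathbb Z} V/V' \simeq \varprojlim_{V'} (V/V')_\bullet ,
\]
where the last step uses the discrete identification $W_\bullet \simeq \mathbb Z_\bullet \otimes_{\mathbb Z} W$ from section \ref{secmain}. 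On the other hand, the functor $V \mapsto V_\bullet$ preserves limits of topological abelian groups, as follows from the presentation \eqref{cotop}. Hence $V_\bullet \simeq \varprojlim_{V'} (V/V')_\bullet$, which gives the first isomorphism. One should also check that the topologies agree: on both sides the open subgroups $V'_\bullet$ correspond to the kernels of the projections to $(V/V')_\bullet$.

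\emph{Second isomorphism.} This also reduces to $V$ discrete, since both sides commute with limits: a continuous map into $\varprojlim V/V'$ is the same thing as a compatible family of continuous maps into the $V/V'$. So take $V$ discrete. A continuous homomorphism $\mathbb Z^{\mathbb N} \to V$ then has open kernel, so it kills some $t'^n$-translate, namely $\{x : x_0 = \dots = x_{n-1} = 0\}$. It therefore factors through the projection $\mathbb Z^{\mathbb N} \to \mathbb Z^n$. This gives
\[
\mathrm{Hom}_{\mathbb Z\mhyphen\mathrm{cont}}(\mathbb Z^{\mathbb N}, V) \simeq \varinjlim_n \mathrm{Hom}(\mathbb Z^n, V) \simeq \varinjlim_n V^n = V \cdot \mathbb N = V_\bullet .
\]
The correspondence is explicit: $\varphi \mapsto (\varphi(e_n))_n$, where $e_n$ is the $n$-th standard vector of $\mathbb Z^{\mathbb N}$.

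In general, this explicit map $\varphi \mapsto (\varphi(e_n))_n$ lands in $V_\bullet$. Indeed, $e_n \to 0$ in the product topology, so $\varphi(e_n) \to 0$. Its inverse sends $(s_n)$ to $x \mapsto \sum_n x_n s_n$, which converges because $V$ is complete. Compatibility with the limits over $V'$ is then clear, so the general case follows from the discrete case, with no appeal to slenderness needed.

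The main obstacle is the topological bookkeeping: making sure that $\widehat\otimes$ is taken with the right tensor topology, and that the compact-open topology on the Hom matches the topology of $V_\bullet$. The basis of neighbourhoods of zero in the Hom is given by the maps sending a compact set into $V'$. The cleanest route is to observe that $\{e_n\} \cup \{0\}$ is compact, and that every compact subset of $\mathbb Z^{\mathbb N}$ lies in a product of finite sets. This should identify that basis with the subgroups $V'_\bullet$.
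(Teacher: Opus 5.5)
Your proof is correct and follows essentially the paper's route: both isomorphisms are reduced, via $V \simeq \varprojlim V/V'$ over open subgroups, to the discrete case, where a continuous map $\mathbb Z^{\mathbb N} \to V$ factors through some $\mathbb Z^n$ and one gets $V \cdot \mathbb N = V_\bullet$. The only cosmetic difference is in the first isomorphism. The paper writes it down explicitly as $(s_n) \mapsto \sum_n e_n \otimes s_n$ and deduces $V_\bullet \simeq \varprojlim (V/V')_\bullet$ from it, whereas you obtain that limit description from the limit-preservation of $V \mapsto V_\bullet$ and then match it with the completed tensor product.
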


\begin{proof}
The first isomorphism is $(s_n)_{n \in \mathbb N} \mapsto \sum_{n \in \mathbb N} e_n \otimes s_n$ whose inverse is given by $\sum_{n \in \mathbb N} a_n \otimes s_n \mapsto (a_ns_n)_{n \in \mathbb N}$.
Now, if we let $V'$ run through all open subgroups of $V$, we have from the first part
\[
V_\bullet \simeq \varprojlim (\mathbb Z_\bullet \otimes_{\mathbb Z} V)/(\mathbb Z_\bullet \otimes V')= \varprojlim (\mathbb Z_\bullet \otimes_{\mathbb Z} V/V') = \varprojlim (V/V')_\bullet
\]
 but also
\[
\mathrm{Hom}_{\mathbb Z\mhyphen\mathrm{cont}}(\mathbb Z^{\mathbb N}, V) = \varprojlim \mathrm{Hom}_{\mathbb Z\mhyphen\mathrm{cont}}(\mathbb Z^{\mathbb N}, V/V').
\]
We are therefore reduced to the case $V$ is discrete but then
\[
\mathrm{Hom}_{\mathbb Z\mhyphen\mathrm{cont}}(\mathbb Z^{\mathbb N}, V) \simeq V \cdot \mathbb N \simeq V_\bullet. \qedhere
\]
\end{proof}

\begin{rmk}
The second isomorphism of lemma \ref{carb} implies that
\[
\mathrm{Hom}_{\mathbb Z-\mathrm{cont}}(\mathbb Z^\mathbb N, V) \simeq \mathrm{Hom}_{\mathbb Z-\mathrm{cont}}(P, V) 
\]
if $P$ denotes as usual the \emph{Graev free abelian group of countable rank}.
This is somehow related to theorem \ref{intPr} below.
\end{rmk}

\begin{prop} \label{adcont}
There exists an adjunction (with respect to the canonical topology on $M(*)$)
\[
\mathrm{Hom}_{\mathbb Z\mhyphen\mathrm{cont}}(\widehat {M(*)}, V) \simeq \mathrm{Hom}_{\mathbb M}(M, V_\bullet)
\]
between $\mathbb M$-modules and complete linear abelian groups.
The functor $V \to V_\bullet$ preserves all direct sums of complete linear abelian groups.
\end{prop}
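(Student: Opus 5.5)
We will build the adjunction in two stages: first for discrete $V$, then by passing to the limit over the discrete quotients $V/V'$, with $V'$ running through the open subgroups of $V$.

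\textbf{Discrete case.} If $V$ is discrete, proposition \ref{adj} gives $\mathrm{Hom}_{\mathbb M}(M, V_\bullet) \simeq \mathrm{Hom}_{\mathbb M}(M, \mathrm{Hom}_{\mathbb Z}(\mathbb Z^{\mathbb N}, V))$ through the coadjoint description $V_\bullet \simeq \mathrm{Hom}_{\mathbb M}(\mathbb Z^{\mathbb N}, V)$. Here we use that $\mathbb Z$ is slender, so $\mathrm{Hom}_{\mathbb Z}(\mathbb Z^{\mathbb N}, V)$ restricted to continuous maps is $V\cdot\mathbb N = V_\bullet$ (lemma \ref{carb}). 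This identifies the right-hand side with $\mathrm{Hom}_{\mathbb Z}(M(*), V)$. It remains to check that a homomorphism $M(*) \to V$ arising this way is continuous for the canonical topology. This is automatic: the corresponding $\mathbb M$-linear map $M \to V_\bullet$ is continuous, because $M$ carries the adjoint (finest) topology and $V_\bullet$ is a topological $\mathbb M$-module. Restricting to top rows and using $V_\bullet(*) \simeq V$ then gives a continuous map $M(*) \to V$. Conversely, given a continuous $f : M(*) \to V$, the composite $M \to V_\bullet$ is $m \mapsto (f(e_0 t^n m))_n$. We must check this lands in null sequences; for discrete $V$ this means that $f(e_0t^nm)$ vanishes for $n$ large. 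I would argue as follows. The map $\mathbb M \to M$, $u \mapsto um$, is continuous for the $t$-adic topology (lemma \ref{endcont} and the adjoint topology), and $e_0 t^n \to 0$ in $\mathbb M$. Hence $e_0t^nm \to 0$ in $M(*)$, and continuity of $f$ into a discrete group gives eventual vanishing. Linearity of the map $M \to V_\bullet$ follows by comparing with the module structure on $V_\bullet$ via $\mathbb M = \mathbb Z_\bullet[[t]]$.

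\textbf{General case.} For general complete linear $V$, write $V = \varprojlim V/V'$. Since $V\mapsto V_\bullet$ preserves limits, $V_\bullet = \varprojlim (V/V')_\bullet$, and therefore
\[
\mathrm{Hom}_{\mathbb M}(M, V_\bullet) = \varprojlim \mathrm{Hom}_{\mathbb Z\mhyphen\mathrm{cont}}(M(*), V/V') = \mathrm{Hom}_{\mathbb Z\mhyphen\mathrm{cont}}(M(*), V).
\]
This equals $\mathrm{Hom}_{\mathbb Z\mhyphen\mathrm{cont}}(\widehat{M(*)}, V)$ by the universal property of linear completion. One point needs care: $M(*)$ with the canonical topology is linear, since it is a direct factor of $M$ and the adjoint topology of an $\mathbb M$-linear ring is linear. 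Hence the Hausdorff and linear completions agree.

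\textbf{Direct sums.} Preservation of direct sums should follow formally from the adjunction once it is reformulated as saying that $V\mapsto V_\bullet$ is a right adjoint composed with something colimit-friendly. That is not quite right, however, since right adjoints preserve limits, not sums. So I would instead argue directly. Take $V = \widehat\bigoplus V_i$ with the direct sum topology, which is already complete. A null sequence in $\bigoplus V_i$ must eventually lie in each basic neighbourhood $\bigoplus V_i'$. For discrete summands this forces the sequence to be finitely supported overall, using a diagonal choice of $V_i'$. This gives $(\bigoplus V_i)_\bullet = \bigoplus (V_i)_\bullet$. The general case then follows by passing to the limit over the open subgroups $\bigoplus V'_i$.

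I expect this direct-sum statement to be the main obstacle, in particular the diagonal argument showing that a null sequence cannot spread over infinitely many summands.
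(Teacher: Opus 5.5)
Your proof is correct in substance, but it is organized differently from the paper's.

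The paper reduces in the variable $M$. Both sides turn colimits of $\mathbb M$-modules into limits, which tacitly uses that the canonical topology on $M(*)$ is compatible with direct sums and cokernels. So it suffices to treat $M=\mathbb M$, where the statement is exactly lemma \ref{carb}: $\mathrm{Hom}_{\mathbb Z\mhyphen\mathrm{cont}}(\mathbb Z^{\mathbb N},V)\simeq V_\bullet$. For direct sums, the paper writes $V_\bullet\simeq\mathbb Z_\bullet\widehat\otimes_{\mathbb Z}V$ and uses that tensoring and completion commute with direct sums.

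You reduce instead in the variable $V$, to its discrete quotients. You then treat an arbitrary $M$ through the explicit formula $m\mapsto (f(e_0t^nm))_n$, where null-ness comes from $e_0t^n\to 0$ and the continuity of $u\mapsto um$ (lemma \ref{endcont}). The paper's route is shorter and purely formal once lemma \ref{carb} is known. Yours gives an explicit description of the bijection and avoids the colimit-compatibility of the topologized top row, at the price of checking $\mathbb M$-linearity by hand.

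Two places need tightening.

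\textbf{The opening isomorphism.} As written it is false. For discrete $V$, $V_\bullet$ is in general a proper submodule of $\mathrm{Hom}_{\mathbb Z}(\mathbb Z^{\mathbb N},V)$ (take $V=\mathbb Z/2$), and slenderness plays no role. What the coadjoint of proposition \ref{adj} gives is an injection $\mathrm{Hom}_{\mathbb M}(M,V_\bullet)\hookrightarrow\mathrm{Hom}_{\mathbb Z}(M(*),V)$, whose image must be identified with the continuous maps. That identification is what the rest of your argument actually does.

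Linearity is cleanest inside this adjunction. The map $f$ corresponds to the automatically $\mathbb M$-linear map $m\mapsto(a\mapsto f(am))$, with $a\in\mathbb Z^{\mathbb N}$ viewed as the first row of $\mathbb M$. Continuity of $a\mapsto am$ and of $f$ shows that $a\mapsto f(am)$ is continuous on $\mathbb Z^{\mathbb N}$, hence lies in $V_\bullet$. This also handles the infinite rows of elements of $\mathbb M$, which a check on $e_k$ and $t$ alone would not.

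\textbf{The direct-sum part.} Your diagonal argument sits in the wrong place. For discrete summands the direct sum topology is already discrete, so null sequences are eventually zero and nothing is needed. The non-formal step is ``passing to the limit'', since it asserts $\varprojlim_{(V'_i)}\bigoplus_i (V_i/V'_i)_\bullet=\bigoplus_i (V_i)_\bullet$. That is an interchange of a limit with an infinite sum, and it holds only because $\bigoplus_i (V_i)_\bullet$ is complete for the direct sum topology. This is the same fact hidden in the paper's ``completion preserves direct sums''.

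It is simpler to run the diagonal argument directly in general. Suppose a null sequence $(s_n)$ in $\bigoplus_i V_i$ met infinitely many summands. Choose $n_1<n_2<\cdots$ and distinct $i_k$ with $(s_{n_k})_{i_k}\neq 0$. By Hausdorffness, pick an open subgroup $V'_{i_k}$ missing that component, and set $V'_i=V_i$ for all other $i$. Then no $s_{n_k}$ lies in $\bigoplus_i V'_i$, a contradiction. So $(s_n)$ lives in a finite subsum, and $(\bigoplus_i V_i)_\bullet=\bigoplus_i (V_i)_\bullet$ follows.
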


\begin{proof}
In order to show that the first assertion holds, it is sufficient to prove that there exists a natural isomorphism
\[
\mathrm{Hom}_{\mathbb Z\mhyphen\mathrm{cont}}(M(*), V) \simeq \mathrm{Hom}_{\mathbb M}(M, V_\bullet)
\]
(before completion) since $V$ itself is assumed to be complete.
Since both sides preserve all limits, we are reduced to the trivial case $M = \mathbb M$ that reads
\[
\mathrm{Hom}_{\mathbb Z\mhyphen\mathrm{cont}}(\mathbb Z^\mathbb N, V) \simeq V_\bullet.
\]
This was shown in lemma \ref{carb}.
Now, the same lemma tells us that that $V_{\bullet} = \mathbb Z_\bullet \widehat \otimes_{\mathbb Z} V$.
The second assertion therefore follows from the fact that both operations of tensorizing with $\mathbb Z_\bullet$ and completing preserve direct sums.
\end{proof}

As a consequence of the proposition, the functor $V \mapsto V_\bullet$ preserves all limits and all direct sums of complete linear abelian groups and the functor $M \mapsto \widehat M(*)$ preserves all colimits\footnote{Be careful that an epimorphism in the category of complete linear abelian groups need not be surjective.}.
Beware that, if $V$ is a complete linear abelian group, then the continuous bijective linear map $V_\bullet(*) \simeq V$ may not be a homeomorphism when the left hand side carries the canonical topology.

\begin{xmp}
\begin{enumerate}
\item
If $M$ is a finitely presented $\mathbb M$-module, then $M$ is complete for the canonical topology and therefore $\widehat{M(*)} = M(*)$.
\item
If $M := \mathbb M/\mathbb M^\vee$ (which is actually a ring), then $\widehat M = 0$ and therefore $\widehat{M(*)} = 0$.
\end{enumerate}
\end{xmp}


The next result is the analog in our situation of theorem 3.2 in \cite{Ren25}.

\begin{prop} \label{metff}
When restricted to complete \emph{metrizable} linear abelian groups, the functor $V \mapsto V_\bullet$ is fully faithful and preserves quotients.
\end{prop}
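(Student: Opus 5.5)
Full faithfulness will come from the adjunction of proposition \ref{adcont}: for complete linear $V, W$ we have $\mathrm{Hom}_{\mathbb M}(V_\bullet, W_\bullet) \simeq \mathrm{Hom}_{\mathbb Z\mhyphen\mathrm{cont}}(\widehat{V_\bullet(*)}, W)$, where $V_\bullet(*) \simeq V$ carries the topology induced by the \emph{canonical} topology of $V_\bullet$. It is therefore enough to show that the counit $\widehat{V_\bullet(*)} \to V$ is an isomorphism of topological groups, i.e.\ that the bijection $V_\bullet(*) \simeq V$ is a homeomorphism when $V$ is metrizable. This is exactly what corollary \ref{metf}(\ref{metf2}) provides. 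When $V$ is complete metrizable, the canonical topology of $V_\bullet$ agrees with the topology inherited from $V$, whose basic neighbourhoods are the $V'_\bullet$. These induce $V'$ on the top row $V \simeq V_\bullet^{t=0}$, which is a direct factor. Hence $V_\bullet(*) \simeq V$ topologically, and it is already complete. Unwinding the adjunction, the resulting map $\mathrm{Hom}_{\mathbb Z\mhyphen\mathrm{cont}}(V,W) \to \mathrm{Hom}_{\mathbb M}(V_\bullet, W_\bullet)$ is the natural one, $f \mapsto f_\bullet$, so the functor is fully faithful.

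For quotients, I take a closed subgroup $V'$ of a complete metrizable linear $V$, so that $V/V'$ is again complete metrizable linear; in this category this is what a quotient is. I must show that $V_\bullet \to (V/V')_\bullet$ is surjective with kernel $V'_\bullet$. The kernel statement is clear, since a null sequence lying in $V'$ is null in $V'$ for the induced topology. For surjectivity, I fix a decreasing basis $U_0 \supset U_1 \supset \cdots$ of open subgroups of $V$. Given a null sequence $(\bar s_n)$ in $V/V'$, for each $n$ let $m(n)$ be the largest $m \le n$ with $\bar s_n \in (U_m + V')/V'$; this tends to infinity because $(U_m+V')/V'$ is a basis of neighbourhoods in the quotient. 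I then choose a lift $s_n \in U_{m(n)}$. The lifted sequence is null in $V$.

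The main obstacle is making sure the quotient notion matches. The cokernel in complete linear groups is the completion of $V/V'$ for non-closed $V'$, and completion preserves quotients by metrizable groups, as noted in the digression section. So I would first reduce to closed $V'$, replacing $V'$ by its closure, and then run the lifting argument. Metrizability is used essentially there, through the countable basis.
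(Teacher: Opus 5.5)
Your proof is correct. For full faithfulness you follow the paper exactly: you use the adjunction of proposition \ref{adcont} to reduce to showing that $V_\bullet(*) \simeq V$ is a homeomorphism for the canonical topology, and you get this from corollary \ref{metf}\eqref{metf2}. For quotients your route is genuinely different and more hands-on. The paper argues abstractly: $V_\bullet \simeq \mathbb Z_\bullet \widehat\otimes_{\mathbb Z} V$ (lemma \ref{carb}), tensoring with $\mathbb Z_\bullet$ is right exact, and completion preserves quotients of metrizable linear groups, so $V_\bullet \to (V/V')_\bullet$ is onto. You instead lift a null sequence of $V/V'$ directly, using a countable decreasing basis $(U_m)$ and the index $m(n)\to\infty$. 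This is precisely the countable-basis argument hidden inside ``completion preserves metrizable quotients'', specialized to null sequences. It makes visible where metrizability enters. It also gives you the kernel $V'_\bullet$, i.e.\ exactness of $0 \to V'_\bullet \to V_\bullet \to (V/V')_\bullet \to 0$, which the paper only records afterwards as a consequence. The paper's version is shorter because it reuses general facts. There are two small points. First, $m(n)$ is undefined for the finitely many $n$ with $\bar s_n \notin (U_0+V')/V'$; take $U_0 = V$, or lift those terms arbitrarily. Second, your reduction to closed $V'$ is not really needed, since the kernel of a quotient map between complete metrizable groups is automatically closed. If you do allow non-closed $V'$, only the surjectivity onto $(V/\overline{V'})_\bullet$ survives, because the kernel is then $\overline{V'}_\bullet$ rather than $V'_\bullet$. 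This is consistent with the paper's warning that the functor is not right exact.
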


\begin{proof}
Since completion preserves quotients of metrizable linear abelian groups and $V_{\bullet} \simeq \mathbb Z_\bullet \widehat \otimes_{\mathbb Z} V$, only fullfaithfulness requires a proof.
It is sufficient to show that, if $V$ is a complete metrizable linear abelian group, then the continuous bijective linear map $V_\bullet(*) \simeq V$ is actually a homeomorphism (so that the former is complete).
But this follows from assertion \eqref{metf2} of corollary \ref{metf}.
\end{proof}

As a consequence of the propostion, the functor $V \mapsto V_\bullet$ sends \emph{strict}\footnote{A morphism is said to be \emph{strict} when regular coimage equals regular image: quotient topology identical to induced topology on the image for a continuous map.} exact sequences of complete metrizable linear groups to exact sequences of $\mathbb M$-modules.
In particular, it preserves \emph{short} exact sequences\footnote{A sequence $0 \to V' \overset i \to V \overset p \to V'' \to 0$ is a short exact sequence when $i = \ker p$ and $\mathrm{coker}\; i = p$.}.
Be careful however that this functor is not right exact since the inclusion map $\mathbb Z \rightarrowtail \mathbb Z_p$ (discrete topology on $\mathbb Z$ and $p$-adic topology on $\mathbb Z_p$) is an epimorphism in the category of complete metrizable linear groups but the map $\mathbb Z_\bullet \to \mathbb Z_{p\bullet}$ is not surjective.

\begin{prop} \label{laxmon}
The functor $V \mapsto V_\bullet$ is (lax) monoidal on complete linear abelian groups.
\end{prop}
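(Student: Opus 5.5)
We have to produce a natural map $V_\bullet \otimes_{\mathbb Z_\bullet} W_\bullet \to (V \widehat\otimes_{\mathbb Z} W)_\bullet$ for complete linear abelian groups $V,W$, together with a unit map $\mathbb Z_\bullet \to \mathbb Z_\bullet$. The unit map is the identity, since $\mathbb Z$ is discrete and complete. We then check the usual associativity, unit and symmetry diagrams. Throughout, $V \widehat\otimes_{\mathbb Z} W$ is the completed tensor product with its universal linear topology.

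For the main map we use the definition $V_\bullet \otimes_{\mathbb Z_\bullet} W_\bullet = (\mathbb M^{\otimes 2}\otimes_{\mathbb M} V_\bullet)\otimes_{\mathbb M} W_\bullet$ and copy the explicit formula given after proposition \ref{monadj}. We send
\[
[c_{k,n,m}] \otimes (x_n) \otimes (y_m) \mapsto \Big(\sum_{n,m} c_{k,n,m}\, x_n \otimes y_m\Big)_{k\in\mathbb N}.
\]
Three things must be checked for this formula.
\begin{enumerate}
\item Convergence. The family $x_n\otimes y_m$ tends to zero along $\mathbb N\times\mathbb N$ in $V\widehat\otimes W$. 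Indeed, given open subgroups $V',W'$, for all but finitely many pairs $(n,m)$ either $x_n\in V'$ or $y_m\in W'$. So $x_n\otimes y_m$ lies in the open subgroup generated by $V'\otimes W + V\otimes W'$, and the sum converges since the target is complete.
\item Null output. The output sequence is null in $k$. Fix an open subgroup $U$ of $V\widehat\otimes W$. Only finitely many pairs $(n,m)$ have $x_n\otimes y_m\notin U$. For each such pair, column-finiteness of $\mathbb M^{\otimes 2}$ in the first index gives $c_{k,n,m}=0$ for $k$ large. Hence the $k$-th term lies in $U$ for $k\gg0$.
\item Balancedness. The formula is $\mathbb M$-balanced for both right structures, e.g. $[c]u\otimes x = [c]\otimes ux$. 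This is a formal rearrangement of unconditionally convergent double sums, legitimate in a complete linear group. Left $\mathbb M$-linearity in $k$ is the same computation.
\end{enumerate}

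A cleaner alternative I would keep in reserve uses the adjunction of proposition \ref{adcont}. A map $M\to (V\widehat\otimes W)_\bullet$ amounts to a continuous map $M(*) \to V\widehat\otimes W$ for the canonical topology. Using the general formula for iterated tensor products and reduction to $V_\bullet,W_\bullet$ replaced by free presentations, the top row of $\mathbb M\otimes_{\mathbb Z_\bullet}\mathbb M=\mathbb Z_\bullet^{\mathbb N\times\mathbb N}$ is $\mathbb Z^{\mathbb N\times\mathbb N}$, which maps continuously by $(a_{nm})\mapsto\sum a_{nm}x_n\otimes y_m$. I would nevertheless present the direct formula, since the reduction through presentations of $V_\bullet$ is awkward when $V_\bullet$ is not finitely presented.

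The coherence diagrams then follow by evaluating on generators. Associativity uses the $n$-fold formula $\mathbb M^{\otimes 3}\otimes_{\mathbb M}(M_1\otimes_{\mathbb M}M_2\otimes_{\mathbb M}M_3)$, where both composites send $[c_{k,n,m,l}]\otimes x\otimes y\otimes z$ to $(\sum c_{k,n,m,l}\,x_n\otimes y_m\otimes z_l)_k$. Symmetry holds because the swap automorphism of $\mathbb M^{\otimes 2}$ corresponds to the swap $V\widehat\otimes W\simeq W\widehat\otimes V$. For the unit, $\mathbb Z_\bullet\otimes_{\mathbb Z_\bullet}V_\bullet = V_\bullet$ is compatible with $\mathbb Z\widehat\otimes V=V$.

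The main obstacle is balancedness and well-definedness on the iterated tensor product. It requires care that all rearrangements of infinite sums are justified, and the key input is that every relevant family is summable over countable index sets in a complete linear group.
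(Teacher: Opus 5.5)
Your proof is correct, but it takes a different route from the paper's.

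\textbf{The paper's route.} The paper never writes a formula. It passes by adjunction to a map $W_\bullet \to \mathcal H\mathrm{om}_{\mathbb Z_\bullet}(V_\bullet,(V\widehat\otimes_{\mathbb Z}W)_\bullet)$. It then reduces to $W$ discrete: write $W=\varprojlim W/W'$, and use that $V\widehat\otimes W=\varprojlim V\widehat\otimes (W/W')$ and that both $(-)_\bullet$ and internal Hom preserve limits. For discrete $W$, the adjunction of proposition \ref{adj} turns the required map into the tautological one, $W\to\mathrm{Hom}_{\mathbb Z\mhyphen\mathrm{cont}}(V,V\widehat\otimes W)\to\mathrm{Hom}_{\mathbb M}(V_\bullet,(V\widehat\otimes W)_\bullet)$, $w\mapsto(v\mapsto v\otimes w)_\bullet$. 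This is precisely how the paper sidesteps the obstacle you raise against your reserve approach. Only $W_\bullet$ is ever resolved, and for discrete $W$ one has $W_\bullet=\mathbb Z_\bullet\otimes_{\mathbb Z}W$, so no presentation of $V_\bullet$ is needed.

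\textbf{Your route.} You extend the explicit formula given after proposition \ref{monadj} and check convergence, nullity in $k$, and balancedness by hand. These checks do go through. For balancedness against the first right structure, you should spell out why the triple family $\bigl(c_{k,i,m}u_{i,n}\,x_n\otimes y_m\bigr)_{i,n,m}$ tends to zero, rather than just invoking ``unconditionally convergent sums''. Given an open subgroup $U$, every term with $n$ or $m$ large already lies in $U$. Once $n$ and $m$ are bounded, column-finiteness of $u$ leaves only finitely many $i$ with $u_{i,n}\neq 0$. The same pattern handles the second right structure and left $\mathbb M$-linearity, where column-finiteness of $[c]$ bounds $j$.

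\textbf{What each approach buys.} Your approach gives an explicit description of the structure map. That makes naturality and the unit, associativity and symmetry constraints checkable on generators. The paper constructs only the map and leaves the coherence conditions of a lax monoidal functor implicit. The price is the analytic bookkeeping with infinite sums, which the paper's formal adjunction argument avoids entirely.
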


\begin{proof}
We need to describe a natural map
\[
V_\bullet \otimes_{\mathbb Z_\bullet} W_\bullet \to (V \widehat \otimes_{\mathbb Z} W)_\bullet
\]
or, by adjunction,
\[
W_\bullet \to \mathcal H\mathrm{om}_{\mathbb Z_\bullet} (V_\bullet, (V \widehat \otimes_{\mathbb Z} W)_\bullet).
\]
After writing $W$ as a limit of discrete abelian groups with surjective transition maps, we see that we can assume $W$ discrete.
But there always exists a canonical map
\[
\begin{tikzcd}
W \ar[r] & \mathrm{Hom}_{\mathbb Z\mhyphen\mathrm{cont}}(V, V \widehat \otimes_{\mathbb Z} W) \ar[r] & \mathrm{Hom}_{\mathbb M} (V_\bullet, (V \widehat \otimes_{\mathbb Z} W)_\bullet) \ar[d, equal] \\ && \mathcal H\mathrm{om}_{\mathbb Z_\bullet} (V_\bullet, (V \widehat \otimes_{\mathbb Z} W)_\bullet)(*)
\end{tikzcd}
\]
and we can use the adjunction of proposition \ref{adj} since $W$ is now discrete.
\end{proof}

\begin{xmp}
The functor $V \mapsto V_\bullet$ is \emph{not} strong monoidal:
\[
\mathbb Q_\bullet \otimes_{\mathbb Z_\bullet} \mathbb Z[[t]]_\bullet = \mathbb Q \otimes_{\mathbb Z} \mathbb Z[[t]]_\bullet \neq \mathbb Q[[t]]_\bullet = (\mathbb Q \widehat \otimes_{\mathbb Z} \mathbb Z[[t]])_\bullet.
\]
The same example implies that the adjoint functor is not monoidal although $\mathbb Z^{\mathbb N} \widehat \otimes_{\mathbb Z} \mathbb Z^{\mathbb N} \simeq \mathbb Z^{\mathbb N \times \mathbb N}$.
\end{xmp}

\begin{prop}
If $V$ is a complete metrizable linear abelian group, then there exists a natural isomorphism (of abelian groups)
\[
(V_\bullet)^\vee \simeq \mathrm{Hom}_{\mathbb Z\mhyphen\mathrm{cont}} (V, \mathbb Z_\bullet).
\]
\end{prop}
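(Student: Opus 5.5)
We need $(V_\bullet)^\vee = \mathcal H\mathrm{om}_{\mathbb Z_\bullet}(V_\bullet,\mathbb Z_\bullet)$, an $\mathbb M$-module, and the claim is that as an abelian group it is $\mathrm{Hom}_{\mathbb Z\mhyphen\mathrm{cont}}(V,\mathbb Z_\bullet)$, with $\mathbb Z_\bullet$ discrete. The plan is to compute the underlying abelian group via the top-row decomposition and then recognize the answer through the adjunction of proposition \ref{adcont} together with full faithfulness (proposition \ref{metff}).

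\emph{Step 1: compute the top row.} The top row is
\[
(V_\bullet)^\vee(*) = \mathrm{Hom}_{\mathbb M}(V_\bullet,\mathbb Z_\bullet),
\]
but the full module is larger. To reach the whole group, I use $\mathbb M \simeq \mathbb Z_\bullet^{\mathbb N}$, so that
\[
\mathrm{Hom}_{\mathbb M}(\mathbb M, M) \simeq M \simeq \mathrm{Hom}_{\mathbb M}(\mathbb Z_\bullet^{\mathbb N}, M).
\]
Taking $M=(V_\bullet)^\vee$ and applying the tensor--hom adjunction gives
\[
(V_\bullet)^\vee \simeq \mathrm{Hom}_{\mathbb M}(\mathbb Z_\bullet^{\mathbb N}\otimes_{\mathbb Z_\bullet} V_\bullet, \mathbb Z_\bullet).
\]
Since $\mathbb Z_\bullet^{\mathbb N}=\mathbb M$ is finitely presented, I would argue that tensoring with it commutes with the countable product, as in the remark after corollary \ref{tensfr}. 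The cleanest version is $\mathbb M\otimes_{\mathbb Z_\bullet} -\simeq (-)^{\mathbb N}$ for all modules: tensoring by $\mathbb Z_\bullet^{\mathbb N}$ is right exact and preserves all colimits, and it agrees with $(-)^{\mathbb N}$ on free modules $\mathbb M\cdot I$. The latter can be checked from the hypermatrix description, or by reducing through a presentation with countable products (which are exact). Then
\[
(V_\bullet)^\vee\simeq \mathrm{Hom}_{\mathbb M}(V_\bullet^{\mathbb N},\mathbb Z_\bullet) = \mathrm{Hom}_{\mathbb M}((V^{\mathbb N})_\bullet,\mathbb Z_\bullet),
\]
using that $V\mapsto V_\bullet$ preserves limits.

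\emph{Step 2: identify the Hom.} Now $V^{\mathbb N}$ is again complete metrizable linear, and $\mathbb Z$ is as well. By proposition \ref{metff}, or more precisely its proof via the adjunction of proposition \ref{adcont} with $\widehat{V_\bullet(*)}\simeq V$ homeomorphically, we get
\[
\mathrm{Hom}_{\mathbb M}((V^{\mathbb N})_\bullet,\mathbb Z_\bullet)\simeq \mathrm{Hom}_{\mathbb Z\mhyphen\mathrm{cont}}(V^{\mathbb N},\mathbb Z).
\]
A continuous homomorphism $V^{\mathbb N}\to\mathbb Z$ kills an open subgroup of $V^{\mathbb N}$, hence factors through a finite subproduct. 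This gives
\[
\mathrm{Hom}_{\mathbb Z\mhyphen\mathrm{cont}}(V^{\mathbb N},\mathbb Z)\simeq \bigoplus_{\mathbb N}\mathrm{Hom}_{\mathbb Z\mhyphen\mathrm{cont}}(V,\mathbb Z).
\]
Meanwhile, $\mathrm{Hom}_{\mathbb Z\mhyphen\mathrm{cont}}(V,\mathbb Z_\bullet)$, with $\mathbb Z_\bullet=\mathbb Z\cdot\mathbb N$ discrete, consists of maps killing some open subgroup $V'$. Each coordinate then kills $V'$, so the map lands in $(V/V')^\vee$ componentwise. Hence it is the set of families $(\phi_k)$ of continuous characters such that for every $x$ only finitely many $\phi_k(x)\neq 0$ and all $\phi_k$ vanish on a common open subgroup.

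\emph{Main obstacle.} This is a mismatch: the finite direct sum description from Step 2 looks smaller than the family description. So I must recheck Step 1. The naturality check with $\mathbb Z_\bullet^{\mathbb N}$ in the first factor means the relevant module is $\mathcal H\mathrm{om}(\mathbb M\otimes V_\bullet,\mathbb Z_\bullet)$ evaluated at the top row. That is exactly $\mathrm{Hom}_{\mathbb M}(\mathbb M\otimes_{\mathbb Z_\bullet}V_\bullet,\mathbb Z_\bullet)$, and the correct identification should be
\[
\mathbb M\otimes_{\mathbb Z_\bullet}V_\bullet\simeq (\mathbb Z^{\mathbb N}\widehat\otimes V)_\bullet\quad\text{or}\quad(V_\bullet)^{\mathbb N}.
\]
Here the subtle point is whether $\mathbb M\otimes_{\mathbb Z_\bullet}-$ really equals $(-)^{\mathbb N}$ on non-finitely-presented inputs. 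I expect it does on \emph{all} modules, since it holds on $\mathbb M$ and both sides are right exact and commute with direct sums. I would double check the direct sum case: $(\bigoplus M_i)^{\mathbb N}$ versus $\bigoplus M_i^{\mathbb N}$ fails in general, so this step is where the argument is most likely to break. If it does, I would instead work directly with a presentation of $V_\bullet$ by $\mathbb M$-modules coming from a countable basis $V\supset V_1\supset\cdots$, writing $V_\bullet=\varprojlim (V/V_n)_\bullet$ as a countable limit of modules of the form $W_\bullet$ with $W$ discrete. I would then compute the dual using the lemma $\mathcal H\mathrm{om}_{\mathbb Z_\bullet}(W_\bullet,\mathbb Z_\bullet)\simeq\mathrm{Hom}_{\mathbb Z}(W,\mathbb Z_\bullet)$, plus a Mittag-Leffler/surjectivity argument (using the short exact sequences of proposition \ref{metff}) to commute the dual with the limit. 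That yields $\varinjlim_n\mathrm{Hom}_{\mathbb Z}(V/V_n,\mathbb Z_\bullet)=\mathrm{Hom}_{\mathbb Z\mhyphen\mathrm{cont}}(V,\mathbb Z_\bullet)$. The limit--colimit interchange for the internal dual is the real obstacle there.
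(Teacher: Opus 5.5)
\textbf{There is a genuine gap.} Your first reduction, $(V_\bullet)^\vee\simeq\mathrm{Hom}_{\mathbb M}(\mathbb M\otimes_{\mathbb Z_\bullet}V_\bullet,\mathbb Z_\bullet)$, is correct and is also how the paper starts. The next step fails exactly where you feared.

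The functor $\mathbb M\otimes_{\mathbb Z_\bullet}-$ commutes with all colimits, while $(-)^{\mathbb N}$ does not commute with infinite direct sums, so the two cannot agree on all modules. Your claim that they ``agree on free modules'' is already wrong. Indeed $\mathbb M\otimes_{\mathbb Z_\bullet}(\mathbb M\cdot I)\simeq\mathbb M^{\otimes 2}\cdot I\simeq(\mathbb M^{\mathbb N})\cdot I$, and this maps injectively but not surjectively to $(\mathbb M\cdot I)^{\mathbb N}$ when $I$ is infinite. The identification only holds for finitely presented modules (the remark after Corollary \ref{tensfr}), and $V_\bullet$ is not finitely presented in general.

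Concretely, take $V=\mathbb Z\cdot\mathbb N$ discrete. Then $V_\bullet=\mathbb M^\vee$, and Proposition \ref{dlhom} gives $(V_\bullet)^\vee\simeq\mathbb M\simeq(\mathbb Z\cdot\mathbb N)^{\mathbb N}=\mathrm{Hom}_{\mathbb Z}(V,\mathbb Z\cdot\mathbb N)$, as the statement predicts. Your Steps 1--2 would instead give $\bigoplus_{\mathbb N}\mathbb Z^{\mathbb N}$, which misses for instance the identity of $\mathbb Z\cdot\mathbb N$. So the mismatch you detected is real, and Step 1 is false.

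Your fallback does not repair this. Internal duality turns colimits into limits, but nothing general makes it turn the limit $\varprojlim_n(V/V_n)_\bullet$ into a colimit. A Mittag-Leffler argument controls $\varprojlim^1$, not Hom out of a limit. This interchange is exactly the step you leave unproved.

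The missing idea is to use the symmetry of the closed monoidal structure to move $\mathbb M$ into the other slot, rather than computing $\mathbb M\otimes_{\mathbb Z_\bullet}V_\bullet$:
\[
\mathrm{Hom}_{\mathbb M}(\mathbb M\otimes_{\mathbb Z_\bullet}V_\bullet,\mathbb Z_\bullet)\simeq\mathrm{Hom}_{\mathbb M}(V_\bullet,\mathcal H\mathrm{om}_{\mathbb Z_\bullet}(\mathbb M,\mathbb Z_\bullet))\simeq\mathrm{Hom}_{\mathbb M}(V_\bullet,(\mathbb Z\cdot\mathbb N)_\bullet),
\]
where the last step is Proposition \ref{dlhom} ($\mathbb M^\vee\simeq(\mathbb Z\cdot\mathbb N)_\bullet$). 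The tool you already invoked in Step 2 now applies with the discrete, hence complete, target $\mathbb Z\cdot\mathbb N$. That tool is Proposition \ref{adcont}, together with $V_\bullet(*)\simeq V$ homeomorphically for $V$ complete metrizable, as in Proposition \ref{metff}. It gives $\mathrm{Hom}_{\mathbb Z\mhyphen\mathrm{cont}}(V,\mathbb Z\cdot\mathbb N)=\mathrm{Hom}_{\mathbb Z\mhyphen\mathrm{cont}}(V,\mathbb Z_\bullet)$.

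This is the paper's proof. It also supplies the limit--colimit interchange your fallback needed, since a continuous map to a discrete group factors through some $V/V_n$.
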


\begin{proof}
We have
\begin{align*}
(V_\bullet)^\vee & = \mathcal H\mathrm{om}_{\mathbb Z_\bullet} (V_\bullet, \mathbb Z_\bullet)
\\ &\simeq \mathrm{Hom}_{\mathbb M}(\mathbb M, \mathcal H\mathrm{om}_{\mathbb Z_\bullet} (V_\bullet, \mathbb Z_\bullet))
\\ &\simeq \mathrm{Hom}_{\mathbb M}(V_\bullet, \mathcal H\mathrm{om}_{\mathbb Z_\bullet} (\mathbb M, \mathbb Z_\bullet))
\\ &\simeq \mathrm{Hom}_{\mathbb M}(V_\bullet, \mathbb M^\vee)
\\ &\simeq \mathrm{Hom}_{\mathbb M}(V_\bullet, (\mathbb Z \cdot \mathbb N)_\bullet)
\\ & \simeq \mathrm{Hom}_{\mathbb Z\mhyphen\mathrm{cont}} (V, \mathbb Z \cdot \mathbb N)
\\ & \simeq \mathrm{Hom}_{\mathbb Z\mhyphen\mathrm{cont}} (V, \mathbb Z_\bullet). \qedhere
\end{align*}
\end{proof}

During the proof of theorem \ref{cohM}, we used the discrete case of the following:

\begin{cor} \label{torlm}
Let $V$ be a complete metrizable linear abelian group.
If $V^\vee = 0$, then $(V_\bullet)^\vee = 0$. \qed
\end{cor}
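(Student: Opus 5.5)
The corollary will follow directly from the proposition just proved, which gives, for $V$ complete metrizable linear, a natural isomorphism
\[
(V_\bullet)^\vee \simeq \mathrm{Hom}_{\mathbb Z\mhyphen\mathrm{cont}}(V, \mathbb Z_\bullet).
\]
It therefore suffices to show that every continuous homomorphism $\varphi : V \to \mathbb Z_\bullet$ vanishes when $V^\vee = \mathrm{Hom}_{\mathbb Z\mhyphen\mathrm{cont}}(V, \mathbb Z) = 0$. Here $\mathbb Z_\bullet$ must carry the topology used in that proposition, namely the one coming from $\mathbb Z \cdot \mathbb N$ with $\mathbb Z$ discrete, so $\mathbb Z_\bullet$ is discrete. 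In fact any topology for which the coordinate projections are continuous will do.

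The plan is to compose $\varphi$ with each coordinate projection $\pi_k : \mathbb Z_\bullet \to \mathbb Z$, $(d_n) \mapsto d_k$. On the discrete group $\mathbb Z \cdot \mathbb N$ these maps are continuous homomorphisms. Hence $\pi_k \circ \varphi \in V^\vee = 0$ for every $k \in \mathbb N$. An element of $\mathbb Z_\bullet$ whose coordinates all vanish is zero, so $\varphi(v) = 0$ for all $v \in V$, giving $\varphi = 0$ and thus $(V_\bullet)^\vee = 0$.

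The only point needing care is to make sure which topology on $\mathbb Z_\bullet$ appears in the proposition, because the argument uses only continuity of the projections. In the proof of the proposition, the last isomorphism identifies $\mathbb Z \cdot \mathbb N$ (discrete) with $\mathbb Z_\bullet$, so the projections are continuous and nothing more is required. In the discrete case used in theorem \ref{cohM}, continuity is automatic and the statement reduces to the elementary fact that $\mathrm{Hom}_{\mathbb Z}(T, \mathbb Z \cdot \mathbb N) \subset \mathrm{Hom}_{\mathbb Z}(T, \mathbb Z)^{\mathbb N} = 0$.
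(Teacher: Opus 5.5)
Your proposal is correct and is essentially the paper's proof. The paper writes the same argument as the chain $(V_\bullet)^\vee \simeq \mathrm{Hom}_{\mathbb Z\mhyphen\mathrm{cont}}(V, \mathbb Z_\bullet) \rightarrowtail \mathrm{Hom}_{\mathbb Z\mhyphen\mathrm{cont}}(V, \mathbb Z^{\mathbb N}) \simeq (V^\vee)^{\mathbb N} = 0$, and this is exactly your coordinate-projection argument.
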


\begin{proof}
$
(V_\bullet)^\vee \simeq \mathrm{Hom}_{\mathbb Z\mhyphen\mathrm{cont}} (V, \mathbb Z_\bullet) \rightarrowtail \mathrm{Hom}_{\mathbb Z\mhyphen\mathrm{cont}} (V, \mathbb Z^\mathbb N) \simeq (V^\vee)^\mathbb N = 0.
$
\end{proof}


\section{$\mathbb M$-rings}


We endowed in section \ref{dualsec} the category of $\mathbb M$-modules with an additive closed symmetric monoidal structure.
We send the reader to section 1.2 and 1.3 of \cite{Marty09} for a very nice presentation of the theory of (commutative) monoids in a (symmetric) monoidal category. 

\begin{dfn}
An (internal) \emph{$\mathbb M$-ring} $A$ is a monoid in the monoidal category of $\mathbb M$-modules and a (left) \emph{$A_{/\mathbb Z}$-module}\footnote{This notation is motivated by forthcoming development.} $M$ is then an $A$-object.
\end{dfn}

In other words, an $\mathbb M$-ring $A$ (resp.\ an $A_{/\mathbb Z}$-module $M$) is an $\mathbb M$-module endowed with an associative unitary\footnote{Both sides in the first case.} morphism $A \otimes_{\mathbb Z_\bullet} A \to A$ (resp.\ $A \otimes_{\mathbb Z_\bullet} M \to M$).
The category of $\mathbb M$-rings has all limits and colimits and the forgetful functor to $\mathbb M$-modules admits an adjoint.
The category of $A_{/\mathbb Z}$-modules is abelian with all limits and colimits.
If $A \to B$ is a morphism of $\mathbb M$-rings, then the forgetful functor has both an adjoint and a coadjoint.
If $A$ is an $\mathbb M$-ring, then there exists an opposite $M$-ring $A^{\mathrm{op}}$ obtained by switching factors and a \emph{right $A_{/\mathbb Z}$-module} is the same thing as a left $A^{\mathrm{op}}_{/\mathbb Z}$-module.
The tensor product of a right $A_{/\mathbb Z}$-module $M$ with a left $A_{/\mathbb Z}$-module $N$ is provided by the right exact sequence
\[
M \otimes_{\mathbb Z_\bullet} A \otimes_{\mathbb Z_\bullet} N \rightrightarrows M \otimes_{\mathbb Z_\bullet} N \to M \otimes_A N.
\]
The ring $A$ is \emph{commutative} when $A^{\mathrm{op}} = A$ in which case the category of $A$-modules is closed symmetric monoidal.
There is \emph{no} multiplication map $A \times A \to A$ or $A \times M \to M$ giving rise to the $\mathbb M$-ring structure or to the $A_{/\mathbb Z}$-module structure in general but the unit does come from some $1 \in A(*)$.
Also, it often happens in practice that the category of $A_{/\mathbb Z}$-modules is (isomorphic to) a category of genuine modules (over some usually huge non-commutative ring).

Recall from definition \ref{linab} that a \emph{linear ring} $R$ (resp.\ a \emph{linear $R$-module} $V$) is a topological ring (resp.\ a topological $R$-module) whose topology is linear: the open subgroups form a basis of neighborhoods of zero.
For example, an adic ring (resp.\ a Huber ring) $R$ is a metrizable linear ring and an $R$-module (resp a finitely presented\footnote{There is no natural topology otherwise.} $R$-module) $V$ is a metrizable linear $R$-module.
Note that we only require a basis of neighborhoods of zero made of \emph{subgroups} and not of ideals or submodules in general.

\begin{prop} \label{fstMrg}
\begin{enumerate}
\item
There exists a functor $R \mapsto R_\bullet$ (resp.\ $V \mapsto V_\bullet$) from complete linear rings (resp.\ complete linear $R$-modules) to $\mathbb M$-rings (resp.\ $R_{\bullet/\mathbb Z}$-modules) that preserves all limits.
\item Restricted to \emph{metrizable} rings (resp.\ modules), it is fully faithful and preserves quotients.
\item Restricted to \emph{discrete} rings (resp.\ modules), it has a coadjoint $A \mapsto A(*)$ (resp.\ $M \mapsto M(*)$) that preserves all limits and all colimits.
\item If $A$ is an $\mathbb M$-ring, then there exists a forgetful functor from $A_{/\mathbb Z}$-modules to genuine $\mathbb M \otimes_{\mathbb Z} A(*)$-modules.
\item If $R$ is a discrete ring, then we actually get an isomorphism of categories between $R_{\bullet/\mathbb Z}$-modules and genuine $\mathbb M \otimes_{\mathbb Z} R$-modules\footnote{aka $\mathbb M$-$R^{\mathrm{op}}$-bimodules.}.
\end{enumerate}
\end{prop}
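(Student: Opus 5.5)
The plan is to derive everything formally from the monoidal properties of $V \mapsto V_\bullet$ already established, namely proposition \ref{laxmon}, proposition \ref{metff}, proposition \ref{adj} and proposition \ref{monadj}.

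For (1), let $R$ be a complete linear ring. Multiplication $R \times R \to R$ is continuous and bilinear, so it factors through $R \widehat\otimes_{\mathbb Z} R \to R$. Composing with the lax monoidal map of proposition \ref{laxmon} gives
\[
R_\bullet \otimes_{\mathbb Z_\bullet} R_\bullet \to (R \widehat \otimes_{\mathbb Z} R)_\bullet \to R_\bullet .
\]
The unit $\mathbb Z \to R$ gives $\mathbb Z_\bullet \to R_\bullet$. Associativity and unitality follow because a lax monoidal functor sends monoids to monoids. For this one must check the coherence of the lax structure; since it was built by reduction to discrete $W$, I would verify coherence on discrete quotients, where it is the obvious formula written after proposition \ref{monadj}. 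The same argument applied to $R \widehat\otimes V \to V$ handles modules. Preservation of limits follows from the fact that $V \mapsto V_\bullet$ preserves limits, as noted after \eqref{cotop}, and that limits of $\mathbb M$-rings and of $R_{\bullet/\mathbb Z}$-modules are computed on underlying $\mathbb M$-modules.

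For (2), the functor is faithful and full on underlying objects by proposition \ref{metff}. A morphism $R_\bullet \to S_\bullet$ of $\mathbb M$-rings therefore comes from a continuous additive map $f : R \to S$. Compatibility with multiplication can be tested after applying $(*)$ and using the bilinear map of proposition \ref{bilmon}, which recovers multiplication on $R = R_\bullet(*)$. Preservation of quotients also comes from proposition \ref{metff}: for a closed two-sided ideal $I$, the sequence $I_\bullet \to R_\bullet \to (R/I)_\bullet \to 0$ is exact, and the ring structure descends.

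For (3), the adjunction of proposition \ref{adj} is monoidal by proposition \ref{monadj}. Hence $M(*)$ inherits a ring (or module) structure through $A(*)\otimes A(*) \to (A\otimes_{\mathbb Z_\bullet} A)(*) \to A(*)$. The adjunction then lifts to monoids, since the left adjoint is strong monoidal, which is the doctrinal adjunction. Preservation of all limits and colimits requires that limits and colimits of $\mathbb M$-rings be computed compatibly. For limits this is clear. For colimits I expect this to be \textbf{the main obstacle}: $(-)(*)$ is only lax monoidal, so preserving colimits of rings (free products, for instance) is not formal. I would try to show that $(-)(*) \simeq \mathbb Z^{\mathbb N}\otimes_{\mathbb M} -$ is in fact strong monoidal when one factor is of the form $V_\bullet$, or else exhibit a further right adjoint at the level of rings via $\mathrm{Hom}_{\mathbb M}(\mathbb Z^{\mathbb N},-)$.

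For (4), given an $A_{/\mathbb Z}$-module $M$, the bilinear map of proposition \ref{bilmon}, $A(*) \times M \to A\otimes_{\mathbb Z_\bullet} M \to M$, makes $A(*)$ act on $M$. It commutes with the $\mathbb M$-action because it is obtained from $\mathbb M$-linear maps $M \to A\otimes M$, and associativity follows from the monoid axioms. This gives an $\mathbb M\otimes_{\mathbb Z} A(*)$-module.

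For (5), the lemma gives $R_\bullet \otimes_{\mathbb Z_\bullet} M \simeq R\otimes_{\mathbb Z} M$ when $R$ is discrete. Hence an $R_{\bullet/\mathbb Z}$-module structure is the same as an $\mathbb M$-linear map $R\otimes_{\mathbb Z} M \to M$ satisfying the monoid axioms, that is, an $R$-action commuting with $\mathbb M$. This recovers the construction in (4) with $A(*) = R$, and it is inverse to it.
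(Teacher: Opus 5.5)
Your treatment of (1), (2), (4) and (5) follows the paper's route. For (1)--(2) the paper uses propositions \ref{laxmon}, \ref{adcont}, \ref{metff} and the fact that a complete linear ring is a monoid for $\widehat\otimes_{\mathbb Z}$. For (4)--(5) it uses that an $\mathbb M$-ring map $A \to \mathcal H\mathrm{om}_{\mathbb Z_\bullet}(M,M)$ induces a ring map $A(*) \to \mathrm{End}_{\mathbb M}(M)$, with a converse when $A = R_\bullet$ and $R$ is discrete. Your use of proposition \ref{bilmon} and of $V_\bullet \otimes_{\mathbb Z_\bullet} M \simeq V \otimes_{\mathbb Z} M$ is the same adjunction written out.

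The gap is the one you flagged in (3), and it cannot be closed: for rings the claim is false. The paper's justification, ``The other assertions result from proposition \ref{monadj}'', does not cover it. Lax monoidality plus doctrinal adjunction gives the adjunction on monoids, hence preservation of all limits. Since $\otimes_{\mathbb Z_\bullet}$ preserves colimits in each variable, filtered colimits and reflexive coequalizers of $\mathbb M$-rings are computed on underlying $\mathbb M$-modules, so $A \mapsto A(*)$ also preserves sifted colimits.

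Coproducts are not preserved. The $\mathbb M$-ring $\mathbb Z[Y]_\bullet = \bigoplus_n \mathbb Z_\bullet$ is free on $\mathbb Z_\bullet$. So for any $\mathbb M$-ring $A$ one has $A \sqcup \mathbb Z[Y]_\bullet \simeq \bigoplus_{n \geq 0} A^{\otimes_{\mathbb Z_\bullet}(n+1)}$ (words $a_0 Y a_1 \cdots Y a_n$), and likewise $A(*) \sqcup \mathbb Z[Y] \simeq \bigoplus_n A(*)^{\otimes_{\mathbb Z}(n+1)}$. The comparison map is, degree by degree in $Y$, the lax map $A(*)^{\otimes_{\mathbb Z}(n+1)} \to (A^{\otimes(n+1)})(*)$. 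For $A = \mathbb Z[[X]]_\bullet \simeq \mathbb M$ and $n = 1$ this is $\mathbb Z^{\mathbb N} \otimes_{\mathbb Z} \mathbb Z^{\mathbb N} \to \mathbb Z^{\mathbb N \times \mathbb N}$, which is not surjective, as remarked after proposition \ref{monadj}. The same phenomenon occurs commutatively: $\mathbb Z[[X]]_\bullet \otimes_{\mathbb Z_\bullet} \mathbb Z[[Y]]_\bullet \simeq \mathbb Z[[X,Y]]_\bullet$ has top row $\mathbb Z[[X,Y]] \neq \mathbb Z[[X]] \otimes_{\mathbb Z} \mathbb Z[[Y]]$.

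Consequently, your first remedy is true for discrete $V$ but beside the point, since the colimits in question are of arbitrary $\mathbb M$-rings. Your second cannot succeed, since a right adjoint on rings would force $A \mapsto A(*)$ to preserve coproducts. The ring half of (3) should be restricted to limits and sifted colimits. The module half holds as stated, because limits and colimits of $A_{/\mathbb Z}$-modules are computed on underlying $\mathbb M$-modules, where $M \mapsto M(*)$ preserves all limits and colimits.
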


\begin{proof}
The first two assertions follow from propositions \ref{laxmon}, \ref{adcont} and \ref{metff} and the fact that a complete linear ring is a monoid in the monoidal category of complete linear abelian groups.
The other assertions result from proposition \ref{monadj}.
For the last ones, on uses the fact that a morphism of $\mathbb M$-rings $A \to \mathcal E\mathrm{nd}_{\mathbb Z_\bullet}(M)$ induces a morphism of rings $A(*) \to \mathcal E\mathrm{nd}_{\mathbb Z_\bullet}(M)(*) = \mathrm{End}_{\mathbb M}(M)$ and conversely when $A = R_\bullet$ with $R$ discrete.
\end{proof}

When $R$ is a discrete ring, do not confuse the ring $\mathbb M \otimes_{\mathbb Z} R$ with the ring $\mathbb M_R$ of column-finite matrices with coefficients in $R$ which is much bigger (but see below).

\begin{xmps}
\begin{enumerate}
\item $\mathbb Z_\bullet$ is a commutative $\mathbb M$-ring.
A $\mathbb Z_{\bullet/\mathbb Z}$-module is the same thing as an $\mathbb M$-module.
Be careful not to confuse external with internal Hom:
\[
\mathrm{Hom}_{\mathbb Z_{\bullet}}(M, N) = \mathrm{Hom}_{\mathbb M}(M, N) = \mathcal H\mathrm{om}_{\mathbb Z_\bullet}(M, N)(*) \neq \mathcal H\mathrm{om}_{\mathbb Z_\bullet}(M, N).
\]
\item Even if $\mathbb M^\vee$ is only a two-sided ideal of $\mathbb M$, there exists a canonical isomorphism $\mathbb M^\vee \simeq \mathbb Z[X]_\bullet$ with a commutative $\mathbb M$-ring (sending $t$ to $X$).
A $\mathbb Z[X]_{\bullet/\mathbb Z}$-module is the same thing as a genuine $\mathbb M[X]$-module (equivalently an $\mathbb M$-module endowed with an $\mathbb M$-linear map).
\item There exists a similar isomorphism $\mathbb M \simeq \mathbb Z[[X]]_\bullet$ (where $\mathbb Z[[X]]$ is endowed with the $X$-adic topology) so that $\mathbb M$ has a natural structure of commutative $\mathbb M$-ring besides its original structure of non-commutative (usual) ring.
Be careful however not to confuse an $\mathbb M$-module with a $\mathbb Z[[X]]_{\bullet/\mathbb Z}$-module which is the same thing as a genuine $\mathbb M[[X]]$-module (see below).
\item It follows from corollary \ref{tensfr} that
\[
\mathbb Z[[X]]_\bullet \otimes^{\mathrm L}_{\mathbb Z_\bullet} \mathbb Z[[Y]]_\bullet \simeq \mathbb Z[[X, Y]]_\bullet.
\]
As a consequence,
\[
\mathbb Z_{p\bullet} \otimes^{\mathrm L}_{\mathbb Z_\bullet} \mathbb Z[[X]]_\bullet \simeq \mathbb Z_p[[X]]_\bullet
\]
(for the $(p,X)$-adic topology) but also
\[
\mathbb Z_{p\bullet} \otimes^{\mathrm L}_{\mathbb Z_\bullet} \mathbb Z_{p\bullet} \simeq \mathbb Z_{p\bullet} \quad \mathrm{and} \quad \mathbb Z_{p\bullet} \otimes^{\mathrm L}_{\mathbb Z_\bullet} \mathbb Z_{\ell\bullet} = 0
\]
when $\ell \neq p$.
Since
$\mathbb Q_{p\bullet} = \varinjlim_p \mathbb Z_{p\bullet}$, we also have
\[
\mathbb Q_{p\bullet} \otimes^{\mathrm L}_{\mathbb Z_\bullet} \mathbb Z[[X]]_\bullet \simeq \mathbb Q_p[[X]]^{\mathrm{bd}}_\bullet \quad \mathrm{and} \quad \mathbb Q_{p\bullet} \otimes^{\mathrm L}_{\mathbb Z_\bullet} \mathbb Q_{p\bullet} \simeq \mathbb Q_{p\bullet}
\]
where $\mathbb Q_p[[X]]^{\mathrm{bd}} := \mathbb Z_p[[X]][1/p]$ denotes the ring of bounded functions on the open unit disc.
\item The category of $\mathbb Z_{p\bullet/\mathbb Z}$-modules (resp.\ $\mathbb Q_{\bullet/\mathbb Z}$-modules, resp.\ $\mathbb Q_{p\bullet/\mathbb Z}$-modules) is a full subcategory of the whole category of $\mathbb M$-modules.
In other words, being a $\mathbb Z_{p\bullet/\mathbb Z}$-modules (resp.\ $\mathbb Q_{\bullet/\mathbb Z}$-modules, resp.\ $\mathbb Q_{p\bullet/\mathbb Z}$-modules) is a property of the $\mathbb M$-module and not an extra structure.
This follows from the fact that, in each case, the $\mathbb M$-ring is idempotent\footnote{An object $M$ in a symmetric monoidal category is said to be \emph{idempotent} is $M \otimes M \simeq M$.}.
\end{enumerate}
\end{xmps}

In order to go further, we shall need the following technical result:

\begin{lem} \label{limfn}
If $R$ is an $I$-adic ring with $I = (f_1, \ldots, f_d)$ and we equip the free $R$-module $R \cdot \mathbb N := \oplus_{\mathbb N} R$ with $I$-adic topology, then\footnote{We use multi index notation $\underline f^{\underline k} = \prod_{i=1}^df_i^{k_i}$ and $|\underline k| = \sum_{i=0}^n k_i$.}
\[
\widehat{R \cdot \mathbb N}_\bullet \simeq \varinjlim_{|\underline {k_n}| \to \infty} \prod_n \underline f^{\underline k_n} \widehat R_\bullet.
\]
\end{lem}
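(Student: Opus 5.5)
The plan is to compute both sides inside the abelian group $\widehat R^{\mathbb N\times\mathbb N}$ of double families $(x_{k,n})$. Here $k$ is the index of the null sequence (the $\bullet$ direction) and $n$ indexes the basis of $R\cdot\mathbb N$. I then show that the two subgroups coincide. I read the filtered colimit on the right as the subgroup of $\widehat R^{\mathbb N\times\mathbb N}$ generated by the images of the products $\prod_n \underline f^{\underline k_n}\widehat R_\bullet$. Any finite sum of elements of such products can be compared termwise, so it is enough to check that every element of the left side is a finite sum of elements of such products, and conversely.

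\emph{Step 1: describe the left side.} Since $I$ is finitely generated, $\widehat R$ is $I$-adically complete and $I^m\widehat R$ is open. The completion $\widehat{R\cdot\mathbb N}$ is then the group of sequences $(r_n)$ in $\widehat R$ with $r_n\to 0$ $I$-adically, and $I^m\widehat{R\cdot\mathbb N}$ is a basis of neighbourhoods of zero. Unwinding the definitions, a null sequence $(x_k)_k$ in this module is a double family with the following property: for every $m$, only finitely many pairs $(k,n)$ satisfy $x_{k,n}\notin I^m\widehat R$. Indeed, $x_k\to0$ gives uniformity in $n$ for $k$ large, and each $x_k$ being null handles the finitely many remaining $k$. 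This condition is symmetric in $k$ and $n$, which is the whole point.

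\emph{Step 2: the inclusion of the right side into the left side.} Take $\underline k_n$ with $|\underline k_n|\to\infty$ and columns $(x_{k,n})_k\in \underline f^{\underline k_n}\widehat R_\bullet$. Fix $m$. Only finitely many $n$ have $|\underline k_n|<m$, and all entries of the other columns lie in $I^m\widehat R$. Each of the finitely many exceptional columns is null in $k$, so it has only finitely many entries outside $I^m\widehat R$. Hence the criterion of Step 1 holds. Sums are harmless because the left side is a group.

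\emph{Step 3: the converse, which is the main obstacle.} Let $(x_{k,n})$ satisfy the criterion of Step 1. Let $m(n)$ be the largest integer $m$ such that every entry of column $n$ lies in $I^m\widehat R$, capped by $n$ to stay finite. The criterion gives $m(n)\to\infty$.

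The difficulty is that a single monomial $\underline f^{\underline k_n}$ does not generate $I^{m(n)}$ when $d>1$. Moreover, the number of monomials of degree $m$ is unbounded, so I cannot simply split column $n$ into a bounded number of monomial pieces indexed by all monomials.

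I will get around this by using only the pure powers $f_i^{m'}$. Pigeonhole gives $I^{dm'}\subset\sum_{i=1}^d f_i^{m'}R$, so I set $m'(n)=\lfloor m(n)/d\rfloor\to\infty$. I then need a \emph{controlled} lifting statement: if $(y_k)$ is a null sequence in $I^{dm'}\widehat R$, it can be written as $\sum_{i=1}^d f_i^{m'}z_{i,k}$ with each $(z_{i,k})_k$ null in $\widehat R$. This is an open-mapping property of the surjection $\widehat R^d\to I^{dm'}\widehat R$, $(z_i)\mapsto\sum_i f_i^{m'}z_i$. It holds because $I^{dm'+j}\widehat R\subset\sum_i f_i^{m'}I^j\widehat R$, which follows from the same pigeonhole applied to the monomials generating $I^{dm'+j}$. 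So I choose, for each entry $y_k$, a lift with $z_{i,k}\in I^j\widehat R$ whenever $y_k\in I^{dm'+j}\widehat R$; the lifted sequences are then null.

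Doing this column by column writes $(x_{k,n})$ as a sum of $d$ elements. The $i$-th of these lies in $\prod_n f_i^{m'(n)}\widehat R_\bullet$, and its multi-indices $\underline k_n=m'(n)\epsilon_i$ have $|\underline k_n|\to\infty$. This proves the equality.
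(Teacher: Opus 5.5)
Your proof is correct, and its core is the same as the paper's. The paper proves surjectivity onto $\widehat{R\cdot\mathbb N}_\bullet$ by a diagonal choice $k_n=\inf_m\max\{k_{m,n},l_m\}$ built from row-wise bounds. Your symmetric criterion of Step 1 and your column exponent $m(n)$ encode exactly this; you simply take the optimal exponent of each column.

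Where you genuinely differ is the passage from powers of $I$ to monomials. The paper disposes of it in one line (``since $I^k=\sum_{|\underline k|=k}\underline f^{\underline k}R$, it is sufficient to show\dots'') and reduces to $\prod_n I^{k_n}\widehat R_\bullet$. That line leaves implicit the two points you isolate:
\begin{enumerate}
\item For $d>1$ the number of degree-$k_n$ monomials is unbounded in $n$, so splitting column by column does not give a \emph{finite} sum of elements of products $\prod_n\underline f^{\underline k_n}\widehat R_\bullet$.
\item A null sequence in $I^{k}\widehat R$ must be lifted to null coefficient sequences.
\end{enumerate}
Your pure-power device $I^{dm'+j}\subset\sum_i f_i^{m'}I^j$ settles both at once. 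It even shows that every element is a sum of $d$ terms with multi-indices $m'(n)\epsilon_i$.

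Step 2 only uses that entries lie in $I^{|\underline k_n|}\widehat R$ and that columns are null. So your equality holds whether $\underline f^{\underline k}\widehat R_\bullet$ is read as the image of multiplication by $\underline f^{\underline k}$ on $\widehat R_\bullet$ or as the null sequences of $\underline f^{\underline k}\widehat R$.

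Your reading of the colimit as a sum of images is also the right one. For $d>1$, multi-index families ordered componentwise are not directed: $(n,0)$ and $(0,n)$ have no common lower bound with $|\cdot|\to\infty$. The paper's reduction must implicitly be understood additively as well.

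In exchange, the paper's route makes the intermediate $I^{k_n}$ statement independent of finite generation of $I$, as its footnote notes. It also reads that statement off quickly from the non-bulleted description of $\widehat{R\cdot\mathbb N}$.
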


\begin{proof}
Since we always have
\[
I^k = \sum_{|\underline k| = k} \underline f^{\underline k} R,
\]
it is sufficient to show that\footnote{It is not necessary to assume $I$ of finitely generated here.}
\[
\widehat{R \cdot \mathbb N}_\bullet \simeq \varinjlim_{k_n \to \infty} \prod_n I^{k_n} \widehat R_\bullet.
\]
Now, we already know that the conclusion holds without the bullets:
\[
\widehat{R \cdot \mathbb N} \simeq \varinjlim_{k_n \to \infty} \prod_n I^{k_n} \widehat R.
\]
There exists therefore an injective map
\[
\varinjlim_{k_n \to \infty} \prod_n I^{k_n} \widehat R_{\bullet} \hookrightarrow \widehat{R \cdot \mathbb N}_\bullet
\]
and it remains to show that it is surjective.
If $(\varphi_m)_{m \in \mathbb N} \in \widehat{R \cdot \mathbb N}_\bullet$, then this is a null sequence and there exists therefore $l_m \to \infty$ such that $\varphi_m \in I^{l_m} \prod_n \widehat R$ for all $m \in \mathbb N$.
On the other hand, for all $m \in \mathbb N$, there exits $k_{m,n} \to \infty$ such that $\varphi_m \in \prod_n I^{k_{m,n}} \widehat R$.
We can then set $k_n = \inf_m\max \{k_{m,n}, l_m\}$.
\end{proof}

A complete adic ring $R$ is said to be \emph{formally finitely generated} if there exists a finitely generated ideal of definition $I$ such that $R/I$ is a finitely generated ring.
Equivalently, $R$ is a quotient of $\mathbb Z[T_1, \ldots, T_m][[X_1, \dots, X_d]]$ equipped with the $(X_1, \dots, X_d)$-adic topology.
Equivalently again, $R$ is an adic completion of a finitely generated ring.

We can then improve on the last assertion of proposition \ref{fstMrg} (which is exactly the discrete case when $R$ is finitely generated):

\begin{prop} \label{fft}
If an adic ring $R$ is formally finitely generated, then the category of $R_{\bullet/\mathbb Z}$-modules is isomorphic to the category of genuine\footnote{Completion is meant with respect to the topology of $R$.} $\mathbb M \widehat \otimes_{\mathbb Z} R$-modules.
\end{prop}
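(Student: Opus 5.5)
Following the pattern of the discrete case (last assertion of proposition \ref{fstMrg}), I will show that an $R_{\bullet/\mathbb Z}$-module structure on an $\mathbb M$-module $M$ is the same as a ring morphism $\mathbb M \widehat\otimes_{\mathbb Z} R \to \mathrm{End}_{\mathbb Z}(M)$ extending the given $\mathbb M$-action. By definition, an $R_{\bullet/\mathbb Z}$-module structure is a morphism of $\mathbb M$-rings $R_\bullet \to \mathcal E\mathrm{nd}_{\mathbb Z_\bullet}(M)$. The key is to identify $R_\bullet$ as an $\mathbb M$-ring with a universal object built from finitely many generators and relations.

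First, write $R$ as a quotient of $S := \mathbb Z[T_1, \ldots, T_m][[X_1, \dots, X_d]]$ with the $(X)$-adic topology. Since $V \mapsto V_\bullet$ preserves quotients of complete metrizable linear groups (proposition \ref{metff}), $R_\bullet$ is the quotient of $S_\bullet$ by the image of $J_\bullet$, where $J$ is the (closed) kernel.

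Next, $S_\bullet$ should be the coproduct, in commutative $\mathbb M$-rings, of $\mathbb Z[T_1,\ldots,T_m]_\bullet$ and copies of $\mathbb Z[[X_i]]_\bullet$. This follows from the example $\mathbb Z[[X]]_\bullet \otimes_{\mathbb Z_\bullet} \mathbb Z[[Y]]_\bullet \simeq \mathbb Z[[X,Y]]_\bullet$ via corollary \ref{tensfr}, together with the lemma $V_\bullet \otimes_{\mathbb Z_\bullet} N \simeq V \otimes_{\mathbb Z} N$ for the discrete polynomial factor.

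Then I analyse what a structure of $\mathbb Z[[X]]_{\bullet/\mathbb Z}$-module is concretely. It is a morphism $\mathbb M \simeq \mathbb Z[[X]]_\bullet \to \mathcal E\mathrm{nd}_{\mathbb Z_\bullet}(M)$, which by $\mathbb M$-linearity amounts to an element of $\mathcal E\mathrm{nd}(M)(*)^{\mathbb N} \cdots$. More precisely, $\mathrm{Hom}_{\mathbb M}(\mathbb M, E) = E$, so the data is an element $\phi \in \mathcal E\mathrm{nd}_{\mathbb Z_\bullet}(M)$ compatible with multiplication. I will interpret this as the $\mathbb M$-linear action of null sequences $\sum a_n X^n \mapsto \sum a_n \phi_X^n$, with $\phi_X \in \mathrm{End}_{\mathbb M}(M)$. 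This means $M$ becomes a module over $\mathbb M[[X]] = \mathbb M \widehat\otimes \mathbb Z[[X]]$, as claimed in the earlier example.

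With several variables this gives $\mathbb M\widehat\otimes S$-modules. The relations in $J$ then cut this down to $\mathbb M \widehat\otimes R$, using lemma \ref{limfn}. That lemma describes $\widehat{R\cdot\mathbb N}_\bullet$, and hence $\mathbb M\widehat\otimes R \simeq (R\cdot \mathbb N)^\wedge_\bullet$-type objects, as colimits of $\prod_n \underline f^{\underline k_n}\widehat R_\bullet$. An element of $\mathbb M\widehat\otimes R$ is then locally a finite combination $\sum_{\underline k} u_{\underline k}\, \underline f^{\underline k}$ with $u_{\underline k} \in \mathbb M \otimes R$ tending to zero, which makes sense through the action on $M$.

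The main obstacle is the step showing that the operators $\underline f^{\underline k}$ acting on a module make infinite sums $\sum u_{\underline k}\underline f^{\underline k}$ well defined, with no topology on $M$. I would handle it exactly through lemma \ref{limfn}: a null sequence in $\widehat{R\cdot\mathbb N}$ is a finite sum over multi-indices, packaged column by column into $\prod_n \underline f^{\underline k_n}\widehat R_\bullet$. This is absorbed by the $\mathbb M$-action (columns with growing exponents are one matrix), with finite generation of $I$ ensuring only $d$ operators are needed. Finally, I will check that the two constructions are mutually inverse and functorial, giving an isomorphism and not just an equivalence.
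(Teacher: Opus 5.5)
Your decomposition step fails as soon as $m\geq 1$ and $d\geq 1$. The coproduct of commutative $\mathbb M$-rings is $\otimes_{\mathbb Z_\bullet}$. The very lemma you cite gives $\mathbb Z[T]_\bullet\otimes_{\mathbb Z_\bullet}\mathbb Z[[X]]_\bullet\simeq\mathbb Z[T]\otimes_{\mathbb Z}\mathbb Z[[X]]_\bullet$, whose top row is $\mathbb Z[[X]][T]$. This is a proper subgroup of $S_\bullet(*)=\mathbb Z[T][[X]]$; for instance $\sum_n T^nX^n$ is missing.

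Granting your one-variable description, modules over this coproduct are modules over $\mathbb Z[T]\otimes_{\mathbb Z}\mathbb M[[X]]=\mathbb M[[X]][T]$. That is a genuinely different category from $\mathbb M[T][[X]]$-modules. Indeed, $1-TX$ is a unit in $\mathbb M[T][[X]]$, whereas left multiplication by $1-TX$ on $\mathbb M[[X]][T]$ never hits $1$: compare leading coefficients in $T$, using that $X$ is regular.

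Imposing the relations from $J$ only passes to quotients, so it cannot restore this missing completion in the $T$-direction. Already for $R=S$, $J=0$, your argument produces the wrong ring. The completion in $\mathbb M\widehat\otimes_{\mathbb Z}R$ is exactly what your decomposition loses.

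Independently of this, your central step is only gestured at. An $\mathbb M$-ring map $\mathbb Z[[X]]_\bullet\simeq\mathbb M\to\mathcal E\mathrm{nd}_{\mathbb Z_\bullet}(M)$ is determined by an element of $\mathcal E\mathrm{nd}_{\mathbb Z_\bullet}(M)$, namely the image of $1$, i.e.\ of the null sequence $(X^n)_n$. It is not determined by an operator $\phi_X\in\mathrm{End}_{\mathbb M}(M)$ alone. Since $M$ carries no topology, the meaning of $\sum_n u_n\phi_X^n$ is precisely what has to be constructed. Saying it is ``absorbed by the $\mathbb M$-action'' is not an argument. Moreover, the earlier example you lean on for $\mathbb Z[[X]]$ is itself deduced from this proposition.

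The missing idea is to avoid operators altogether. By definition $R_\bullet\otimes_{\mathbb Z_\bullet}M=(\mathbb M^{\otimes 2}\otimes_{\mathbb M}R_\bullet)\otimes_{\mathbb M}M$, so it suffices to prove $\mathbb M^{\mathbb N}\otimes_{\mathbb M}R_\bullet\simeq\mathbb M\widehat\otimes_{\mathbb Z}R$. The paper does this for $R=\mathbb Z[T][[X]]=\widehat{\mathbb Z[[X]][T]}$ as follows:
\begin{enumerate}
\item Lemma \ref{limfn}, applied over the base $\mathbb Z[[X]]$ to the free module on the $T$-monomials, writes $R_\bullet$ itself (not $\mathbb M\widehat\otimes_{\mathbb Z}R$) as a filtered colimit of countable products of copies of $\mathbb Z[[X]]_\bullet\simeq\mathbb M$.
\item These products are finitely presented, so $\mathbb M^{\mathbb N}\otimes_{\mathbb M}-$ commutes with the colimit and acts as $(-)^{\mathbb N}$ on each term.
\item The resulting colimit is $\mathbb M[T][[X]]$.
\end{enumerate}
An $R_{\bullet/\mathbb Z}$-module structure is then literally an associative unital $\mathbb M$-linear map $(\mathbb M\widehat\otimes_{\mathbb Z}R)\otimes_{\mathbb M}M\to M$. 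There are no infinite sums of operators left to justify.
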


\begin{proof}
As a first step, we shall prove that there exists an isomorphism of right $\mathbb M$-modules
\[
\mathbb M^\mathbb N \otimes_{\mathbb M} R_\bullet \simeq \mathbb M \widehat \otimes_{\mathbb Z} R.
\]
We may assume that $R = \mathbb Z[T_1, \ldots, T_m][[X_1, \dots, X_d]]$ and we shall simply write $\mathbb Z[T][[X]]$ to lighten the notations (and avoid underlining everything).
Recall that
\[
\mathbb Z[T][[X]] \simeq \widehat{\mathbb Z[[X]][T]} 
\]
(where completion is meant with respect to $X$-adic completion).
Thanks to lemma \ref{limfn} and the fact that each $\prod_n T^{k_n} X^n\mathbb Z[[X]]_\bullet$ is a finitely presented $\mathbb M$-module,
\begin{align*}
\mathbb M^\mathbb N \otimes_{\mathbb M} \mathbb Z[ T][[ X]]_\bullet & \simeq \mathbb M^\mathbb N \otimes_{\mathbb M} \varinjlim_{k_n \to \infty} \prod_n T^{k_n} X^n\mathbb Z[[X]]_\bullet
\\ & \simeq \varinjlim_{k_n \to \infty} \mathbb M^\mathbb N \otimes_{\mathbb M} \prod_n T^{k_n} X^n\mathbb Z[[X]]_\bullet
\\ & \simeq \varinjlim_{k_n \to \infty} \left(\prod_n T^{k_n} X^n\mathbb Z[[X]]_\bullet\right)^\mathbb N
\\ & \simeq \varinjlim_{k_n \to \infty} \prod_n T^{k_n} X^n\mathbb M[[X]]
\\ & \simeq \mathbb M[T][[X]]
\\ & \simeq \mathbb M \widehat \otimes_{\mathbb Z} \mathbb Z[ T][[ X]].
\end{align*}
The identification $\mathbb Z[[X]]_\bullet^\mathbb N \simeq \mathbb Z_\bullet^\mathbb N[[X]] \simeq \mathbb M[[X]]$ follows from the fact that the functor $V \mapsto V_\bullet$ preserves all limits of topological abelian groups - and in particular arbitrary products.

Now, by definition, an $R_{\bullet/\mathbb Z}$-module (resp.\ an $\mathbb M \widehat \otimes_{\mathbb Z} R$-module) is an $\mathbb M$-module endowed with a unitary associative morphism $R_\bullet \otimes_{\mathbb Z_\bullet} M \to M$ (resp.\ $\mathbb M \widehat \otimes_{\mathbb Z} R \otimes_{\mathbb M} M \to M$).
And a morphism is in both cases an $\mathbb M$-linear map compatible with these actions.
Since
\[
\mathbb M^{\otimes 2} \otimes_{\mathbb M} R_\bullet \simeq \mathbb M \widehat \otimes_{\mathbb Z} R,
\]
we have:
\[
R_\bullet \otimes_{\mathbb Z_\bullet} M = (\mathbb M^{\otimes 2} \otimes_{\mathbb M} R_\bullet) \otimes_{\mathbb M} M \simeq \mathbb M \widehat \otimes_{\mathbb Z} R \otimes_{\mathbb M} M. \qedhere
\]
\end{proof}

In order to state the next corollary, it is necessary to recall some notions and introduce some notations.

If $R$ is a complete \emph{adic} ring, we shall then denote by $\mathbb M_R$ the set of \emph{column-null} matrices with coefficients in $R$, meaning that all columns are made of null sequences of elements of $R$.
This is a ring for usual multiplication and $\mathbb M_R \simeq \mathrm{End}_{R\mhyphen\mathrm{cont}}(R_\bullet)$.
In particular, $R_\bullet$ is an $\mathbb M_R$-module and we have $\mathbb M_R \simeq R_\bullet^\mathbb N$.
In the case $R$ is discrete, this is the same thing as the ring of \emph{column-finite} matrices with coefficients in $R$ so that, in particular, $\mathbb M_{\mathbb Z} = \mathbb M$.
If we complete with respect to some ideal $I$ of $R$, then $\mathbb M_{\widehat R} = \widehat{\mathbb M_R}$ (since null sequences preserve all limits).

An adic ring is said to be \emph{strictly formally finitely generated} if it is (isomorphic to) a quotient of $\mathbb Z[[X_1, \dots, X_d]]$ equipped with the $(X_1, \dots, X_d)$-adic topology.
As a standard example, we may of course consider $\mathbb Z$ or $\mathbb Z[[X]]$ but also $\mathbb Z_p$ or $\mathbb Z_p[[X]]$ (but not $\mathbb Z[X]$).

If $R$ is a (complete) linear ring, then the \emph{ring of convergent power series} over $R$ is the completion $R\{X_1, \ldots, X_d\}$ of the polynomial ring $R[X_1, \ldots, X_d]$ with respect to the subgroups $V[X_1, \ldots, X_d]$ when $V$ runs through the open subgroups of $R$.
In other words,
\[
R\{X_1, \ldots, X_d\} = \left\{\sum_{\mathbb n \in \mathbb N^d}\underline f_{\underline n} \underline X^{\underline n}, \quad f_{\underline n} \to 0\right \} \subset R[[X_1, \ldots, X_d]].
\]

\begin{cor} \label{fft}
If an adic ring $R$ is \emph{strictly} formally finitely generated, then the category of $R\{X_1, \ldots, X_d\}_{\bullet/\mathbb Z}$-modules is equivalent to the category of genuine $\mathbb M_R\{X_1, \ldots, X_d\}$-modules.
\end{cor}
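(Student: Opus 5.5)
The plan is to follow the proof of the preceding proposition (the one identifying $R_{\bullet/\mathbb Z}$-modules with genuine $\mathbb M \widehat\otimes_{\mathbb Z} R$-modules). Write $A := R\{X_1, \ldots, X_d\}$. By definition, an $A_{\bullet/\mathbb Z}$-module is an $\mathbb M$-module $M$ with a unitary associative map $A_\bullet \otimes_{\mathbb Z_\bullet} M \to M$, and
\[
A_\bullet \otimes_{\mathbb Z_\bullet} M = (\mathbb M^{\otimes 2} \otimes_{\mathbb M} A_\bullet) \otimes_{\mathbb M} M \simeq (\mathbb M^{\mathbb N} \otimes_{\mathbb M} A_\bullet) \otimes_{\mathbb M} M,
\]
using the first right structure of $\mathbb M^{\otimes 2} \simeq \mathbb M^{\mathbb N}$. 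So everything reduces to one identification of $(\mathbb M, \mathbb M)$-bimodules, compatible with the ring structures:
\[
\mathbb M^{\mathbb N} \otimes_{\mathbb M} A_\bullet \simeq \mathbb M_R\{X_1, \ldots, X_d\}.
\]
Once this is established, the monoid structure on $A_\bullet$ transports to the multiplication of $\mathbb M_R\{X\}$, and the two categories of modules agree, exactly as at the end of the preceding proof.

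To compute the left-hand side, I would first reduce to $R = \mathbb Z[[Y_1, \ldots, Y_e]]$. A quotient $R = \mathbb Z[[Y]]/J$ gives a quotient $A = \mathbb Z[[Y]]\{X\}/J\{X\}$; these are metrizable, so $V \mapsto V_\bullet$ preserves the quotient (proposition \ref{metff}). The functor $\mathbb M^{\mathbb N} \otimes_{\mathbb M} -$ is right exact, and $\mathbb M_R\{X\}$ should be the corresponding quotient of $\mathbb M_{\mathbb Z[[Y]]}\{X\}$. In that case $A$ is the $Y$-adic completion of the free $\mathbb Z[[Y]]$-module $\mathbb Z[[Y]] \cdot \mathbb N^d$ (a basis of monomials), so lemma \ref{limfn} gives
\[
A_\bullet \simeq \varinjlim_{|\underline k_{\underline n}| \to \infty} \prod_{\underline n \in \mathbb N^d} \underline Y^{\underline k_{\underline n}} \mathbb Z[[Y]]_\bullet\, \underline X^{\underline n}.
\]
This is the point where strictness matters: there are no $T$-variables, so each factor is the finitely presented module $\mathbb Z[[Y]]_\bullet \simeq \mathbb M$ (up to the shift by a monomial in $Y$). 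A countable product of such modules is again finitely presented, and $\mathbb M^{\mathbb N} \otimes_{\mathbb M} -$ commutes with filtered colimits and with products of finitely presented modules. We then get
\[
\varinjlim_{k_{\underline n} \to \infty} \prod_{\underline n} \underline Y^{\underline k_{\underline n}} \mathbb Z[[Y]]_\bullet^{\mathbb N}\, \underline X^{\underline n}.
\]
Since $\mathbb Z[[Y]]_\bullet^{\mathbb N} = \mathbb M_{\mathbb Z[[Y]]}$ (column-null matrices, using that $V \mapsto V_\bullet$ preserves products), this is exactly the set of series $\sum u_{\underline n} \underline X^{\underline n}$ with $u_{\underline n} \in \mathbb M_R$ and $u_{\underline n} \to 0$ $Y$-adically, that is, $\mathbb M_R\{X\}$.

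The main obstacle is the last step: checking that this colimit description of $\mathbb M_R\{X\}$ is correct. One has to show that a null sequence in $\mathbb M_R$ (for its adic topology, which is not the $t$-adic one) is captured by a single sequence $k_{\underline n} \to \infty$, which amounts to repeating the $\inf\max$ argument of lemma \ref{limfn} at the level of matrices. One also has to check that the resulting bijection is multiplicative, not merely a bimodule isomorphism. I would verify multiplicativity on the dense part $\mathbb M^\vee \otimes R[X]$ and extend it by continuity, using lemma \ref{endcont}. The same continuity argument would also settle the compatibility with quotients used in the reduction step.
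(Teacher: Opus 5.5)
Your proposal is correct and is essentially the paper's argument. The paper applies the preceding proposition to the formally finitely generated ring $R\{X_1,\ldots,X_d\}$ (the $X_i$ playing the role of the $T$-variables there) and then identifies $\mathbb M\widehat\otimes_{\mathbb Z}R\{X\}\simeq\mathbb M_R\{X\}$ using $\mathbb M\widehat\otimes_{\mathbb Z}\mathbb Z[[Y]]\simeq\mathbb M[[Y]]\simeq\mathbb M_{\mathbb Z[[Y]]}$, whereas you inline that proposition's computation and reach $\mathbb M_R\{X\}$ directly. Two corrections on emphasis: strictness is not what makes the factors in lemma \ref{limfn} finitely presented (they are finitely presented in the $T$-variable case too), it is what makes the coefficients come out as $\mathbb Z[[Y]]_\bullet^{\mathbb N}=\mathbb M_R$; and your announced main obstacle is immediate, since $\bigcup_{k_{\underline n}\to\infty}\prod_{\underline n}I^{k_{\underline n}}\mathbb M_R\,\underline X^{\underline n}$, with $I=(Y_1,\ldots,Y_e)$, is by definition the set of series with $I$-adically null coefficients.
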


\begin{proof}
From the identification $\mathbb Z_\bullet^\mathbb N[[X]] \simeq \mathbb Z[[X]]_\bullet^\mathbb N$, we deduce that
\[
\mathbb M \widehat \otimes_{\mathbb Z} \mathbb Z[[X]] \simeq \mathbb M[[X]] \simeq \mathbb M_{\mathbb Z[[X]]}.
\]
It follows that $\mathbb M \widehat \otimes_{\mathbb Z} R\{X_1, \ldots, X_d\} \simeq \mathbb M_R\{X_1, \ldots, X_d\}$.
\end{proof}

This applies to $\mathbb Z[[X]]$ or $\mathbb Z_p\{X\}$ for example but we can also easily deduce that the category of $\mathbb Q[[X]]^{\mathrm{bd}}_{\bullet/\mathbb Z}$-modules (resp.\ $\mathbb Q_{p\bullet/\mathbb Z}$-modules) is equivalent to the category of $\mathbb Q \otimes_{\mathbb Z} \mathbb M[[X]]$-modules (resp.\ $\mathbb M_p[1/p]$-modules).

Moving to $\mathbb M$-modules replaces completed tensor product of Banach spaces with the simpler tensor product we may think of:

\begin{prop}
If $E$ and $F$ are two $\mathbb Q_p$-Banach spaces, then\footnote{The derived decoration plays no role since everything is flat here.}
\[
E_\bullet \otimes^L_{\mathbb Z_\bullet} F_\bullet \simeq (E \widehat \otimes_{\mathbb Q_p} F)_\bullet.
\]
\end{prop}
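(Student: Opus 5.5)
Since we may assume (for the purposes of the argument) that a $\mathbb Q_p$-Banach space is isomorphic to a space $c_0(I)$ of null families, I would reduce to the case $E = \widehat{\mathbb Q_p \cdot I}$ and $F = \widehat{\mathbb Q_p \cdot J}$. Both the statement and the completed tensor product are compatible with such isomorphisms, and $c_0(I)\widehat\otimes_{\mathbb Q_p} c_0(J) \simeq c_0(I\times J)$. Moreover $E = E^0[1/p]$ with $E^0 := \widehat{\mathbb Z_p \cdot I}$ the unit ball, so $E_\bullet = \varinjlim_{p\cdot} E^0_\bullet$, and similarly for $F$. Since the internal derived tensor product commutes with filtered colimits in each variable, I get
\[
E_\bullet \otimes^L_{\mathbb Z_\bullet} F_\bullet \simeq \varinjlim_{p\cdot} E^0_\bullet \otimes^L_{\mathbb Z_\bullet} F^0_\bullet .
\]
It therefore suffices to prove $E^0_\bullet \otimes^L_{\mathbb Z_\bullet} F^0_\bullet \simeq (E^0\widehat\otimes_{\mathbb Z_p} F^0)_\bullet$, where the completion is $p$-adic, and then invert $p$.

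For the integral statement, I would apply lemma \ref{limfn} with $R = \mathbb Z_p$, $I=(p)$. It writes $E^0_\bullet$ as a filtered colimit over sequences $k_n \to \infty$ of $\prod_{n} p^{k_n}\mathbb Z_{p\bullet}$, which is a countable product when $I$ is countable. For uncountable $I$, I would first write $E^0_\bullet$ as the filtered union of the $\widehat{\mathbb Z_p\cdot I'}_\bullet$ with $I'\subset I$ countable. This holds because a null sequence of $c_0(I)$ has countable support, and the passage to this union is compatible with both sides. Each $\prod_n p^{k_n}\mathbb Z_{p\bullet} \simeq \mathbb Z_{p\bullet}^{\mathbb N}$ is a countable product of the finitely presented module $\mathbb Z_{p\bullet}\simeq \mathbb M/\mathbb M(t-p)$. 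By the remark after corollary \ref{tensfr} together with the idempotence $\mathbb Z_{p\bullet}\otimes^L_{\mathbb Z_\bullet}\mathbb Z_{p\bullet}\simeq \mathbb Z_{p\bullet}$, I would obtain
\[
\mathbb Z_{p\bullet}^{\mathbb N}\otimes^L_{\mathbb Z_\bullet}\mathbb Z_{p\bullet}^{\mathbb N} \simeq \mathbb Z_{p\bullet}^{\mathbb N\times\mathbb N}.
\]
To justify this, I would first resolve $\mathbb Z_{p\bullet}$ by $0\to\mathbb M\xrightarrow{\cdot(t-p)}\mathbb M\to\mathbb Z_{p\bullet}\to 0$, so that the countable product is resolved by $\mathbb M^{\mathbb N}\simeq\mathbb M$. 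Corollary \ref{tensfr} then computes the tensor product of the projectives as $\mathbb Z_\bullet^{\mathbb N\times\mathbb N}$-type objects, and the terms assemble to $\mathbb Z_p[[X,Y]]$-like products exactly as in the example computing $\mathbb Z_{p\bullet}\otimes^L\mathbb Z[[X]]_\bullet$. Passing to the colimit over pairs of sequences $(k_n),(l_m)$ yields $\varinjlim \prod_{n,m} p^{k_n+l_m}\mathbb Z_{p\bullet}$.

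The remaining step is to identify this colimit with $\widehat{\mathbb Z_p\cdot(I\times J)}_\bullet$. By lemma \ref{limfn} again, the latter is the colimit over all sequences $h_{n,m}\to\infty$ on $\mathbb N\times\mathbb N$. This is where I expect the main obstacle: the sequences of the form $k_n+l_m$ are not cofinal among all $h_{n,m}\to\infty$, since $h_{n,m}$ can be large with $n$ fixed. So the integral statement may only hold after inverting $p$, or up to bounded torsion. I would handle it as follows. Given $h_{n,m}\to\infty$, set $k_n := \min_m h_{n,m}$, and note that for each $n$ only finitely many $m$ give small values. The boundedness lost when forcing product form is then absorbed by the $p^{-1}$ colimit; otherwise I would use the cofinality available for null sequences of $c_0$ (countable support, entries tending to zero in both indices jointly). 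Flatness, which removes the higher Tor, should come from each transition map being injective and from the fact that the resolutions above have length one with injective differentials after tensoring.
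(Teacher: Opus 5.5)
\textbf{Gap at the final step.} Your reductions follow the paper's route: Banach spaces as $c_0(I)$, unit balls and $\varinjlim_{p\cdot}$, countable supports, lemma \ref{limfn}, and $\mathbb Z_{p\bullet}^{\mathbb N}\otimes^{\mathrm L}_{\mathbb Z_\bullet}\mathbb Z_{p\bullet}^{\mathbb N}\simeq\mathbb Z_{p\bullet}^{\mathbb N\times\mathbb N}$ via corollary \ref{tensfr} and idempotence of $\mathbb Z_{p\bullet}$. The gap is the last step, which you leave unproved and misdiagnose: the product exponents $k_n+l_m$ \emph{are} cofinal.

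What you need is only that every $h\colon\mathbb N\times\mathbb N\to\mathbb N$ with $h\to\infty$ (i.e.\ $\{h<N\}$ finite for all $N$) dominates some $k_n+l_m$ with $k,l\to\infty$. Then $\prod_{n,m}p^{h_{n,m}}\mathbb Z_{p\bullet}\subset\prod_{n,m}p^{k_n+l_m}\mathbb Z_{p\bullet}$. Large values of $h_{n,m}$ for fixed $n$ are harmless, since only an upper bound by $h$ is required. You were one step away. Put $k_n:=\lfloor\frac12\min_m h_{n,m}\rfloor$ and symmetrically $l_m:=\lfloor\frac12\min_n h_{n,m}\rfloor$. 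Both tend to infinity because the finite set $\{h<2N\}$ has finite projections to each factor, and clearly $k_n+l_m\le h_{n,m}$. Conversely, $k_n+l_m\to\infty$ on $\mathbb N\times\mathbb N$ whenever $k,l\to\infty$, since $\{k_n+l_m<N\}\subset\{k<N\}\times\{l<N\}$.

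Hence, by lemma \ref{limfn} applied to the countable set $\mathbb N\times\mathbb N$, the two filtered unions coincide inside $\widehat{\mathbb Z_p\cdot(\mathbb N\times\mathbb N)}_\bullet$. So the integral isomorphism $\widehat{\mathbb Z_p\cdot I}_\bullet\otimes^{\mathrm L}_{\mathbb Z_\bullet}\widehat{\mathbb Z_p\cdot J}_\bullet\simeq\widehat{\mathbb Z_p\cdot(I\times J)}_\bullet$ holds on the nose, with no bounded torsion. This is what the paper asserts (leaving this cofinality implicit) and later reuses for $\mathbb Z_p\{X\}_\bullet\otimes_{\mathbb Z_\bullet}\mathbb Z_p\{Y\}_\bullet$.

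\textbf{Your fallback would not work.} Had the obstruction been real, inverting $p$ would not have rescued it. It only shifts all exponents by one fixed constant $c$, and a constant shift does not affect cofinality. Indeed, if $h+c$ dominates some $k+l$, then so does $h$: replace $k_n$ by $\max(k_n-c,0)$ and cap $l_m$ by $\min_{n\in F}h_{n,m}$, where $F$ is the finite set of $n$ with $k_n<c$. A genuine failure would therefore have broken the rational statement too. Your other suggestion, ``the cofinality available for null sequences of $c_0$'', is just the claim to be proved.

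\textbf{On Tor vanishing.} Injectivity of the transition maps is what lets you read the colimit as a union of submodules. For that you must also check that your identifications $\prod_n p^{k_n}\mathbb Z_{p\bullet}\otimes\prod_m p^{l_m}\mathbb Z_{p\bullet}\simeq\prod_{n,m}p^{k_n+l_m}\mathbb Z_{p\bullet}$ are compatible with the inclusions. Vanishing of higher Tor comes instead from $\mathbb Z_{p\bullet}\otimes^{\mathrm L}_{\mathbb Z_\bullet}\mathbb Z_{p\bullet}\simeq\mathbb Z_{p\bullet}$ together with exactness of filtered colimits.
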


\begin{proof}
We can write $E = \mathbb Q \otimes_{\mathbb Z} \widehat{\mathbb Z \cdot I}$ and $F = \mathbb Q \otimes_{\mathbb Z} \widehat{\mathbb Z \cdot J}$ (where completion is meant with respect to $p$-adic topology) and it is sufficient to prove that
\begin{equation} \label{Zlevl}
\widehat{\mathbb Z \cdot I}_\bullet \otimes^L_{\mathbb Z_\bullet} \widehat{\mathbb Z \cdot I}_\bullet \simeq \widehat{\mathbb Z \cdot (I \times J)}_\bullet.
\end{equation}
Moreover, since
\[
\widehat{\mathbb Z \cdot I} = \varinjlim \widehat{\mathbb Z \cdot I'}
\]
when $I'$ runs through countable subsets, we may assume that $I = \mathbb N$ and, by symmetry, that $J = \mathbb N$ (the case $I$ or $J$ finite is trivial).
Thanks to lemma \ref{limfn}, we are therefore reduced to proving
\[
\prod_n p^{k_n} \mathbb Z_{p\bullet} \otimes^L_{\mathbb Z_\bullet} \prod_n p^{l_n} \mathbb Z_{p\bullet} = \prod_n p^{k_n} \mathbb Z_{p\bullet} \times \prod_n p^{l_n} \mathbb Z_{p\bullet}
\]
which follows from corollary \ref{tensfr}.
\end{proof}

\begin{xmp}
We have 
\[
\mathbb Q_p\{X\}_\bullet \otimes_{\mathbb Z_\bullet} \mathbb Q_p\{Y\}_\bullet \simeq \mathbb Q_p\{X, Y\}_\bullet
\]
but also
\[
\mathbb Z_p\{X\}_\bullet \otimes_{\mathbb Z_\bullet} \mathbb Z_p\{Y\}_\bullet \simeq \mathbb Z_p\{X, Y\}_\bullet,
\]
using the intermediate step \eqref{Zlevl}.
\end{xmp}

\begin{prop} \label{idmod}
$Z[[X]]_\bullet$ is a (derived) idempotent $\mathbb Z[X]_{\bullet/\mathbb Z}$-module:
\[
\mathbb Z[[X]]_\bullet \otimes^{\mathrm L}_{\mathbb Z[X]_\bullet } \mathbb Z[[X]]_\bullet \simeq \mathbb Z[[X]]_\bullet.
\]
\end{prop}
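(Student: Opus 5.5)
Since $\mathbb Z[X]_\bullet \simeq \mathbb M^\vee$ as an $\mathbb M$-module and $\mathbb Z[[X]]_\bullet \simeq \mathbb M$, the plan is to compute everything with an explicit resolution of $\mathbb Z[[X]]_\bullet$ by free $\mathbb Z[X]_{\bullet/\mathbb Z}$-modules. By the second example above, $\mathbb Z[X]_{\bullet/\mathbb Z}$-modules are genuine $\mathbb M[X]$-modules. By the lemma identifying $V_\bullet\otimes_{\mathbb Z_\bullet}M$ with $V\otimes_{\mathbb Z}M$ for discrete $V$, the free module on an $\mathbb M$-module $N$ is $\mathbb Z[X]_\bullet \otimes_{\mathbb Z_\bullet} N \simeq N[X]$. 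So $\mathbb Z[X]_\bullet\otimes_{\mathbb Z_\bullet}\mathbb Z[[X]]_\bullet \simeq \mathbb M[X]$ (with $\mathbb M \simeq \mathbb Z[[Y]]_\bullet$), and this is projective since $\mathbb M$ is projective.

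The key step is to use the standard resolution, in the style of a Koszul complex,
\[
0 \to \mathbb Z[[Y]]_\bullet[X] \xrightarrow{\,X-Y\,} \mathbb Z[[Y]]_\bullet[X] \to \mathbb Z[[X]]_\bullet \to 0 .
\]
Concretely this is $0\to \mathbb M[X]\xrightarrow{X - t\text{-action}} \mathbb M[X]\to \mathbb M\to 0$, where the last map sends $\sum_i u_iX^i$ to $\sum_i X^i u_i$, with $X$ acting on $\mathbb M\simeq \mathbb Z[[X]]_\bullet$ via its $\mathbb M$-ring structure. This is a sequence of $\mathbb M[X]$-modules.

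Exactness is checked as for polynomials over any module $N$ with an endomorphism $\phi$. For $0\to N[X]\xrightarrow{X-\phi}N[X]\to N\to 0$, injectivity of $X-\phi$ follows by comparing top degrees. Exactness in the middle comes from the usual division $X^i-\phi^i = (X-\phi)(\ldots)$, and surjectivity is clear. This holds with $N=\mathbb Z[[Y]]_\bullet$ and $\phi$ multiplication by $Y$, which is $\mathbb M$-linear. So we have a projective resolution of $\mathbb Z[[X]]_\bullet$ of length one.

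Now tensor over $\mathbb Z[X]_\bullet$ with $\mathbb Z[[X]]_\bullet$. Using $(\mathbb Z[X]_\bullet\otimes_{\mathbb Z_\bullet}N)\otimes_{\mathbb Z[X]_\bullet} M \simeq N\otimes_{\mathbb Z_\bullet}M$, we obtain the two-term complex
\[
\mathbb Z[[Y]]_\bullet\otimes_{\mathbb Z_\bullet}\mathbb Z[[X]]_\bullet \xrightarrow{\,X-Y\,}\mathbb Z[[Y]]_\bullet\otimes_{\mathbb Z_\bullet}\mathbb Z[[X]]_\bullet .
\]
By the fourth example above (a consequence of corollary \ref{tensfr}), this is $\mathbb Z[[X,Y]]_\bullet \xrightarrow{X-Y}\mathbb Z[[X,Y]]_\bullet$. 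Since $(\cdot)_\bullet$ is exact on strict short exact sequences of complete metrizable linear groups (proposition \ref{metff}), it remains to check the topological statement that
\[
0\to\mathbb Z[[X,Y]]\xrightarrow{X-Y}\mathbb Z[[X,Y]]\to\mathbb Z[[X]]\to 0
\]
is a strict short exact sequence. Injectivity holds because $\mathbb Z[[X,Y]]$ is a domain. The cokernel is $\mathbb Z[[X]]$ via $Y\mapsto X$: substitute $Y = X+Z$ to get $\mathbb Z[[X,Z]]/(Z)$. Strictness is automatic, since these are finitely generated modules over a noetherian adic ring with the adic topology. Hence $\mathrm{Tor}_1$ vanishes and $\mathrm{Tor}_0 \simeq \mathbb Z[[X]]_\bullet$. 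The main point of care is identifying the tensored map with multiplication by $X-Y$ under the isomorphism of corollary \ref{tensfr}, i.e.\ checking compatibility of the $\mathbb M$-ring actions. This follows by naturality, since both actions are induced by the monoid structures transported through the lax monoidal functor of proposition \ref{laxmon}.
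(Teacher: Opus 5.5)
Your proof is correct and is essentially the paper's argument. The paper rewrites the relative tensor product as $(\mathbb Z[[X]]_\bullet\otimes^{\mathrm L}_{\mathbb Z_\bullet}\mathbb Z[[X]]_\bullet)\otimes^{\mathrm L}_{\mathbb Z[X]_\bullet}\mathbb Z_\bullet$ and resolves $\mathbb Z_\bullet$ by $\mathbb Z[X]_\bullet\xrightarrow{X}\mathbb Z[X]_\bullet$, whereas you resolve $\mathbb Z[[X]]_\bullet$ directly by free modules, but both arrive at the same complex $\mathbb Z[[X,Y]]_\bullet\xrightarrow{X-Y}\mathbb Z[[X,Y]]_\bullet$, and you additionally spell out the flatness, exactness and strictness checks that the paper leaves implicit.
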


\begin{proof}
By definition of relative tensor product in a closed monoidal category, if we let $\mathbb Z[X]_\bullet$ act on $\mathbb Z[[X]]_\bullet \otimes_{\mathbb Z_\bullet } \mathbb Z[[X]]_\bullet$ by $X \mapsto 1 \otimes X - X \otimes 1$, then we have
\begin{align*}
\mathbb Z[[X]]_\bullet \otimes^{\mathrm L}_{\mathbb Z[X]_\bullet } \mathbb Z[[X]]_\bullet &\simeq \left(\mathbb Z[[X]]_\bullet \otimes^{\mathrm L}_{\mathbb Z_\bullet } \mathbb Z[[X]]_\bullet \right) \otimes^{\mathrm L}_{\mathbb Z[X]_\bullet } \mathbb Z_\bullet
\\ &\simeq \mathbb Z[[X, Y]]_\bullet \otimes^{\mathrm L}_{\mathbb Z[X]_\bullet } \mathbb Z_\bullet
\\ &\simeq \mathbb Z[[X]]_\bullet
\end{align*}
using the resolution $\mathbb Z[X]_\bullet \overset X \to \mathbb Z[X]_\bullet$ of $\mathbb Z_\bullet$.
\end{proof}

As a consequence, the forgetful functor from $\mathbb Z[[X]]_{\bullet/\mathbb Z}$-modules to $\mathbb Z[X]_{\bullet/\mathbb Z}$-modules is fully faithful.
We may therefore consider a $\mathbb Z[[X]]_{\bullet/\mathbb Z}$-module as an $\mathbb M$-module endowed with an operator satisfying an extra-property (but \emph{no} extra structure).

\begin{cor}
Let $R$ be a finitely generated ring and $I,J$ two ideals of $R$, then (if the exponents indicates that we are completing with respect to the given ideal)
\[
\widehat {R}^{/I}_\bullet \otimes_{R_\bullet} \widehat R^{/J}_\bullet \simeq \widehat R^{/I+J}_\bullet. \qed
\]
\end{cor}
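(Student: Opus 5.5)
The plan is to reduce to the polynomial case via Proposition \ref{idmod}, then base change along $\mathbb Z[X_1,\ldots,X_m]\to R$.

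\textbf{Step 1: presentation.} Choose generators $f_1,\ldots,f_d$ of $I$ and $g_1,\ldots,g_e$ of $J$ (possible since $R$ is noetherian). Write $P:=\mathbb Z[X_1,\ldots,X_d,Y_1,\ldots,Y_e]$ and map $P\to R$ by $X_i\mapsto f_i$, $Y_j\mapsto g_j$. Then completions are base changes from the polynomial setting:
\[
\widehat R^{/I}\simeq R\widehat\otimes_{\mathbb Z[X]}\mathbb Z[[X]]\simeq R[[X]]/(X_i-f_i),
\]
and similarly for $J$ and $I+J$.

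\textbf{Step 2: translate to $\mathbb M$-modules.} Use that $-_\bullet$ is fully faithful, preserves quotients on complete metrizable groups (Proposition \ref{metff}), and is compatible with tensor products of power series rings (Corollary \ref{tensfr}, in the form $\mathbb Z[[X]]_\bullet\otimes\mathbb Z[[Y]]_\bullet\simeq\mathbb Z[[X,Y]]_\bullet$). From this, derive
\[
\widehat R^{/I}_\bullet\simeq R_\bullet\otimes_{\mathbb Z[X]_\bullet}\mathbb Z[[X]]_\bullet .
\]
To justify it, present $R$ as a quotient of a polynomial ring and apply Lemma \ref{fptr}-type finite presentation arguments together with Lemma \ref{limfn}. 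I expect this step to be the main obstacle: it is exactly where finite generation is needed to control the completion of an infinitely generated-looking module by finitely presented pieces.

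\textbf{Step 3: compute.} Granting Step 2,
\[
\widehat R^{/I}_\bullet\otimes_{R_\bullet}\widehat R^{/J}_\bullet\simeq R_\bullet\otimes_{\mathbb Z[X,Y]_\bullet}\bigl(\mathbb Z[[X]]_\bullet\otimes_{\mathbb Z_\bullet}\mathbb Z[[Y]]_\bullet\bigr).
\]
By Corollary \ref{tensfr} the inner factor is $\mathbb Z[[X,Y]]_\bullet$, so the whole expression is $R_\bullet\otimes_{\mathbb Z[X,Y]_\bullet}\mathbb Z[[X,Y]]_\bullet\simeq\widehat R^{/I+J}_\bullet$, applying Step 2 once more with the generators $(f,g)$ of $I+J$.

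\textbf{Role of Proposition \ref{idmod}.} It is the one-variable instance of Step 3 and explains the general mechanism: in the idempotent case $I=J$, the $\otimes_{R_\bullet}$ collapses exactly as $\mathbb Z[[X]]_\bullet\otimes_{\mathbb Z[X]_\bullet}\mathbb Z[[X]]_\bullet\simeq\mathbb Z[[X]]_\bullet$. Iterating over variables handles several generators.
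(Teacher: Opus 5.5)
Your Step 2 is false in general, and Step 3 uses it three times (for $I$, for $J$ and for $I+J$). Since $R$ is discrete, the lemma in Section \ref{dualsec} ($V_\bullet\otimes_{\mathbb Z_\bullet}M\simeq V\otimes_{\mathbb Z}M$ for discrete $V$) shows that $R_\bullet\otimes_{\mathbb Z[X]_\bullet}\mathbb Z[[X]]_\bullet$ is just the algebraic tensor product $R\otimes_{\mathbb Z[X]}\mathbb Z[[X]]_\bullet$.

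For $R=\mathbb Z[S,T]$ and $I=(S)$ this is $\mathbb Z[T]\otimes_{\mathbb Z}\mathbb Z[[S]]_\bullet$, whose top row is $\mathbb Z[[S]][T]$. But $\widehat R^{/I}_\bullet(*)=\mathbb Z[T][[S]]$ contains $\sum_n S^nT^n$, so the natural map is not surjective. Likewise $R=\mathbb Z[T]$, $I=(2)$ gives $\mathbb Z_{2\bullet}\otimes_{\mathbb Z}\mathbb Z[T]$ instead of $\mathbb Z_2\{T\}_\bullet$.

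The identity $\widehat R^{/I}\simeq R\widehat\otimes_{\mathbb Z[X]}\mathbb Z[[X]]$ of Step 1 involves a \emph{completed} tensor product, whereas $V\mapsto V_\bullet$ is only lax monoidal (see the example after Proposition \ref{laxmon}). Tensoring with the discrete $R_\bullet$ only creates direct sums, while Lemma \ref{limfn} shows that completion creates filtered colimits of countable products. Step 2 does hold when $R$ is a finite $\mathbb Z[X]$-module (tensor a finite presentation and apply Proposition \ref{metff}), but not in general. What your Step 3 actually proves is the analogous identity for the naive base changes $R\otimes_{\mathbb Z[X]}\mathbb Z[[X]]_\bullet$, not the corollary. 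This is also why Proposition \ref{idmod} and Corollary \ref{tensfr} cannot suffice on their own: they never see the variables of $R$ that are not being completed.

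What is missing is a base change along a \emph{surjection}, plus a genuine computation. Choose $P=\mathbb Z[T_1,\dots,T_m,X_1,\dots,X_d,Y_1,\dots,Y_e]\twoheadrightarrow R$ with the $T_i$ going to ring generators, $X_i\mapsto f_i$ and $Y_j\mapsto g_j$. Since $P$ is noetherian and $R$ is a finitely presented $P$-module, $\widehat R^{/I}\simeq R\otimes_P\mathbb Z[T,Y][[X]]$, and the presentation stays strict after completion (Artin--Rees). Proposition \ref{metff} then gives $\widehat R^{/I}_\bullet\simeq R_\bullet\otimes_{P_\bullet}\mathbb Z[T,Y][[X]]_\bullet$, and likewise for $J$ and $I+J$ with $\mathbb Z[T,X][[Y]]$ and $\mathbb Z[T][[X,Y]]$. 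The corollary then reduces formally to the polynomial identity
\[
\mathbb Z[T,Y][[X]]_\bullet\otimes_{P_\bullet}\mathbb Z[T,X][[Y]]_\bullet\simeq\mathbb Z[T][[X,Y]]_\bullet .
\]
This identity has to be proved by hand:
\begin{itemize}
\item use Lemma \ref{limfn} to write both factors as filtered colimits of countable products $\prod_\mu X^{\underline k_\mu}\mathbb Z[[X]]_\bullet$ (resp.\ in $Y$), indexed by monomials $\mu$ in the discrete variables;
\item tensor these products using Corollary \ref{tensfr};
\item check that killing the diagonal relations ($T_i\otimes 1-1\otimes T_i$, and the same for the $X_i$ and $Y_j$) yields exactly $\mathbb Z[T][[X,Y]]_\bullet$.
\end{itemize}
The last check needs convergence estimates both for surjectivity and for identifying the kernel, e.g.\ that $k_a,l_c\to\infty$ forces $\min_{a+c=p}\max(k_a,l_c)\to\infty$ as $p\to\infty$. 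The paper states the corollary without proof, directly after Proposition \ref{idmod}; this computation is what that deduction actually requires.
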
 

We finish with the analog of a theorem of Mann.
Recall that if $R$ is a discrete ring, then an $R$-module (or complex) $M$ is \emph{derived complete} with respect to a finitely generated ideal $I$ if $\mathrm R\varprojlim_f M = 0$ when $f \in I$.
When $M$ is a Hausdorff $I$-adic $R$-module, this is equivalent to $M$ being complete.

\begin{thm}
If $R$ is a \emph{finitely generated} ring, $I \subset R$ an ideal and $M, N$ are two derived $I$-complete $R_{\bullet/\mathbb Z}$-modules, then $M \otimes^{\mathrm L}_{R_\bullet} N$ also is derived $I$-complete.
\end{thm}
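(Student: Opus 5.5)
We plan to reduce everything to principal ideals and then to the idempotence statement of proposition \ref{idmod}. Since $I$ is finitely generated, say $I=(f_1,\ldots,f_d)$, derived $I$-completeness of an $R_{\bullet/\mathbb Z}$-module (or complex) means derived $f_i$-completeness for each $i$. So it suffices to treat a single element $f \in R$: assuming $M$ and $N$ are derived $f$-complete, we show that $M \otimes^{\mathrm L}_{R_\bullet} N$ is derived $f$-complete. In fact we only need one of the two factors to be complete, say $M$.

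The key step is to characterize derived $f$-completeness inside $R_{\bullet/\mathbb Z}$-modules by a base change. The element $f$ gives a ring map $\mathbb Z[X] \to R$, $X \mapsto f$, hence a morphism of $\mathbb M$-rings $\mathbb Z[X]_\bullet \to R_\bullet$ by proposition \ref{fstMrg}. Set $R_\bullet[[X]] := \mathbb Z[[X]]_\bullet \otimes^{\mathrm L}_{\mathbb Z[X]_\bullet} R_\bullet$, and let $R_\bullet \langle f\rangle := R_\bullet[[X]] \otimes^{\mathrm L}_{R_\bullet[X]} R_\bullet$, which is the completion of $R$ along $f$ seen on the $\mathbb M$ side.
\begin{enumerate}
\item We first prove that an $R_{\bullet/\mathbb Z}$-complex $M$ is derived $f$-complete if and only if the natural map $M \to \mathbb Z[[X]]_\bullet \otimes^{\mathrm L}_{\mathbb Z[X]_\bullet} M$ is an isomorphism, where $X$ acts by $f$. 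We would compute $\mathrm R\varprojlim_f M$ via the Milnor sequence $M \cdot \mathbb N \xrightarrow{1-ft'} M\cdot\mathbb N$ dually. The concrete input is the short exact sequence $0 \to \mathbb Z[X]_\bullet \to \mathbb Z[[X]]_\bullet \to Q \to 0$. We then identify $\mathcal H\mathrm{om}$ out of $Q$ (or tensor with $Q$) with $\mathrm R\varprojlim_f$, using $\mathbb Z[[X]] = \varprojlim \mathbb Z[X]/X^n$ together with the fact that $V\mapsto V_\bullet$ preserves limits.
\item Granting this characterization, we get
\[
\mathbb Z[[X]]_\bullet \otimes^{\mathrm L}_{\mathbb Z[X]_\bullet} (M \otimes^{\mathrm L}_{R_\bullet} N) \simeq (\mathbb Z[[X]]_\bullet \otimes^{\mathrm L}_{\mathbb Z[X]_\bullet} M) \otimes^{\mathrm L}_{R_\bullet} N \simeq M \otimes^{\mathrm L}_{R_\bullet} N.
\]
This uses associativity of relative tensor products in the closed symmetric monoidal category, and the fact that the $X$-action on the tensor product may be computed through either factor since $R$ is commutative.
\end{enumerate}

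Finite generation of $R$ enters to make the $\mathbb M$-side objects behave, via proposition \ref{fft} and lemma \ref{limfn}. In particular, it ensures $\widehat R^{/f}_\bullet \simeq \mathbb Z[[X]]_\bullet \otimes^{\mathrm L}_{\mathbb Z[X]_\bullet} R_\bullet$ with no higher Tor. This should follow by presenting $R$ as a quotient of $\mathbb Z[T_1,\ldots,T_m]$, reducing to polynomial rings, and using corollary \ref{tensfr} and proposition \ref{idmod}.

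The main obstacle is step 1. On the $\mathbb M$ side the naive notion ``$\mathrm R\varprojlim_f M = 0$'' must be matched with the base-change formulation. One cannot argue with elements, since tensor products have no bilinear maps (proposition \ref{bilmon}). I would first try to prove it for $M$ of the form $V_\bullet$ with $V$ complete metrizable, where both conditions are classical. I would then extend it to all complexes using that both conditions are stable under cones, limits and retracts, and that $\mathbb Z[[X]]_\bullet$ is idempotent over $\mathbb Z[X]_\bullet$.
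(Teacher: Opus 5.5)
There is a genuine gap in Step~1. The ``if'' direction of your characterization is false, so it cannot be proved.

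By Proposition~\ref{idmod}, the condition $M \simeq \mathbb Z[[X]]_\bullet \otimes^{\mathrm L}_{\mathbb Z[X]_\bullet} M$ only says that $M$ is a module over the idempotent algebra $\mathbb Z[[X]]_\bullet$. The same holds for $Q \otimes^{\mathrm L} M = 0$ and for $\mathrm R\mathcal H\mathrm{om}_{\mathbb Z[X]_\bullet}(Q, M) = 0$. That class is stable under all colimits and under $V \otimes_{\mathbb Z} -$ for every abelian group $V$; derived completeness is not.

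Here is a concrete counterexample. Take $R = \mathbb Z[X]$, $f = X$ and $M = \mathbb Q \otimes_{\mathbb Z} \mathbb Z[[X]]_\bullet$.
\begin{itemize}
\item By idempotence, $\mathbb Z[[X]]_\bullet \otimes^{\mathrm L}_{\mathbb Z[X]_\bullet} M \simeq \mathbb Q \otimes_{\mathbb Z} (\mathbb Z[[X]]_\bullet \otimes^{\mathrm L}_{\mathbb Z[X]_\bullet} \mathbb Z[[X]]_\bullet) \simeq M$.
\item However, $M(*) = \mathbb Q \otimes_{\mathbb Z} \mathbb Z[[X]]$ is an $X$-stable direct summand of $M$, because $X$ acts $\mathbb M$-linearly and so commutes with $e_0$.
\item This summand is $X$-torsion free and not $X$-adically complete (think of $\sum_k X^k/k!$). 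Hence $\mathrm R\varprojlim_X M \neq 0$.
\end{itemize}
The module $\bigoplus_n (\mathbb Z[X]/X^n)_\bullet$ is another counterexample.

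Only the converse holds. A derived complete $M$ is $\mathrm R\varprojlim_n M \otimes^{\mathrm L}_{\mathbb Z[X]_\bullet} (\mathbb Z[X]/X^n)_\bullet$. Each of these terms is a $\mathbb Z[[X]]_\bullet$-module, and modules over an idempotent algebra are stable under limits.

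A symptom is that your plan proves too much. Step~2 never uses the completeness of $N$, and the claim that one complete factor suffices is false: $\mathbb Z[[X]]_\bullet \otimes^{\mathrm L}_{\mathbb Z[X]_\bullet} \mathbb Q[X]_\bullet \simeq \mathbb Q \otimes_{\mathbb Z} \mathbb Z[[X]]_\bullet$, which is not complete. Already classically, $\mathbb Z[[X]] \otimes_{\mathbb Z[X]} \mathbb Q[X] = \mathbb Q \otimes_{\mathbb Z} \mathbb Z[[X]]$ is not $X$-adically complete.

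Your claim $\widehat R^{/f}_\bullet \simeq \mathbb Z[[X]]_\bullet \otimes^{\mathrm L}_{\mathbb Z[X]_\bullet} R_\bullet$ fails for the same reason. Take the finitely generated ring $R = \mathbb Z[T,X]$ with $f = X$. By Proposition~\ref{monadj} and the lemma $V_\bullet \otimes_{\mathbb Z_\bullet} M \simeq V \otimes_{\mathbb Z} M$, the right-hand side is $\mathbb Z[T] \otimes_{\mathbb Z} \mathbb Z[[X]]_\bullet$, i.e.\ polynomials in $T$ over $\mathbb Z[[X]]$. That module is not even derived $X$-complete, let alone equal to $\mathbb Z[T][[X]]_\bullet$.

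What is missing is an argument that uses the completeness of \emph{both} factors. The correct consequence of Proposition~\ref{idmod}, which is what the paper's one-line proof feeds into Mann's strategy, is the direction above. Derived $f$-complete $M$ and $N$ are modules over the idempotent $R_\bullet$-algebra $\mathbb Z[[X]]_\bullet \otimes^{\mathrm L}_{\mathbb Z[X]_\bullet} R_\bullet$. So $M \otimes^{\mathrm L}_{R_\bullet} N$ may be computed relative to that algebra instead of $R_\bullet$. One must still prove that a tensor product of two \emph{complete} modules over it is complete. This is the substantive step, which the paper delegates to Mann's argument. In special cases, such as completions of countable free modules, it can be done by hand with Lemma~\ref{limfn} and Corollary~\ref{tensfr}, as in the proof of the proposition on Banach spaces. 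Your proposal has no substitute for this step.
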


\begin{proof}
This is obtained from proposition \ref{idmod} with the same strategy as Mann in lemma 2.12.19 of \cite{Mann22} (see also theorem 9.3.5 in \cite{Kedlaya25}).
\end{proof}




\section{$\mathbb M$-modules and the condensed world}

A \emph{light\footnote{\emph{Heavy} if you replace metrizable with Hausdorff.} condensed set} is a sheaf (of sets) on the site of compact metrizable\footnote{Or equivalently second-countable if already heavy.} spaces covered by jointly surjective finite families.
Equivalently, one can use only metrizable Stone spaces (aka light profinite sets aka totally disconnected compact metrizable spaces).
However, taking into account all compact metrizable spaces from the beginning sounds more natural in the sense that we recover \emph{the pretopos} that generates the topos (which is therefore \emph{coherent}).
There exists an obvious functor sending a topological space $X$ to a light condensed set $\underline X$ which is defined by
\[
\underline X(S) = \mathrm{Hom}_{\mathrm{cont}}(S, X)
\]
for $S$ compact metrizable.
This functor has an adjoint $\mathcal X \mapsto \mathcal X(*)$ -- where $*$ denotes the one point space -- and becomes fully faithful when restricted to sequential topological spaces.
The adjunction may be enriched to provide an isomorphism
\[
\underline{\mathrm{Hom}_{\mathrm{cont}}(\mathcal X(*), Y)} \simeq \mathcal H \mathrm{om}(\mathcal X, \underline Y)
\]
with respect to the compact-open topology (when $Y$ is equiped with its $k$-topology).

A \emph{light condensed abelian group} is an abelian group in the cartesian category of light condensed sets (or equivalently a sheaf of abelian groups on the site of compact metrizable spaces and jointly surjective finite families).
If $M$ is a topological abelian group, then $\underline M$ is naturally a light condensed abelian group and the above isomorphism then provides
\[
\underline{\mathrm{Hom}_{\mathbb Z\mhyphen\mathrm{cont}}(M, N)} \simeq \mathcal H \mathrm{om}_{\underline{\mathbb Z}}(\underline M, \underline N)
\]
when $N$ is any topological abelian group and $M$ is sequential (and we are using the compact-open topology with respect to the $k$-topology on $N$).
Be careful that the topology of $\mathcal M(*)$ need \emph{not} be compatible with its structure as an abelian group when $\mathcal M$ is a light condensed abelian group.

We shall systematically use the following fundamental fact (recall that, when $X$ is a topological space, we denote Markov free abelian group as $\mathbb Z \cdot X$).

\begin{prop} \label{untop}
If $S$ is a compact metrizable space, then $\underline{\mathbb Z \cdot S}$ is universal for morphisms from $\underline S$ to light condensed abelian groups\footnote{In other words, $\underline{\mathbb Z \cdot S} = \mathbb Z[\underline S]$ in Clauzen-Scholze notations.}.
\end{prop}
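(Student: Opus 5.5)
The plan is to compare the canonical morphism $\mathbb Z[\underline S] \to \underline{\mathbb Z \cdot S}$ with an explicit presentation of both sides by the compact Hausdorff pieces $(\mathbb Z \cdot S)_N$. The morphism exists because $\underline{\mathbb Z \cdot S}$ is an abelian group object ($\underline{(-)}$ preserves finite products) receiving $\underline S \to \underline{\mathbb Z \cdot S}$. Here $\mathbb Z[\underline S]$ denotes the free light condensed abelian group, i.e.\ the sheafification of $T \mapsto \mathbb Z \cdot \mathrm{Hom}_{\mathrm{cont}}(T, S)$. For each $N$ set $K_N := (S \sqcup S \sqcup \{0\})^N$, a compact metrizable space. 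It has a continuous surjection $\sigma_N : K_N \twoheadrightarrow (\mathbb Z \cdot S)_N$ sending $(x_1, \ldots, x_N)$ to $\sum \pm x_i$, with the sign $+$ or $-$ according to the copy of $S$ and $0$ for the extra point. Since $K_N$ is compact and $(\mathbb Z \cdot S)_N$ is Hausdorff (recalled in the previous section), $\sigma_N$ is a quotient map. By construction every section of $\mathbb Z[\underline S]$ is, locally on $T$, the composite of some map $T \to K_N$ with the formal sum.

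\textbf{Step 1: surjectivity.} Let $T$ be compact metrizable and $g : T \to \mathbb Z \cdot S$ continuous. The first and main point is that $g(T)$ lies in some $(\mathbb Z \cdot S)_N$. I would use the description $\mathbb Z \cdot S = \varinjlim_N (\mathbb Z \cdot S)_N$ recalled in the previous section: this is an increasing union of compact Hausdorff subspaces carrying the colimit topology (a $k_\omega$-space). In such a space a compact subset lies in a finite stage. Otherwise, pick $y_N \in g(T) \setminus (\mathbb Z \cdot S)_N$. The set $\{y_N\}$ then meets each stage in a finite set, so all its subsets are closed and it is closed and discrete, which contradicts the compactness of $g(T)$.

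Next form the fiber product $T' := T \times_{(\mathbb Z \cdot S)_N} K_N$. It is compact metrizable, being a closed subspace of $T \times K_N$, and $T' \to T$ is surjective. A surjection of compact metrizable spaces is a cover, and on $T'$ the pulled-back section lifts through $K_N$, hence comes from $\mathbb Z[\underline S](T')$. So the morphism is an epimorphism of sheaves. This step, the finite-stage property and the resulting lifting, is where I expect the real work to be.

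\textbf{Step 2: injectivity.} Take a local section $\sum_{i=1}^N \epsilon_i f_i$ of $\mathbb Z[\underline S]$ over $T$, with $f_i : T \to S$ continuous and $\epsilon_i = \pm 1$, whose image in $\underline{\mathbb Z \cdot S}(T)$ vanishes, i.e.\ $\sum \epsilon_i f_i(t) = 0$ in $\mathbb Z \cdot S$ for every $t$. For each perfect matching $\mu$ of $\{1, \ldots, N\}$ pairing indices of opposite sign, define
\[
T_\mu := \{ t \in T,\ f_i(t) = f_j(t)\ \mathrm{for\ all}\ \{i,j\} \in \mu \}.
\]
This set is closed because $S$ is Hausdorff. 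Pointwise vanishing in the free abelian group means exactly that every $t$ lies in some $T_\mu$, so the finitely many $T_\mu$ jointly cover $T$. Each $T_\mu$ is compact metrizable, so $\bigsqcup_\mu T_\mu \to T$ is a cover in the site. On $T_\mu$ the formal sum cancels term by term already in $\mathbb Z \cdot \mathrm{Hom}(T_\mu, S)$, so the section is locally zero, hence zero.

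\textbf{Step 3: conclusion.} Combining the two steps, $\mathbb Z[\underline S] \to \underline{\mathbb Z \cdot S}$ is an isomorphism of light condensed abelian groups, which is the universal property claimed in the proposition. Besides the finite-stage property in Step 1, the only other input is that $(\mathbb Z \cdot S)_N$ is Hausdorff with the quotient topology from $K_N$. This follows from the complete regularity of $S$, already used in the previous section.
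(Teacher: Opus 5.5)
Your proof is correct and is essentially the by-hand argument the paper defers to in its cited references. You filter $\mathbb Z\cdot S$ by the compact Hausdorff pieces $(\mathbb Z\cdot S)_N$, get local surjectivity by lifting through $(S\sqcup S\sqcup\{0\})^N$ once a compact image is known to lie in a finite stage, and get injectivity from the finite closed cover of $T$ indexed by sign-respecting matchings. The one nontrivial input you import is that $\mathbb Z\cdot S$ carries the colimit topology of the $(\mathbb Z\cdot S)_N$ (the $k_\omega$ theorem of Graev and Mack--Morris--Ordman); this is exactly what the paper recalls in its section on topological abelian groups, so nothing is missing.
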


\begin{proof}
This is done by hand. See for example example 5.1.7 of \cite{Kedlaya25} or proposition 2.1 and exercise 2.3 of \cite{Scholze26b}.
\end{proof}

The statement may be rephrased as follows: if $\mathcal M$ is a light condensed abelian group, then there exists a natural isomorphism
\[
\mathrm{Hom}_{\underline {\mathbb Z}}(\underline{\mathbb Z \cdot S}, \mathcal M) \simeq \mathcal M(S).
\]

Nöbeling's theorem \ref{Nobel} implies that the bidual $\mathbb Z\cdot S^{\vee\vee} := (\mathbb Z\cdot S)^{\vee\vee} $ is a complete linear abelian group when $S$ is compact Hausdorff and we may therefore consider $\mathbb Z\cdot S^{\vee\vee}_\bullet :=(\mathbb Z\cdot S^{\vee\vee})_\bullet$.

\begin{prop} \label{expfrm}
\begin{enumerate}
\item
If $M$ is an $\mathbb M$-module, then the presheaf defined on compact metrizable spaces as
\[
M^{\mathrm{cond}} : S \mapsto \mathrm{Hom}_{\mathbb M}(\mathbb Z\cdot S^{\vee\vee}_\bullet, M)
\]
is a light condensed abelian group.
\item The functor $M \mapsto M^{\mathrm{cond}}$ is fully faithful and preserves all limits as well as all colimits.
\end{enumerate}
\end{prop}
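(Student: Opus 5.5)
We first make the objects $\mathbb Z\cdot S^{\vee\vee}_\bullet$ explicit. By Nöbeling's theorem \ref{Nobel}, for $S$ compact metrizable and nonempty, $(\mathbb Z\cdot S)^\vee\simeq \mathbb Z\cdot I_S$ is free of countable rank, so $\mathbb Z\cdot S^{\vee\vee}\simeq \mathbb Z^{I_S}$ with its product topology. By the easy identification $(\mathbb Z^{I})_\bullet\simeq \mathbb Z_\bullet^{I}$ (limits are preserved by $V\mapsto V_\bullet$), we get $\mathbb Z\cdot S^{\vee\vee}_\bullet\simeq\mathbb Z_\bullet^{I_S}$. This is a finitely generated projective $\mathbb M$-module (isomorphic to $\mathbb M$ or to $\mathbb Z_\bullet^n$). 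Moreover $S\mapsto \mathbb Z\cdot S^{\vee\vee}_\bullet$ is covariant, so $M^{\mathrm{cond}}$ is a contravariant functor. Applying proposition \ref{metff} (full faithfulness on complete metrizable groups) and lemma \ref{carb}, we have
\[
\mathrm{Hom}_{\mathbb M}(\mathbb Z\cdot T^{\vee\vee}_\bullet,\mathbb Z\cdot S^{\vee\vee}_\bullet)\simeq \mathrm{Hom}_{\mathbb Z\mhyphen\mathrm{cont}}(\mathbb Z^{I_T},\mathbb Z^{I_S}).
\]

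For (1), we use the sheaf condition for finite jointly surjective families. Since $\mathrm{Hom}_{\mathbb M}(P,-)$ is exact for $P$ projective, it suffices to show two things. First, $\emptyset\mapsto 0$ and finite disjoint unions go to direct sums, which is clear since $(\mathbb Z\cdot(S\sqcup T))^\vee=(\mathbb Z\cdot S)^\vee\oplus(\mathbb Z\cdot T)^\vee$. Second, for a surjection $T\twoheadrightarrow S$, the complex
\[
\mathbb Z\cdot (T\times_ST)^{\vee\vee}_\bullet\to \mathbb Z\cdot T^{\vee\vee}_\bullet\to\mathbb Z\cdot S^{\vee\vee}_\bullet\to0
\]
must be right exact in $\mathbb M$-modules. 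Dualizing with corollary \ref{dlfree} (which is legitimate since everything is of the form $\mathbb Z_\bullet^I$ with $I$ countable, and $(-)^\vee$ is an anti-equivalence on these), this amounts to left exactness of
\[
0\to (\mathbb Z\cdot S)^\vee_\bullet\to(\mathbb Z\cdot T)^\vee_\bullet\rightrightarrows(\mathbb Z\cdot T\times_ST)^\vee_\bullet .
\]
Because $-_\bullet$ is exact on discrete groups, this reduces to the equalizer statement $C(S,\mathbb Z)=\mathrm{eq}\,(C(T,\mathbb Z)\rightrightarrows C(T\times_ST,\mathbb Z))$. That statement holds because a surjection of compact Hausdorff spaces is a quotient map. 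We then transport the dual exactness back by noting that, on the full subcategory $\{\mathbb Z_\bullet^I\}$, a sequence is right exact if and only if its dual is left exact. This equivalence is the step I expect to be the main obstacle: the cokernel of a map between such modules need not be of the same form, so the argument instead has to go through $\mathrm{Hom}_{\mathbb M}(-,N)$ for arbitrary $N$. My first approach will be to use the presentation $\mathbb Z_\bullet^I\to\mathbb Z_\bullet^J$ with dual $g_\bullet$ and argue as in lemma \ref{split}, where the key input is the splitting of kernel and image of maps of countable free groups.

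For (2), preservation of limits is immediate because $\mathrm{Hom}_{\mathbb M}(P,-)$ preserves limits, and limits of sheaves are computed sectionwise. Colimits are handled in two steps. Since each $\mathbb Z\cdot S^{\vee\vee}_\bullet$ is finitely presented projective, $M\mapsto M(S)$ commutes with filtered colimits and cokernels sectionwise. For the light site, filtered colimits of sheaves are computed sectionwise, and sectionwise cokernels of sheaves remain sheaves by (1) applied through exactness. Together these give all colimits. For full faithfulness, take $S=\overline{\mathbb N}$. Then $\mathbb Z\cdot\overline{\mathbb N}^{\vee\vee}\simeq \mathbb Z\oplus P^{\vee\vee}\simeq\mathbb Z\oplus\mathbb Z^{\mathbb N}$, so $\mathbb Z\cdot\overline{\mathbb N}^{\vee\vee}_\bullet\simeq\mathbb Z_\bullet\oplus\mathbb M\simeq \mathbb M$ is a generator. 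Hence $M^{\mathrm{cond}}(\overline{\mathbb N})\simeq M$ functorially, with the $\mathbb M$-action recovered from $\mathrm{End}(\mathbb Z\cdot\overline{\mathbb N}^{\vee\vee}_\bullet)$. This shows faithfulness. Fullness follows from a Yoneda argument: a morphism $M^{\mathrm{cond}}\to N^{\mathrm{cond}}$ evaluated at $\overline{\mathbb N}$ commutes with all endomorphisms of $\overline{\mathbb N}$ and with the maps coming from sequences in $\mathrm{Hom}(\overline{\mathbb N}\times\overline{\mathbb N},\dots)$. For fullness, I would check that the maps of $\mathbb M\simeq\mathbb Z\cdot\overline{\mathbb N}^{\vee\vee}_\bullet$ arising from continuous maps between the $S$'s generate $\mathbb M'=\mathrm{End}_{\mathbb M}(\mathbb M)$ under sums and limits. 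The cleaner alternative is to present every $S$ as a retract-limit involving $\overline{\mathbb N}$ and use that both sides turn these into the same limits.
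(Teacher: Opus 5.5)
\textbf{Part (1).} The step you flag is worse than delicate: the equivalence ``right exact iff the dual is left exact'' on modules $\mathbb Z_\bullet^I$ is false. Already $\mathbb Z_\bullet\overset{2}{\to}\mathbb Z_\bullet\to 0\to 0$ has a left exact dual but is not right exact. Borrowing the kernel/image splitting from lemma \ref{split} cannot rescue it, because the obstruction sits in the cokernel.

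Precisely, put $F(X):=\mathbb Z\cdot X^{\vee\vee}_\bullet$, $\delta:=p_1^*-p_2^*\colon C(T,\mathbb Z)\to C(T\times_ST,\mathbb Z)$ and $Q:=\mathrm{coker}\,\delta$. Since $\mathrm{im}\,\delta$ is free, $C(S,\mathbb Z)=\ker\delta$ is a direct summand of $C(T,\mathbb Z)$. The computation in the proof of lemma \ref{split} then gives $\mathrm{coker}(F(T\times_ST)\to F(T))\simeq F(S)\oplus\mathcal E\mathrm{xt}_{\mathbb Z_\bullet}(Q_\bullet,\mathbb Z_\bullet)$, and the second summand has top row $\mathrm{Ext}^1(Q,\mathbb Z)$. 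So you need $Q$ free.

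The missing idea is exactness of the \v{C}ech complex $C(T^{\bullet+1}_S,\mathbb Z)$ (fibre powers over $S$) one step further. For a surjection of metrizable Stone spaces this complex is a filtered colimit of split complexes of finite sets, hence exact, so $Q$ embeds in the free group $C(T\times_ST\times_ST,\mathbb Z)$. With that added, your dual argument is a valid alternative to the paper's. The paper works on the predual side: strict exactness of $\mathbb Z\cdot(T\times_ST)\to\mathbb Z\cdot T\to\mathbb Z\cdot S\to0$ for the linear topologies, followed by completion. Freeness of $Q$ is the dual form of strictness of the first map.

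Neither your argument nor the statement extends to all compact metrizable $S$. For a Stone cover $T\to S$, cohomological descent identifies the cohomology of that \v{C}ech complex with $H^*(S,\mathbb Z)$, so $Q\simeq H^1(S,\mathbb Z)\oplus(\mathrm{free})$. For the $p$-adic solenoid, $H^1=\mathbb Z[1/p]$ and $\mathrm{Ext}^1(\mathbb Z[1/p],\mathbb Z)\simeq\mathbb Z_p/\mathbb Z\neq0$. The sheaf condition for this cover therefore fails, e.g.\ for $M=\mathcal E\mathrm{xt}_{\mathbb Z_\bullet}(\mathbb Z[1/p]_\bullet,\mathbb Z_\bullet)$. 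The paper's reduction via $\pi_0$ does not avoid this, since $\pi_0(T\times_ST)\neq\pi_0T\times_{\pi_0S}\pi_0T$ here. You must work on metrizable Stone spaces, which define the same topos.

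\textbf{Part (2).} Limits and colimits are fine once (1) holds. That, together with faithfulness, is all that the pointwise identification $M^{\mathrm{cond}}(S)\simeq M$ or $M(*)^n$ (yours, and the paper's) actually yields.

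Your fullness argument has a genuine gap. Naturality only sees certain endomorphisms of $\mathbb Z\cdot\overline{\mathbb N}^{\vee\vee}_\bullet\simeq\mathbb M$: those induced by continuous maps, or even by morphisms $\underline{\mathbb Z\cdot T}\to\underline{\mathbb Z\cdot S}$ of condensed groups. These are only the ``bounded'' ones. A convergent sequence in $\mathbb Z\cdot\overline{\mathbb N}$ lies in some $(\mathbb Z\cdot\overline{\mathbb N})_N$, so the corresponding matrices have uniformly bounded $\ell^1$-norms in one direction. They form a proper subring of $\mathrm{End}_{\mathbb M}(\mathbb M)$. A natural transformation is merely additive, so commuting with these operators does not propagate to their limits, and ``generate under sums and limits'' proves nothing. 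The retract--limit alternative is too vague to evaluate.

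Reducing to $M=\mathbb M$ by colimits and using $\mathbb M^{\mathrm{cond}}\simeq\underline{\mathbb Z}^{\mathbb N}$, fullness is equivalent to the restriction map $\mathrm{Hom}_{\underline{\mathbb Z}}(\underline{\mathbb Z}^{\mathbb N},N^{\mathrm{cond}})\to\mathrm{Hom}_{\underline{\mathbb Z}}(\underline P,N^{\mathrm{cond}})\simeq N$ being bijective for every $N$. That is the universal property of $\underline P\to\underline{\mathbb Z}^{\mathbb N}$ as a solidification, tested on the solid objects $N^{\mathrm{cond}}$. This is exactly the Clausen--Scholze input of theorem \ref{intPr}(2). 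Full faithfulness should be deduced from it, as in theorem \ref{infMor}, not from a Yoneda argument at $\overline{\mathbb N}$.
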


\begin{proof}
In order to prove the first assertion, it is sufficient to show that the functor $M^{\mathrm{cond}}$ is left exact (it turns finite colimits into limits).
First of all, the reflection $S \mapsto \pi_0(S)$ (on the category of all topological spaces) preserves all colimits and we may therefore assume that $S$ is a metrizable \emph{Stone} space.
We then know from proposition \ref{ddcomp} that $\mathbb Z\cdot S^{\vee\vee} \simeq \mathbb Z\cdot S^{(\wedge)}$ is the linear completion of $\mathbb Z\cdot S$.
Since completion preserves strict exact sequences of metrizable linear abelian groups, we only have to make sure that the functor $S \mapsto \mathbb Z \cdot S^{\mathrm{lin}}$ is right exact on metrizable Stone spaces (the superscript indicates that we use the linear topology).
Since this is already the case when we forget the topology, it is sufficient to notice that $\mathbb Z \cdot S \twoheadrightarrow \mathbb Z \cdot S'$ is a quotient map for the linear topology when $S' \twoheadrightarrow S$ is a continuous surjective map of Stone spaces.

Now, Nöbeling's theorem \ref{Nobel} states that there exists a (highly non-canonical) isomorphism $\mathbb Z\cdot S^{\vee\vee}_\bullet \simeq \mathbb Z_\bullet^I$ for some countable set $I$.
This implies that $M^{\mathrm{cond}}(S)$ is isomorphic either to $M$ or to $M(*)^n$ (naturally in $M$) for fixed $S$ (depending on $S$ being infinite or not).
In particular, the second assertion already holds at the presheaf level.
\end{proof}

By definition, there exists a natural isomorhism
\begin{equation} \label{adjpoor}
\mathrm{Hom}_{\mathbb M}(\mathbb Z\cdot S^{\vee\vee}_\bullet, M) \simeq \mathrm{Hom}_{\underline{\mathbb Z}}(\underline {\mathbb Z\cdot S}, M^{\mathrm{cond}}).
\end{equation}

\begin{prop} \label{fstcomp}
\begin{enumerate}
\item
If $M$ is an $\mathbb M$-module, then $M^{\mathrm{cond}}(*) \simeq M(*)$.
\item If $V$ is a complete metrizable linear abelian group, then $V_\bullet^{\mathrm{cond}} = \underline V$.
\end{enumerate}
\end{prop}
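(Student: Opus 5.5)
For the first assertion I evaluate the definition at the one-point space: $M^{\mathrm{cond}}(*) = \mathrm{Hom}_{\mathbb M}(\mathbb Z\cdot *^{\vee\vee}_\bullet, M)$. Since $\mathbb Z \cdot * = \mathbb Z$ is discrete, its bidual is again $\mathbb Z$. Thus $\mathbb Z\cdot *^{\vee\vee}_\bullet = \mathbb Z_\bullet$, and proposition \ref{adj} (or the lemma identifying $M(*)$ with $\mathrm{Hom}_{\mathbb M}(\mathbb Z_\bullet, M)$) gives $M^{\mathrm{cond}}(*) \simeq M(*)$. This step is purely formal.

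For the second assertion I fix a compact metrizable $S$ and compute
\[
V_\bullet^{\mathrm{cond}}(S) = \mathrm{Hom}_{\mathbb M}(\mathbb Z\cdot S^{\vee\vee}_\bullet, V_\bullet).
\]
By Nöbeling's theorem \ref{Nobel}, $\mathbb Z\cdot S^{\vee\vee}$ is a complete metrizable linear abelian group, being isomorphic to $\mathbb Z^I$ with $I$ countable. Then proposition \ref{metff} (full faithfulness of $V \mapsto V_\bullet$ on complete metrizable linear groups) turns the right-hand side into $\mathrm{Hom}_{\mathbb Z\mhyphen\mathrm{cont}}(\mathbb Z\cdot S^{\vee\vee}, V)$. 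Alternatively, I could use the adjunction of proposition \ref{adcont} together with $\widehat{\mathbb Z\cdot S^{\vee\vee}_\bullet(*)} = \mathbb Z\cdot S^{\vee\vee}$, which holds because finitely presented modules are complete.

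Next I reduce to Stone spaces. Since $V$ is linear and complete, it is totally disconnected, so continuous maps $S \to V$ factor through $\pi_0(S)$. The bidual also only depends on $\pi_0(S)$, by the remark that $(\mathbb Z \cdot S)^\vee \simeq (\mathbb Z \cdot \pi_0(S))^\vee$. So I may assume $S$ is a metrizable Stone space. Proposition \ref{ddcomp} then identifies $\mathbb Z\cdot S^{\vee\vee}$ with the linear completion $(\mathbb Z\cdot S)^{(\wedge)}$, a reflection onto complete linear groups. Hence
\[
\mathrm{Hom}_{\mathbb Z\mhyphen\mathrm{cont}}(\mathbb Z\cdot S^{\vee\vee}, V) \simeq \mathrm{Hom}_{\mathbb Z\mhyphen\mathrm{cont}}(\mathbb Z\cdot S, V) \simeq \mathrm{Hom}_{\mathrm{cont}}(S, V) = \underline V(S).
\]

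The main obstacle is naturality in $S$: Nöbeling's isomorphism is non-canonical, so it may only be used to establish that $\mathbb Z\cdot S^{\vee\vee}$ is metrizable, and is needed only for that. The chain of identifications must then be built from canonical maps, namely the full faithfulness of proposition \ref{metff}, the reflection property of linear completion, and the universal property of $\mathbb Z\cdot S$. I will check that each step is functorial in $S$, including for the non-Stone-to-Stone reduction via $\pi_0$. This gives an isomorphism of presheaves $V_\bullet^{\mathrm{cond}} \simeq \underline V$, and therefore of light condensed abelian groups.
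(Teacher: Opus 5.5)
Your proposal is correct and follows the paper's proof. Part (1) is obtained by evaluating the definition at the point. Part (2) combines the full faithfulness of $V \mapsto V_\bullet$ (proposition \ref{metff}), the identification $\mathbb Z\cdot S^{\vee\vee} \simeq (\mathbb Z\cdot S)^{(\wedge)}$ of proposition \ref{ddcomp}, the reflection property of linear completion and the universal property of $\mathbb Z \cdot S$, exactly as in the paper. You also make explicit two points the paper leaves implicit: that $\mathbb Z\cdot S^{\vee\vee}$ is metrizable (so proposition \ref{metff} applies), and the reduction from compact metrizable to metrizable Stone spaces via $\pi_0$.
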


\begin{proof}
By definition,
\[
M^{\mathrm{cond}}(*) = \mathrm{Hom}_{\mathbb M}(\mathbb Z_\bullet, M) \simeq M(*).
\]

Now, if $S$ is a metrizable Stone space, then ($\mathbb Z\cdot S^{\vee\vee} \simeq \mathbb Z\cdot S^{(\wedge)}$ and)
\begin{align*}
V_\bullet^{\mathrm{cond}}(S) & = \mathrm{Hom}_{\mathbb M}( \mathbb Z\cdot S^{(\wedge)}_\bullet, V_\bullet)
\\ &\simeq \mathrm{Hom}_{\mathbb Z-\mathrm{cont}}(\mathbb Z\cdot S^{(\wedge)}, V)
\\ &\simeq \mathrm{Hom}_{\mathbb Z-\mathrm{cont}}(\mathbb Z\cdot S, V)
\\ &\simeq \mathrm{Hom}_{\mathrm{cont}}(S, V)
\\ &\simeq \underline V(S). \qedhere
\end{align*}
\end{proof}

As explained in the appendix, a functor from $\mathbb M$-modules to light condensed abelian groups that preserves all colimits must come from a unique right $\mathbb M$-module in the additive category of light condensed abelian groups (the image of $\mathbb M$ by the given functor).
Since, in our case, $\mathbb M \simeq \mathbb Z_\bullet^\mathbb N$, proposition \ref{fstcomp} tells us that this right module is $\underline {\mathbb Z^\mathbb N}$.
In particular, we see that the coadjoint to $M \mapsto M^{\mathrm{cond}}$ is explicitly given by $\mathcal M \mapsto \mathrm{Hom}_{\underline{\mathbb Z}}(\underline{\mathbb Z^\mathbb N}, \mathcal M)$ and that we can also write (with the notations of the appendix) $M^{\mathrm{cond}} = \underline{\mathbb Z^\mathbb N} \otimes_{\mathbb M}M$.
Since the functor $M \mapsto M^{\mathrm{cond}}$ also preserves all limits, it has a more mysterious adjoint\footnote{It may be called \emph{concrete solidification}.} that we shall denote as $\mathcal M \mapsto \mathcal M^{(\blacksquare)}$ so that
\[
\mathrm{Hom}_{\mathbb M}(\mathcal M^{(\blacksquare)}, N)\simeq \mathrm{Hom}_{\underline {\mathbb Z}}(\mathcal M, N^{\mathrm{cond}}).
\]
There actually exists an enriched adjunction:

\begin{prop}
If $\mathcal M$ is a light condensed abelian group and $N$ is an $\mathbb M$-module, then there exists a natural isomorphism
\[
\mathcal H \mathrm{om}_{\mathbb M}(\mathcal M^{(\blacksquare)}, N)^{\mathrm{cond}} \simeq \mathcal H\mathrm{om}_{\underline {\mathbb Z}}(\mathcal M, N^{\mathrm{cond}}).
\]
\end{prop}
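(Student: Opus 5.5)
The approach is a Yoneda argument: we test both sides against the generators $\underline{\mathbb Z \cdot S}$ of light condensed abelian groups, with $S$ a metrizable Stone space. Two light condensed abelian groups are isomorphic, naturally, as soon as their values on every such $S$ agree naturally in $S$. By proposition \ref{untop} these values are $\mathrm{Hom}_{\underline{\mathbb Z}}(\underline{\mathbb Z\cdot S}, -)$. For the left hand side, proposition \ref{expfrm} gives
\[
\mathcal H \mathrm{om}_{\mathbb M}(\mathcal M^{(\blacksquare)}, N)^{\mathrm{cond}}(S) = \mathrm{Hom}_{\mathbb M}(\mathbb Z\cdot S^{\vee\vee}_\bullet, \mathcal H\mathrm{om}_{\mathbb Z_\bullet}(\mathcal M^{(\blacksquare)}, N)).
\]
Internal adjunction turns this into $\mathrm{Hom}_{\mathbb M}(\mathbb Z\cdot S^{\vee\vee}_\bullet \otimes_{\mathbb Z_\bullet} \mathcal M^{(\blacksquare)}, N)$. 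For the right hand side we use the internal adjunction in light condensed abelian groups and get $\mathrm{Hom}_{\underline{\mathbb Z}}(\underline{\mathbb Z\cdot S} \otimes_{\underline{\mathbb Z}} \mathcal M, N^{\mathrm{cond}})$. By the defining adjunction of $(-)^{(\blacksquare)}$, this equals $\mathrm{Hom}_{\mathbb M}((\underline{\mathbb Z\cdot S}\otimes \mathcal M)^{(\blacksquare)}, N)$. The whole statement therefore reduces to a natural isomorphism
\[
(\underline{\mathbb Z\cdot S}\otimes_{\underline{\mathbb Z}} \mathcal M)^{(\blacksquare)} \simeq \mathbb Z\cdot S^{\vee\vee}_\bullet \otimes_{\mathbb Z_\bullet} \mathcal M^{(\blacksquare)},
\]
that is, to $(-)^{(\blacksquare)}$ being strong monoidal on this kind of product. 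The monoidal structure here is the internal tensor product introduced in section \ref{dualsec}.

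To prove this reduced isomorphism, note that both sides preserve all colimits in $\mathcal M$. The functor $(-)^{(\blacksquare)}$ is a left adjoint, and both tensor products commute with colimits. Every light condensed abelian group is a colimit of objects $\underline{\mathbb Z\cdot T}$ with $T$ metrizable Stone, so we may assume $\mathcal M = \underline{\mathbb Z\cdot T}$. By \eqref{adjpoor} and Yoneda, $\underline{\mathbb Z\cdot T}^{(\blacksquare)} \simeq \mathbb Z\cdot T^{\vee\vee}_\bullet$. On the other side, $\underline{\mathbb Z\cdot S}\otimes\underline{\mathbb Z\cdot T} \simeq \underline{\mathbb Z\cdot(S\times T)}$, since Markov free groups turn products of compact spaces into tensor products; this can be checked on represented functors using proposition \ref{untop}. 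We are thus left to show
\[
\mathbb Z\cdot(S\times T)^{\vee\vee}_\bullet \simeq \mathbb Z\cdot S^{\vee\vee}_\bullet \otimes_{\mathbb Z_\bullet} \mathbb Z\cdot T^{\vee\vee}_\bullet .
\]

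This last step is where the concrete work lies, and I expect it to be the main obstacle, mostly in keeping the identification natural rather than merely abstract. By Nöbeling's theorem \ref{Nobel} we can write $\mathbb Z\cdot S^{\vee\vee}_\bullet \simeq \mathbb Z_\bullet^I$ and $\mathbb Z\cdot T^{\vee\vee}_\bullet \simeq \mathbb Z_\bullet^J$ with $I,J$ countable. Corollary \ref{tensfr} then gives the right hand side as $\mathbb Z_\bullet^{I\times J}$. On the left, $(\mathbb Z\cdot(S\times T))^\vee \simeq (\mathbb Z\cdot S)^\vee\otimes(\mathbb Z\cdot T)^\vee$. This follows from proposition \ref{ddcomp}(2) together with the fact that finite quotients of the form $S'\times T'$ are cofinal among finite quotients of $S\times T$. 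Hence the bidual is $\mathbb Z^{I\times J}$. To make the isomorphism canonical, I would first write down the natural comparison map. It comes from the lax monoidality of $V\mapsto V_\bullet$ (proposition \ref{laxmon}) applied to the map $\mathbb Z\cdot S^{\vee\vee}\widehat\otimes \mathbb Z\cdot T^{\vee\vee}\to \mathbb Z\cdot(S\times T)^{\vee\vee}$. I would then check that this comparison map is the isomorphism of corollary \ref{tensfr} under the Nöbeling bases. Finally, compatibility with the enriched structure, meaning that the maps are natural in $S$, follows because every identification used is functorial in $S$.
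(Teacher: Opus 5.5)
Your proposal is correct and follows essentially the paper's route. Both arguments reduce, via the generators $\underline{\mathbb Z\cdot S}$ and the identification $\underline{\mathbb Z\cdot T}^{(\blacksquare)}\simeq\mathbb Z\cdot T^{\vee\vee}_\bullet$ from \eqref{adjpoor}, to the N\"obeling-based isomorphism $\mathbb Z\cdot S^{\vee\vee}_\bullet\otimes_{\mathbb Z_\bullet}\mathbb Z\cdot T^{\vee\vee}_\bullet\simeq\mathbb Z\cdot(S\times T)^{\vee\vee}_\bullet$; the only difference is that the paper reduces $\mathcal M$ to a generator before evaluating sections, while you evaluate first. Your justification of that key isomorphism (dual of a product via cofinality of the quotients $S'\times T'$, then corollary \ref{tensfr}) and your plan to make it canonical through proposition \ref{laxmon} fill in a step the paper only asserts.
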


\begin{proof}
Since light consensed abelian groups are abelian sheaves on the category of compact metrizable spaces, we may assume that $\mathcal M = \underline{\mathbb Z \cdot S}$ where $S$ is a compact metrizable space (they form a system of generators and our functors preserve all limits).
Moreover, the isomorphism \eqref{adjpoor} shows that
\begin{equation}
\underline{\mathbb Z \cdot \mathbb S}^{(\blacksquare)} = \mathbb Z\cdot S^{\vee\vee}_\bullet.
\end{equation}
We are therefore reduced to show:
\begin{equation}
\mathcal H \mathrm{om}_{\mathbb Z_\bullet}(\mathbb Z\cdot S^{\vee\vee}_\bullet, M)^{\mathrm{cond}} \simeq \mathcal H \mathrm{om}_{\underline {\mathbb Z}}(\underline {\mathbb Z \cdot S}, M^{\mathrm{cond}}).
\end{equation}
The point is that, although the functor $V \mapsto V_\bullet$ is only \emph{lax} monoidal, Nöbeling's theorem implies hat
\[
\mathbb Z\cdot T^{\vee\vee}_\bullet \otimes_{\mathbb Z_\bullet}\mathbb Z\cdot S^{\vee\vee}_\bullet \simeq \mathbb Z\cdot (S \times T)^{\vee\vee}_\bullet
\]
whenever $S$ and $T$ are compact Hausdorff spaces.
One can then compute when $T$ is a compact metrizable space:
\begin{align*}
\mathcal H \mathrm{om}_{\mathbb Z_\bullet}(\mathbb Z\cdot S^{\vee\vee}_\bullet, M)^{\mathrm{cond}}(T) & \simeq \mathrm{Hom}_{\mathbb Z_\bullet}(\mathbb Z\cdot T^{\vee\vee}_\bullet, \mathcal H \mathrm{om}_{\mathbb Z_\bullet}(\mathbb Z\cdot S^{\vee\vee}_\bullet, M)^{\mathrm{cond}})
\\ & \simeq \mathrm{Hom}_{\mathbb Z_\bullet}(\mathbb Z\cdot T^{\vee\vee}_\bullet \otimes_{\mathbb Z_\bullet}\mathbb Z\cdot S^{\vee\vee}_\bullet, M)^{\mathrm{cond}})
\\ & \simeq \mathrm{Hom}_{\mathbb Z_\bullet}(\mathbb Z\cdot (S \times T)^{\vee\vee}_\bullet, M)^{\mathrm{cond}}
\\ & \simeq M^{\mathrm{cond}}(S \times T)
\\ & \simeq \mathcal H \mathrm{om}_{\underline {\mathbb Z}}(\underline {\mathbb Z \cdot S}, M^{\mathrm{cond}})(T)
\end{align*}
(the last equality being completely standard in any topos - taking into account proposition \ref{untop}).
\end{proof}

As a consequence of the proposition, the functor $M \mapsto M^{\mathrm{cond}}$ is \emph{internally} fully faithful:
given two $\mathbb M$-modules $M$ and $N$, we have
\[
\mathcal H\mathrm{om}_{\mathbb Z_\bullet}(M, N)^{\mathrm{cond}} \simeq \mathcal H\mathrm{om}_{\underline {\mathbb Z}}(M^{\mathrm{cond}} , N^{\mathrm{cond}} ).
\]

\begin{cor} \label{frst}
The functor $M \mapsto M^{\mathrm{cond}}$ is (lax) monoidal.
\end{cor}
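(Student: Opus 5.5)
The plan is to deduce lax monoidality formally from the enriched adjunction just proved, that is, from internal full faithfulness. For $\mathbb M$-modules $M$ and $N$ we need a natural map
\[
M^{\mathrm{cond}} \otimes_{\underline{\mathbb Z}} N^{\mathrm{cond}} \to (M \otimes_{\mathbb Z_\bullet} N)^{\mathrm{cond}}
\]
together with a unit map $\underline{\mathbb Z} \to \mathbb Z_\bullet^{\mathrm{cond}}$. The unit map is immediate. By proposition \ref{fstcomp}, $\mathbb Z_\bullet^{\mathrm{cond}} = \underline{\mathbb Z}$, because $\mathbb Z$ is a discrete, hence complete metrizable linear, abelian group. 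So we take the identity.

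For the multiplication map, we use the tensor-hom adjunction in light condensed abelian groups. A map as above is the same thing as a map
\[
M^{\mathrm{cond}} \to \mathcal H\mathrm{om}_{\underline{\mathbb Z}}(N^{\mathrm{cond}}, (M \otimes_{\mathbb Z_\bullet} N)^{\mathrm{cond}}).
\]
By internal full faithfulness, the target is isomorphic to $\mathcal H\mathrm{om}_{\mathbb Z_\bullet}(N, M \otimes_{\mathbb Z_\bullet} N)^{\mathrm{cond}}$. We therefore apply the functor $(-)^{\mathrm{cond}}$ to the unit of the internal tensor-hom adjunction of $\mathbb M$-modules,
\[
M \to \mathcal H\mathrm{om}_{\mathbb Z_\bullet}(N, M \otimes_{\mathbb Z_\bullet} N),
\]
and transpose back. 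Naturality in $M$ and $N$ follows from naturality of each ingredient.

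Next we check the coherence axioms of a lax monoidal functor (associativity, unitality, and symmetry if we want a lax symmetric monoidal functor). The cleanest route is to phrase the construction abstractly. A functor $F$ between closed monoidal categories that comes with an isomorphism $F(\mathbb 1)\simeq \mathbb 1$ and natural isomorphisms $F(\mathcal H\mathrm{om}(N,P)) \simeq \mathcal H\mathrm{om}(FN, FP)$ compatible with composition and identities is exactly a closed functor in the sense of Eilenberg–Kelly. Such a functor is lax monoidal, with structure map the mate constructed above. We then only need that the isomorphism of the previous proposition is compatible with internal composition and with the unit $\mathbb Z_\bullet$. This compatibility should hold because that isomorphism was built from the ordinary adjunction \eqref{adjpoor} evaluated on the generators $\mathbb Z\cdot S^{\vee\vee}_\bullet$, using the Nöbeling isomorphism $\mathbb Z\cdot T^{\vee\vee}_\bullet \otimes_{\mathbb Z_\bullet}\mathbb Z\cdot S^{\vee\vee}_\bullet \simeq \mathbb Z\cdot (S\times T)^{\vee\vee}_\bullet$.

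The main obstacle is this compatibility of the enriched isomorphism with composition. I would check it on sections over each compact metrizable $T$, where everything reduces to ordinary Hom-sets out of $\mathbb Z\cdot T^{\vee\vee}_\bullet$. There one needs the product isomorphisms for $S\times T$ to be associative and natural in $S$ and $T$. That holds because they are induced by the canonical maps $\mathbb Z\cdot S\otimes \mathbb Z\cdot T \to \mathbb Z\cdot (S\times T)$, with the lax monoidal map of proposition \ref{laxmon} serving as the comparison.
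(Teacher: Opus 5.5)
Your proposal is correct and follows the paper's proof: apply $(-)^{\mathrm{cond}}$ to the unit $M \to \mathcal H\mathrm{om}_{\mathbb Z_\bullet}(N, M \otimes_{\mathbb Z_\bullet} N)$, identify the target with $\mathcal H\mathrm{om}_{\underline{\mathbb Z}}(N^{\mathrm{cond}}, (M \otimes_{\mathbb Z_\bullet} N)^{\mathrm{cond}})$ via internal full faithfulness, and transpose. Two things you add are not in the paper, which simply calls the argument ``completely formal'': the unit isomorphism $\underline{\mathbb Z} \simeq \mathbb Z_\bullet^{\mathrm{cond}}$ from proposition \ref{fstcomp}, and the Eilenberg--Kelly closed-functor sketch of the coherence axioms.
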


\begin{proof}
This is completely formal.
If $M$ and $N$ are two $\mathbb M$-modules, then the canonical map
\[
M \to \mathcal H\mathrm{om}_{\mathbb Z_\bullet} (N, M \otimes_{\mathbb Z_\bullet} N)
\]
(coming from adjunction) provides a map
\[
M^{\mathrm{cond}} \to \mathcal H\mathrm{om}_{\mathbb Z_\bullet} (N, (M \otimes_{\mathbb Z_\bullet} N))^{\mathrm{cond}} \simeq \mathcal H\mathrm{om}_{\underline {\mathbb Z}} (N^{\mathrm{cond}}, (M \otimes_{\mathbb Z_\bullet} N)^{\mathrm{cond}})
\]
or equivalenlty
\[
M^{\mathrm{cond}} \otimes_{\underline {\mathbb Z}} N^{\mathrm{cond}} \to (M \otimes_{\mathbb Z_\bullet} N)^{\mathrm{cond}}. \qedhere.
\]
\end{proof}

In order to understand the essential image of the functor $M \mapsto M^{\mathrm{cond}}$, we need another construction:

\begin{prop} \label{expfrm}
\begin{enumerate}
\item
If $\mathcal M$ is a light condensed abelian group, then the presheaf defined on compact metrizable spaces as
\[
\mathcal M_\bullet : S \mapsto \ker(\mathcal M(S \times \overline{\mathbb N})\overset \infty \to \mathcal M(S))
\]
is a light condensed abelian group.
\item The endofunctor $\mathcal M \mapsto \mathcal M_\bullet$ is fully faithful and preserves all limits and all colimits.
\end{enumerate}
\end{prop}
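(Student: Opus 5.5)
The plan is to identify $\mathcal M_\bullet$ with an internal Hom out of the Graev group $P = \mathbb Z \cdot \overline{\mathbb N}^*$. By proposition \ref{untop} and adjunction in the topos, for $S$ compact metrizable we have
\[
\mathcal M(S \times \overline{\mathbb N}) \simeq \mathrm{Hom}_{\underline{\mathbb Z}}(\underline{\mathbb Z \cdot \overline{\mathbb N}}, \mathcal H\mathrm{om}_{\underline{\mathbb Z}}(\underline{\mathbb Z\cdot S}, \mathcal M)) \simeq \mathcal H\mathrm{om}_{\underline{\mathbb Z}}(\underline{\mathbb Z \cdot \overline{\mathbb N}}, \mathcal M)(S).
\]
Evaluation at $\infty$ corresponds to restriction along $\mathbb Z\cdot\infty \to \mathbb Z\cdot\overline{\mathbb N}$. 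Since $\mathbb Z \cdot \overline{\mathbb N} \simeq P \oplus \mathbb Z\cdot\infty$ as topological groups, and this splitting passes to $\underline{(-)}$, we obtain $\mathcal M_\bullet \simeq \mathcal H\mathrm{om}_{\underline{\mathbb Z}}(\underline P, \mathcal M)$. This settles the first assertion: the kernel of a morphism of sheaves is a sheaf. One can also check it directly: $S \mapsto S \times \overline{\mathbb N}$ preserves finite disjoint unions and jointly surjective finite families, so $S \mapsto \mathcal M(S\times\overline{\mathbb N})$ is already a sheaf. The same description also gives the condensed analog of lemma \ref{Pbul}, namely $\underline V_\bullet = \underline{V_\bullet}$.

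Preservation of limits is then automatic, since $\mathcal H\mathrm{om}_{\underline{\mathbb Z}}(\underline P, -)$ is a right adjoint. For colimits, I would split into filtered colimits and cokernels.

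For filtered colimits, I would use that $S \times \overline{\mathbb N}$ is again a metrizable compact space. Evaluation at such a quasi-compact quasi-separated object commutes with filtered colimits of sheaves on this coherent site, so $\mathcal M \mapsto \mathcal M(S\times\overline{\mathbb N})$ commutes with them, and so does the kernel.

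For cokernels, I need exactness of $\mathcal H\mathrm{om}(\underline P, -)$, i.e. internal projectivity of $\underline P$. I would invoke the Clausen--Scholze result that $\underline{\mathbb Z\cdot \overline{\mathbb N}}$, hence its direct factor $\underline P$, is internally projective among light condensed abelian groups. Concretely, an epimorphism $\mathcal B \to \mathcal C$ gives surjections on sections after passing to a cover. The point is that a section of $\mathcal C$ over $S \times \overline{\mathbb N}$ vanishing at $\infty$ lifts after refining $S$ only, not $S\times \overline{\mathbb N}$. One uses a countable inverse-limit argument along $\overline{\mathbb N} = \varprojlim \overline n$, lifting on the clopen pieces $S \times \{n\}$ and correcting to zero near $\infty$. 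This is where the light (countability) hypothesis is used.

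Full faithfulness is the step I expect to be the main obstacle. The shift $t \colon \overline{\mathbb N}\to\overline{\mathbb N}$ and evaluation at $0$ give, as for $P$ itself, a split exact sequence $0 \to \underline P \overset t\to \underline P \to \underline{\mathbb Z}\to 0$. Applying $\mathcal H\mathrm{om}(-,\mathcal M)$ yields a natural split exact sequence $0 \to \mathcal M \to \mathcal M_\bullet \overset t \to \mathcal M_\bullet \to 0$, so $\mathcal M \simeq \mathcal M_\bullet^{t=0}$ naturally. This gives faithfulness at once. For fullness, a morphism $\mathcal M_\bullet \to \mathcal N_\bullet$ need not a priori commute with $t$, so I would rather use the adjunction $\mathrm{Hom}(\mathcal M_\bullet, \mathcal N_\bullet) \simeq \mathrm{Hom}(\underline P \otimes_{\underline{\mathbb Z}} \mathcal M_\bullet, \mathcal N)$. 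I would then try to show that the counit $\underline P \otimes \mathcal H\mathrm{om}(\underline P, \mathcal M) \to \mathcal M$ behaves like a trace. This should be checked first on generators $\mathcal M = \underline{\mathbb Z\cdot S}$, using $\underline P \otimes \underline{\mathbb Z\cdot S} \simeq \underline{\mathbb Z\cdot(\overline{\mathbb N}\times S)}/\underline{\mathbb Z\cdot S}$, and then extended by the colimit preservation established above. If this direct route fails, the fallback is to interpret the statement as full faithfulness for the natural $\mathbb M$-enriched structure. There the $e_k$ and $t$ act on $\mathcal M_\bullet$ exactly as in section \ref{Seclin}, and $\mathcal M$ is recovered functorially as the top row.
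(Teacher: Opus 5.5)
Apart from full faithfulness, you follow the paper's route. The paper also identifies $\mathcal M_\bullet\simeq\mathcal H\mathrm{om}_{\underline{\mathbb Z}}(\underline P,\mathcal M)$, obtains preservation of colimits from the Clausen--Scholze facts that $\underline P$ is internally finitely presented (your qcqs argument) and internally projective (theorem \ref{intPr}), and calls everything else straightforward.

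The gap is fullness, and your direct ``trace'' route cannot succeed: for the bare endofunctor of light condensed abelian groups, fullness is \emph{false}. By your own identification $\underline V_\bullet=\underline{V_\bullet}$ (or simply the tube lemma), $\underline{\mathbb Z}_\bullet=\underline{\mathbb Z\cdot\mathbb N}$ is discrete. Hence $\mathrm{End}(\underline{\mathbb Z}_\bullet)=\mathrm{End}_{\mathbb Z}(\mathbb Z\cdot\mathbb N)\simeq\mathbb M$ by proposition \ref{isoM}, whereas $\mathrm{End}(\underline{\mathbb Z})=\mathbb Z$ only reaches $\mathbb Z\cdot\mathrm{id}$; the shift $t$ itself is not in the image. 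The paper's proof passes over this as ``straightforward'', but no argument can supply it as stated. What is true, and what you do prove, is faithfulness, via the natural retraction $\mathcal M\simeq\mathcal M_\bullet^{t=0}$.

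Your fallback is the right kind of repair, but it needs the right extra structure. First, $\mathbb M$ acts on $\mathcal M_\bullet$ only when $\mathcal M$ is solid: an action extending $t$ would make $1-t$ invertible, which fails for $\underline{\mathbb R}$. Second, compatibility with the discrete operators $t$ and $e_k$ only lets you compare $\phi$ with $(\phi^{t=0})_\bullet$ slice by slice on $S\times\{k\}$. Such slices do not determine sections over $S\times\overline{\mathbb N}$ of a general light condensed abelian group. For instance, take $\underline{\mathbb Z_p}$ modulo the constant sheaf on the abstract group $\mathbb Z_p$: all points vanish, yet $n\mapsto p^n$ gives a nonzero section over $\overline{\mathbb N}$.

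The correct statement is that $\mathcal M\mapsto\mathcal M_\bullet$ is fully faithful as a functor to right modules over the \emph{internal} ring $\mathcal E:=\mathcal H\mathrm{om}_{\underline{\mathbb Z}}(\underline P,\underline P)$. Its left adjoint is $\mathcal X\mapsto\mathcal X\otimes_{\mathcal E}\underline P$. Since the idempotent $e_0$ of $\underline P$ splits through $\underline{\mathbb Z}$, one has $\mathcal Ee_0\simeq\mathcal H\mathrm{om}_{\underline{\mathbb Z}}(\underline{\mathbb Z},\underline P)=\underline P$ as left $\mathcal E$-modules. Hence the counit $\mathcal M_\bullet\otimes_{\mathcal E}\underline P\simeq\mathcal M_\bullet e_0\simeq\mathcal M$ is an isomorphism.
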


\begin{proof}
Everything is straightforwards besides preservation of colimits that follows from to part \eqref{intPr1} of theorem \ref{intPr} below according to the coming discussion.
\end{proof}

Even if we shall not use it, it is worth mentioning the following topos-theoretic description of the functor $\mathcal M \mapsto \mathcal M_\bullet$: if we denote by $\mathrm{Cond}$ the category of light condensed sets, then there exists a commutative diagram of topos
\[
\begin{tikzcd}
\mathrm{Cond} \ar[r, rightarrowtail , "j"] \ar[rd, equal] & \mathrm{Cond}_{/\underline{\overline{\mathbb N}}} \ar[d, "p"] & \mathrm{Cond}_{/\underline{\overline{\mathbb N}}}^* \ar[l, rightarrowtail , "i"'] \ar[ld, "q"']
\\ & \mathrm{Cond} &
\end{tikzcd}
\]
where the upper maps are open and closed complement induced by $\infty : * \rightarrowtail \overline{\mathbb N}$.
Then, we have
\[
\mathcal M_\bullet = q_*i^!p^{-1}\mathcal M.
\]
In particular, there exists an explicit adjoint given by $p_!i_*q^{-1}$ (but the coadjoint is more mysterious).

There exists yet another description.
Recall that we have a decomposition
\[
\mathbb Z \cdot \overline {\mathbb N} \simeq P \oplus \mathbb Z \cdot  \infty \quad \mathrm{with} \quad P := \mathbb Z \cdot \overline {\mathbb N}^*
\]
(Graev's free abelian group of countable rank) when $\overline {\mathbb N} : = \mathbb N \cup \{\infty\}$ is endowed with its profinite topology and pointed at infinity.
Then, we have
\[
\mathcal M_\bullet \simeq \mathcal H\mathrm{om}_{\underline {\mathbb Z}}(\underline P, \mathcal M).
\]
In particular, the functor $\mathcal M \mapsto \mathcal M_\bullet$ preserves all colimits if and only $\underline P$ is internally finitely presented projective (by definition).
This is exactly the content of the first assertion of theorem \ref{intPr} below.

\begin{prop} \label{comp}
\begin{enumerate}
\item
If $V$ is a topological abelian group, then $\underline V_\bullet = \underline {V_\bullet}$.
\item \label{PM}
If $M$ is an $\mathbb M$-module, then $(M^{\mathrm{cond}})_\bullet \simeq \underline M$ (for the canonical topology).
\end{enumerate}
\end{prop}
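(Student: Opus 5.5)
The plan is to evaluate both sides on a compact metrizable space $S$ and compare, proving assertion (1) by hand and deducing (2) from (1) via colimit preservation.

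\textbf{Assertion (1).} By definition,
\[
\underline V_\bullet(S) = \ker\bigl(\mathrm{Hom}_{\mathrm{cont}}(S \times \overline{\mathbb N}, V) \overset{\infty}\to \mathrm{Hom}_{\mathrm{cont}}(S, V)\bigr).
\]
Since $\overline{\mathbb N}$ is compact Hausdorff, the exponential law gives
\[
\mathrm{Hom}_{\mathrm{cont}}(S \times \overline{\mathbb N}, V) \simeq \mathrm{Hom}_{\mathrm{cont}}(S, \mathrm{Hom}_{\mathrm{cont}}(\overline{\mathbb N}, V)),
\]
where the target carries the compact-open topology. This is where compactness of $\overline{\mathbb N}$ matters: it makes the exponential law valid for every $S$ without any hypothesis on $V$. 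The sequence \eqref{cotop} is a split exact sequence of topological abelian groups, and $\mathrm{Hom}_{\mathrm{cont}}(S,-)$ preserves kernels. Taking the kernel of evaluation at $\infty$ therefore identifies $\underline V_\bullet(S)$ with $\mathrm{Hom}_{\mathrm{cont}}(S, V_\bullet) = \underline{V_\bullet}(S)$, naturally in $S$.

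The one thing to check is that the topology on $V_\bullet$ fixed earlier, with basis $U_\bullet$, agrees with the subspace topology induced from the compact-open topology. The paper already asserts this in \eqref{cotop}. I would still verify it directly: a null sequence lies in $U$ for large $n$, and the sets $\mathrm{Hom}(\overline{\mathbb N}, U)$ for closed neighborhoods $U$ are cofinal.

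\textbf{Assertion (2).} Both sides are functors of $M$: the left via $M \mapsto (M^{\mathrm{cond}})_\bullet$, the right via $M \mapsto \underline M$ with $M$ carrying its canonical topology. For $M = \mathbb M$ I compute both sides.

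\begin{itemize}
\item By proposition \ref{fstcomp}, $\mathbb M^{\mathrm{cond}} = (\mathbb Z_\bullet^{\mathbb N})^{\mathrm{cond}} = \underline{\mathbb Z^{\mathbb N}}$.
\item By assertion (1), $(\underline{\mathbb Z^{\mathbb N}})_\bullet = \underline{\mathbb Z^{\mathbb N}_\bullet}$.
\item By lemma \ref{carb} and corollary \ref{metf}, $\mathbb Z^{\mathbb N}_\bullet \simeq (\mathbb Z_\bullet)^{\mathbb N} \simeq \mathbb M$ with its canonical topology.
\end{itemize}

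So the two sides agree on $\mathbb M$, and the isomorphism is $\mathbb M$-equivariant, since both sides carry the action of $\mathrm{End}(\mathbb M) = \mathbb M^{\mathrm{op}}$ induced by right multiplication. The natural map can be written down directly: an element $m \in M$ gives $\mathbb M \to M$, hence $\mathbb Z^{\mathbb N}$-points, and continuity in $S$ follows from the canonical topology.

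To extend to all $M$, take a presentation $\mathbb M \cdot J \to \mathbb M \cdot I \to M \to 0$. The left side preserves colimits, because $M \mapsto M^{\mathrm{cond}}$ does, and so does $\mathcal M \mapsto \mathcal M_\bullet$ by the proposition preceding this statement.

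\textbf{Main obstacle.} The hard part is the right side: we need $M \mapsto \underline M$, for the canonical (adjoint) topology, to preserve direct sums and cokernels. For cokernels, a quotient map $N \to M$ of sequential spaces need not give a surjection $\underline N(S) \to \underline M(S)$ locally on $S$.

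I would get around this by not evaluating $\underline M$ directly. Instead I would define the comparison map $\underline M \to (M^{\mathrm{cond}})_\bullet$ and show that on $S$-points both sides equal continuous maps $S \to M$. For the right side this uses the formula $(M^{\mathrm{cond}})_\bullet(S) = \mathrm{Hom}_{\mathbb M}(\mathbb Z\cdot S^{\vee\vee}_\bullet, M) \otimes$-type description with $S \times \overline{\mathbb N}$. The key input would be Nöbeling's theorem: it gives $\mathbb Z\cdot (S\times\overline{\mathbb N})^{\vee\vee}_\bullet \simeq \mathbb M$ for infinite $S$, and a continuous map $S \to M$ for the canonical topology should factor through a finitely presented $\mathbb M$-submodule, which is complete. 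I expect this factorization step to be the hardest.
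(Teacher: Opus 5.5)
Your proof of (1) is the paper's argument. The exponential law for the compact space $\overline{\mathbb N}$ is exactly what the identification $\mathcal H\mathrm{om}(\underline{\overline{\mathbb N}},\underline V)=\underline{\mathrm{Hom}_{\mathrm{cont}}(\overline{\mathbb N},V)}$ amounts to, and you then take the kernel of evaluation at $\infty$ via \eqref{cotop} and Lemma \ref{Pbul}. Your computation of both sides of (2) for $M=\mathbb M$ is likewise exactly the paper's reduction.

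The gap is the passage from $\mathbb M$ to an arbitrary $M$. The paper handles it in one line, asserting that both $M\mapsto(M^{\mathrm{cond}})_\bullet$ and $M\mapsto\underline M$ preserve all colimits. You are right to doubt this for the second functor. However, your substitute stops at its key step, and that step is false: finitely presented $\mathbb M$-modules need not be complete, or even Hausdorff, for the canonical topology. For instance, take the presentation $0\to\mathbb Z\cdot\mathbb N\overset{g}\to\mathbb Z\cdot\mathbb N\to\mathbb Q\to0$ with $g(e_n)=(n+1)e_{n+1}-e_n$. The finitely presented module $\mathcal E\mathrm{xt}_{\mathbb Z_\bullet}(\mathbb Q_\bullet,\mathbb Z_\bullet)$ is the cokernel of $(g_\bullet)^\vee$ on $\mathbb Z_\bullet^{\mathbb N}$. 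Its top row is $\mathbb Z^{\mathbb N}$ modulo the image of $x\mapsto((n+1)x_{n+1}-x_n)_n$. That image is dense and proper, so the quotient topology is indiscrete.

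More seriously, the obstacle you found defeats the statement itself, not just your method. Let $M=\mathbb M/\mathbb M^\vee$. Since $\mathbb M^\vee$ is $t$-adically dense in $\mathbb M$ (cut a matrix off after its first $n$ columns), the canonical topology on $M$ is indiscrete. Hence $\underline M(S)=M^S$, which has cardinality at least $2^{2^{\aleph_0}}$ for the Cantor set $S=2^{\mathbb N}$.

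On the other hand, $(M^{\mathrm{cond}})_\bullet(S)$ is a subgroup of $M^{\mathrm{cond}}(S\times\overline{\mathbb N})=\mathrm{Hom}_{\mathbb M}(\mathbb Z\cdot(S\times\overline{\mathbb N})^{\vee\vee}_\bullet,M)$. By Theorem \ref{Nobel} and Proposition \ref{countlm}, this is $\simeq\mathrm{Hom}_{\mathbb M}(\mathbb M,M)=M$, so it has cardinality at most $2^{\aleph_0}$. Thus (2) fails for this $M$.

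Both sides do agree on $\mathbb M$ and on $\mathbb M^\vee=(\mathbb Z\cdot\mathbb N)_\bullet$. So $M\mapsto\underline M$ fails to preserve the cokernel of $\mathbb M^\vee\hookrightarrow\mathbb M$; indeed $\underline{\mathbb M}\to\underline M$ is not locally surjective, precisely the failure you anticipated. No argument can close the gap without restricting $M$, and the paper's one-line reduction has the same defect.

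What your argument, like the paper's, does prove is (2) for $M=\mathbb M$. Combining (1) with the second part of Proposition \ref{fstcomp} and Corollary \ref{metf}, it also holds for $M=V_\bullet$ with $V$ complete metrizable linear. For general $M$ one only gets $(M^{\mathrm{cond}})_\bullet(*)\simeq\mathrm{Hom}_{\mathbb M}(P^{\vee\vee}_\bullet,M)\simeq M$. The proof of Theorem \ref{infMor} needs only the case $M=\mathbb M$.
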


\begin{proof}
The first assertion follows from
\[
\mathcal H\mathrm{om}_{\underline {\mathbb Z}}(\underline {\mathbb Z \cdot \overline {\mathbb N}}, \underline V) \simeq \mathcal H\mathrm{om}(\underline{\overline {\mathbb N}}, \underline V) = \underline {\mathrm{Hom}_{\mathrm{cont}}(\overline {\mathbb N}, V)}
\]
and lemma \ref{Pbul}.
Since both functors preserve all colimits, the second assertion reduces to $\mathbb M^{\mathrm{cond}} \simeq \underline {\mathbb Z}^{\mathbb N}$.
\end{proof}

If $\mathcal M$ is a light condensed abelian group, then the shift $t$ on $\overline {\mathbb N}$ that sends $n$ to $n+1$ induces an endomorphism still denoted by $t$ of $\mathcal M_\bullet$ and $\mathcal M$ is said to be \emph{solid} if $1 - t$ is an automorphism.

\begin{prop}
If $M$ is an $\mathbb M$-module, then $M^{\mathrm{cond}}$ is solid.
\end{prop}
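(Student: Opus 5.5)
By proposition \ref{comp}\eqref{PM}, $(M^{\mathrm{cond}})_\bullet \simeq \underline M$, where $M$ carries its canonical topology. The shift $t$ on $(M^{\mathrm{cond}})_\bullet$ comes from $n \mapsto n+1$ on $\overline{\mathbb N}$. So the plan is to show that under this identification it corresponds to $\underline{t}$, where $t \in \mathbb M$ acts on $M$ by left multiplication. Once that is done, $1-t$ on $(M^{\mathrm{cond}})_\bullet$ is $\underline{1-t}$. Since the matrix $1-t$ is invertible in $\mathbb M$ (see below), left multiplication by $1-t$ is an $\mathbb M$-module automorphism of $M$, hence continuous with continuous inverse for the canonical topology. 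Then $\underline{1-t}$ is an automorphism of $\underline M$, and $M^{\mathrm{cond}}$ is solid.

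Invertibility of $1-t$ in $\mathbb M$: the series $\sum_{n\ge 0} t^n$ converges in $\mathbb M$ for the $t$-adic topology (proposition \ref{tMod}). Concretely it is the upper triangular matrix of ones, which is column-finite since column $n$ has exactly $n+1$ nonzero entries. It is a two-sided inverse of $1-t$ because multiplication is continuous (lemma \ref{endcont}) and the telescoping partial sums give $(1-t)\sum_{k<n}t^k = 1-t^n \to 1$.

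Compatibility of the shifts: everything in sight (the functors $M\mapsto M^{\mathrm{cond}}$, $\mathcal M\mapsto\mathcal M_\bullet$, $M \mapsto \underline M$ with canonical topology, and the isomorphism of proposition \ref{comp}) is additive and preserves colimits. Any natural endomorphism is then determined by what happens on $\mathbb M$. More precisely, both $t_{\mathrm{cond}}$ (the condensed shift) and $\underline{t\cdot}$ are natural transformations of colimit-preserving functors from $\mathbb M$-modules to light condensed abelian groups, so it suffices to compare them on $M=\mathbb M$. There $(\mathbb M^{\mathrm{cond}})_\bullet = \underline{\mathbb Z^{\mathbb N}}_\bullet = \underline{(\mathbb Z^{\mathbb N})_\bullet}$ by proposition \ref{comp}(1), and also $\underline{(\mathbb Z^{\mathbb N})_\bullet} \simeq \underline{\mathbb M}$ since $\mathbb M\simeq \mathbb Z[[X]]_\bullet$, i.e.\ null sequences in $\mathbb Z^{\mathbb N}$ are column-finite matrices read column by column. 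The condensed shift becomes the topological shift $t$ of null sequences via lemma \ref{Pbul}. On $V_\bullet$ that shift is $(s_n)\mapsto(s_{n+1})$, which is exactly left multiplication by the matrix $t$ (shifting rows up). One can alternatively argue entirely formally: everything is $\mathbb M$-linear and the action of $\mathbb M$ on $\mathbb M\simeq (\mathbb Z^{\mathbb N})_\bullet$ is the action on null sequences described in section \ref{Seclin}.

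The main obstacle is this bookkeeping step: checking that the identification $(M^{\mathrm{cond}})_\bullet\simeq\underline M$ of proposition \ref{comp}\eqref{PM} really intertwines the geometric shift with the matrix $t$, including naturality in $M$. Reducing to $M = \mathbb M$ via colimit preservation handles it, provided I verify that the isomorphism for general $M$ is induced from the one on $\mathbb M$ by right exactness, which is how proposition \ref{comp} was proved.
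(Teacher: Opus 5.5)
Your argument is the paper's. The paper's ``compatible with both actions of $t$'' is exactly your identification of the condensed shift on $(M^{\mathrm{cond}})_\bullet\simeq\underline M$ with left multiplication by $t$, followed by invertibility of $1-t$ in the $t$-adically complete ring $\mathbb M$; you simply spell out the reduction to $M=\mathbb M$ and the geometric series.

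One slip to fix: left multiplication by $1-t$ is \emph{not} an $\mathbb M$-module automorphism of $M$, since $t$ is not central ($tt'=1\neq t't$). Its continuity, and that of its inverse $\sum_n t^n$, for the canonical topology must therefore be deduced from the continuity of the action $\mathbb M\times M\to M$, not from the automatic continuity of $\mathbb M$-linear maps.
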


\begin{proof}
The functor $M \mapsto M^{\mathrm{cond}}$ is fortunately compatible with both actions of $t$.
Since $\mathbb M$ is complete for the $t$-adic topology, $1-t$ is invertible in $\mathbb M$ and $1-t$ is therefore bijective on any $\mathbb M$-module.
\end{proof} 

The condition for being solid is stable under all (limits and) colimits.
It follows that light solid abelian groups form a reflective (and coreflective) subcategory and one denotes by $\mathcal M \mapsto \mathcal M^\blacksquare$ the reflection (adjoint to inclusion) which is called \emph{solidification}.
Note that $\mathcal H \mathrm{om}_{\underline {\mathbb Z}}(\mathcal M, \mathcal N)$ is automatically solid when $\mathcal N$ is and there exists an adjoint given by $\mathcal M \otimes^\blacksquare \mathcal N := (\mathcal M \otimes_{\underline{\mathbb Z}} \mathcal N)^\blacksquare$ so that the category is closed symmetric monoidal.
Actually, the category of light solid abelian groups is equivalent to the category of $\mathbb M$-modules.
This may be shown as follows: the category of light solid abelian groups si a cocomplete abelian category with exact filtered colimits that admits, according to theorem part \eqref{intPr2} of theorem \ref{intPr} below, a finitely presented projective generator $\underline {\mathbb Z}^{\mathbb N}$.
Since $\mathbb M^{\mathrm{op}} \simeq \mathrm{End}_{\underline {\mathbb Z}}(\underline {\mathbb Z}^{\mathbb N})$, our claim is the easy case of Gabriel-Popescu theorem as explained in the appendix.
We can actually be more precise:

\begin{thm} \label{infMor}
The functor $M \mapsto M^{\mathrm{cond}}$ induces an equivalence between $\mathbb M$-modules and light solid abelian groups with inverse $\mathcal M \mapsto \mathcal M_\bullet(*)$.
\end{thm}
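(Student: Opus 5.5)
We prove the theorem by exhibiting $\mathcal M \mapsto \mathcal M_\bullet(*)$ as a two-sided inverse of $M \mapsto M^{\mathrm{cond}}$, combining the full faithfulness of proposition \ref{expfrm} with a density argument based on theorem \ref{intPr}.

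First, the composite $\mathbb M$-modules $\to$ light solid abelian groups $\to$ $\mathbb M$-modules is already under control. By the proposition preceding the theorem, $M^{\mathrm{cond}}$ is solid, so the functor lands in light solid abelian groups. For the return functor, note that $\mathcal M_\bullet(*) = \ker(\mathcal M(\overline{\mathbb N}) \to \mathcal M(*))$ carries an action of $\mathbb M$. Indeed, $\mathcal M_\bullet(*) \simeq \mathrm{Hom}_{\underline{\mathbb Z}}(\underline P, \mathcal M)$, and $\mathbb M^{\mathrm{op}} \simeq \mathrm{End}_{\underline{\mathbb Z}}(\underline{\mathbb Z}^{\mathbb N})$ acts once $\underline P$ is replaced by its solidification $\underline P^\blacksquare \simeq \underline{\mathbb Z^{\mathbb N}}$ (this is part \eqref{intPr2} of theorem \ref{intPr}). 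For solid $\mathcal M$ we have $\mathrm{Hom}(\underline P, \mathcal M) = \mathrm{Hom}(\underline{\mathbb Z^{\mathbb N}}, \mathcal M)$, so $\mathcal M_\bullet(*)$ is exactly the coadjoint $\mathrm{Hom}_{\underline{\mathbb Z}}(\underline{\mathbb Z^\mathbb N}, \mathcal M)$ identified just after proposition \ref{fstcomp}. Part \eqref{PM} of proposition \ref{comp} gives $(M^{\mathrm{cond}})_\bullet(*) \simeq \underline M(*) = M$, and one checks that this is the unit of the adjunction and is $\mathbb M$-linear. This recovers full faithfulness, which is also proposition \ref{expfrm}.

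It then remains to show essential surjectivity, i.e.\ that the counit $(\mathcal M_\bullet(*))^{\mathrm{cond}} \to \mathcal M$ is an isomorphism for every solid $\mathcal M$. Both functors $\mathcal M \mapsto \mathcal M_\bullet(*) = \mathrm{Hom}(\underline{\mathbb Z^{\mathbb N}}, -)$ and $M \mapsto M^{\mathrm{cond}}$ preserve all colimits on the solid category. For the first, this holds because $\underline{\mathbb Z^{\mathbb N}}$ is finitely presented projective in light solid abelian groups, by theorem \ref{intPr}. For the second, this is proposition \ref{expfrm}, together with the fact that solid objects are stable under colimits computed in condensed abelian groups. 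The class of solid $\mathcal M$ for which the counit is an isomorphism is therefore closed under colimits, and it contains $\underline{\mathbb Z^{\mathbb N}} = \mathbb M^{\mathrm{cond}}$, since the counit there is the identity via proposition \ref{fstcomp}.

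Finally, every light solid abelian group is a cokernel of a map between direct sums of copies of $\underline{\mathbb Z^{\mathbb N}}$. Solidity turns the generators $\underline{\mathbb Z\cdot S}$ into $(\underline{\mathbb Z \cdot S})^\blacksquare \simeq \underline{\mathbb Z\cdot S^{\vee\vee}}$, which is $\underline{\mathbb Z}^I$ for a countable $I$ by Nöbeling's theorem \ref{Nobel}, hence a direct summand of $\underline{\mathbb Z^{\mathbb N}}$. So $\underline{\mathbb Z^{\mathbb N}}$ generates, and the counit is an isomorphism everywhere. The main obstacle is this input from theorem \ref{intPr}: that the solidification of $\underline{\mathbb Z\cdot S}$ is $\underline{\mathbb Z\cdot S^{\vee\vee}}$ and that $\underline{\mathbb Z^{\mathbb N}}$ is compact projective among solid objects. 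Granting it, everything else is formal and amounts to the Gabriel–Popescu/Morita argument of the appendix.
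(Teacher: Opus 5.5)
Your proposal is correct and is essentially the paper's argument: both composites preserve colimits (by Theorem \ref{intPr} and the colimit preservation of $M \mapsto M^{\mathrm{cond}}$), so the unit and counit only need to be checked on $\mathbb M$ and on $\mathbb M^{\mathrm{cond}} = \underline{\mathbb Z^{\mathbb N}}$, which generates light solid abelian groups; this is the Gabriel--Popescu reduction of the appendix. The only divergence is your Nöbeling argument for generation, which is unnecessary because part (2) of Theorem \ref{intPr} already states that $\underline{\mathbb Z}^{\mathbb N}$ is a generator, and which is also slightly misattributed: $(\underline{\mathbb Z\cdot S})^\blacksquare \simeq \underline{\mathbb Z\cdot S^{\vee\vee}}$ is not part of Theorem \ref{intPr}, and before the theorem the paper only has the concrete solidification $\underline{\mathbb Z\cdot S}^{(\blacksquare)} = \mathbb Z\cdot S^{\vee\vee}_\bullet$ (its agreement with $\blacksquare$ is deduced from the theorem), so your route would need that computation imported from Clausen--Scholze to avoid circularity.
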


\begin{proof}
We need to check that, if $M$ is an $\mathbb M$-module, then $M^{\mathrm{cond}}_\bullet(*) \simeq M$ and that, if $\mathcal M$ is a solid abelian group then $\mathcal M \simeq \mathcal M_\bullet(*)^{\mathrm{cond}}$.
Since all our functor preserve colimits and $\underline{\mathbb Z^\mathbb N}_\bullet \simeq \underline{\mathbb Z^\mathbb N_\bullet} \simeq \underline {\mathbb M}$, both assertions reduce to
\[
\mathbb M^{\mathrm{cond}}_\bullet(*) \simeq \underline{\mathbb Z^\mathbb N}_\bullet(*) = \underline {\mathbb M}(*)\simeq \mathbb M
\]
and, thanks to part \eqref{intPr2} of theorem \ref{intPr} \emph{below}, to
\[
\underline{\mathbb Z^\mathbb N}_\bullet(*)^{\mathrm{cond}} \simeq \underline{\mathbb M}(*)^{\mathrm{cond}} \simeq \mathbb M^{\mathrm{cond}} \simeq \underline{\mathbb Z^\mathbb N}.\qedhere
\]
\end{proof}

As a consequence, we see that, if $\mathcal M$ is a light condensed abelian group, then $\mathcal M^{(\blacksquare)} = \mathcal M^\blacksquare_\bullet(*)$.

Recall that a \emph{light condensed ring} $\mathcal A$ is, equivalently, a sheaf of rings on the site of compact metrizable spaces and jointly surjective finite covering, a ring in the cartesian category of light condensed sets or a monoid in the monoidal category of light condensed abelian group.
Then, an \emph{$\mathcal A$-module} is, equivalently, an $\mathcal A$-module in the cartesian category of light condensed sets or an $\mathcal A$-object in the monoidal category of light condensed abelian group.
The next statement is then a formal consequence of theorem \ref{infMor}:

\begin{cor}
The functor $A \mapsto A^{\mathrm{cond}}$ (resp. $M \mapsto M^{\mathrm{cond}}$) induces an equivalence between $\mathbb M$-rings (resp. $A$-modules) and light solid rings (resp. $A^{\mathrm{cond}}$-modules ) with inverse $\mathcal A \mapsto \mathcal A_\bullet(*)$ (resp. $\mathcal M \mapsto \mathcal M_\bullet(*)$). \qed
\end{cor}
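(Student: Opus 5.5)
The corollary should follow from theorem \ref{infMor} by transporting monoids along a monoidal equivalence. The only real work is showing that the equivalence $M \mapsto M^{\mathrm{cond}}$ from $\mathbb M$-modules to light solid abelian groups is \emph{strong} symmetric monoidal, where the target carries the solid tensor product $\otimes^\blacksquare$. Once that holds, the rest is formal: a strong monoidal equivalence induces an equivalence between monoids and between module objects over corresponding monoids. Light solid rings are by definition monoids for $\otimes^\blacksquare$, and $\mathcal A$-modules are $\mathcal A$-objects. For a solid $\mathcal A$ these are automatically solid, and the action $\mathcal A \otimes \mathcal M \to \mathcal M$ factors uniquely through $\mathcal A\otimes^\blacksquare \mathcal M$ by the reflection property. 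The inverse is then necessarily the inverse equivalence $\mathcal M \mapsto \mathcal M_\bullet(*)$, endowed with the transported structure.

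Corollary \ref{frst} already gives a lax structure $M^{\mathrm{cond}} \otimes_{\underline{\mathbb Z}} N^{\mathrm{cond}} \to (M \otimes_{\mathbb Z_\bullet} N)^{\mathrm{cond}}$. Its target is solid, so the map factors through $(M^{\mathrm{cond}} \otimes_{\underline{\mathbb Z}} N^{\mathrm{cond}})^\blacksquare = M^{\mathrm{cond}}\otimes^\blacksquare N^{\mathrm{cond}}$. We must show the induced map is an isomorphism.

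\begin{itemize}
\item \textbf{Using Yoneda.} For any solid $\mathcal P$, which is of the form $P^{\mathrm{cond}}$ by theorem \ref{infMor}, maps out of $M^{\mathrm{cond}}\otimes^\blacksquare N^{\mathrm{cond}}$ are
\[
\mathrm{Hom}(M^{\mathrm{cond}}, \mathcal H\mathrm{om}_{\underline{\mathbb Z}}(N^{\mathrm{cond}}, P^{\mathrm{cond}})).
\]
By the internal full faithfulness $\mathcal H\mathrm{om}_{\mathbb Z_\bullet}(N,P)^{\mathrm{cond}} \simeq \mathcal H\mathrm{om}_{\underline{\mathbb Z}}(N^{\mathrm{cond}},P^{\mathrm{cond}})$, this equals $\mathrm{Hom}_{\mathbb M}(M, \mathcal H\mathrm{om}_{\mathbb Z_\bullet}(N,P))$, which is $\mathrm{Hom}_{\mathbb M}(M\otimes_{\mathbb Z_\bullet}N, P)$. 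By full faithfulness, that is in turn $\mathrm{Hom}((M\otimes_{\mathbb Z_\bullet} N)^{\mathrm{cond}}, P^{\mathrm{cond}})$.
\item \textbf{Compatibility.} We still have to check that this composite bijection is induced by the lax map of corollary \ref{frst}. That map was itself built from the same adjunction unit, so this is a diagram chase.
\item \textbf{Unit.} The unit is preserved: $\mathbb Z_\bullet^{\mathrm{cond}} = \underline{\mathbb Z}$ by proposition \ref{fstcomp}, since $\mathbb Z_\bullet = \mathbb Z\cdot *^{\vee\vee}_\bullet$, and $\underline{\mathbb Z}$ is the unit for $\otimes^\blacksquare$.
\item \textbf{Symmetry and associativity.} These constraints match because everything is induced by adjunctions compatible with the symmetric closed structures.
\end{itemize}

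The main obstacle is this compatibility check, together with confirming that the internal full faithfulness really holds for arbitrary $N,P$ and not only on generators. That proposition was proved by reducing to $\mathcal M=\underline{\mathbb Z\cdot S}$ using preservation of limits and colimits, so I will take it as established. If the Yoneda chase becomes messy, I would instead reduce to generators. Both $\otimes_{\mathbb Z_\bullet}$ and $\otimes^\blacksquare$ preserve colimits in each variable, so it suffices to treat $M=N=\mathbb M$. There $\mathbb M^{\mathrm{cond}}=\underline{\mathbb Z^{\mathbb N}}$, and one needs
\[
\underline{\mathbb Z^{\mathbb N}}\otimes^\blacksquare\underline{\mathbb Z^{\mathbb N}} \simeq \underline{\mathbb Z^{\mathbb N\times\mathbb N}} = (\mathbb M^{\otimes 2})^{\mathrm{cond}},
\]
which is the standard Clausen–Scholze computation, or follows from the Nöbeling-based product formula $\mathbb Z\cdot T^{\vee\vee}_\bullet\otimes_{\mathbb Z_\bullet}\mathbb Z\cdot S^{\vee\vee}_\bullet\simeq \mathbb Z\cdot(S\times T)^{\vee\vee}_\bullet$ with $S=T=\overline{\mathbb N}$.
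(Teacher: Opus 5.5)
The step that fails is the sentence ``For a solid $\mathcal A$ these are automatically solid''. An $\mathcal A$-module in light condensed abelian groups need not be solid, even when $\mathcal A$ is. Take $\mathcal A=\underline{\mathbb Z}=\mathbb Z_\bullet^{\mathrm{cond}}$, which is solid, and $\mathcal M=\underline{\mathbb R}$. On $\underline{\mathbb R}_\bullet(*)=\mathbb R_\bullet$ the map $1-t:(s_n)\mapsto(s_n-s_{n+1})$ is not surjective: a preimage of the null sequence $(1/(n+1))$ would force $s_n=\sum_{k\geq n}1/(k+1)$, which diverges. So $\underline{\mathbb R}$ is not solid. In fact, theorem \ref{infMor} itself shows that the module half of the corollary is false if ``$A^{\mathrm{cond}}$-modules'' means all $A^{\mathrm{cond}}$-objects in light condensed abelian groups. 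For $A=\mathbb Z_\bullet$, the $A_{/\mathbb Z}$-modules are the $\mathbb M$-modules, that is, light solid abelian groups. The $\underline{\mathbb Z}$-modules, by contrast, are all light condensed abelian groups. Your false claim is exactly what would be needed to reconcile these two categories.

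The fix is to read (and prove) the statement for $A^{\mathrm{cond}}$-modules whose underlying light condensed abelian group is solid. Equivalently, these are $A^{\mathrm{cond}}$-objects in light solid abelian groups for $\otimes^\blacksquare$. With that reading, delete the ``automatically solid'' sentence. Your factorization of $\mathcal A\otimes_{\underline{\mathbb Z}}\mathcal M\to\mathcal M$ through $\mathcal A\otimes^\blacksquare\mathcal M$ then identifies the two descriptions, since it uses precisely that $\mathcal M$ is solid. Transport along the monoidal equivalence finishes the proof.

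The rest of your proposal is sound, and it supplies what the paper's one-line justification (``a formal consequence of theorem \ref{infMor}'') leaves implicit. An equivalence transports monoids and module objects only once it is strong monoidal. Your Yoneda argument establishes this, using the internal full faithfulness $\mathcal H\mathrm{om}_{\mathbb Z_\bullet}(N,P)^{\mathrm{cond}}\simeq\mathcal H\mathrm{om}_{\underline{\mathbb Z}}(N^{\mathrm{cond}},P^{\mathrm{cond}})$ together with essential surjectivity. It gives $M^{\mathrm{cond}}\otimes^\blacksquare N^{\mathrm{cond}}\simeq(M\otimes_{\mathbb Z_\bullet}N)^{\mathrm{cond}}$ and preservation of the unit.
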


The results of this section rely on the following:

\begin{thm}[Clausen-Scholze] \label{intPr}
\begin{enumerate}
\item \label{intPr1}
The light condensed abelian group $\underline P$ is internally (and externally) finitely presented\footnote{aka \emph{compact}.} projective\footnote{When there are not enough projectives, a projective need not be internally projective.} .
\item \label{intPr2}
The light solid abelian group $\underline{\mathbb Z}^\mathbb N$ is a finitely presented projective generator (and $\underline P^\blacksquare \simeq \underline{\mathbb Z}^\mathbb N$).
\end{enumerate}
\end{thm}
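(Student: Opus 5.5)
The plan is to treat the two parts separately, and to deduce the second from the first together with Nöbeling's theorem \ref{Nobel}.

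\textbf{Part (1).} By the discussion preceding the statement, $\mathcal H\mathrm{om}_{\underline{\mathbb Z}}(\underline P, \mathcal M) \simeq \mathcal M_\bullet$, with
\[
\mathcal M_\bullet(S)=\ker\bigl(\mathcal M(S\times\overline{\mathbb N})\to\mathcal M(S)\bigr).
\]
So internal finite presentability and projectivity of $\underline P$ amount to a single claim: $\mathcal M\mapsto \mathcal M_\bullet$ is exact and preserves filtered colimits. I would check this sectionwise on metrizable Stone spaces $S$, since $S\times\overline{\mathbb N}$ is again one. The external statement is then the case $S=*$.

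\emph{Filtered colimits.} These are easy: $S\times\overline{\mathbb N}$ is compact, so its sections commute with filtered colimits of sheaves on this coherent site.

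\emph{Exactness.} This is the main obstacle. Let $\mathcal M\twoheadrightarrow\mathcal N$ be an epimorphism and $x\in \mathcal N_\bullet(S)$. Then $x$ lifts to $\mathcal M$ only after pulling back along some cover $T\twoheadrightarrow S\times\overline{\mathbb N}$, and we must produce a lift over $S\times\overline{\mathbb N}$ itself, vanishing at $\infty$.

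\begin{enumerate}
\item First reduce to a cover of $S\times\mathbb N$: restrict $T$ to the fibres over the clopen pieces $S\times\{n\}$, obtaining covers $T_n\twoheadrightarrow S$.
\item Glue these into the single light profinite set $T' := \bigl(\coprod_n T_n\bigr)\cup (S\times\{\infty\})$, topologised as a one-point-at-infinity compactification fibred over $S$. Metrizability is needed exactly here, to keep $T'$ light.
\item The lifts on the $T_n$ can be corrected by sections that vanish near $\infty$. This uses that $x$ vanishes at $\infty$, and that countable products are exact in light condensed abelian groups. So they assemble to a section of $\mathcal M_\bullet(T')$ mapping to $x$.
\end{enumerate}

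The delicate point is the continuity at $\infty$ in step~(3). My first attempt would be to write $\underline P$ via the short exact sequence $0\to\underline P\overset{t}\to\underline P\to\underline{\mathbb Z}\to 0$. I would then express $\mathcal M_\bullet$ as an explicit kernel inside $\prod_{\mathbb N}\mathcal M$ (null sequences), and reduce exactness to exactness of countable products plus a sequential-limit argument.

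\textbf{Part (2).} Solidity means that $1-t$ is invertible on $\mathcal M_\bullet=\mathcal H\mathrm{om}(\underline P,\mathcal M)$. Using this, I would show that the bidual map $\underline P\to\underline{\mathbb Z^{\mathbb N}}$ (lemma \ref{Pbul}) induces $\underline P^\blacksquare\simeq\underline{\mathbb Z}^{\mathbb N}$. Concretely, I would check that $\mathcal H\mathrm{om}(\underline{\mathbb Z}^{\mathbb N},\mathcal M)\to\mathcal H\mathrm{om}(\underline P,\mathcal M)$ is an isomorphism for every solid $\mathcal M$. The intended route is to present $\underline{\mathbb Z}^{\mathbb N}$ as the cokernel of a map built from $1-t$ on countable sums of copies of $\underline P$, and then apply the solidity hypothesis. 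Once this is done, $\mathrm{Hom}(\underline{\mathbb Z}^{\mathbb N},\mathcal M)=\mathcal M_\bullet(*)$ for solid $\mathcal M$. This functor is exact and preserves filtered colimits by part (1), and solid objects are stable under colimits, so $\underline{\mathbb Z}^{\mathbb N}$ is finitely presented projective in the solid category.

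For the generator property, it suffices to show that every $\underline{\mathbb Z\cdot S}^\blacksquare$, with $S$ a metrizable Stone space, is a quotient of a sum of copies of $\underline{\mathbb Z}^{\mathbb N}$. Surjecting the Cantor set (or $\overline{\mathbb N}$, or a finite set) onto $S$ reduces this to $S$ equal to one of these spaces. Nöbeling's theorem \ref{Nobel} then gives $\underline{\mathbb Z\cdot S}^\blacksquare\simeq\underline{\mathbb Z}^{I}$ with $I$ countable. Since $I$ is countable, $\underline{\mathbb Z}^{I}$ is either $\underline{\mathbb Z}^{\mathbb N}$ or a finite power $\underline{\mathbb Z}^n$, which is a direct factor of $\underline{\mathbb Z}^{\mathbb N}$. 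This completes the argument.
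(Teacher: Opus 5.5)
Your proposal has genuine gaps at both non-formal points of the theorem.

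\textbf{Part (1), step (3).} This step cannot work as stated. A light condensed abelian group only satisfies gluing for \emph{finite} covers, so $\mathcal M(T')$ is not determined by $\prod_n\mathcal M(T_n)\times\mathcal M(S)$. Lifts chosen separately on the $T_n$ therefore do not assemble into a section over $T'$, and exactness of countable products says nothing about what happens at $\infty$.

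Your fallback fails for the same reason. The map $\mathcal M_\bullet\to\prod_{\mathbb N}\mathcal M$ is not injective in general. For $\mathcal M=\underline{\mathbb R}/\underline{\mathbb Q}$, the class of $n\mapsto 1/(n+1)$, $\infty\mapsto 0$, is a nonzero element of $\mathcal M_\bullet(*)$ vanishing at every point of $\overline{\mathbb N}$. So $\mathcal M_\bullet$ is not a kernel inside $\prod_{\mathbb N}\mathcal M$.

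The missing idea is the one the paper singles out: every nonempty metrizable Stone space is injective. Choose a retraction $r\colon T\to T_\infty$ and replace the lift $y$ by $y-r^*(y|_{T_\infty})$. It still lifts $x$, since the correction maps to the pullback of $x|_{S\times\{\infty\}}=0$, and it now vanishes on $T_\infty$. Hence it descends along the quotient map $T\to T'$ to a section over $T'$ vanishing on $S\times\{\infty\}$. Your $T'$ is exactly the pushout $T\sqcup_{T_\infty}S$, and $\underline{T'}$ is the corresponding pushout of light condensed sets.

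You then still have to pass from $T'$ to a product. Let $S':=T_0\times_ST_1\times_S\cdots$, which is light and surjects onto $S$, with projection $\pi\colon S'\to S$. The map $S'\times\overline{\mathbb N}\to T'$ given by $(s',n)\mapsto\mathrm{pr}_n(s')$ and $(s',\infty)\mapsto(\pi(s'),\infty)$ is continuous precisely because of the fibred compactification topology. Pulling back gives the required element of $\mathcal M_\bullet(S')$. Only a lift over a cover $S'\times\overline{\mathbb N}$ is needed, not over $S\times\overline{\mathbb N}$ itself.

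\textbf{Part (2).} The isomorphism $\underline P^\blacksquare\simeq\underline{\mathbb Z}^{\mathbb N}$ is the hard step, and the route you sketch cannot produce it. Evaluation at $*$ is exact on light condensed abelian groups. So the cokernel of any map between countable sums of copies of $\underline P$ has a countable underlying group, whereas $\mathbb Z^{\mathbb N}$ is uncountable. Such a presentation could only hold after solidification, which is what you are trying to compute. The paper, following Clausen--Scholze, goes through bounded sequences in two steps, $\underline P^\blacksquare\simeq\underline{\mathbb Z}^{\mathbb N,\mathrm{bd},\blacksquare}\simeq\underline{\mathbb Z}^{\mathbb N}$, and you offer no substitute.

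The generator argument has a related flaw. Nöbeling's theorem \ref{Nobel} concerns the topological bidual $(\mathbb Z\cdot S)^{\vee\vee}$. The identification $\underline{\mathbb Z\cdot S}^\blacksquare\simeq\underline{(\mathbb Z\cdot S)^{\vee\vee}}$ you rely on is itself the structure theorem for light solid groups. In the paper it only follows from theorem \ref{infMor}, which rests on the statement you are proving, so the argument is circular. The paper instead uses the explicit null sequence $[n]-[n-2^{v_2(n)}]$ in $\mathbb Z\cdot 2^{\mathbb N}$ to get $\underline P^\blacksquare\simeq(\underline{\mathbb Z}\cdot\underline{2^{\mathbb N}})^\blacksquare$. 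Generation then follows because $\underline{\mathbb Z}\cdot\underline{2^{\mathbb N}}$ generates light condensed abelian groups.

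The formal parts of your proposal are fine: filtered colimits via quasi-compactness, and deducing finite presentability and projectivity in the solid category from part (1).
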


\begin{proof}
\begin{enumerate}
\item
If $S$ is a compact metrizable space, then $\underline S$ is an internally finitely presentable light condensed set (qcqs object of a coherent topos).
It follows that $\underline{\mathbb Z} \cdot \underline S$ is an internally finitely presentable light condensed abelian group.
This is therefore also the case for $\underline P$ which is a direct factor of $\underline{\mathbb Z} \cdot \underline{\overline {\mathbb N}}$.
Next, one shows that, in the category of Stone metrizable spaces (aka light profinite sets), every non-empty object is injective.
This is used to prove internal projectivity of $P$ by hand: see for example \cite{Warn24} (theorem 5), \cite{ Camargo26} (theorem 2.3.3.(3)) or \cite{Kedlaya25} (proposition 5.4.8).
\item
It follows from the first part that $\underline P^\blacksquare$ is a finitely presented projective light solid abelian group.
Then, one builds a null sequence\footnote{Use $[n] - [n - 2^{v_2(n)}]$ where $[n] \in 2^\mathbb N$ denotes the binary expansion of $n$.} in $\mathbb Z \cdot 2^\mathbb N$ that provides an isomorphism $P^\blacksquare \simeq \underline {\mathbb Z} \cdot \underline 2^{\mathbb N\blacksquare}$ on solidifications.
Since $\underline {\mathbb Z} \cdot \underline {2^\mathbb N}$ is a generator of the category of light condensed abelian groups, it formally follows that $\underline P^\blacksquare$, is a generator of the category of light solid abelian groups.
To conclude, one shows that the coordinate map $\mathbb N \to \mathbb Z^\mathbb N$ extends to an isomorphism $\underline P^\blacksquare \simeq \underline {\mathbb Z}^\mathbb N$.
This is done in two steps\footnote{The defintion of the middle term is left to the imagination of the reader.}: $\underline P^\blacksquare \simeq \underline {\mathbb Z}^{\mathbb N, \mathrm{bd}, \blacksquare} \simeq\underline {\mathbb Z}^\mathbb N$ with two different kinds of arguments.
For the details, we send the reader to theorem 3.3.1 of \cite{ Camargo26} or to the course \cite{Kedlaya25}, or, of course, the online lectures \cite{ClausenScholze23}. \qedhere
\end{enumerate}
\end{proof}


\section{Appendix: Watts-Eilenberg/Gabriel-Popescu} \label{Appendix}

We do not really need the full theorems of Watts-Eilenberg and Gabriel-Popescu but mostly the context of their statements.

If $R$ is a ring, then a right \emph{$R$-module} in an additive category $\mathcal A$ is an object $G$ of $\mathcal A$ endowed with a homomorphism of rings $R^{\mathrm{op}} \to \mathrm{End}_{\mathcal A}(G)$.
There exists then a functor
\begin{equation} \label{Homfct}
\mathcal A \to \mathrm{Mod}_R, \quad M \mapsto \mathrm{Hom}_{\mathcal A}(G, M)
\end{equation}
that preserves all limits (that exist in $\mathcal A$).
If $\mathcal A$ is cocomplete (and the solution set condition is satisfied), then there exists an adjoint
\begin{equation} \label{Tensfct}
\mathrm{Mod}_R \to \mathcal A, \quad M \mapsto G \otimes_R M
\end{equation}
(where the tensor product on the right hand side must be seen as a bare notation).
The theorem of Watts-Eleinberg states that any functor that preserves all colimits has this form (theorem 3.1 of \cite{NymanSmith16}).

Now, an object $G$ in a category $\mathcal C$ is called a \emph{separator} (resp.\ a \emph{generator}, resp.\ a \emph{projective} object, resp.\ a \emph{finitely presented}\footnote{aka compact.} object) if the functor
\begin{equation} \label{fHom}
X \mapsto \mathrm{Hom}(G, X)
\end{equation}
is faithful (resp.\ is conservative, resp.\ preserves epimorphisms, resp.\ preserves filtered colimits).
When $\mathcal C$ is cocomplete with finite limits, a generator $G$ is always a separator and any $X \in \mathcal C$ is then a colimit of copies of $G$.
All three conditions are actually equivalent if, moreover, $\mathcal C$ is \emph{balanced} (a morphism is an isomorphism if and only if it is a monomorphism and an epimorphism).
In the case $\mathcal C$ is an abelian category, $G$ is projective (resp. finitely presented projective) if and only if the functor \eqref{fHom} preserves all finite colimits (resp. all colimits).

The theorem of Gabriel-Popescu (corollary 4.10 of \cite{Popescu73}) states that, if $\mathcal A$ is a cocomplete abelian category with exact filtered colimits, $G$ is a generator of $\mathcal A$ and $R := \mathrm{End}(G)^{\mathrm{op}}$, then the functor \eqref{Homfct} is fully faithful and the functor \eqref{Tensfct} is exact.
Moreover, they induce an equivalence if and only if $G$ is finitely presented projective.

As a baby example, you can consider the category of (left) $\mathbb M_r$-modules where $\mathbb M_r$ denotes the ring of $r \times r$-matrices.
Since
\[
\mathbb M_r \simeq \underbrace{\mathbb Z^r \oplus \cdots \oplus \mathbb Z^r}_{r\ \mathrm{times}}
\]
as $\mathbb M_r$-modules, it is easily checked that $\mathbb Z^r$ is a finitely presented projective generator.
Since $\mathrm{End}_{\mathbb M_r}(\mathbb Z^r)$ identifies with the center of $\mathbb M_r \simeq \mathrm{End}_{\mathbb Z}(\mathbb Z^r)$ which is nothing but the subset of diagonal matrices, and therefore isomorphic to $\mathbb Z$, we recover \emph{Morita equivalence} between (left) $\mathbb M_r$-modules and abelian groups.

\newpage

\addcontentsline{toc}{section}{References}
\printbibliography

\Addresses

\end{document}